\newtheorem{thm}{Theorem}[section]
\newtheorem{lem}[thm]{Lemma}
\theoremstyle{definition}
\DeclareSymbolFont{bboldforun}{U}{bbold}{m}{n}
\DeclareSymbolFontAlphabet{\mathbboldforun}{bboldforun}
\newcommand{\un}{\mathbboldforun{1}}
\newcommand{\CCC}{\mathbboldforun{C}}
\newcommand{\co}{\colon}
\newcommand{\id}{\mathrm{id}}
\newcommand{\cc}{\mathcal{C}}
\newcommand{\cch}{\mathcal{C}_{\mathrm{hom}}}
\newcommand{\dd}{\mathcal{D}}
\newcommand{\ee}{\mathcal{E}}
\newcommand{\ZZ}{\mathbb{Z}}
\newcommand{\RR}{\mathbb{R}}
\newcommand{\Ker}{\mathrm{Ker}}
\newcommand{\iso}{\stackrel{\sim}{\longrightarrow}}
\newcommand{\kk}{\Bbbk}
\newcommand{\kt}{$\Bbbk$\nobreakdash-\hspace{0pt}}
\newcommand{\Aut}{\mathrm{Aut}}
\newcommand{\End}{\mathrm{End}}
\newcommand{\Hom}{\mathrm{Hom}}
\newcommand{\tr}{\mathrm{tr}}
\newcommand{\rank}{\mathrm{rank}}
\newcommand{\lev}{\mathrm{ev}}
\newcommand{\rev}{\widetilde{\mathrm{ev}}}
\newcommand{\lcoev}{\mathrm{coev}}
\newcommand{\rcoev}{\widetilde{\mathrm{coev}}}
\newcommand{\ldual}[1]{#1^{*}}
\newcommand{\scaledraw}[1]{A}
\newcommand{\scaleraisedraw}[2]{A}
\newcommand{\rsdraw}[3]{\raisebox{-#1\height}{\scalebox{#2}{\includegraphics{#3.eps}}}}
\newcommand{\labela}{\renewcommand{\labelenumi}{{\rm (\alph{enumi})}}}
\newcommand{\labeli}{\renewcommand{\labelenumi}{{\rm (\roman{enumi})}}}
\begin{document}

\title{Surgery HQFT}
\author[V. Turaev]{Vladimir Turaev}
\address{%
Vladimir Turaev\newline
\indent Department of Mathematics, \newline
\indent Indiana University \newline
\indent Bloomington IN47405 \newline
\indent USA \newline
\indent e-mail: vtouraev@indiana.edu}
\author[A. Virelizier]{Alexis Virelizier}
\address{%
Alexis Virelizier\newline
\indent Department of Mathematics, \newline
\indent University Lille 1\newline
\indent 59655 Villeneuve d'Ascq \newline
\indent France \newline
\indent e-mail:  alexis.virelizier@math.univ-lille1.fr}
\subjclass[2010]{57M27, 18D10, 57R56}
\date{\today}

\begin{abstract}
Homotopy Quantum Field
Theories (HQFTs)
generalize more familiar Topological Quantum Field Theories (TQFTs). In generalization of the surgery construction of    3-dimensional TQFTs from modular categories, we use surgery to derive  3-dimensional HQFTs from $G$-modular categories.
\end{abstract}
\maketitle

\setcounter{tocdepth}{1} \tableofcontents

\section{Introduction}\label{sec-Intro} Homotopy Quantum Field
  Theories
  (HQFTs)
were introduced in \cite{Tu1} as   generalizations  of
  Topological Quantum Field Theories (TQFTs). An
  HQFT   produces
\lq\lq quantum" invariants of   manifolds    endowed with   homotopy classes of
maps   to   a fixed   space. Such homotopy classes    represent additional structures on  manifolds whose nature depends on the choice of the target space. We shall focus on HQFTs whose target space is an Eilenberg-MacLane space $K(G,1)$ where $G$ is a  discrete  group.
The maps to $K(G,1)$ encode flat principal $G$-bundles over manifolds.
When  $G=1$,
 we recover  the
usual TQFTs.

The study of TQFTs is
 primarily motivated by their interest for theoretical physics, and
 their main  applications outside of pure mathematics lie  in   physics and in the theory of   quantum
 computations, see, for example, \cite{Ca, Wa}.
The study of TQFTs has also  found
 applications  in   knot theory, low dimensional topology, and in the theory of Hopf algebras and    monoidal categories.
 One  expects similar applications for HQFTs.

The    construction of    Reshe\-ti\-khin and Turaev  \cite{RT2} derives  a   3-dimensional TQFT  from  a modular  category.
The principal aim of the present paper is to extend this construction to HQFTs.   Specifically,   we
  show that every $G$-modular category in the sense of \cite{TVi3} gives rise to a 3-dimensional   HQFT   with target  $K(G,1)$. The construction is based on surgery  presentations of 3-manifolds by links in Euclidean 3-space $\RR^3$, and the resulting HQFT is called the {\it surgery HQFT}.

  The definition of a $G$-modular category will be fully recalled in the body of the paper. Note here that such a category is $G$-graded and $G$-ribbon.
A $G$-graded category   is a  monoidal category   whose  objects have
a   multiplicative $G$-grading.   Such a  category is $G$-ribbon if it carries
   additional  structures:   a $G$-braiding,  a $G$-twist, and  an action of
$G$  called the   crossing.
For  $G=1$,    we recover the standard notions of
 braided/ribbon/modular categories.

An alternative approach to 3-dimensional  HQFTs based on the technique of state sums starts with so-called spherical $G$-graded categories, see
 \cite{TVi2}. In a sequel to this  paper, we will relate these  approaches via the following theorem:
the  state sum HQFT
associated with a spherical $G$-graded category is isomorphic to the
surgery HQFT associated with the $G$-center of
that   category.  This theorem is highly non-trivial already for $G=1$ (that is, for TQFTs); in this case  it was first established in \cite{TVi1} and slightly later  - but independently  -  in \cite{KB,Ba1,Ba2}.  For   the   notion of the $G$-center of a $G$-graded category, we refer the reader to \cite{TVi3}; we will not use   $G$-centers here.

  A surgery construction for HQFTs  was  first suggested  in   \cite{Tu1}.  However,   the class of $G$-modular categories   studied in \cite{Tu1} is very narrow. For example, the $G$-centers of  spherical $G$-graded categories only rarely belong to this class which makes it inadequate for the above-mentioned theorem. The   notion of  a $G$-modular category used here is considerably more general and does include  the $G$-centers. Working in this generality requires us to to introduce quite a number of   new   algebraic and geometric techniques
essential for this work.

The key ingredient in the definition of the surgery TQFT associated with a modular category $\cc$ is a certain  functor  from the category
   of $\cc$-colored ribbon graphs in $\RR^2 \times [0,1]$ to $\cc$, see \cite{RT1, RT2, Sh, Tu0}.  A $\cc$-coloring of a
ribbon graph labels the edges of the
graph with objects of $\cc$ and labels   the vertices of the graph with morphisms in $\cc$. Note that the
vertices of a ribbon graph are rectangles called \lq\lq coupons".
We need  to define a   version of the functor above for
ribbon graphs
whose exteriors are equipped
with homotopy classes of maps to $K(G,1)$.
  Such a homotopy class is   determined by a  homomorphism  from the fundamental group
of the   graph   exterior  to~$G$, and we rather work with homomorphisms. In the role of the base point  of the graph
exterior  we take  any point  with big second coordinate. In the role of   $\cc$ we take a $G$-ribbon $G$-graded category.  A
$\cc$-coloring attributes an object of $\cc$ to each path in the
graph exterior leading from the base point to an edge of the graph.
This  object must be preserved under homotopies of the path and must
behave in a \lq\lq controlled" way under changes of the path. In
particular, under multiplication of the path by a loop at the base
point, the object should be modified via the crossing automorphism
of $\cc$ determined by the element of $G$ represented by the loop.
Furthemore, a $\cc$-coloring attributes morphisms in $\cc$ to paths from the base point  to the
coupons.   Again, a   \lq\lq controlled behavior" is required.

We define a monoidal category $\mathcal G_{\cc}$ of $\cc$-colored ribbon graphs. The objects of $\mathcal G_{\cc}$
 are finite sequences of pairs      (an object of $\cc$, a sign $\pm $).
 These objects encode the colors and the orientations of   $\cc$-colored ribbon graphs near
  the inputs and the outputs. The morphisms of $\mathcal G_{\cc}$ are appropriate equivalence classes of
   $\cc$-colored ribbon graphs in $\RR^2 \times [0,1]$ having no circle components.   It should be stressed that the category $\mathcal G_{\cc}$ does not include knots or links. It does
     include    $\cc$-colored string links (and in particular, $\cc$-colored  braids) which are viewed as ribbon graphs without coupons.

For any $G$-ribbon $G$-graded category $\cc$, we define a
monoidal functor  $F_\cc \co  \mathcal G_{\cc} \to \cc$. As in the
classical case, the construction of $F_\cc$ uses graph diagrams and
Reidemeister moves though, in our setting, the correspondence between  graphs and   diagrams becomes quite delicate. For  $G=1$,  the functor $F_\cc $ is the
restriction of the functor of \cite{RT1, Sh, Tu0} to ribbon graphs without
circle components.

We then show how to transform links in $\RR^3$ into $\cc$-colored ribbon graphs. This transformation, called \lq\lq insertion of coupons",  allows  us to apply $F_\cc$ to  links and   leads  to the surgery   HQFT associated with $\cc$.


The paper consists of 13 sections. Sections \ref{Preliminaries on categories and functors}--\ref{sect-braided-ribbon} are  devoted to an algebraic discussion
 of $G$-graded categories, $G$-braidings, twists, etc.    In Sections \ref{Colored   $G$-graphs}--\ref{Invariants of special colored $G$-graphs}  we define and study the functor $F_\cc$. In  Sections \ref{$G$-modular categories} and \ref{sect-modular-Gcat}  we define $G$-graded modular categories and construct the associated HQFTs.

 Throughout the paper,  we fix  a  (discrete) group $G$.

  {\it Acknowledgements.}   The work of V.\ Turaev was partially supported
by the NSF grant DMS-1202335.   A. Virelizier gratefully
acknowledges the support and hospitality of the Max Planck Institute for Mathematics in  Bonn  where a   part of this work was carried out.

\section{Preliminaries on categories}\label{Preliminaries on categories and functors}

We recall the basic definitions of the theory of monoidal categories.

\subsection{Conventions}\label{Conventions}  The symbol $\cc$ will denote  a monoidal category with unit object~$\un=\un_\cc$.
Notation  $X\in \cc$    means    that $X$ is an object of $\cc$.
To simplify the formulas, we  will always pretend that   $\cc$   is
strict. Consequently, we  omit brackets in the monoidal products and
suppress the associativity constraints $(X\otimes Y)\otimes Z\cong
X\otimes (Y\otimes Z)$ and the unitality constraints $X\otimes \un
\cong X\cong \un \otimes X$.    By the   monoidal   product $X_1 \otimes
X_2 \otimes \cdots \otimes X_n$ of $n\geq 2$ objects $X_1,...,
X_n\in \cc$ we mean $(... ((X_1\otimes X_2) \otimes X_3) \otimes
\cdots \otimes X_{n-1}) \otimes X_n$.

\subsection{Pivotal    categories}\label{pivotall}
A monoidal category $\cc=(\cc,\otimes,\un)$  is \emph{pivotal} (see
\cite{Malt}) if   for each
object $X$ of $\cc$, we have a \emph{dual
object}~$X^*\in \cc$ and four morphisms
\begin{align*}
& \lev_X \co X^*\otimes X \to\un,  \qquad \lcoev_X\co \un  \to X \otimes X^*,\\
&   \rev_X \co X\otimes X^* \to\un, \qquad   \rcoev_X\co \un  \to X^* \otimes X,
\end{align*}
such that
\begin{enumerate}
  \renewcommand{\labelenumi}{{\rm (\alph{enumi})}}
   \item for any object $X\in \cc$,
  \begin{gather*}
(\id_X \otimes \lev_X)(\lcoev_X \otimes \id_X)=\id_X \quad \text{and} \quad (\lev_X \otimes \id_{X^*})(\id_{X^*} \otimes \lcoev_X)=\id_{X^*}, \\
(\rev_X \otimes \id_X)(\id_X \otimes \rcoev_X)=\id_X \quad \text{and} \quad (\id_{X^*} \otimes \rev_X)(\rcoev_X \otimes \id_{X^*})=\id_{X^*};
\end{gather*}
\item for every morphism $f\co X \to Y$ in $\cc$,
the \emph{left dual}
$$
f^*= (\lev_Y \otimes  \id_{X^*})(\id_{Y^*}  \otimes f \otimes \id_{X^*})(\id_{Y^*}\otimes \lcoev_X) \colon Y^*\to X^*$$ is equal to the
 \emph{right  dual}
$$
f^*= (\id_{X^*} \otimes \rev_Y)(\id_{X^*} \otimes f \otimes \id_{Y^*})(\rcoev_X \otimes \id_{Y^*}) \colon Y^*\to X^*;$$
\item for all   $X,Y\in
\cc$, the  \emph{left  monoidal constraint}
$$
(\lev_X  \otimes \id_{(Y \otimes X)^*})(\id_{X^*}  \otimes \lev_Y \otimes  \id_{X \otimes (Y \otimes X)^*})(\id_{X^* \otimes Y^*}\otimes \lcoev_{Y \otimes
X})\colon \ldual{X} \otimes \ldual{Y} \to (Y \otimes X)^*
$$
is equal to the  \emph{right  monoidal constraint}
$$
(\id_{(Y \otimes X)^*} \otimes \rev_Y)(\id_{(Y \otimes X)^*\otimes Y} \otimes
\rev_X \otimes \id_{Y^*})(\rcoev_{Y \otimes X}\otimes \id_{X^*
\otimes Y^*}) \colon \ldual{X} \otimes \ldual{Y} \to (Y \otimes
X)^*;
$$
\item  $\lev_\un=\rev_\un \colon \un^* \to \un$ (or, equivalently, $\lcoev_\un=\rcoev_\un\colon \un  \to \un^*$).
\end{enumerate}

In what follows, for a pivotal category $\cc$, we will suppress the duality constraints
$\un^* \cong \un$ and $X^* \otimes Y^*\cong (Y\otimes X)^* $. For
example, we will write $(f \otimes g)^*=g^* \otimes f^*$ for
morphisms $f,g$ in~$\cc$.

\subsection{Traces and dimensions}\label{sec-traces}
Let $\cc$ be a pivotal category. For any endomorphism $f$ of an object
$X\in \cc$, one   defines the {\it left} and  {\it right traces}
$$\tr_l(f)=\lev_X(\id_{\ldual{X}} \otimes f) \rcoev_X  \quad {\text {and}}\quad \tr_r(f)=  \rev_X( f \otimes
\id_{\ldual{X}}) \lcoev_X .$$ Both traces take values in   the commutative monoid   $\End_\cc(\un)$ and are symmetric: $\tr_l
(gh)=\tr_l(hg)$ for any morphisms $g\co X\to Y$, $h\co Y\to X$ in $\cc$
and similarly
 for $\tr_r$.  Also $\tr_{l/r}(f)=\tr_{r/l}( {f}^*) $ for any
 endomorphism $f$ of an object. The  {\it left} and  {\it right}
 dimensions of an object $X\in \cc$ are defined by
 $\dim_{l/r}(X)=\tr_{l/r}(\id_X)$. Clearly,
 $\dim_{l/r}(X)=\dim_{r/l}(X^*) $ for all $X$.


\subsection{Monoidal  functors}\label{sect-monofunctor}
Let $\cc$ and $\dd$ be    monoidal categories. A \emph{monoidal
functor} from $\cc$ to $\dd$ is a triple $(F,F_2,F_0)$, where $F\co
\cc \to \dd$ is a functor, $$ F_2=\{F_2(X,Y) \co F(X) \otimes F(Y)
\to F(X \otimes Y)\}_{X,Y \in \cc} $$ is a natural transformation
from $F\otimes F$ to $F \otimes$, and $F_0\co\un_\dd \to F(\un_\cc)$ is a
morphism in~$\dd$, such that the diagrams
\begin{equation}\label{stmonoidal1}
\begin{split}
    \xymatrix@R=1cm @C=3cm { F(X) \otimes F(Y) \otimes F(Z) \ar[r]^-{\id_{F(X)} \otimes F_2(Y,Z)}  \ar[d]_-{F_2(X,Y) \otimes \id_{F(Z)}}
    & F(X) \otimes F(Y \otimes Z) \ar[d]^-{F_2(X,Y \otimes Z)} \\
    F(X \otimes Y) \otimes F(Z) \ar[r]_-{F_2(X \otimes Y, Z)}
    & F(X \otimes Y \otimes Z),
    }
\end{split}
\end{equation}
\begin{equation}\label{stmonoidal2}
\begin{split}
\xymatrix@R=1cm @C=2,5cm { F(X)  \ar[rd]^-{\id_{F(X)}} \ar[r]^-{\id_{F(X)} \otimes F_0}  \ar[d]_-{F_0 \otimes \id_{F(X)}}
    & F(X) \otimes F(\un_\cc) \ar[d]^-{F_2(X,\un_\cc)} \\
    F(\un_\cc) \otimes F(X) \ar[r]_-{F_2(\un_\cc,X)}
    & F(X)
    }
\end{split}
\end{equation}
%
commute for all objects $X,Y,Z \in \cc$ (see \cite{ML1}).  Composing
  monoidal    products of  $F_1=\id_F$, and $F_2$, we can define for
every   integer $n \geq 3$, a natural transformation
$$
F_n=\{F_n(X_1,\dots,X_n) \co F(X_1) \otimes \cdots \otimes F(X_n) \to F(X_1 \otimes \cdots \otimes X_n)\}_{X_1,\dots,X_n \in \cc}.
$$
For instance, $F_3(X,Y,Z)=F_2(X,Y \otimes Z) (F_1(X) \otimes F_2(Y,Z))$.
The commutativity of the diagrams~\eqref{stmonoidal1}
and~\eqref{stmonoidal2} ensures that $F_n$ does not depend on the
way it is built from $F_1$ and $F_2$.

A monoidal functor $(F,F_2,F_0)$ is   \emph{strong}   if $F_2$ and
$F_0$ are isomorphisms. A monoidal functor $(F,F_2,F_0)$ is   \emph{strict}
if $F_2$ and $F_0$ are   identity morphisms.

If $F\co \cc \to \dd$ and $G\co \dd \to \ee$ are two monoidal
functors between monoidal categories, then their composition $GF\co
\cc \to \ee$
is a monoidal functor with 
$$
(GF)_0=G(F_0)G_0 \quad \text{and} \quad
(GF)_2=\{G(F_2(X,Y))\, G_2(F(X),F(Y))\}_{X,Y \in \cc}.
$$

Let $F\co\cc \to \dd$ and $G\co\cc \to \dd$ be two monoidal
functors. A natural transformation $\varphi=\{\varphi_X \co F(X) \to
G(X)\}_{X \in \cc}$ from $F$ to $G$ is \emph{monoidal} if it
satisfies
\begin{equation}\label{monoidalnattrans} G_0=\varphi_\un F_0
 \quad \text{and} \quad  \varphi_{X \otimes Y} F_2(X,Y)= G_2(X,Y) (\varphi_X \otimes
\varphi_Y)
\end{equation}
for all objects $X,Y$ of $\cc$. A \emph{monoidal natural
isomorphism} between $F$ and $G$ is a monoidal natural
transformation
 $\varphi$ from $F$ to $G$ which is an isomorphism in the sense that each $\varphi_X$ is an isomorphism. The inverse $\varphi^{-1}=\{\varphi_X^{-1} \co G(X) \to F(X)\}_{X \in \cc}$ is then a monoidal natural transformation from $G$ to $F$.


 \subsection{Pivotal functors}\label{sect-pivotal-functor}

 Given
  a strong monoidal functor $F\co \cc \to \dd$ between pivotal categories,  we define for each $X\in \cc$ a
  morphism $F^l(X)\co F(X^*) \to F(X)^*$ by
$$
F^l(X)=(F_0^{-1}F(\lev_X)F_2(X^*,X) \otimes \id_{F(X)^*})(\id_{F(X^*)} \otimes \lcoev_{F(X)}).
$$
It is well-known that $F^l=\{F^l(X) \co F(X^*) \to F(X)^*\}_{X \in
\cc}$ is a monoidal natural isomorphism
 which   preserves the left
duality in the sense that for all $X \in \cc$,
\begin{align}
& F(\lev_X)=F_0\lev_{F(X)}(F^l(X) \otimes \id_{F(X)})F_2(X^*,X)^{-1}, \label{pivotal-lev} \\
& F(\lcoev_X)=F_2(X,X^*)(\id_{F(X)} \otimes F^l(X)^{-1} )\lcoev_{F(X)}F_0^{-1}. \label{pivotal-lcoev}
\end{align}
The monoidality of $F^l$ means that $(F_0^{-1})^*=F^l(\un)F_0$ and for all $X,Y \in \cc$,
$$
F^l(X \otimes Y)F_2(Y^*,X^*)=(F_2(X,Y)^{-1})^*(F^l(Y) \otimes F^l(X)).
$$

Likewise, the morphisms $ \{F^r(X) \co F(X^*) \to F(X)^*\}_{X \in
\cc}$, defined by
$$
F^r(X)=(\id_{F(X)^*} \otimes F_0^{-1}F(\rev_X)F_2(X,X^*))(\rcoev_{F(X)} \otimes \id_{F(X^*)}),
$$
form a monoidal natural isomorphism $F^r$  preserving the right
duality: for all $X \in \cc$,
\begin{align}
& F(\rev_X)=F_0\rev_{F(X)}(\id_{F(X)} \otimes F^r(X))F_2(X,X^*)^{-1},  \label{pivotal-rev} \\
& F(\rcoev_X)=F_2(X^*,X)(F^r(X)^{-1}\otimes \id_{F(X)})\rcoev_{F(X)}F_0^{-1}. \label{pivotal-rcoev}
\end{align}
One can check  that    $F^l$ and $F^r$ are related by
\begin{equation}\label{FrFlcomp}
F^l(X^*)F(\phi_X)=F^r(X)^*\phi_{F(X)}
\end{equation}
for all $X \in \cc$ where $ \{\phi_X \co X \to X^{**}\}_{X\in\cc} $ is the
 {\it pivotal structure} in $\cc$ defined by
\begin{equation}\label{pivotal-struct}
\phi_X=(\rev_X \otimes \id_{X^{**}})(\id_X \otimes \lcoev_{X^*})\co X \to X^{**}.
\end{equation}

The functor $F\co \cc \to \dd$ is  said to be  \emph{pivotal} if
$F^l(X)=F^r(X)$ for any $X \in \cc$. In this case, $F^l=F^r$ is
denoted by $F^1$.

\subsection{Penrose graphical calculus}\label{sect-penrose} We will represent morphisms in a category $\cc$ by plane   diagrams to be read from the bottom to the top.
The  diagrams are made of   oriented arcs colored by objects of
$\cc$  and of boxes colored by morphisms of~$\cc$.  The arcs connect
the boxes and   have no   intersections or self-intersections.
The identity $\id_X$ of $X\in  \cc $, a morphism $f\co X \to Y$,
and the composition of two morphisms $f\co X \to Y$ and $g\co Y \to
Z$ are represented as follows:
\begin{center}
\psfrag{X}[Bc][Bc]{\scalebox{.7}{$X$}} \psfrag{Y}[Bc][Bc]{\scalebox{.7}{$Y$}} \psfrag{h}[Bc][Bc]{\scalebox{.8}{$f$}} \psfrag{g}[Bc][Bc]{\scalebox{.8}{$g$}}
\psfrag{Z}[Bc][Bc]{\scalebox{.7}{$Z$}} $\id_X=$ \rsdraw{.45}{.9}{identitymorph}\,,\quad $f=$ \rsdraw{.45}{.9}{morphism} ,\quad \text{and} \quad $gf=$ \rsdraw{.45}{.9}{morphismcompo}\,.
\end{center}
  If $\cc$ is
monoidal, then the monoidal product of two morphisms $f\co X \to Y$
and $g \co U \to V$ is represented by juxtaposition:
\begin{center}
\psfrag{X}[Bc][Bc]{\scalebox{.7}{$X$}} \psfrag{h}[Bc][Bc]{\scalebox{.8}{$f$}}
\psfrag{Y}[Bc][Bc]{\scalebox{.7}{$Y$}}  $f\otimes g=$ \rsdraw{.45}{.9}{morphism} \psfrag{X}[Bc][Bc]{\scalebox{.8}{$U$}} \psfrag{g}[Bc][Bc]{\scalebox{.8}{$g$}}
\psfrag{Y}[Bc][Bc]{\scalebox{.7}{$V$}} \rsdraw{.45}{.9}{morphism3}\,.
\end{center}
If $\cc$ is pivotal, then we allow arcs directed  upwards. Such an arc, colored with $X\in \cc$, contributes $X^*$ to the source/target of the associated morphism. For example, $\id_{X^*}$  and a morphism $f\co X^* \otimes
Y \to U \otimes V^* \otimes W$  may be depicted as:
\begin{center}
 $\id_{X^*}=$ \, \psfrag{X}[Bl][Bl]{\scalebox{.7}{$X$}}
\rsdraw{.45}{.9}{identitymorphdual} $=$  \,
\psfrag{X}[Bl][Bl]{\scalebox{.7}{$\ldual{X}$}}
\rsdraw{.45}{.9}{identitymorph2}  \quad and \quad
\psfrag{X}[Bc][Bc]{\scalebox{.7}{$X$}}
\psfrag{h}[Bc][Bc]{\scalebox{.8}{$f$}}
\psfrag{Y}[Bc][Bc]{\scalebox{.7}{$Y$}}
\psfrag{U}[Bc][Bc]{\scalebox{.7}{$U$}}
\psfrag{V}[Bc][Bc]{\scalebox{.7}{$V$}}
\psfrag{W}[Bc][Bc]{\scalebox{.7}{$W$}} $f=$
\rsdraw{.45}{.9}{morphism2} \,.
\end{center}
The duality morphisms   are depicted as follows:
\begin{center}
\psfrag{X}[Bc][Bc]{\scalebox{.7}{$X$}} $\lev_X=$ \rsdraw{.45}{.9}{leval}\,,\quad
 $\lcoev_X=$ \rsdraw{.45}{.9}{lcoeval}\,,\quad
$\rev_X=$ \rsdraw{.45}{.9}{reval}\,,\quad
\psfrag{C}[Bc][Bc]{\scalebox{.7}{$X$}} $\rcoev_X=$
\rsdraw{.45}{.9}{rcoeval}\,.
\end{center}
The dual of a morphism $f\co X \to Y$ and the
  traces of a morphism $g\co X \to X$ can be depicted as
follows:
\begin{center}
\psfrag{X}[Bc][Bc]{\scalebox{.7}{$X$}} \psfrag{h}[Bc][Bc]{\scalebox{.8}{$f$}}
\psfrag{Y}[Bc][Bc]{\scalebox{.7}{$Y$}} \psfrag{g}[Bc][Bc]{\scalebox{.8}{$g$}}
$f^*=$ \rsdraw{.45}{.9}{dualmorphism2}$=$ \rsdraw{.45}{.9}{dualmorphism}\quad \text{and} \quad
$\tr_l(g)=$ \rsdraw{.45}{.9}{ltrace}\,,\quad  $\tr_r(g)=$ \rsdraw{.45}{.9}{rtrace}\,.
\end{center}
  If $\cc$ is pivotal, then    the morphism  represented by a plane diagram
is invariant under isotopies of the diagram  in the plane keeping
  the bottom and   top endpoints.

\section{$G$-graded categories}

\subsection{$G$-graded categories}\label{G-cat-deb} A \emph{$G$-graded category}  is a
  monoidal category   $(\cc,\otimes,\un)$   endowed with a system of
pairwise disjoint full   subcategories $\{{\mathcal
C}_{\alpha}\}_{{\alpha}\in G}$ such that
\begin{enumerate}
  \labela

  \item   all $\Hom$-sets in $\cc$ are modules over a (fixed) commutative ring $\kk$ and the composition and the    monoidal    product of morphisms  are $\kk$-bilinear;

  \item  $\un \in  {\mathcal C}_1$ and   if $U\in {\mathcal C}_{\alpha}$ and  $V\in {\mathcal C}_{\beta}$, then
$U\otimes V\in {\mathcal C}_{{\alpha}{\beta}}$;

\item if $U\in {\mathcal C}_{\alpha}$ and $V\in
{\mathcal C}_{\beta}$ with ${\alpha}\neq {\beta}$, then $\Hom_{\mathcal C}
(U,V)=0$.

\end{enumerate}

  The monoidal category ${\mathcal C}_1$ corresponding to the
neutral element $1\in G$ is called the {\it neutral component}
of~${\mathcal C}$.

 An object $X$ of a $G$-graded category ${\cc} $ is   {\it homogeneous} if   $X\in \cc_\alpha \subset \cc$ for some $\alpha\in
G$. Such an $\alpha$
 is then uniquely determined by $X$ and  denoted $|X|$. It is allowed for
  objects $X \in \cc_\alpha$, $ Y\in \cc_\beta $ with $\alpha\neq \beta$ to be
 isomorphic. However, in this case, $X$ and $Y$ are {\it zero objects} in the sense that  $\End_\cc(X)=\End_\cc(Y)=0$.


The definition of $G$-graded categories given above is  more
general than the corresponding definition in \cite{TVi2} where we
additionally require  the existence of direct sums and the splitting
of arbitrary objects into direct sums of homogeneous objects. These
conditions will not be needed in the present paper.

\subsection{$G$-crossed categories}
Given a monoidal   category $\cc$,   denote by
$\Aut(\cc)$ the category of  strong monoidal auto-equivalences of
$\cc$. Its objects are   strong monoidal functors
 $\cc\to \cc$ that are   $\kk$-linear on the $\Hom$-sets and are equivalences of categories. The  morphisms in  $\Aut(\cc)$ are
 monoidal natural isomorphisms. The category $\Aut(\cc)$ has a canonical structure of a strict monoidal category, in which
the monoidal product is the composition of monoidal functors and the
monoidal unit is the identity endofunctor $1_\cc$ of~$\cc$.

Denote by $\overline{G}$ the category whose objects are elements of
the group $G$ and morphisms are identities. We view $\overline{G}$ as a strict monoidal category with
monoidal product
$\alpha\otimes \beta= \beta \alpha$ for all $\alpha, \beta \in G$.

By a \emph{$G$-crossed category} we mean a $G$-graded category $\cc$   endowed with a \emph{crossing}, that is, a
strong monoidal functor  $\varphi\co \overline{G} \to
\Aut(\cc)$  such that $ \varphi_\alpha(\cc_\beta) \subset
\cc_{\alpha^{-1} \beta \alpha} $ for all $\alpha,\beta \in G$.
For each $\alpha\in G$, the crossing $\varphi$ provides a
strong monoidal equivalence $\varphi_\alpha\co \cc\to \cc$.  By
definition, $\varphi$ comes equipped with isomorphisms
$(\varphi_\alpha)_0 \co \un \iso \varphi_\alpha(\un)$ in $\cc$  and
with natural isomorphisms
\begin{align*}
&(\varphi_\alpha)_2=\{(\varphi_\alpha)_2(X,Y) \co \varphi_\alpha(X) \otimes \varphi_\alpha(Y) \iso \varphi_\alpha(X \otimes Y)\}_{X,Y \in \cc},\\
&\varphi_2=\bigl\{\varphi_2(\alpha,\beta)=\{\varphi_2(\alpha,\beta)_X  \co \varphi_\alpha\varphi_\beta(X)\iso \varphi_{\beta\alpha}(X)\}_{X \in \cc}\bigr\}_{\alpha,\beta \in G},\\
&\varphi_0=\{(\varphi_0)_X \co X \iso \varphi_1(X)\}_{X \in \cc},
\end{align*}
such that  $(\varphi_0)_\un=(\varphi_1)_0$ and, for all
$\alpha,\beta,\gamma \in G$ and   $X,Y,Z \in \cc$,   the
following diagrams commute:
\begin{equation}\label{crossing1}
\begin{split}
\xymatrix@R=1cm @C=3cm {
\varphi_\alpha(X) \otimes \varphi_\alpha(Y) \otimes \varphi_\alpha(Z) \ar[r]^-{\id_{\varphi_\alpha(X)} \otimes (\varphi_\alpha)_2(Y,Z)} \ar[d]^{(\varphi_\alpha)_2(X,Y) \otimes \id_{\varphi_\alpha(Z)}} & \varphi_\alpha(X) \otimes \varphi_\alpha(Y \otimes  Z)
\ar[d]^{(\varphi_\alpha)_2(X,Y \otimes Z)}  \\
\varphi_\alpha(X \otimes Y) \otimes \varphi_\alpha(Z) \ar[r]_-{(\varphi_\alpha)_2(X \otimes Y, Z)} & \varphi_\alpha(X \otimes Y \otimes Y)
}
\end{split}
\end{equation}

\begin{equation}\label{crossing2}
\begin{split}
\xymatrix@R=1cm @C=3cm {
\varphi_\alpha(X) \ar[rd]^-{\id_{\varphi_\alpha(X)}} \ar[r]^-{\id_{\varphi_\alpha(X)} \otimes (\varphi_\alpha)_0} \ar[d]_{(\varphi_\alpha)_0
\otimes \id_{\varphi_\alpha(X)}} & \varphi_\alpha(X) \otimes \varphi_\alpha(\un) \ar[d]^{(\varphi_\alpha)_2(X,\un)}\\
\varphi_\alpha(\un) \otimes \varphi_\alpha(X) \ar[r]_-{(\varphi_\alpha)_2(\un,X)} & \varphi_\alpha(X)
}\end{split}
\end{equation}

\def\MyNode{\ifcase\xypolynode\or
      \varphi_{\beta\alpha}(X \otimes Y)
    \or
      \varphi_{\beta\alpha}(X) \otimes \varphi_{\beta\alpha} (Y)
    \or
      \varphi_\alpha\varphi_\beta(X) \otimes \varphi_\alpha\varphi_\beta(Y)
    \or
      \varphi_\alpha(\varphi_\beta(X) \otimes \varphi_\beta(Y))
    \or
      \varphi_\alpha\varphi_\beta(X \otimes Y)
    \fi
  }%
\begin{equation}\label{crossing3}
\begin{split}
  \xy/r10pc/: (0,.3)::
    \xypolygon5{~>{}\txt{\ \ \strut\ensuremath{\MyNode}}} 
    \ar "2";"1" ^-{\qquad (\varphi_{\beta\alpha})_2(X,Y)}
    \ar "3";"2" ^-{\varphi_2(\alpha,\beta)_X \otimes \varphi_2(\alpha,\beta)_Y \qquad\qquad}
    \ar "3";"4" _-{(\varphi_\alpha)_2(\varphi_\beta(X),\varphi_\beta(Y))}
    \ar "4";"5" _-{\varphi_\alpha((\varphi_\beta)_2(X,Y))}
    \ar "5";"1" _-{\varphi_2(\alpha,\beta)_{X\otimes Y}}
  \endxy
\end{split}
\end{equation}

\begin{equation}\label{crossing4}
\begin{split}
\xymatrix@R=1cm @C=2.5cm { \un \ar[d]_-{(\varphi_\alpha)_0} \ar[r]^-{(\varphi_{\beta\alpha})_0}  & \varphi_{\beta\alpha}(\un) \\
\varphi_\alpha(\un) \ar[r]_-{\varphi_\alpha\bigl((\varphi_\beta)_0\bigr) } & \varphi_\alpha\varphi_\beta(\un) \ar[u]_-{\varphi_2(\alpha,\beta)_\un}
}
\end{split}
\end{equation}

\begin{equation}\label{crossing7}
\begin{split}
\xymatrix@R=1cm @C=.5cm { X \otimes Y \ar[rd]_-{(\varphi_0)_X \otimes (\varphi_0)_Y}\ar[rr]^-{(\varphi_0)_{X \otimes Y}} && \varphi_1(X \otimes Y)\\
& \varphi_1(X) \otimes \varphi_1(Y) \ar[ru]_-{\;(\varphi_1)_2(X,Y)} &
}
\end{split}
\end{equation}

\begin{equation}\label{crossing5}
\begin{split}
\xymatrix@R=1cm @C=2.5cm {
\varphi_\alpha\varphi_\beta \varphi_\gamma(X) \ar[r]^-{\varphi_\alpha(\varphi_2(\beta, \gamma)_X)}\ar[d]_{\varphi_2(\alpha,\beta)_{\varphi_\gamma(X)}} & \varphi_\alpha\varphi_{\gamma\beta}(X)
\ar[d]^{\varphi_2(\alpha, \gamma \beta)_X} \\
\varphi_{\beta\alpha} \varphi_\gamma(X) \ar[r]_-{\varphi_2( \beta\alpha, \gamma)_X} & \varphi_{\gamma\beta\alpha}(X)}
\end{split}
\end{equation}

\begin{equation}\label{crossing6}
\begin{split}
\xymatrix@R=1cm @C=2.5cm {
\varphi_\alpha(X) \ar[rd]^-{\id_{\varphi_\alpha(X)}} \ar[r]^-{\varphi_\alpha((\varphi_0)_X)} \ar[d]_{(\varphi_0)_{\varphi_\alpha(X)}} & \varphi_\alpha\varphi_1(X)  \ar[d]^{\varphi_2(\alpha,1)_X}\\
\varphi_1\varphi_\alpha(X) \ar[r]_-{\varphi_2(1,\alpha)_X} & \varphi_\alpha(X).
}
\end{split}
\end{equation}
The commutativity of the diagrams  \eqref{crossing1} and
\eqref{crossing2} means that
$(\varphi_\alpha,(\varphi_\alpha)_2,(\varphi_\alpha)_0)$ is a
monoidal endofunctor of~$\cc$.   The   diagrams
\eqref{crossing3} and \eqref{crossing4} indicate  that the natural
transformation $\varphi_2(\alpha,\beta)$ is monoidal. The
diagram \eqref{crossing7} and the equality
$(\varphi_0)_\un=(\varphi_1)_0$ indicate that the natural
transformation $\varphi_0$ is monoidal. The diagrams
\eqref{crossing5} and \eqref{crossing6} indicate  that
$(\varphi,\varphi_2,\varphi_0)$ is a monoidal functor.

Crossings in   $G$-graded categories were   introduced in \cite{Tu1}
in the special case  where    $\varphi$ and all $\varphi_\alpha$'s are strict monoidal functors, that is,   all the morphisms
$\varphi_2(\alpha,\beta)_X$, $(\varphi_0)_X$, $(\varphi_\alpha)_2(X,Y)$, and $(\varphi_\alpha)_0$  are identity morphisms.

\subsection{Pivotality}\label{sect-pivot-cross-Gcat} A $G$-graded category $\cc  $ is {\emph{pivotal} if the underlying  monoidal category of $\cc$ is pivotal  and  for all  $\alpha\in
G$ and $X \in \cc_\alpha$ we have $X^* \in  \cc_{\alpha^{-1}}$. A  crossing $\varphi$ in a  pivotal $G$-graded category $\cc  $ is {\emph{pivotal} if  all the
   functors $\{\varphi_\alpha
\co \cc \to \cc\}_{\alpha \in G}$ are pivotal   (see
Section~\ref{sect-pivotal-functor}).    Then  we have  for each $\alpha \in G$ a   monoidal   natural
isomorphism
$$
\varphi_\alpha^1=\{\varphi_\alpha^1(X) \co \varphi_\alpha(X^*) \iso \varphi_\alpha(X)^*\}_{X \in \cc}
$$
which  preserves both left and right duality.

 We shall use the crossing $\varphi$ to define the following transformations of morphisms in $\cc$.  Given
   an
 isomorphism $\psi\colon X \to \varphi_\alpha (Y)$ with $  X, Y\in \mathcal C$ and $\alpha\in G$,   we let
 $\overline \psi \colon Y\to \varphi_{\alpha^{-1}} (X)$ be the
 following composition of isomorphisms:
 \begin{equation*}\label{overlinepsi}
 \xymatrix@R=1cm @C=2cm { Y \ar[r]^-{(\varphi_0)_{Y}} & \varphi_{1} (Y) \ar[r]^-{\varphi_2 ( \alpha^{-1}, \alpha)_{Y}^{-1}} & \varphi_{\alpha^{-1}} \varphi_{\alpha} (Y) \ar[r]^-{ \varphi_{\alpha^{-1}} (\psi^{-1}) } &\varphi_{\alpha^{-1}} (X). }
\end{equation*}
If $\cc$ and $\varphi$ are pivotal, we  let    $  \psi^- \colon X^*\to
\varphi_{\alpha} (Y^*)$ be the following composition of
isomorphisms:
\begin{equation*}\label{psi-}
\xymatrix@R=1cm @C=1.7cm {  X^* \ar[r]^-{  (\psi^{-1})^* }     & \varphi_\alpha (Y)^*    \ar[r]^-{ \varphi_\alpha^1 (Y)^{-1} }     &
 \varphi_\alpha (Y^*). }
\end{equation*}
We also  sometimes write $\psi^+$ for $\psi$ itself.

\begin{lem}\label{lem-psibar}
In the above notation,  $\overline{\psi^-}=\bigl(\overline{\psi}\bigr)^-\co Y^*\to
\varphi_{\alpha^{-1}} (X^*)$.
\end{lem}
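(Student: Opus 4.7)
My strategy is to unfold both sides by definition, apply the naturality of the pivotal isomorphism $\varphi_{\alpha^{-1}}^1$ at $\psi$ to reduce the equality $\overline{\psi^-} = (\overline{\psi})^-$ to a $\psi$-free identity, and then recognize the residual identity as a special case of a general compatibility between monoidal natural isomorphisms and pivotal structures. Using $(\psi^-)^{-1} = \psi^* \circ \varphi_\alpha^1(Y)$ together with functoriality of $\varphi_{\alpha^{-1}}$, one expands
\[
\overline{\psi^-} = \varphi_{\alpha^{-1}}(\psi^*) \circ \varphi_{\alpha^{-1}}(\varphi_\alpha^1(Y)) \circ \varphi_2(\alpha^{-1},\alpha)_{Y^*}^{-1} \circ (\varphi_0)_{Y^*},
\]
and using $\overline{\psi}^{-1} = (\varphi_0)_Y^{-1} \circ \varphi_2(\alpha^{-1},\alpha)_Y \circ \varphi_{\alpha^{-1}}(\psi)$ together with reversal of factors under $(-)^*$,
\[
(\overline{\psi})^- = \varphi_{\alpha^{-1}}^1(X)^{-1} \circ \varphi_{\alpha^{-1}}(\psi)^* \circ \varphi_2(\alpha^{-1},\alpha)_Y^* \circ \bigl((\varphi_0)_Y^{-1}\bigr)^*.
\]
The naturality of $\varphi_{\alpha^{-1}}^1$ at $\psi \co X \to \varphi_\alpha(Y)$ gives $\varphi_{\alpha^{-1}}^1(X) \circ \varphi_{\alpha^{-1}}(\psi^*) = \varphi_{\alpha^{-1}}(\psi)^* \circ \varphi_{\alpha^{-1}}^1(\varphi_\alpha(Y))$; substituting for $\varphi_{\alpha^{-1}}(\psi^*)$ in the formula for $\overline{\psi^-}$ and cancelling the common prefix $\varphi_{\alpha^{-1}}^1(X)^{-1} \circ \varphi_{\alpha^{-1}}(\psi)^*$ reduces the claim to the $\psi$-free identity
\[
\varphi_{\alpha^{-1}}^1\bigl(\varphi_\alpha(Y)\bigr) \circ \varphi_{\alpha^{-1}}(\varphi_\alpha^1(Y)) \circ \varphi_2(\alpha^{-1},\alpha)_{Y^*}^{-1} \circ (\varphi_0)_{Y^*} = \varphi_2(\alpha^{-1},\alpha)_Y^* \circ \bigl((\varphi_0)_Y^{-1}\bigr)^*.
\]

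Setting $\Phi := \varphi_{\alpha^{-1}}\varphi_\alpha \co \cc \to \cc$, which is a strong monoidal endofunctor pivotal by virtue of the composition formula, one has $\Phi^1(Y) = \varphi_{\alpha^{-1}}^1(\varphi_\alpha(Y)) \circ \varphi_{\alpha^{-1}}(\varphi_\alpha^1(Y))$. Diagrams~\eqref{crossing3}, \eqref{crossing4}, and~\eqref{crossing7}, together with $(\varphi_0)_\un = (\varphi_1)_0$, imply that the components $\eta_Y := (\varphi_0)_Y^{-1} \circ \varphi_2(\alpha^{-1},\alpha)_Y \co \Phi(Y) \to Y$ assemble into a monoidal natural isomorphism $\eta \co \Phi \to 1_\cc$. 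In these terms, the residual identity reads precisely $\Phi^1(Y) = \eta_Y^* \circ \eta_{Y^*}$. This is a special case of the general fact: \emph{for any strong monoidal functor $F$ between pivotal categories and any monoidal natural isomorphism $\eta \co F \to 1_\cc$, one has $F^l(Y) = \eta_Y^* \circ \eta_{Y^*}$ for every $Y$.}

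To prove this auxiliary fact, apply naturality of $\eta$ to $\lev_Y \co Y^* \otimes Y \to \un$, combine with the monoidality identities $\eta_\un = F_0^{-1}$ and $\eta_{Y^* \otimes Y} = (\eta_{Y^*} \otimes \eta_Y) \circ F_2(Y^*, Y)^{-1}$, and expand $F(\lev_Y)$ via~\eqref{pivotal-lev}; cancelling $F_0$ and $F_2(Y^*,Y)^{-1}$ produces
\[
\lev_Y = \lev_{F(Y)} \bigl( F^l(Y) \eta_{Y^*}^{-1} \otimes \eta_Y^{-1} \bigr).
\]
Plugging this into the standard formula $\eta_Y^* = \bigl(\lev_Y (\id_{Y^*} \otimes \eta_Y) \otimes \id_{F(Y)^*}\bigr) (\id_{Y^*} \otimes \lcoev_{F(Y)})$ for the dual of $\eta_Y \co F(Y) \to Y$ and invoking the left zigzag identity in $F(Y)^*$ yields $\eta_Y^* = F^l(Y) \eta_{Y^*}^{-1}$, which is equivalent to the desired equality. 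The main obstacle is purely bookkeeping --- keeping the order of factors right when dualizing long compositions, and distinguishing $\alpha$ from $\alpha^{-1}$ throughout; the conceptual content sits in the general ``$\eta$ respects duality'' lemma.
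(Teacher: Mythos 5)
Your proof is correct and follows essentially the same route as the paper's: both unfold the definitions of $\overline{\psi^-}$ and $(\overline\psi)^-$, peel off the common prefix $\varphi_{\alpha^{-1}}^1(X)^{-1}\varphi_{\alpha^{-1}}(\psi)^*$ via naturality of $\varphi_{\alpha^{-1}}^1$ (the paper's \eqref{eq-psibar1}) and the composition formula for $(-)^1$ (the paper's \eqref{eq-psibar2}), and then reduce to the compatibility between $\Phi^1$ and the monoidal natural isomorphism $\Phi=\varphi_{\alpha^{-1}}\varphi_\alpha\to 1_\cc$ (the paper's \eqref{eq-psibar3}). The only presentational difference is that the paper applies \eqref{eq-psibar3} twice, separately to $\varphi_2(\alpha^{-1},\alpha)\co\Phi\to\varphi_1$ and $\varphi_0^{-1}\co\varphi_1\to 1_\cc$, whereas you first compose these into a single $\eta\co\Phi\to 1_\cc$ and apply the $G=1_\cc$ special case of \eqref{eq-psibar3} once; you also supply a direct zigzag proof of that special case from \eqref{pivotal-lev}, a verification the paper leaves implicit as a ``direct consequence of the definitions.''
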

\begin{proof}
We begin with three observations  which are direct consequences of the definitions given in Section~\ref{sect-pivotal-functor}. Firstly, if $F\co \cc \to \dd$ is a pivotal functor  between pivotal categories and $f\co X \to Y$ is a morphism in $\cc$, then
\begin{equation}\label{eq-psibar1}
F(f)^*=F^1(X)F(f^*)F^1(Y)^{-1}.
\end{equation}
Secondly, if $F\co \cc \to \dd$ and $G\co \dd \to \ee$ are pivotal functors between pivotal categories, then for any $X \in \cc$,
\begin{equation}\label{eq-psibar2}
(GF)^1(X)=G^1(F(X)) \,  G(F^1(X)).
\end{equation}
Thirdly, if $F,G\co \cc \to \dd$ are pivotal functors  between pivotal categories, then any  monoidal natural transformation  $\lambda=\{\lambda_X\co F(X) \to G(X)\}_{X \in \cc}$ is  invertible and
\begin{equation}\label{eq-psibar3}
\lambda_X^*G^1(X)=F^1(X)\lambda_{X^*}^{-1}
\end{equation}
for all $X\in \cc$. Now,
\begin{eqnarray*}
\overline{\psi^-} & \overset{(i)}{=}
&  \varphi_{\alpha^{-1}}(\psi^*) \varphi_{\alpha^{-1}}(\varphi_\alpha^1(Y))\varphi_2(\alpha^{-1},\alpha)^{-1}_{Y^*} (\varphi_0)_{Y^*}\\
& \overset{(ii)}{=}
& \varphi_{\alpha^{-1}}^1(X)^{-1}  \varphi_{\alpha^{-1}}(\psi)^*  \varphi_{\alpha^{-1}}^1(\varphi_\alpha(Y)) \varphi_{\alpha^{-1}}(\varphi_\alpha^1(Y))\varphi_2(\alpha^{-1},\alpha)^{-1}_{Y^*} (\varphi_0)_{Y^*}\\
& \overset{(iii)}{=} &  \varphi_{\alpha^{-1}}^1(X)^{-1}  \varphi_{\alpha^{-1}}(\psi)^*  (\varphi_{\alpha^{-1}}\varphi_\alpha)^1(Y)\varphi_2(\alpha^{-1},\alpha)^{-1}_{Y^*} (\varphi_0)_{Y^*} \\
& \overset{(iv)}{=} & \varphi_{\alpha^{-1}}^1(X)^{-1}  \varphi_{\alpha^{-1}}(\psi)^* \varphi_2(\alpha^{-1},\alpha)^{*}_{Y} \varphi_1^1(Y) (\varphi_0)_{Y^*} \\
& \overset{(v)}{=} & \varphi_{\alpha^{-1}}^1(X)^{-1}  \varphi_{\alpha^{-1}}(\psi)^* \varphi_2(\alpha^{-1},\alpha)^{*}_{Y} ((\varphi_0)_{Y}^{-1})^*  \overset{(vi)}{=}   \bigl(\overline{\psi}\bigr)^-.
\end{eqnarray*}
Here, the equalities $(i)-(vi)$ follow respectively from the definition  of $\overline{\psi^-}$, \eqref{eq-psibar1}, \eqref{eq-psibar2},  \eqref{eq-psibar3} with $\lambda=\varphi_2(\alpha^{-1},\alpha)$,   \eqref{eq-psibar3} with $\lambda=\varphi_0^{-1}$,  the definition  of $\bigl(\overline{\psi}\bigr)^-$.
\end{proof}

\subsection{The fusion algebra}\label{sect-verlinde-algebra}
With every   $G$-graded category $\cc$ over $\kk$, one associates
a   $G$-graded  \kt algebra  $L({\mathcal  C})$  called the   {\it
fusion algebra} or  the {\it Verlinde algebra}  of~${\mathcal  C}$.  Specifically, for each
$\alpha\in G$, set $\widetilde L_\alpha=\oplus_{X } \,
\End_{\mathcal  C} (X)$ where $X$ runs over all objects of
${\mathcal  C}_\alpha$. The element of the \kt module $\widetilde
L_\alpha$ represented by $f\in \End_{\mathcal C} (X)$ is denoted
$\langle X, f \rangle$ or briefly $\langle f \rangle$.   Let
$L_\alpha$ be the quotient of $\widetilde L_\alpha$ by all relations
of type $\langle X, fg \rangle=\langle Y, gf \rangle$ for morphisms
$f\colon Y\to X$ and $g\colon X\to Y$ in ${\mathcal C}_\alpha$.   We
provide the \kt module  $L =\oplus_{\alpha\in G} L_\alpha$ with
multiplication $\langle f \rangle \,\langle f' \rangle= \langle
f\otimes f'\rangle$.
  This turns $L=L({\mathcal  C})$ into an associative  $G$-graded  \kt algebra with unit $\langle \id_{\un}
\rangle    $.   Every homogeneous object $X\in {\mathcal  C} $ determines a
vector $ \langle X \rangle =\langle \id_X \rangle\in L $.

If $\cc$ is crossed, then its crossing $\varphi$ induces a group homomorphism $\varphi\co G \to \mathrm{Aut}(L)$, where $\mathrm{Aut}(L)$ denotes the group of algebra automorphisms of $L$. For $\alpha \in G$, $\varphi_\alpha$  carries any generator $\langle f \rangle$ to $\langle \varphi_\alpha(f) \rangle$. Clearly, $\varphi_\alpha(L_\beta)\subset L_{\alpha^{-1}\beta\alpha}$.

If $\cc$ is pivotal, then $L$ is endowed with a canonical \kt linear endomorphism $\ast\co L \to L$ defined by sending a generator $\langle X, f \rangle$ to the generator $\langle X^*, f^*\rangle$.  One can prove
that $\ast$ is an involutive  algebra anti-automorphism of    $L$
carrying each $L_\alpha$ onto $L_{\alpha^{-1}}$.

  If $\cc$ is crossed and pivotal, and its crossing $\varphi$ is pivotal, then \eqref{eq-psibar1} implies that $\ast\co L \to L$ commutes with the crossing:  $\varphi_\alpha \ast=\ast \varphi_\alpha$ for all $\alpha \in G$.

\section{$G$-braided and $G$-ribbon categories}\label{sect-braided-ribbon}

Given a $G$-graded category $\cc$, we let $\cch=\amalg_{\alpha\in G}\,  \cc_\alpha$ be
the full subcategory of homogeneous objects of $\cc$, cf.\
Section~\ref{G-cat-deb}.   Note that $\cch$ is itself a $G$-graded category in the sense of Section~\ref{G-cat-deb} such that $(\cch)_\mathrm{hom}=\cch$.

\subsection{$G$-braided  categories}%
A \emph{$G$-braided  category} is a $G$-crossed category
$(\cc,\varphi)$ endowed with a \emph{$G$-braiding}, that is, a family
of isomorphisms $$\tau=\{\tau_{X,Y} \co X \otimes Y \to Y \otimes
\varphi_{|Y|}(X)\}_{X \in \cc, Y \in \cch}$$ natural in   $X$,
  $Y$   and such that:
\begin{enumerate}
\labela
  \item for all $X \in \cc$ and $Y,Z \in \cch$, the following diagram commutes:
\begin{equation}\label{eq-braiding1}
\begin{split}
\xymatrix@R=1cm @C=3cm {
X \otimes Y \otimes Z \ar[r]^-{\tau_{X, Y \otimes Z}}\ar[d]_{\tau_{X,Y} \otimes \id_Z} & Y \otimes Z \otimes \varphi_{|Y \otimes Z|}(X) \\
Y \otimes \varphi_{|Y|}(X) \otimes Z \ar[r]_-{\id_Y \otimes \tau_{\varphi_{|Y|}(X),Z}} & Y \otimes Z \otimes \varphi_{|Z|}\varphi_{|Y|}(X); \ar[u]_{\id_{Y \otimes Z} \otimes \varphi_2(|Z|,|Y|)_X}
}
\end{split}
\end{equation}
  \item for all $X,Y \in \cc$ and $Z \in \cch$, the following diagram commutes:
\begin{equation}\label{eq-braiding2}
\begin{split}
\xymatrix@R=1cm @C=3cm {
X \otimes Y \otimes Z \ar[r]^-{\tau_{X \otimes Y, Z}}\ar[d]_{\id_X\otimes \tau_{Y,Z}} & Z \otimes \varphi_{|Z|}(X \otimes Y) \\
X \otimes Z \otimes \varphi_{|Z|}(Y) \ar[r]_-{\tau_{X,Z} \otimes \id_{\varphi_{|Z|}(Y)}} & Z \otimes \varphi_{|Z|}(X) \otimes \varphi_{|Z|}(Y); \ar[u]_{\id_Z \otimes (\varphi_{|Z|})_2(X,Y)}
}
\end{split}
\end{equation}
   \item for all $\alpha \in G$, $X \in \cc$, and $Y \in \cch$, the following diagram commutes:
\begin{equation}\label{eq-braiding3}
\begin{split}
\xymatrix@R=1cm @C=3cm{
\varphi_\alpha(X) \otimes \varphi_\alpha(Y) \ar[r]^-{(\varphi_\alpha)_2(X,Y)}\ar[d]_{\tau_{\varphi_\alpha(X),\varphi_\alpha(Y)}} &\varphi_\alpha(X \otimes Y) \ar[d]^-{\varphi_\alpha(\tau_{X,Y})} \\
\varphi_\alpha(Y) \otimes \varphi_{\alpha^{-1}|Y|\alpha}\varphi_\alpha(X)\ar[d]^-{\id_{\varphi_\alpha(Y)} \otimes \varphi_2(\alpha^{-1}|Y|\alpha,\alpha)_X} & \varphi_\alpha(Y \otimes \varphi_{|Y|}(X)) \\
 \varphi_\alpha(Y) \otimes \varphi_{|Y|\alpha}(X) \ar[r]_-{\id_{\varphi_\alpha(Y)} \otimes \varphi_2(\alpha,|Y|)_X^{-1}} &\varphi_\alpha(Y) \otimes \varphi_{\alpha}\varphi_{|Y|}(X). \ar[u]_-{(\varphi_\alpha)_2(Y,\varphi_{|Y|}(X))} \\
}
\end{split}
\end{equation}
\end{enumerate}

The diagrams \eqref{eq-braiding1} and \eqref{eq-braiding2} are the analogues of the usual braiding relations in braided categories while \eqref{eq-braiding3} expresses the \lq\lq invariance" of $\tau$  under $\varphi$.
We will depict the $G$-braiding $\tau$ and its inverse as follows
\begin{center}
 \psfrag{U}[Br][Br]{\scalebox{.9}{$Y$}}
 \psfrag{V}[Bl][Bl]{\scalebox{.9}{$\varphi_{|Y|}(X)$}}
 \psfrag{A}[Br][Br]{\scalebox{.9}{$X$}}
 \psfrag{B}[Bl][Bl]{\scalebox{.9}{$Y$}}
 $\tau_{X,Y}=$\!\!\!\rsdraw{.45}{.9}{braiding-dot-1}
 \quad \text{and} \quad
  \psfrag{U}[Br][Br]{\scalebox{.9}{$X$}}
 \psfrag{V}[Bl][Bl]{\scalebox{.9}{$Y$}}
 \psfrag{A}[Br][Br]{\scalebox{.9}{$Y$}}
 \psfrag{B}[Bl][Bl]{\scalebox{.9}{$\varphi_{|Y|}(X)$}}
$\tau^{-1}_{Y,X}=$\!\!\!\rsdraw{.45}{.9}{braiding-dot-2}
\end{center}

\begin{lem} \label{lem-braiding}
Let $(\cc,\varphi,\tau)$ be a  $G$-braided  category. Then:
\begin{enumerate}
  \labela
     \item $\tau_{X, \un}=(\varphi_0)_X$ for all $X \in \cc$;
     \item $\tau_{\un,X}=\id_X \otimes (\varphi_{|X|})_0$ for all $X \in \cch$;
      \item The $G$-braiding $\tau$ satisfies the following quantum Yang-Baxter equation:
for all $X \in \cc$, $Y  \in \cc_\beta$, $Z\in \cc_\gamma$ with
$\beta,\gamma\in G$,
$$
\psfrag{X}[Br][Br]{\scalebox{.9}{$X$}}
\psfrag{Y}[Br][Br]{\scalebox{.9}{$Y$}}
\psfrag{Z}[Bl][Bl]{\scalebox{.9}{$Z$}}
\psfrag{R}[Br][Br]{\scalebox{.9}{$Z$}}
\psfrag{P}[Bl][Bl]{\scalebox{.9}{$\varphi_{\beta\gamma}(X)$}}
\psfrag{T}[Bl][Bl]{\scalebox{.9}{$\varphi_\gamma(Y)$}}
\psfrag{u}[Bc][Bc]{\scalebox{.9}{$\varphi_2(\gamma^{-1}\beta\gamma,\gamma)_X$}}
\psfrag{v}[Bc][Bc]{\scalebox{.9}{$\varphi_2(\gamma,\beta)_X$}}
\psfrag{e}[Bc][Bc]{\scalebox{1.11}{$=$}}
\rsdraw{.45}{.9}{YBE} \;.
$$
\item If $\cc$ is pivotal, then for all $X \in \cc$ and $Y \in \cc_\beta$ with $\beta \in G$,
$$
\psfrag{X}[Bl][Bl]{\scalebox{.9}{$\varphi_\beta(X)$}}
\psfrag{Y}[Bl][Bl]{\scalebox{.9}{$Y$}}
\psfrag{R}[Bl][Bl]{\scalebox{.9}{$X$}}
\psfrag{u}[Bc][Bc]{\scalebox{.9}{$\varphi_{\beta}^l(X)$}}
\psfrag{v}[Bc][Bc]{\scalebox{.9}{$(\varphi_0)_X^{-1}\varphi_2(\beta^{-1},\beta)_X$}}
\tau_{X,Y}^{-1}=\;\,\rsdraw{.45}{.9}{tauinv1} \!= \;\;\rsdraw{.45}{.9}{tauinv2} \;.
$$
\end{enumerate}
\end{lem}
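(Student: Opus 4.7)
The four parts are standard-flavored consequences of the $G$-braiding axioms combined with the monoidal and pivotal structure of the crossing $\varphi$. The overall strategy is to mimic the classical proofs for ordinary braided and ribbon categories while carefully tracking the coherence morphisms $\varphi_2$ and $\varphi_0$ (and their $\varphi_\alpha$-analogues) that now appear wherever identities would stand in the $G=1$ case.

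For (a), I would specialize \eqref{eq-braiding1} by taking $Y=\un$ and then apply naturality of $\tau$ in its first argument along the isomorphism $(\varphi_0)_X\co X \iso \varphi_1(X)$. Combined with the coherence identity \eqref{crossing6}, this reduces the resulting equation to $\tau_{X,Z} = \tau_{X,Z} \circ \bigl((\varphi_0)_X^{-1}\tau_{X,\un} \otimes \id_Z\bigr)$, from which $\tau_{X,\un} = (\varphi_0)_X$ follows by cancellation (taking a nonzero $Z$). Part (b) is proved analogously from \eqref{eq-braiding2} with $X=\un$, using the monoidal unit axiom \eqref{crossing2} for $\varphi_{|X|}$ to identify $\varphi_{|X|}(\un)\cong \un$ and canceling along the same lines.

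For (c), the proof is the usual hexagon argument adapted to the crossed setting. Starting from $X \otimes Y \otimes Z$, I would apply \eqref{eq-braiding1} and \eqref{eq-braiding2} alternately to produce both sides of the Yang--Baxter equation as morphisms into $Z \otimes \varphi_\gamma(Y) \otimes \varphi_{\beta\gamma}(X)$. The coherence factors $\varphi_2(\gamma,\beta)_X$ and $\varphi_2(\gamma^{-1}\beta\gamma,\gamma)_X$ decorating the two sides of the picture arise from the way \eqref{eq-braiding1} reorders the target component $\varphi_{|Y\otimes Z|}(X)=\varphi_{\beta\gamma}(X)$ into a composite of $\varphi_\gamma$ and $\varphi_\beta$; their matching on the two sides is governed by the pentagon-like diagram \eqref{crossing5}.

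For (d), the plan is to define the two purported expressions for $\tau_{X,Y}^{-1}$ as explicit morphisms $Y \otimes \varphi_\beta(X) \to X \otimes Y$ constructed by bending $\tau$ around with the left and right duality data, and then to verify that each one composes with $\tau_{X,Y}$ to give the identity, using naturality of $\tau$, the pivotal axioms, and pivotality of the functors $\varphi_\alpha$. The main obstacle is the second equality, where the crossing correction $(\varphi_0)_X^{-1}\varphi_2(\beta^{-1},\beta)_X$ takes the place of $\varphi_\beta^l(X)$: this is precisely the pivotal content of $\varphi$, and establishing it should reduce, after chasing the definitions of $\overline{(-)}$ and $(-)^-$ from Section~\ref{sect-pivot-cross-Gcat}, to an application of \eqref{FrFlcomp} together with an argument in the spirit of Lemma~\ref{lem-psibar} that trades left dualities for right ones through the pivotal structure of $\varphi_\beta$.
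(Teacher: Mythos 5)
Your outline for parts (a), (b), and (d) captures the essential ideas, though in (a) the cancellation step deserves care: after naturality and \eqref{crossing6} you are left with $\tau_{X,Z} = \tau_{X,Z}\bigl((\varphi_0)_X^{-1}\tau_{X,\un}\otimes\id_Z\bigr)$, and cancelling $\tau_{X,Z}$ gives $(\varphi_0)_X^{-1}\tau_{X,\un}\otimes\id_Z = \id_{X\otimes Z}$; you cannot conclude $(\varphi_0)_X^{-1}\tau_{X,\un}=\id_X$ from this for an arbitrary nonzero $Z$, since $f\otimes\id_Z = \id\otimes\id_Z$ does not in general force $f=\id$. You need to take $Z=\un$, which reduces your argument to the paper's (the paper sets $Y=Z=\un$ in \eqref{eq-braiding1} from the start, then uses $\varphi_1$ being an auto-equivalence to cover all objects). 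For (d), the paper derives the formulas graphically by first establishing a local identity via (a), naturality, \eqref{eq-braiding1}, \eqref{eq-braiding2}, \eqref{pivotal-lev}, and \eqref{pivotal-rcoev}, then substituting; your strategy of composing both candidate inverses with $\tau_{X,Y}$ and verifying the identity is an honest alternative route and would indeed rest on the pivotality of $\varphi_\beta$ rather than on \eqref{FrFlcomp} specifically (the latter plays no role in the paper's proof of (d)).

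The genuine gap is in part (c). You propose to derive the Yang--Baxter equation by applying \eqref{eq-braiding1} and \eqref{eq-braiding2} alternately and matching coherence factors via \eqref{crossing5}. This cannot close. The crossed Yang--Baxter equation relates $\tau_{\varphi_{|Z|}(X),\varphi_{|Z|}(Y)}$ to the other braidings, and the only way such a term appears is through axiom \eqref{eq-braiding3}, which expresses the \emph{invariance of the $G$-braiding under the crossing}. The paper's proof applies naturality of $\tau$ in its first argument to $\tau_{X,Y}\co X\otimes Y\to Y\otimes\varphi_{|Y|}(X)$, producing a $\varphi_{|Z|}(\tau_{X,Y})$ on one side; after expanding both sides with \eqref{eq-braiding2}, the factor $\varphi_{|Z|}(\tau_{X,Y})(\varphi_{|Z|})_2(X,Y)$ must be rewritten, and \eqref{eq-braiding3} is precisely what performs this rewriting, replacing it by the braiding of the transformed objects together with the $\varphi_2$-corrections that appear in the statement. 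Neither hexagon alone, nor \eqref{crossing5} (which only coheres composites of $\varphi_2$), can produce this exchange; your sketch omits the one axiom without which part (c) does not reduce to the $G=1$ hexagon argument.
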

\begin{proof}
By the naturality of $\tau$ and $\varphi_0$, if (a) holds for  some
$X\in \cc$, then it holds for all objects of $\cc$  isomorphic to
$X$. Since $\varphi_1$ is an auto-equivalence of $\cc$, it is enough
to check (a) for the objects  of type  $\varphi_1(X)$ where  $X \in
\cc$. Setting $Y=Z=\un$ in \eqref{eq-braiding1} we obtain $\tau_{X,
\un}=\varphi_2(1,1)_X\tau_{\varphi_1(X),\un} \tau_{X, \un}$.
Multiplying by $\tau_{X, \un}^{-1}$ and using \eqref{crossing6}, we
obtain  that $\tau_{\varphi_1(X),
\un}=(\varphi_2(1,1)_X)^{-1}=(\varphi_0)_{\varphi_1(X)}$.

Let us prove (b). Since $\tau_{\un,Z}$ is an isomorphism, putting $X=Y=\un$ in \eqref{eq-braiding2} gives $\id_{Z \otimes \varphi_{|Z|}(\un)}=(\id_Z \otimes (\varphi_{|Z|})_2(\un,\un))(\tau_{\un,Z} \otimes \id_{\varphi_{|Z|}(\un)})$. We conclude by composing on the right with $\id_Z \otimes (\varphi_{|Z|})_0$, since $(\varphi_{|Z|})_2(\un,\un)(\id_{\varphi_{|Z|}(\un)} \otimes (\varphi_{|Z|})_0)=\id_{\varphi_{|Z|}(\un)}$ by \eqref{crossing2}.

Let us prove (c). The naturality of the $G$-braiding gives
$$
\tau_{Y\otimes \varphi_{|Y|}(X),Z}(\tau_{X,Y} \otimes \id_Z)=\bigl(\id_Z \otimes \varphi_{|Z|}(\tau_{X,Y})\bigr)\tau_{X\otimes Y,Z}.
$$
We use \eqref{eq-braiding2} to expand $\tau_{X \otimes Y,Z}$ on the right and $\tau_{Y\otimes \varphi_{|Y|}(X),Z}$ on the left. Then we use \eqref{eq-braiding3} to replace on the right $\varphi_{|Z|}(\tau_{X,Y})(\varphi_{|Z|})_2(X,Y)$ by a composition of 4 arrows. This gives a formula equivalent to (c).

We now prove (d). Depicting $\id_\un$ by a dotted line, we  obtain
$$
\psfrag{X}[Bl][Bl]{\scalebox{.9}{$X$}}
\psfrag{Y}[Bl][Bl]{\scalebox{.9}{$Y$}}
\rsdraw{.25}{.9}{lem-G-braid-0}
\overset{(i)}{=}\,
\psfrag{X}[Bl][Bl]{\scalebox{.9}{$X$}}
\psfrag{Y}[Bl][Bl]{\scalebox{.9}{$Y$}}
\psfrag{v}[Bc][Bc]{\scalebox{.9}{$(\varphi_0)_X^{-1}$}}
\rsdraw{.25}{.9}{lem-G-braid-1}
\overset{(ii)}{=}
\psfrag{X}[Bl][Bl]{\scalebox{.9}{$X$}}
\psfrag{Y}[Bl][Bl]{\scalebox{.9}{$Y$}}
\psfrag{u}[Bc][Bc]{\scalebox{.9}{$\id_{Y \otimes Y^*}$}}
\psfrag{v}[Bc][Bc]{\scalebox{.9}{$(\varphi_0)_X^{-1}$}}
\rsdraw{.25}{.9}{lem-G-braid-2}
\overset{(iii)}{=}\;
\psfrag{X}[Bl][Bl]{\scalebox{.9}{$X$}}
\psfrag{Y}[Bl][Bl]{\scalebox{.9}{$Y$}}
\psfrag{Z}[Br][Br]{\scalebox{.9}{$Y$}}
\psfrag{v}[Bc][Bc]{\scalebox{.9}{$(\varphi_0)_X^{-1}\varphi_2(\beta^{-1},\beta)_X$}}
\rsdraw{.25}{.9}{lem-G-braid-3},
$$
where the equality $(i)$ is obtained by applying (a), $(ii)$ follows from the naturality of $\tau$, and $(iii)$  is obtained by applying \eqref{eq-braiding1} to $Z=Y^*$. Substituting the resulting   equality in the   dotted box below, we obtain
$$
\psfrag{X}[Bl][Bl]{\scalebox{.9}{$\varphi_\beta(X)$}}
\psfrag{Y}[Bl][Bl]{\scalebox{.9}{$Y$}}
\psfrag{R}[Bl][Bl]{\scalebox{.9}{$X$}}
\rsdraw{.35}{.9}{lem-G-braid-4}
\;\;=\;
\psfrag{X}[Bl][Bl]{\scalebox{.9}{$\varphi_\beta(X)$}}
\psfrag{Y}[Bl][Bl]{\scalebox{.9}{$Y$}}
\psfrag{R}[Bl][Bl]{\scalebox{.9}{$X$}}
\rsdraw{.35}{.9}{lem-G-braid-5}
=
\psfrag{X}[Bl][Bl]{\scalebox{.9}{$\varphi_\beta(X)$}}
\psfrag{R}[Bl][Bl]{\scalebox{.9}{$X$}}
\psfrag{Z}[Br][Br]{\scalebox{.9}{$Y$}}
\psfrag{v}[Bc][Bc]{\scalebox{.9}{$(\varphi_0)_X^{-1}\varphi_2(\beta^{-1},\beta)_X$}}
\rsdraw{.35}{.9}{lem-G-braid-6}
\!\!\!\!\!=
\psfrag{X}[Bl][Bl]{\scalebox{.9}{$\varphi_\beta(X)$}}
\psfrag{Y}[Bl][Bl]{\scalebox{.9}{$Y$}}
\psfrag{R}[Bl][Bl]{\scalebox{.9}{$X$}}
\psfrag{u}[Bc][Bc]{\scalebox{.9}{$\varphi_{\beta}^l(X)$}}
\psfrag{v}[Bc][Bc]{\scalebox{.9}{$(\varphi_0)_X^{-1}\varphi_2(\beta^{-1},\beta)_X$}}
\rsdraw{.35}{.9}{lem-G-braid-7}\!\!.
$$
Similarly,
\begin{center}
$ \displaystyle
\psfrag{X}[Bl][Bl]{\scalebox{.9}{$X$}}
\psfrag{Y}[Bl][Bl]{\scalebox{.9}{$Y$}}
\rsdraw{.25}{.9}{lem-G-braid-8}
\;\overset{(i)}{=}\;
\psfrag{X}[Bl][Bl]{\scalebox{.9}{$X$}}
\psfrag{Y}[Bl][Bl]{\scalebox{.9}{$Y$}}
\psfrag{v}[Bc][Bc]{\scalebox{.9}{$(\varphi_\beta)_0^{-1}$}}
\rsdraw{.25}{.9}{lem-G-braid-9}
\;\overset{(ii)}{=}\;
\psfrag{X}[Bl][Bl]{\scalebox{.9}{$X$}}
\psfrag{Y}[Bl][Bl]{\scalebox{.9}{$Y$}}
\psfrag{u}[Bc][Bc]{\scalebox{.9}{$\id_{X^* \otimes X}$}}
\psfrag{v}[Bc][Bc]{\scalebox{.9}{$(\varphi_\beta)_0^{-1}\varphi_\beta(\lev_X)$}}
\rsdraw{.25}{.9}{lem-G-braid-10}
$\\[1em]
$ \displaystyle
\;\overset{(iii)}{=}\;
\psfrag{X}[Bl][Bl]{\scalebox{.9}{$X$}}
\psfrag{Y}[Bl][Bl]{\scalebox{.9}{$Y$}}
\psfrag{v}[Bc][Bc]{\scalebox{.9}{$(\varphi_\beta)_0^{-1}\varphi_\beta(\lev_X)(\varphi_\beta)_2(X^*,X)$}}
\rsdraw{.25}{.9}{lem-G-braid-11}
\;\overset{(iv)}{=}\;
\psfrag{X}[Bl][Bl]{\scalebox{.9}{$X$}}
\psfrag{Y}[Bl][Bl]{\scalebox{.9}{$Y$}}
\psfrag{v}[Bc][Bc]{\scalebox{.9}{$\varphi_\beta^l(X)$}}
\rsdraw{.25}{.9}{lem-G-braid-12}
$
\end{center}\vspace{1em}
where  $(i)$ is obtained by applying (a),  $(ii)$ follows from the naturality of $\tau$, $(iii)$  follows from \eqref{eq-braiding2}, and $(iv)$ is obtained by applying \eqref{pivotal-lev} to $F=\varphi_\beta$. Finally, substituting the resulting   equality in the   dotted box below, we obtain
$$
\psfrag{X}[Bl][Bl]{\scalebox{.9}{$\varphi_\beta(X)$}}
\psfrag{Y}[Bl][Bl]{\scalebox{.9}{$Y$}}
\psfrag{R}[Bl][Bl]{\scalebox{.9}{$X$}}
\rsdraw{.35}{.9}{lem-G-braid-14}
\!\!=
\psfrag{X}[Bl][Bl]{\scalebox{.9}{$\varphi_\beta(X)$}}
\psfrag{Y}[Bl][Bl]{\scalebox{.9}{$Y$}}
\psfrag{R}[Bl][Bl]{\scalebox{.9}{$X$}}
\rsdraw{.35}{.9}{lem-G-braid-15}
\!=
\psfrag{X}[Bl][Bl]{\scalebox{.9}{$\varphi_\beta(X)$}}
\psfrag{R}[Bl][Bl]{\scalebox{.9}{$X$}}
\psfrag{Z}[Br][Br]{\scalebox{.9}{$Y$}}
\psfrag{v}[Bc][Bc]{\scalebox{.9}{$\varphi_\beta^l(X)$}}
\rsdraw{.35}{.9}{lem-G-braid-16}
\!\!\!\!=\!
\psfrag{X}[Bl][Bl]{\scalebox{.9}{$\varphi_\beta(X)$}}
\psfrag{R}[Bl][Bl]{\scalebox{.9}{$X$}}
\psfrag{Z}[Br][Br]{\scalebox{.9}{$Y$}}
\psfrag{v}[Bc][Bc]{\scalebox{.9}{$\varphi_\beta^l(X)$}}
\rsdraw{.35}{.9}{lem-G-braid-17}\!\!.
$$
This concludes the proof of the lemma.
%
\end{proof}

\subsection{Twists}
Consider a   pivotal   $G$-braided  category $(\cc,\varphi,\tau)$. The \emph{twist} of $\cc$ is the family
of morphisms $\theta=\{\theta_X \co X\to \varphi_{|X|}(X)\}_{X \in
\cch}$ defined by
\begin{equation}\label{eq-def-twist}
\theta_X =(\lev_X \otimes \id_{\varphi_{|X|}(X)})(\id_{X^*} \otimes \tau_{X,X})(\rcoev_X \otimes \id_X)
\psfrag{B}[Bl][Bl]{\scalebox{.9}{$\varphi_{|X|}(X)$}}
\psfrag{X}[Bl][Bl]{\scalebox{.9}{$X$}}
=\,\rsdraw{.45}{.9}{theta-def1}\;.
\end{equation}
The naturality of $\tau$ implies that   $ \theta_X $ is natural
in $X$.  Lemma~\ref{lem-braiding}(a) implies that $\theta_\un=(\varphi_0)_\un=(\varphi_{1})_0$.

\begin{lem}\label{lem-twist}
For any $X \in \cc_\alpha$ with $\alpha\in G$, the  twist  $\theta_X$ is invertible and
\begin{equation*}
\theta_X^{-1}=( \id_X \otimes\rev_{\varphi_{\alpha}(X)})(\tau_{X,\varphi_{\alpha}(X)}^{-1} \otimes \id_{\varphi_{\alpha}(X)^*})( \id_{\varphi_{\alpha}(X)} \otimes \lcoev_{\varphi_{\alpha}(X)})
\psfrag{X}[Bl][Bl]{\scalebox{.9}{$\varphi_{\alpha}(X)$}}
\psfrag{B}[Bl][Bl]{\scalebox{.9}{$X$}}
=\,\rsdraw{.45}{.9}{theta-def2}\;\;.
\end{equation*}
\end{lem}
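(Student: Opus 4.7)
The plan is to verify directly that the candidate morphism, which I denote $\widetilde{\theta}_X$, is a two-sided inverse to $\theta_X$, using the Penrose graphical calculus of Section~\ref{sect-penrose}. Pictorially, $\theta_X$ is a right-handed curl on the $X$-strand built from the braiding $\tau_{X,X}$ together with the hook $\rcoev_X,\lev_X$, while $\widetilde{\theta}_X$ is a left-handed curl on the $\varphi_\alpha(X)$-strand built from the inverse braiding $\tau_{X,\varphi_\alpha(X)}^{-1}$ together with the hook $\lcoev_{\varphi_\alpha(X)},\rev_{\varphi_\alpha(X)}$. The formula for $\widetilde{\theta}_X$ is precisely the mirror of the defining formula~\eqref{eq-def-twist} for $\theta_X$, swapping the left and right duality and inverting the braiding; this is the source of the invertibility.

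The main step is to show $\widetilde{\theta}_X \circ \theta_X = \id_X$. Drawing the composition gives a diagram consisting of two curls of opposite handedness, joined along an intermediate $\varphi_\alpha(X)$-strand. I use naturality of the $G$-braiding to slide the hook $\lcoev_{\varphi_\alpha(X)},\rev_{\varphi_\alpha(X)}$ of the second curl along this intermediate strand, combined with a snake identity from Section~\ref{pivotall}(a), to straighten the strand. The effect is to pull $\tau_{X,\varphi_\alpha(X)}^{-1}$ downward so it acts on the same pair of $X$-strands as $\tau_{X,X}$; the two crossings collapse via $\tau_{X,X}^{-1}\tau_{X,X}=\id_{X\otimes X}$. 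What remains is a pure zig-zag of dualities on a single $X$-strand, which reduces to $\id_X$ by the snake identities. The other identity $\theta_X \circ \widetilde{\theta}_X = \id_{\varphi_\alpha(X)}$ follows by the mirror argument.

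The expected obstacle lies in tracking the $G$-crossing coherences through the naturality move: a priori, sliding caps past the inverse braiding could introduce the isomorphisms $\varphi_2(\alpha,\beta)_X$ or $(\varphi_0)_X$ from the definition of the crossing, via the commutative diagram~\eqref{eq-braiding3}. The reason this does not happen here is that both strands entering $\tau_{X,X}$ carry the same homogeneous object $X$ of grade~$\alpha$, so the intermediate $\varphi_\alpha(X)$-strand matches tautologically and the relevant naturality square of $\tau^{-1}$ is applied only to the identity~$\id_X$. If any coherence bookkeeping were to become delicate, Lemma~\ref{lem-braiding} provides the required supporting identities—in particular part~(d), which rewrites $\tau_{X,Y}^{-1}$ as a closure of $\tau$, confirming that $\widetilde{\theta}_X$ is genuinely opposite to $\theta_X$ in handedness.
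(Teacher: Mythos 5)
Your proposal identifies the right candidate inverse and the right high-level strategy (show $\vartheta_X\theta_X=\id$ and $\theta_X\vartheta_X=\id$ graphically), and it correctly anticipates that the delicacy lies in the $G$-crossing coherence. But the central assertion — that the coherence isomorphisms do not actually arise, because ``the intermediate $\varphi_\alpha(X)$-strand matches tautologically and the naturality square of $\tau^{-1}$ is applied only to $\id_X$'' — is incorrect, and the cancellation $\tau_{X,X}^{-1}\tau_{X,X}=\id$ you rely on is not what one gets after the proposed slide.

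Trace the two crossings: $\theta_X$ uses $\tau_{X,X}\colon X\otimes X\to X\otimes\varphi_\alpha(X)$, while $\vartheta_X$ uses $\tau_{X,\varphi_\alpha(X)}^{-1}\colon\varphi_\alpha(X)\otimes\varphi_\alpha(X)\to X\otimes\varphi_\alpha(X)$. These two morphisms neither compose directly nor become inverse to each other under mere naturality slides of caps/cups: naturality of $\tau$ lets you push a \emph{given} morphism through a crossing, but it does not relabel a crossing $\tau_{X,\varphi_\alpha(X)}^{-1}$ as $\tau_{X,X}^{-1}$. The operation you actually need — sliding a cap \emph{over or under} a crossing to convert $\tau^{-1}$ on one pair of strands into a closure of $\tau$ on another — is precisely Lemma~\ref{lem-braiding}(d), and that lemma says the slide costs you the insertion of $\varphi_\beta^l(X)$, equivalently $(\varphi_0)_X^{-1}\varphi_2(\beta^{-1},\beta)_X$. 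In an ordinary ribbon category these would be identities and the two curls would cancel by Reidemeister~I, but in a $G$-braided category they are genuine coherence isomorphisms and must be tracked. The paper's proof is organized exactly around making them appear and then disappear: step~$(i)$ rewrites $\tau^{-1}_{X,\varphi_\alpha(X)}$ via Lemma~\ref{lem-braiding}(d); a naturality move and the hexagon~\eqref{eq-braiding1} turn a double crossing into a $\varphi_2$-box; and Lemma~\ref{lem-braiding}(a) ($\tau_{X,\un}=(\varphi_0)_X$) cancels the last $(\varphi_0)_X^{-1}$. Moreover, your ``mirror argument'' for $\theta_X\vartheta_X=\id_{\varphi_\alpha(X)}$ is not literally available: the paper's second half uses an extra identity \eqref{eq-dem-twist}, itself derived from \eqref{crossing5} and \eqref{crossing6}, to transport $(\varphi_0)^{-1}\varphi_2$ across $\varphi_\alpha$, and this has no counterpart in the first half. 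So while you correctly flag Lemma~\ref{lem-braiding}(d) as a fallback, it is not a fallback here — it is the load-bearing step, and without it your cancellation does not go through.
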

\begin{proof}
Denote the right-hand side by $\vartheta_X$. Firstly,
\begin{center}
$ \displaystyle
\vartheta_X\theta_X \;
\overset{(i)}{=}\;
\psfrag{X}[Bl][Bl]{\scalebox{.9}{$X$}}
\psfrag{u}[Bc][Bc]{\scalebox{.9}{$\theta_X$}}
\psfrag{v}[Bc][Bc]{\scalebox{.9}{$(\varphi_0)_X^{-1}\varphi_2(\alpha^{-1},\alpha)_X$}}
\rsdraw{.35}{.9}{lem-twist-1}
\overset{(ii)}{=}\;
\psfrag{X}[Bl][Bl]{\scalebox{.9}{$X$}}
\psfrag{u}[Bc][Bc]{\scalebox{.9}{$\theta_X$}}
\psfrag{v}[Bc][Bc]{\scalebox{.9}{$(\varphi_0)_X^{-1}\varphi_2(\alpha^{-1},\alpha)_X$}}
\rsdraw{.35}{.9}{lem-twist-2}
\overset{(iii)}{=}\;
\psfrag{X}[Bl][Bl]{\scalebox{.9}{$X$}}
\psfrag{v}[Bc][Bc]{\scalebox{.9}{$(\varphi_0)_X^{-1}\varphi_2(\alpha^{-1},\alpha)_X$}}
\rsdraw{.35}{.9}{lem-twist-3}
$\\[.8em]
$ \displaystyle \overset{(iv)}{=}\;
\psfrag{X}[Bl][Bl]{\scalebox{.9}{$X$}}
\psfrag{u}[Bc][Bc]{\scalebox{.9}{$\id_{X \otimes X^*}$}}
\psfrag{v}[Bc][Bc]{\scalebox{.9}{$(\varphi_0)_X^{-1}$}}
\rsdraw{.45}{.9}{lem-twist-4}
\overset{(v)}{=}\;
\psfrag{X}[Bl][Bl]{\scalebox{.9}{$X$}}
\psfrag{v}[Bc][Bc]{\scalebox{.9}{$(\varphi_0)_X^{-1}$}}
\rsdraw{.45}{.9}{lem-twist-5}
\overset{(vi)}{=}\; \id_X.
$
\end{center}\vspace{.8em}
\noindent Here, $(i)$ is obtained from the definition of $\vartheta_X$ by applying the first expression for $\tau^{-1}$   in Lemma~\ref{lem-braiding}(d),  $(ii)$ and $(v)$ follow  from the naturality of $\tau$ and $(iii)$ from the definition of $\theta_X$, $(iv)$ is obtained by applying \eqref{eq-braiding1}, and $(vi)$ follows from Lemma~\ref{lem-braiding}(a). Secondly,
\begin{center}
$ \displaystyle
\theta_X\vartheta_X \;
=\;
\psfrag{X}[Bl][Bl]{\scalebox{.9}{$\varphi_\alpha(X)$}}
\psfrag{u}[Bc][Bc]{\scalebox{.9}{$\vartheta_X$}}
\rsdraw{.35}{.9}{lem-twist-7}
\;\;\;\overset{(i)}{=}\;\;\;
\psfrag{X}[Bl][Bl]{\scalebox{.9}{$\varphi_\alpha(X)$}}
\psfrag{u}[Bc][Bc]{\scalebox{.9}{$\vartheta_X$}}
\rsdraw{.35}{.9}{lem-twist-8}
\overset{(ii)}{=}\;
\psfrag{X}[Bl][Bl]{\scalebox{.9}{$\varphi_\alpha(X)$}}
\psfrag{v}[Bc][Bc]{\scalebox{.9}{$(\varphi_0)^{-1}_X\varphi_2(\alpha^{-1},\alpha)_X$}}
\rsdraw{.35}{.9}{lem-twist-9}
$\\[.8em]
$ \displaystyle
\overset{(iii)}{=}
\psfrag{X}[Bl][Bl]{\scalebox{.9}{$\varphi_\alpha(X)$}}
\psfrag{v}[Bc][Bc]{\scalebox{.9}{$(\varphi_0)^{-1}_{\varphi_\alpha(X)}\varphi_2(\alpha,\alpha^{-1})_{\varphi_\alpha(X)}$}}
\rsdraw{.25}{.9}{lem-twist-10}
\!\!\!\!\!\!\!\!\!\!\overset{(iv)}{=}
\psfrag{X}[Bl][Bl]{\scalebox{.9}{$\varphi_\alpha(X)$}}
\psfrag{u}[Bc][Bc]{\scalebox{.9}{$\id_{\varphi_\alpha(X)^* \otimes \varphi_\alpha(X)}$}}
\psfrag{v}[Bc][Bc]{\scalebox{.9}{$(\varphi_0)^{-1}_{\varphi_\alpha(X)}$}}
\rsdraw{.25}{.9}{lem-twist-11}
\;\;\overset{(v)}{=}\;\,
\psfrag{X}[Bl][Bl]{\scalebox{.9}{$\varphi_\alpha(X)$}}
\rsdraw{.25}{.9}{lem-twist-12}
\!\!\!\overset{(vi)}{=}\; \id_X.
$
\end{center}\vspace{.8em}
Here, $(i)$ and $(v)$ follow  from the naturality of $\tau$,  $(ii)$ is obtained from the definition of $\vartheta_X$ by applying the first expression for   $\tau^{-1}$ in Lemma~\ref{lem-braiding}(d),  $(iii)$ follows from the naturality of $\tau$ and the equality
\begin{equation}\label{eq-dem-twist}
\varphi_\alpha\bigl ( (\varphi_0)^{-1}_X\varphi_2(\alpha^{-1},\alpha)_X \bigr )= (\varphi_0)^{-1}_{\varphi_\alpha(X)}\varphi_2(\alpha,\alpha^{-1})_{\varphi_\alpha(X)}
\end{equation}
which is a consequence of \eqref{crossing5} and \eqref{crossing6}, $(iv)$ is obtained by applying \eqref{eq-braiding1}, and $(vi)$ follows from Lemma~\ref{lem-braiding}(a).
Thus $\theta_X$ is invertible and
$\theta_X^{-1}=\vartheta_X$.
\end{proof}

\begin{lem}\label{lem-twist-mult}
For any $X \in \cc_\alpha$, $Y \in \cc_\beta$  with $\alpha, \beta \in G$,
\begin{equation*}
\psfrag{e}[Bc][Bc]{\scalebox{.9}{$\theta_X$}}
\psfrag{a}[Bc][Bc]{\scalebox{.9}{$\theta_Y$}}
\psfrag{u}[Bc][Bc]{\scalebox{.9}{$\varphi_2(\beta^{-1}\alpha\beta,\beta)_Y$}}
\psfrag{v}[Bc][Bc]{\scalebox{.9}{$\varphi_2(\beta,\alpha)_X$}}
\psfrag{s}[Bc][Bc]{\scalebox{.9}{$(\varphi_{\alpha\beta})_2(X,Y)$}}
\psfrag{T}[Bl][Bl]{\scalebox{.9}{$\varphi_{\alpha\beta}(X\otimes Y)$}}
\psfrag{Y}[Bl][Bl]{\scalebox{.9}{$Y$}}
\psfrag{X}[Br][Br]{\scalebox{.9}{$X$}}
\theta_{X\otimes Y}=\;\rsdraw{.45}{.9}{theta-def3}\;\;.
\end{equation*}
\end{lem}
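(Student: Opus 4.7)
The plan is to mimic the classical pictorial argument in ribbon categories — where $\theta_{X\otimes Y}=(\theta_X\otimes\theta_Y)\tau_{Y,X}\tau_{X,Y}$ — and carefully track how the crossing $\varphi$ intervenes. I would begin by writing $\theta_{X\otimes Y}$ from the defining formula~\eqref{eq-def-twist}:
\[
\theta_{X\otimes Y}=(\lev_{X\otimes Y}\otimes\id)(\id_{(X\otimes Y)^*}\otimes\tau_{X\otimes Y,X\otimes Y})(\rcoev_{X\otimes Y}\otimes\id_{X\otimes Y}).
\]
Using the pivotal identifications $(X\otimes Y)^*=Y^*\otimes X^*$ and the standard decomposition of $\lev_{X\otimes Y}$ and $\rcoev_{X\otimes Y}$, this picture consists of two nested cups and caps with $\tau_{X\otimes Y,X\otimes Y}$ in the middle.

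Next I would expand the middle braiding in two stages. First apply the $G$-braiding axiom~\eqref{eq-braiding1} with $Y\otimes Z=X\otimes Y$ to replace $\tau_{X\otimes Y,X\otimes Y}$ by the composite $(\id\otimes\id\otimes\varphi_2(\beta,\alpha)_{X\otimes Y})(\id\otimes\tau_{\varphi_{|X|}(X\otimes Y),Y})(\tau_{X\otimes Y,X}\otimes\id)$; then apply axiom~\eqref{eq-braiding2} to each of $\tau_{X\otimes Y,X}$ and $\tau_{\varphi_\alpha(X\otimes Y),Y}$, pulling apart the left tensor factor. The result is a diagram built from the four braidings $\tau_{X,X},\tau_{Y,X},\tau_{X,Y},\tau_{Y,Y}$, from the $(\varphi_\gamma)_2$'s, and from one instance of $\varphi_2(\beta,\alpha)_X$. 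At this point the diagrammatic content is exactly two loops (one around each of $X$ and $Y$) with a double braiding between them.

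Now I would close up the picture. The loop around $X$ can be recognised, using the naturality of $\tau$ and one application of axiom~\eqref{eq-braiding1} (as in the proof of Lemma~\ref{lem-braiding}(d)), as $\theta_X$ after crossing the $Y$-strand; similarly for $Y$, producing $\theta_Y$ but only after the $Y$-strand has itself been twisted via the crossing by $\varphi_{\alpha}$, which by the coherence diagrams~\eqref{crossing3} and~\eqref{crossing5} introduces precisely the factor $\varphi_2(\beta^{-1}\alpha\beta,\beta)_Y$. The factor $\varphi_2(\beta,\alpha)_X$ is the one left over from the first application of~\eqref{eq-braiding1}, and finally reassembling the two outputs back to $\varphi_{\alpha\beta}(X\otimes Y)$ introduces the global $(\varphi_{\alpha\beta})_2(X,Y)$. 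This reproduces the displayed diagram.

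The main obstacle is purely bookkeeping: every manipulation of a braiding introduces some $(\varphi_\gamma)_2$ or $\varphi_2(\gamma,\delta)$ and one must check that these all cancel except for the three listed in the statement. The relevant collapsing identities are the pentagon~\eqref{crossing5}, the triangles~\eqref{crossing2} and~\eqref{crossing6}, and the quantum Yang–Baxter equation of Lemma~\ref{lem-braiding}(c); as a sanity check, specialising to $\varphi=\id$ (the case $G=1$, where every $\varphi_2$ and $(\varphi_\gamma)_2$ is an identity) recovers the classical ribbon identity $\theta_{X\otimes Y}=(\theta_X\otimes\theta_Y)\tau_{Y,X}\tau_{X,Y}$.
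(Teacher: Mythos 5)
Your plan follows essentially the same approach as the paper's proof: expand $\theta_{X\otimes Y}$ from the definition~\eqref{eq-def-twist}, decompose $\tau_{X\otimes Y,\,X\otimes Y}$ using the two $G$-braiding axioms, recognize $\theta_X$ and $\theta_Y$ via naturality, and use the quantum Yang--Baxter equation of Lemma~\ref{lem-braiding}(c) to sort the crossings. The only difference is that you apply~\eqref{eq-braiding1} before~\eqref{eq-braiding2}, whereas the paper uses~\eqref{eq-braiding2} first and then~\eqref{eq-braiding1} twice --- an inconsequential bookkeeping choice that leads to the same four elementary braidings and the same coherence factors.
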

\begin{proof}
  We have
\begin{center}
$ \displaystyle
\theta_{X\otimes Y}
\;\;\overset{(i)}{=}\;\;
\psfrag{T}[Bl][Bl]{\scalebox{.9}{$\varphi_{\alpha\beta}(X\otimes Y)$}}
\psfrag{Y}[Bl][Bl]{\scalebox{.9}{$X \otimes Y$}}
\rsdraw{.45}{.9}{theta-mult-proof1}
\overset{(ii)}{=}\;
\psfrag{u}[Bc][Bc]{\scalebox{.9}{$\id_{X \otimes Y}$}}
\psfrag{s}[Bc][Bc]{\scalebox{.9}{$(\varphi_{\alpha\beta})_2(X,Y)$}}
\psfrag{T}[Bl][Bl]{\scalebox{.9}{$\varphi_{\alpha\beta}(X\otimes Y)$}}
\psfrag{Y}[Bl][Bl]{\scalebox{.9}{$X \otimes Y$}}
\rsdraw{.45}{.9}{theta-mult-proof2}
$\\[1.2em]
$ \displaystyle
\overset{(iii)}{=}\;\;
\psfrag{e}[Bc][Bc]{\scalebox{.9}{$\theta_X$}}
\psfrag{a}[Bc][Bc]{\scalebox{.9}{$\theta_Y$}}
\psfrag{u}[Bc][Bc]{\scalebox{.9}{$\varphi_2(\beta,\alpha)_Y$}}
\psfrag{v}[Bc][Bc]{\scalebox{.9}{$\varphi_2(\beta,\alpha)_X$}}
\psfrag{s}[Bc][Bc]{\scalebox{.9}{$(\varphi_{\alpha\beta})_2(X,Y)$}}
\psfrag{T}[Bl][Bl]{\scalebox{.9}{$\varphi_{\alpha\beta}(X\otimes Y)$}}
\psfrag{Y}[Bl][Bl]{\scalebox{.9}{$Y$}}
\psfrag{X}[Br][Br]{\scalebox{.9}{$X$}}
\rsdraw{.45}{.9}{theta-mult-proof3}
\;\;\overset{(iv)}{=}\;\;
\psfrag{e}[Bc][Bc]{\scalebox{.9}{$\theta_X$}}
\psfrag{a}[Bc][Bc]{\scalebox{.9}{$\theta_Y$}}
\psfrag{u}[Bc][Bc]{\scalebox{.9}{$\varphi_2(\beta,\alpha)_Y$}}
\psfrag{v}[Bc][Bc]{\scalebox{.9}{$\varphi_2(\beta,\alpha)_X$}}
\psfrag{s}[Bc][Bc]{\scalebox{.9}{$(\varphi_{\alpha\beta})_2(X,Y)$}}
\psfrag{T}[Bl][Bl]{\scalebox{.9}{$\varphi_{\alpha\beta}(X\otimes Y)$}}
\psfrag{Y}[Bl][Bl]{\scalebox{.9}{$Y$}}
\psfrag{X}[Br][Br]{\scalebox{.9}{$X$}}
\rsdraw{.45}{.9}{theta-mult-proof4}
$\\[1.2em]
$ \displaystyle
\overset{(v)}{=}\;\;
\psfrag{e}[Bc][Bc]{\scalebox{.9}{$\theta_X$}}
\psfrag{a}[Bc][Bc]{\scalebox{.9}{$\theta_Y$}}
\psfrag{u}[Bc][Bc]{\scalebox{.9}{$\varphi_2(\beta^{-1}\alpha\beta,\beta)_Y$}}
\psfrag{v}[Bc][Bc]{\scalebox{.9}{$\varphi_2(\beta,\alpha)_X$}}
\psfrag{s}[Bc][Bc]{\scalebox{.9}{$(\varphi_{\alpha\beta})_2(X,Y)$}}
\psfrag{T}[Bl][Bl]{\scalebox{.9}{$\varphi_{\alpha\beta}(X\otimes Y)$}}
\psfrag{Y}[Bl][Bl]{\scalebox{.9}{$Y$}}
\psfrag{X}[Br][Br]{\scalebox{.9}{$X$}}
\rsdraw{.45}{.9}{theta-mult-proof5}
\;\;\overset{(vi)}{=}\;\;
\psfrag{e}[Bc][Bc]{\scalebox{.9}{$\theta_X$}}
\psfrag{a}[Bc][Bc]{\scalebox{.9}{$\theta_Y$}}
\psfrag{u}[Bc][Bc]{\scalebox{.9}{$\varphi_2(\beta^{-1}\alpha\beta,\beta)_Y$}}
\psfrag{v}[Bc][Bc]{\scalebox{.9}{$\varphi_2(\beta,\alpha)_X$}}
\psfrag{s}[Bc][Bc]{\scalebox{.9}{$(\varphi_{\alpha\beta})_2(X,Y)$}}
\psfrag{T}[Bl][Bl]{\scalebox{.9}{$\varphi_{\alpha\beta}(X\otimes Y)$}}
\psfrag{Y}[Bl][Bl]{\scalebox{.9}{$Y$}}
\psfrag{X}[Br][Br]{\scalebox{.9}{$X$}}
\rsdraw{.45}{.9}{theta-def3}\;\;.
$
\end{center}
Here, $(i)$ follows from the definition of $\theta_{X \otimes Y}$, $(ii)$ is obtained from \eqref{eq-braiding2}, $(iii)$ is obtained by applying \eqref{eq-braiding1} twice, $(iv)$ follows from the definition of $\theta_X$ and the naturality of $\tau$, $(v)$ is obtained by Lemma~\ref{lem-braiding}(c), and $(vi)$ follows from the definition of $\theta_Y$ and the naturality of $\tau$.
\end{proof}

\begin{lem}\label{lem-twist-2}
If the crossing $\varphi$ in   $  \cc $    is pivotal, then   for  all $\alpha, \beta \in G$ and   $X \in \cc_\alpha$,
$$
\varphi_\beta(\theta_X)= \varphi_2(\beta,\alpha)_X^{-1} \varphi_2(\beta^{-1}\alpha\beta,\beta)_X \theta_{\varphi_\beta(X)}.
$$

\end{lem}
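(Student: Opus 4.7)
The plan is to apply the monoidal functor $\varphi_\beta$ to the defining formula \eqref{eq-def-twist} for $\theta_X$ and then use the invariance axiom \eqref{eq-braiding3} of the $G$-braiding together with the pivotality of $\varphi_\beta$ to rearrange the resulting expression into the stated form.

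First, I would rewrite $\varphi_\beta(\theta_X)$ diagrammatically. Using the fact that $\varphi_\beta$ is a (strong) monoidal functor, the equality
\begin{equation*}
\varphi_\beta(\theta_X) = \varphi_\beta(\lev_X \otimes \id_{\varphi_\alpha(X)}) \, \varphi_\beta(\id_{X^*} \otimes \tau_{X,X}) \, \varphi_\beta(\rcoev_X \otimes \id_X)
\end{equation*}
can be expanded by inserting the natural isomorphisms $(\varphi_\beta)_2(-,-)$ and their inverses so that every application of $\varphi_\beta$ falls on a single factor. Since $\varphi_\beta$ is assumed pivotal, the identities \eqref{pivotal-lev} and \eqref{pivotal-rcoev} (with $F = \varphi_\beta$) allow us to replace $\varphi_\beta(\lev_X)$ and $\varphi_\beta(\rcoev_X)$ by $\lev_{\varphi_\beta(X)}$ and $\rcoev_{\varphi_\beta(X)}$ up to insertions of $\varphi_\beta^1(X)^{\pm 1}$ and $(\varphi_\beta)_0^{\pm 1}$.

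Next, I would apply the axiom \eqref{eq-braiding3} with $Y = X$ and with $\alpha$ there replaced by the present $\beta$ (so that $|Y| = \alpha$). This axiom converts $\varphi_\beta(\tau_{X,X}) \cdot (\varphi_\beta)_2(X,X)$ into a composition involving $\tau_{\varphi_\beta(X),\varphi_\beta(X)}$, the outer factor $(\varphi_\beta)_2(X,\varphi_\alpha(X))$, and the two correction isomorphisms
\begin{equation*}
\id_{\varphi_\beta(X)} \otimes \varphi_2(\beta^{-1}\alpha\beta,\beta)_X \quad \text{and} \quad \id_{\varphi_\beta(X)} \otimes \varphi_2(\beta,\alpha)_X^{-1}.
\end{equation*}
Substituting this into the expression from the previous step, the outer $(\varphi_\beta)_2$ morphisms coming from the monoidality of $\varphi_\beta$ should cancel with those produced by \eqref{eq-braiding3}, and the $\varphi_\beta^1$ corrections introduced by pivotality should telescope after using the monoidality of $\varphi_\beta^1$.

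Finally, the resulting diagram should be recognizable as
\begin{equation*}
\bigl(\varphi_2(\beta,\alpha)_X^{-1} \varphi_2(\beta^{-1}\alpha\beta,\beta)_X\bigr) \bigl(\lev_{\varphi_\beta(X)} \otimes \id\bigr)\bigl(\id \otimes \tau_{\varphi_\beta(X),\varphi_\beta(X)}\bigr)\bigl(\rcoev_{\varphi_\beta(X)} \otimes \id\bigr),
\end{equation*}
which is exactly $\varphi_2(\beta,\alpha)_X^{-1}\varphi_2(\beta^{-1}\alpha\beta,\beta)_X \, \theta_{\varphi_\beta(X)}$ by the definition of $\theta_{\varphi_\beta(X)}$. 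The main obstacle is pure bookkeeping: one has to keep track of the numerous coherence isomorphisms $(\varphi_\beta)_2$, $(\varphi_\beta)_0$, $\varphi_\beta^1$, and $\varphi_2(-,-)$ and verify, using the monoidality of $\varphi_\beta^1$ (which requires pivotality of the crossing) and the coherence diagrams \eqref{crossing5}--\eqref{crossing6}, that all auxiliary terms cancel and leave only the two $\varphi_2$ factors on the left. A graphical-calculus presentation, as in the proofs of Lemmas \ref{lem-twist} and \ref{lem-twist-mult}, should make the cancellations transparent.
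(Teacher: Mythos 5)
Your proposal follows essentially the same route as the paper's proof: apply $\varphi_\beta$ to the defining formula \eqref{eq-def-twist} using its monoidality, convert $\varphi_\beta(\lev_X)$ and $\varphi_\beta(\rcoev_X)$ via the pivotality identities \eqref{pivotal-lev} and \eqref{pivotal-rcoev}, invoke \eqref{eq-braiding3} with $Y=X$ and the crossing index $\beta$, and then clear the resulting coherence isomorphisms. The paper carries out this bookkeeping graphically and closes the argument with \eqref{crossing2}, but the key steps and the identification of the two $\varphi_2$ factors are the same as in your outline.
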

\begin{proof}
Set $\psi=\varphi_2(\beta,\alpha)_X^{-1} \varphi_2(\beta^{-1}\alpha\beta,\beta)_X$.   Then
$$
\varphi_\beta(\theta_X)\;
\overset{(i)}{=}\;
\psfrag{Y}[Bl][Bl]{\scalebox{.9}{$\varphi_\beta\varphi_\alpha(X)$}}
\psfrag{X}[Bl][Bl]{\scalebox{.9}{$\varphi_\beta(X)$}}
\psfrag{a}[Bc][Bc]{\scalebox{.9}{$(\varphi_\beta)_2(\un,\varphi_\alpha(X))$}}
\psfrag{c}[Bc][Bc]{\scalebox{.9}{$\varphi_\beta(\lev_X)$}}
\psfrag{u}[Bc][Bc]{\scalebox{.9}{$(\varphi_\beta)_2(X^*,X)$}}
\psfrag{v}[Bc][Bc]{\scalebox{.9}{$(\varphi_\beta)_2(X,\varphi_\alpha(X))^{-1}$}}
\psfrag{e}[Bc][Bc]{\scalebox{.9}{$\varphi_\beta(\tau_{X,X})$}}
\psfrag{n}[Bc][Bc]{\scalebox{.9}{$(\varphi_\beta)_2(X,X)$}}
\psfrag{s}[Bc][Bc]{\scalebox{.9}{$(\varphi_\beta)_2(X^*,X)^{-1}$}}
\psfrag{z}[Bc][Bc]{\scalebox{.9}{$\varphi_\beta(\rcoev_X)$}}
\psfrag{d}[Bc][Bc]{\scalebox{.9}{$(\varphi_\beta)_2(\un,X)^{-1}$}}
\rsdraw{.35}{.9}{lem-twist-20}
\overset{(ii)}{=}\;\;\;
\psfrag{Y}[Bl][Bl]{\scalebox{.9}{$\varphi_\beta\varphi_\alpha(X)$}}
\psfrag{X}[Bl][Bl]{\scalebox{.9}{$\varphi_\beta(X)$}}
\psfrag{a}[Bc][Bc]{\scalebox{.9}{$(\varphi_\beta)_2(\un,\varphi_\alpha(X))$}}
\psfrag{c}[Bc][Bc]{\scalebox{.9}{$(\varphi_\beta)_0$}}
\psfrag{u}[Bc][Bc]{\scalebox{.9}{$\psi$}}
\psfrag{e}[Bc][Bc]{\scalebox{.9}{$\varphi_\beta^1(X)$}}
\psfrag{n}[Bc][Bc]{\scalebox{.9}{$\varphi_\beta^1(X)^{-1}$}}
\psfrag{z}[Bc][Bc]{\scalebox{.9}{$(\varphi_\beta)_0^{-1}$}}
\psfrag{d}[Bc][Bc]{\scalebox{.9}{$(\varphi_\beta)_2(\un,X)^{-1}$}}
\rsdraw{.35}{.9}{lem-twist-21}
\overset{(iii)}{=}\; \psi \, \theta_{\varphi_\beta(X)}.
$$
Here, $(i)$ is obtained by writing
$$
\theta_X=(\lev_X \otimes \id_{\varphi_\alpha(X)})(\id_{X^*} \otimes \tau_{X,X})(\rcoev_X \otimes \id_X)
$$
and  applying the monoidality of  $\varphi_\beta$, $(ii)$ is obtained by applying \eqref{eq-braiding3}, \eqref{pivotal-lev},
 and \eqref{pivotal-rcoev}, and $(iii)$ follows from \eqref{crossing2}.
\end{proof}

\subsection{$G$-ribbon categories}\label{sect-rib-def}
A \emph{$G$-ribbon  category} is a   pivotal   $G$-braided  category $\cc$   such that   its crossing   $\varphi$ is   pivotal and    its  twist $\theta$  is
\emph{self-dual} in the sense that for all $\alpha\in G$ and all $X \in \cc_\alpha$,
\begin{equation}\label{eq-ribbon}
(\theta_X)^*= (\varphi_0)_X^* (\varphi_2(\alpha^{-1},\alpha)_X^{-1})^* \varphi_{\alpha^{-1}}^1(\varphi_{\alpha}(X)) \theta_{\varphi_{\alpha}(X)^*}.
\end{equation}

For examples of $G$-ribbon categories, see \cite{Tu1}, \cite{TVi3}. The following   lemmas yield a useful consequence of self-duality for twists.

\begin{lem}\label{lem-twist-ribbon}
If  the twist $\theta$ in a  pivotal   $G$-braided  category $\cc$ is self-dual, then  for all   $\alpha\in G$ and   $X \in \cc_\alpha$,
$$
\theta_X
\psfrag{B}[Bl][Bl]{\scalebox{.9}{$\varphi_{\alpha}(X)$}}
\psfrag{X}[Bl][Bl]{\scalebox{.9}{$X$}}
=\,\rsdraw{.45}{.9}{theta-def4}
\qquad \text{and} \qquad
\psfrag{X}[Bl][Bl]{\scalebox{.9}{$\varphi_{\alpha}(X)$}}
\psfrag{B}[Bl][Bl]{\scalebox{.9}{$X$}}
\theta_X^{-1}=\,\rsdraw{.45}{.9}{theta-def5}\;\;.
$$
\end{lem}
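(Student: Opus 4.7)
The right-hand sides of the two claimed equalities depict the twist and its inverse with the closing loop on the opposite side compared with the definitions in \eqref{eq-def-twist} and Lemma~\ref{lem-twist}. They are the $G$-graded analogues of the classical ribbon identity $\theta_X=(\id_X\otimes\rev_X)(\tau_{X,X}\otimes\id_{X^*})(\id_X\otimes\lcoev_X)$, which in the classical case holds by virtue of the ribbon axiom $(\theta_X)^{*}=\theta_{X^{*}}$. The self-duality condition \eqref{eq-ribbon} is precisely the $G$-graded replacement for that axiom, and the plan is to extract from \eqref{eq-ribbon} the graphical rotation that turns one expression for $\theta_X$ into the other.

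First I would unpack the pictures theta-def4 and theta-def5 as explicit compositions. For instance, theta-def4 should read
\[
\vartheta_X \;=\; \bigl(\id_{\varphi_\alpha(X)}\otimes\rev_{\varphi_\alpha(X)}\bigr)\bigl(\tau_{X,\varphi_\alpha(X)}\otimes\id_{\varphi_\alpha(X)^{*}}\bigr)\bigl(\id_X\otimes\lcoev_{\varphi_\alpha(X)}\bigr),
\]
and theta-def5 should read analogously in terms of $\tau^{-1}$, $\rcoev_X$, and $\lev_X$, with arguments forced by the grading: the lower cap must create a $\varphi_\alpha(X)\otimes\varphi_\alpha(X)^{*}$ pair so that the strand can reconnect at the top with total output $\varphi_\alpha(X)$.

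Next, rather than manipulating the two sides into each other directly, I would compute the right dual $(\vartheta_X)^{*}$. Using pivotality of $\varphi_\alpha$ (in particular \eqref{pivotal-lev}--\eqref{pivotal-rcoev}), the naturality of the braiding together with Lemma~\ref{lem-braiding}, and the crossing coherence diagrams \eqref{crossing1}--\eqref{crossing6}, one should be able to transform $(\vartheta_X)^{*}$ until it takes exactly the form
\[
(\varphi_0)_X^{*}\,\bigl(\varphi_2(\alpha^{-1},\alpha)_X^{-1}\bigr)^{*}\,\varphi_{\alpha^{-1}}^1(\varphi_\alpha(X))\,\theta_{\varphi_\alpha(X)^{*}},
\]
which by the hypothesis \eqref{eq-ribbon} equals $(\theta_X)^{*}$. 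Since the right dual $(-)^{*}$ is fully faithful on the pivotal category $\cc$, one concludes $\vartheta_X=\theta_X$. The identity for $\theta_X^{-1}$ will follow either by inverting the first equality and comparing with the formula in Lemma~\ref{lem-twist}, or by running the same self-dualisation argument with $\tau^{-1}$ in place of $\tau$ (using Lemma~\ref{lem-braiding}(d) to convert between the two representations of $\tau^{-1}$).

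The principal obstacle is one of bookkeeping rather than of ideas: diagrammatically the assertion is the obvious isotopy rotating a strand by $360^\circ$, but algebraically one must track, at every step, the coherence morphisms $(\varphi_0)_X$, $(\varphi_\alpha)_0$, $(\varphi_\alpha)_2(X,Y)$, $\varphi_2(\alpha,\beta)_X$, and $\varphi_\alpha^1$ that arise from the crossing $\varphi$, and match them exactly against the factors appearing in \eqref{eq-ribbon}. This matching will dominate the length of the actual proof.
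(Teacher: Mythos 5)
Your proposal is correct and uses the same key ingredients as the paper's proof, just organized differently: you apply the duality functor $(-)^*$ to the candidate morphism $\vartheta_X$ and compare the result with the right-hand side of the self-duality hypothesis~\eqref{eq-ribbon}, then invoke faithfulness of $(-)^*$; the paper instead starts from $\theta_X$ and uses the pivotal structure of $\cc$ to rewrite it as a diagram with a coupon labelled $\theta_X^*$, substitutes~\eqref{eq-ribbon}, and simplifies to the target picture via the second formula in Lemma~\ref{lem-braiding}(d), naturality of $\tau$, and~\eqref{eq-dem-twist}. Since applying $(-)^*$ and ``unrotating'' via the pivotal structure are mutually inverse operations, these two routes force the same diagram chase (in particular, your step identifying $(\vartheta_X)^*$ with the right-hand side of~\eqref{eq-ribbon} is exactly the reverse of the paper's steps $(ii)$--$(iv)$). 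Your handling of the formula for $\theta_X^{-1}$ likewise matches the paper, which deduces it from the first equality by the method of Lemma~\ref{lem-twist} (check it is a two-sided inverse); of your two suggestions, the first is the one the paper uses, and both are viable. One detail worth being careful about when you carry it out: in the $G$-crossed setting the two loop strands of the target picture are not both coloured $X$ as in the classical ribbon identity --- the loop created by the cup must carry the colour $\varphi_\alpha(X)$ (as you note when you insist the cap creates a $\varphi_\alpha(X)\otimes\varphi_\alpha(X)^*$ pair), and this is where the coherence morphisms $(\varphi_0)_X$, $\varphi_2(\alpha^{-1},\alpha)_X$, and $\varphi_{\alpha^{-1}}^1$ from~\eqref{eq-ribbon} must be absorbed.
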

\begin{proof}
  We have
\begin{center}
$ \displaystyle
\theta_X \;\overset{(i)}{=}\;\,
\psfrag{X}[Bl][Bl]{\scalebox{.9}{$X$}}
\psfrag{Y}[Bl][Bl]{\scalebox{.9}{$\varphi_\alpha(X)$}}
\psfrag{u}[Bc][Bc]{\scalebox{.9}{$\theta_X^*$}}
\rsdraw{.35}{.9}{lem-twist-13}
 \overset{(ii)}{=}\;
\psfrag{X}[Bl][Bl]{\scalebox{.9}{$X$}}
\psfrag{Y}[Bl][Bl]{\scalebox{.9}{$\varphi_\alpha(X)$}}
\psfrag{u}[Bc][Bc]{\scalebox{.9}{$\varphi_{\alpha^{-1}}^1(\varphi_\alpha(X))$}}
\psfrag{v}[Bc][Bc]{\scalebox{.9}{$\varphi_2(\alpha^{-1},\alpha)_X^{-1}(\varphi_0)_X$}}
\rsdraw{.35}{.9}{lem-twist-15}
$\\[.8em]
$ \displaystyle
\overset{(iii)}{=}\;
\psfrag{X}[Bl][Bl]{\scalebox{.9}{$X$}}
\psfrag{Y}[Bl][Bl]{\scalebox{.9}{$\varphi_\alpha(X)$}}
\psfrag{u}[Bc][Bc]{\scalebox{.9}{$(\varphi_0)_{\varphi_\alpha(X)}^{-1}\varphi_2(\alpha,\alpha^{-1})_{\varphi_\alpha(X)}$}}
\psfrag{v}[Bc][Bc]{\scalebox{.9}{$\varphi_2(\alpha^{-1},\alpha)_X^{-1}(\varphi_0)_X$}}
\rsdraw{.35}{.9}{lem-twist-17}
\;\overset{(iv)}{=}\;\;
\psfrag{X}[Bl][Bl]{\scalebox{.9}{$X$}}
\psfrag{Y}[Bl][Bl]{\scalebox{.9}{$\varphi_\alpha(X)$}}
\rsdraw{.35}{.9}{lem-twist-18}
$
\end{center}\vspace{.8em}
Here,  $(i)$ follows from the pivotality of $\cc$, $(ii)$ is obtained from \eqref{eq-ribbon} and the definition of $\theta_{\varphi_\alpha(X)}$,  $(iii)$ is obtained by applying   the second equality of Lemma~\ref{lem-braiding}(d), and  $(iv)$ follows from the naturality of $\tau$ and \eqref{eq-dem-twist}.

The proof of the second equality of the lemma uses the first equality and is similar to
the proof  of Lemma~\ref{lem-twist}.
\end{proof}

\subsection{The category $\cc_1$}\label{sect-cas-C1}
Given a   $G$-ribbon  category $(\cc,\varphi,\tau )$ with  twist $\theta$,   the category   $\cc_1$ is   a ribbon category   in the usual sense of the word  with braiding
$$
\{c_{X,Y}=(\id_Y \otimes (\varphi_0)^{-1}_X)\tau_{X,Y} \co X \otimes Y \to Y \otimes X\}_{X,Y \in \cc}
$$
and   twist $ \{v_X=(\varphi_0)^{-1}_X \theta_X\co X \to X\}_{X \in \cc}$.

For
  $G=1$,   the definitions of $G$-braided/$G$-ribbon categories  are
equivalent to the standard definitions of braided/ribbon
categories.

\section{Colored $G$-graphs}\label{Colored   $G$-graphs}

From now on, unless explicitly stated to the
contrary,  the symbol $\cc$ denotes  a pivotal $G$-crossed category
with pivotal crossing $\varphi $.

In this section, we  introduce ribbon graphs in $\RR^3$
and their colorings over $\cc$.

\subsection{Ribbon graphs}\label{ribbredGgraphs}   We   recall  the
notion of a ribbon graph following \cite{Tu0}. A {\it
coupon}\index{coupon} is an oriented  rectangle   with a
distinguished  side called the bottom base; the opposite side is
called   the top base. A {\it ribbon graph}  $\Omega$  with $k\geq
0$ {\it inputs} $((r,0,0))_{ r=1}^k$ and $ l\geq 0$ {\it outputs}
$((s,0,1))_{ s=1}^l$ consists of a finite family of coupons,
oriented circles, and  oriented segments  embedded in $\RR
\times (-\infty, 1] \times [0,1]$. The circles and the segments in
question are called the {\it circle components} and the {\it  edges}
of $\Omega$, respectively.
The inputs and outputs of $\Omega$ should be among the endpoints
of the edges, all the other endpoints of the edges should lie on
the bases  of the coupons. Otherwise, the edges, the circle
components, and the coupons  of $\Omega$ are disjoint. They are
also supposed to carry a    framing, i.e.,   a continuous
nonsingular vector field
  on $\Omega$ transversal to $\Omega$. It is required that near the inputs
and outputs of $\Omega$, the edges are  straight segments parallel to the axis $ \{(0,0)\}\times \RR
 $ and  the framing is  given by the vector $(0, \delta,0)$
with small $\delta>0$.
 The orientation of each coupon together with the framing should
yield  the negative (left-handed) orientation of $\RR^3$.

In the pictures   we will use the following
conventions: the first axis in $\RR^2\times [0,1]$ is a
horizontal line  on the page of the picture directed to the
right, the second axis is orthogonal to the plane of the picture
and is  directed from the eye of the reader towards this plane,
the third axis is a vertical line on the plane of the picture
directed from the bottom to the top.   Note that   points with
positive second coordinate lie behind the plane of the picture.
The distinguished bases of the coupons
in the pictures are the bottom horizontal sides.

\subsection{Tracks and meridians}\label{ribbredGgraphsNEWNEW}   Fix a base point $z\in \RR \times [2, \infty ) \times [0,1]$.   Given a  ribbon graph   $\Omega$, we   consider its complement $ C_\Omega=(\RR^2\times
[0,1]) \setminus \Omega$   in $\RR^2\times
[0,1]$ and observe that $z \in  C_\Omega$.
 We shall write
$\pi_1(C_{\Omega} )$ for $\pi_1(C_{\Omega}, z)$. Pushing
$\Omega$ along the framing we obtain a disjoint   copy
$\widetilde \Omega$ of $\Omega$. Pushing an   edge/coupon ${e}$
of $\Omega$ along the framing we obtain an   edge/coupon
$\widetilde {e}$
  of $\widetilde \Omega$. A path  $\gamma\colon [0,1] \to
C_{\Omega}$ from the base point $z=\gamma(0) $ to a point  of  $
\widetilde  {{e}} $  is called a {\it $z$-path} for ${e}$. By a
{\it homotopy} of a $z$-path $\gamma$ we  mean a deformation of
$\gamma$ in the class of $z$-paths   in $C_{\Omega}$ fixing
$\gamma(0)=z  $ and keeping $\gamma(1)$ on $\widetilde  {{e}}$.
The   homotopy classes of  $z$-paths for ${e}$  are called  {\it
tracks} of ${e}$  (with respect to $z$). Multiplication of loops
based at $z$ with $z$-paths defines a left action of
$\pi_1(C_{\Omega} )$ on the set of tracks of~${e}$. Since
$\widetilde  {{e}}$ is contractible, this action is transitive
and faithful. The tracks of edges (resp.\ coupons) of $\Omega$
are called {\it edge-tracks} (resp.\ {\it coupon-tracks}) of
$\Omega$.  We do not define tracks for circle components of
$\Omega$.

   For a  $z$-path $\gamma$ of  an edge/coupon  ${e}$, denote by
$\mu_{\gamma}\in \pi_1(C_{\Omega}) $ the (negative) meridian of
${e}$ represented by the loop $\gamma l_{e} \gamma^{-1}$, where
$l_{e} $ is a small loop in $C_{\Omega}$ encircling ${e}$ with
linking number $-1$, see Figure~\ref{fig-meridian}.  The meridian $\mu_{\gamma}$ depends only on the
track represented by $\gamma$. Clearly, $\mu_{\beta \gamma}=\beta \mu_\gamma \beta^{-1}$ for any $\beta\in \pi_1(C_\Omega )$.

\begin{figure}[t]
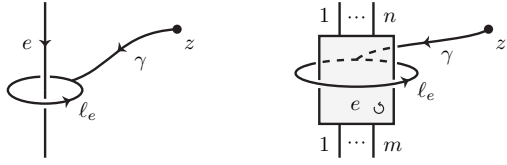

\begin{center}
\psfrag{u}[Br][Br]{\scalebox{.9}{$1$}}
\psfrag{n}[Bl][Bl]{\scalebox{.9}{$n$}}
\psfrag{x}[Bl][Bl]{\scalebox{.9}{$m$}}
\psfrag{e}[Bc][Bc]{\scalebox{.9}{$e$}}
\psfrag{z}[Bc][Bc]{\scalebox{.9}{$z$}}
\psfrag{g}[Bc][Bc]{\scalebox{.9}{$\gamma$}}
\psfrag{m}[Bc][Bc]{\scalebox{.9}{$\ell_e$}}
\rsdraw{.45}{.9}{meridian2}
\end{center}
\caption{Meridian $\mu_{\gamma}=\gamma l_{e} \gamma^{-1}$ of an edge/coupon $e$}
\label{fig-meridian}
\end{figure}

\subsection{Colorings of   graphs}\label{coloredGgraphs}  By a  {\it $G$-graph} we mean a ribbon
 graph $\Omega $ endowed with a
 group homomorphism $g\colon   \pi_1(C_\Omega ) \to G$.
 For brevity, we shall sometimes write $\Omega$ for the pair $(\Omega, g)$.   
  A {\it   ${\mathcal C}$-pre-coloring} or shorter a {\it
pre-coloring} $u$  of  a $G$-graph $(\Omega,g )$ comprises
two functions. The first function   assigns to every
edge-track  $\gamma $ of      $\Omega$ a  non-zero object $u_\gamma\in
{{\mathcal C}}_{g(\mu_{\gamma})}$ called the {\it color} of
$\gamma$.    The second function   assigns
 to every    edge-track  $\gamma $ of     $\Omega$  and to every
$\beta\in \pi_1(C_\Omega )$ an isomorphism
$$u_{\beta, \gamma}\colon  u_{\beta \gamma} \to \varphi_{g(\beta^{-1})} (u_\gamma) $$
so that for all $\gamma$, we have $u_{1,
\gamma}=(\varphi_0)_{u_\gamma}\colon u_{ \gamma} \to \varphi_{1}
(u_\gamma)$ and for all $ \beta, \delta \in  \pi_1(C_\Omega )$,
the following diagram   commutes:
\begin{equation}\begin{split}\label{condweak}\xymatrix@R=1cm @C=3cm {
u_{ \beta \delta \gamma} \ar[r]^-{u_{  \beta \delta, \gamma}}\ar[d]_{u_{\beta, \delta \gamma}} & \varphi_{g(\delta^{-1}\beta^{-1})} (u_{  \gamma}) \\
\varphi_{g(\beta^{-1})} (u_{\delta \gamma}) \ar[r]^-{\varphi_{g(\beta^{-1})}  (u_{\delta, \gamma}) } & \varphi_{g(\beta^{-1})} \varphi_{g(\delta^{-1} )} (u_{  \gamma})   \ar[u]_{\varphi_2(g(\beta^{-1}) , g(\delta^{-1}))_{u_\gamma}}.
} \end{split}\end{equation}

One can   extend this definition by allowing zero objects for   colors of tracks. This however does not lead to interesting invariants of graphs, and we shall not do it.

A {\it   ${\mathcal C}$-coloring} or shorter a {\it coloring}
of  a $G$-graph $(\Omega,g )$ consists of   a
pre-coloring $u$ and a   function  $v$ assigning to every
coupon-track  $\gamma$ of  $\Omega$  a    morphism $v_\gamma$ in
$ {{\mathcal C}}_{g(\mu_{\gamma})}$. To state our requirements
on $v_\gamma$, we need more terminology. By {\it entries}
(resp.\ {\it exits}) of a coupon $Q$ of $\Omega$, we mean the
endpoints of edges of $\Omega$ lying on the bottom  (resp.\ top)
side of $Q$.
 Let $m$   be the number of entries  of $Q$; the   direction of
 the bottom  side induced by the orientation of    $Q$
 determines an order in the set of the entries. Let  ${e}_i $
 be the   edge  of $\Omega$
incident to the $i$-th entry where $i=1, \ldots, m$. Set
  $\varepsilon_i =+$    if ${e}_i$ is directed
  out of $Q$ near the $i$-th entry and set   $\varepsilon_i =-$
  otherwise. Composing  a  $z$-path representing  $\gamma$ with
  a   path in
$\widetilde  Q$ leading   to the $i$-th entry, we obtain a track
$\gamma_i$  of $ {e}_i$ depending only on $\gamma$ and $i$.
Similarly, let $n$   be the number of exits  of $Q$; the
direction of the top side of $Q$ induced by the opposite
orientation of $Q$ determines an order in the set of the exits.
Let  ${e}^j$ be the edge  of $\Omega$ incident to the $j$-th
exit where $j=1, \ldots, n$. Set
   $\varepsilon^j =-$     if ${e}^j$ is directed out of $Q$ and
    $\varepsilon^j =+$   otherwise.
     Composing  a  $z$-path  representing $\gamma$ with a   path
in $\widetilde  Q$ leading  to the $j$-th exit we obtain  a
well-defined track $\gamma^j$  of $ {e}^j$.   Clearly,
$$
\mu_\gamma=\mu_{\gamma_1}^{\varepsilon_1} \cdots  \mu_{\gamma_m}^{\varepsilon_m} =
\mu_{\gamma^1}^{\varepsilon^1} \cdots  \mu_{\gamma^n}^{\varepsilon^n}  \in \pi_1(C_\Omega).
$$
We require that
\begin{enumerate}
\labeli
\item for any  coupon-track $\gamma$ of  $ \Omega$,  we have (in
the notation above)
\begin{equation}\label{vgamma}
v_\gamma \in \Hom_{{\mathcal C}}({{\otimes}}_{i=1}^m
u_{\gamma_i}^{\varepsilon_i},\,
 {{\otimes}}_{j=1}^n u_{\gamma^j}^{\varepsilon^j})
\end{equation}
where for an object $U$ of $\mathcal C$, we set $U^+=U$ and $U^-=U^*$;

\item for any  $  \gamma$ as in (i) and   any $\beta\in \pi_1(C_\Omega)$, the following diagram   commutes:
\begin{equation}\begin{split}\label{weakiso---}
\xymatrix@R=1cm @C=1.7cm {
{{\otimes}}_{i=1}^m
u_{\beta \gamma_i}^{\varepsilon_i}  \ar[r]^-{\otimes_{i=1}^m
u_{\beta, \gamma_i}^{\varepsilon_i}}\ar[d]_{v_{ \beta \gamma}  } &
{{\otimes}}_{i=1}^m \varphi_{g(\beta^{-1})}  (u_{ \gamma_i}^{\varepsilon_i}  ) \ar[r]^-{(\varphi_{g(\beta^{-1})})_m}
 &\varphi_{g(\beta^{-1})} ( {{\otimes}}_{i=1}^m    u_{ \gamma_i}^{\varepsilon_i})  \ar[d]^-{  \varphi_{g(\beta^{-1})} (v_\gamma) }\\
 {{\otimes}}_{j=1}^n u_{\beta \gamma^j}^{\varepsilon^j}
 \ar[r]^{\otimes_{j=1}^n  u_{\beta, \gamma^j}^{\varepsilon^j}}
 &{{\otimes}}_{j=1}^n  \varphi_{g(\beta^{-1})} (u_{\gamma^j}^{\varepsilon^j})  \ar[r]^-{(\varphi_{g(\beta^{-1})})_n}
& \varphi_{g(\beta^{-1})}( {{\otimes}}_{j=1}^n     u_{  \gamma^j}^{\varepsilon^j}  )}
\end{split}\end{equation}
where $u_{\beta, \gamma}^+= u_{\beta, \gamma} $,   $u_{\beta,
\gamma}^- \colon  u_{\beta \gamma}^* \to \varphi_{g(\beta^{-1})}
(u_\gamma^*)$ is defined in Section~\ref{sect-pivot-cross-Gcat}, and the notation $(\varphi_\alpha)_n$ is defined in Section~\ref{sect-monofunctor}.
In the case where $m=n=1$ and $\varepsilon_1=\varepsilon^1=+$,
the diagram \eqref{weakiso---} simplifies to
 \begin{equation}\begin{split}\label{weakiso---next}\xymatrix@R=1cm @C=1.7cm {
 u_{\beta \gamma_1}  \ar[r]^-{u_{\beta, \gamma_1}} \ar[d]_{v_{ \beta \gamma}  } &
  \varphi_{g(\beta^{-1})} (    u_{ \gamma_1} )  \ar[d]^-{  \varphi_{g(\beta^{-1})} (v_\gamma) }\\
 u_{\beta \gamma^1}
 \ar[r]^{u_{\beta, \gamma^1} \,\,\,\,\,\,\,\,\,\,\,\,\,}
 &      \varphi_{g(\beta^{-1})}(    u_{  \gamma^1}  ).
}\end{split}\end{equation}
\end{enumerate}

When $m=0$ and/or $n=0$, we use in \eqref{vgamma} and in similar
 formulas below the   convention that an empty monoidal product
 of objects is the
  unit object.

Any pre-coloring $u$ of a $G$-graph $ \Omega $ can be extended
to a coloring  of $ \Omega $ as follows:  for each coupon $Q$ of
$\Omega$ pick a  track $\gamma $ of $Q$ and  a    morphism
$v_\gamma$ as in \eqref{vgamma}.  For all $\beta\in
\pi_1(C_\Omega)\setminus \{1\}$,  the  morphism  $ v_{\beta
\gamma}$ is determined uniquely from the   diagram
\eqref{weakiso---}. One may check that this gives a  coloring
$(u,v)$ of $ \Omega $.

\subsection {The source and the target}
Consider a $G$-graph $\Omega=(\Omega, g)$ with $k\geq 0$
inputs and $l\geq 0$ outputs. For $r=1, \ldots, k$ consider the
path in $C_\Omega$   obtained as   the product of the linear
paths from    the base point $z=(z_1,z_2,z_3)$  to $(r,z_2,0)$
and  from $(r,z_2,0)$ to $(r, \delta,0)$, where $\delta$ is a
small positive real number.   This product is a   $z$-path of
the  edge of $\Omega$ incident to the $r$-th input. The
corresponding  track is denoted $\gamma_r$ and called the  {\it
$r$-th input track} of $ \Omega$. Also   we define a   sign
$\varepsilon_r $
  to be   $ +$   if the edge of $\Omega$ incident to the $r$-th
  input is directed down (into   $ \RR^2\times
  (-\infty, 0]$)  and to be    $-$   otherwise.  Similarly, for
$s=1, \ldots, l$,  the   product of the linear paths from $z$
  to $(s,z_2,1)$ and
from $(s,z_2,1)$ to $(s,\delta,1)$  is a   $z$-path of the  edge
of $\Omega$ incident to the $s$-th output.  The corresponding
track is denoted $\gamma^s$ and called the  {\it $s$-th output
track} of $ \Omega$. Set   $\varepsilon^s=+$  if the  edge of
$\Omega$ incident to the $s$-th output     is directed down
(into $ \RR^2\times [0,1]$)  and set   $\varepsilon^s=
-$   otherwise.  Given a pre-coloring $u$ of $\Omega$, the
sequences
   $( u_{\gamma_1}, \varepsilon_1),\ldots ,( u_{\gamma_k},
   \varepsilon_k)$ and $( u_{\gamma^1}, \varepsilon^1),\ldots ,(
   u_{\gamma^l},
\varepsilon^l)$ are called respectively   the {\it source} and
the {\it target} of the pre-colored $G$-graph $(\Omega, u
)$. Here $u_{\gamma_r} \in {\mathcal C}_{g(\mu_{\gamma_r})}$ and
$u_{\gamma^s} \in {\mathcal C}_{g(\mu_{\gamma^s})}$ for all
$r,s$. Clearly,
$$\mu_{\gamma_1}^{\varepsilon_1} \cdots \mu_{\gamma_k}^{\varepsilon_k}= \mu_{\gamma^1}^{\varepsilon^1}\cdots \mu_{\gamma^l}^{\varepsilon^l} .$$

 \subsection{Isomorphisms of colorings}\label{sect-iso-quasi-iso}   Let $\Omega=(\Omega, g)$ be a $G$-graph.
  An  {\it  isomorphism}      $u\approx u'$ of pre-colorings
 of  $\Omega $   is a system of isomorphisms $f=\{f_\gamma
 \colon u_\gamma \to u'_\gamma\}_\gamma$, where $\gamma$ runs
 over all   edge-tracks of   $\Omega$,  such that
  for any   $\gamma$  and any $ \beta \in  \pi_1(C_\Omega)$, the
  following diagram   commutes:
\begin{equation}\begin{split}\label{weakiso}\xymatrix@R=1cm @C=3cm {
u_{  \beta \gamma} \ar[r]^-{u_{   \beta, \gamma}}\ar[d]_{f_{\beta  \gamma}} & \varphi_{g(\beta^{-1} )} (u_{  \gamma}) \ar[d]^{ \varphi_{g(\beta^{-1} )} (f_{  \gamma})} \\
u'_{  \beta \gamma} \ar[r]^-{u'_{   \beta, \gamma}}  &
\varphi_{g(\beta^{-1} )} (u'_{
\gamma}).}\end{split}\end{equation} Note that for $\beta=1$ the
commutativity of \eqref{weakiso} follows from the  definition of
pre-colorings and the naturality of $\varphi_0$.

Isomorphisms of pre-colorings may be used to replace the
 colors of edges with isomorphic objects. Specifically, suppose
 that $u$ is a pre-coloring of $ \Omega $ and   that for each
 edge-track $\gamma$ of   $\Omega$ we have  an object $u'_\gamma  \in {{\mathcal
 C}}_{g(\mu_{\gamma})}$ and an isomorphism $
 f_\gamma \colon u_\gamma \to u'_\gamma  $.  Then the system  $
 \{u'_\gamma\}_\gamma$ extends uniquely to a pre-coloring $ u'
 $ of $ \Omega $  such that $f=\{f_\gamma  \}_\gamma\colon u\to
 u'$ is an isomorphism of pre-colorings. Indeed, the morphisms
 $u'_{\beta, \gamma}$ can be uniquely recovered from
 \eqref{weakiso}.
For example, given $u$, we can replace the color of any
    edge-track $\gamma_0$   via any isomorphism $f_0\colon
    u_{\gamma_0} \to V\in {{\mathcal C}}_{g(\mu_{\gamma_0})}$
    keeping the  colors of all the other   edge-tracks. This is
    achieved by applying the procedure above to
  the system     $u'_{\gamma_0} = V$,
  $f_{\gamma_0}=f_0$ and $u'_\gamma=u_\gamma$,  $f_\gamma=\id$ for
  $\gamma\neq \gamma_0$.

 The following lemma  shows that   to specify a pre-coloring
 it is  essentially  enough to color one track  for every edge.

\begin{lem}\label{lem-weakcolorings}    Let  ${E}$ be the set of edges of
  $\Omega $.  Pick a      track $ \gamma_{e} $   of  ${e}$ for
  all    ${e}\in {E} $.
\begin{enumerate}
\labeli
\item  For any family of   non-zero   objects $\{ u_{e} \in {{\mathcal
C}}_{g(\mu_{\gamma_{e}})}\}_{{e}\in {E}}$, there is a
pre-coloring $u$ of $\Omega $ such that
$u_{\gamma_{e}}=u_{e}$ for all ${e}\in {E}$.
\item Given      pre-colorings  $u$,  $u'$ of $\Omega$,  any
system of  isomorphisms $\{ u_{\gamma_e}\to   u'_{\gamma_e}
\}_{e\in E}$ extends uniquely to an isomorphism  $ u \approx
u'$.
\end{enumerate}
\end{lem}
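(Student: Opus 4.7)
The plan is to exploit the fact, noted in Section~\ref{ribbredGgraphsNEWNEW}, that $\pi_1(C_\Omega)$ acts transitively and faithfully on the tracks of each edge. Thus every track of an edge $e$ has a \emph{unique} expression of the form $\beta\gamma_e$ with $\beta\in\pi_1(C_\Omega)$, so the construction of a pre-coloring reduces to prescribing data indexed by elements of $\pi_1(C_\Omega)$ subject to coherence with the group law.

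For (i), I first build an auxiliary pre-coloring $u^0$ by declaring
\[
u^0_{\beta\gamma_e} \;:=\; \varphi_{g(\beta^{-1})}(u_e), \qquad u^0_{\delta,\,\beta\gamma_e} \;:=\; \varphi_2\bigl(g(\delta^{-1}),\,g(\beta^{-1})\bigr)_{u_e}^{-1}
\]
for every $e\in E$ and all $\beta,\delta\in\pi_1(C_\Omega)$. Three facts must be checked. First, homogeneity $u^0_{\beta\gamma_e}\in\cc_{g(\mu_{\beta\gamma_e})}$ follows from the grading axiom $\varphi_\alpha(\cc_\beta)\subset\cc_{\alpha^{-1}\beta\alpha}$, the identity $\mu_{\beta\gamma_e}=\beta\mu_{\gamma_e}\beta^{-1}$, and the fact that $g$ is a homomorphism. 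Second, the normalization $u^0_{1,\gamma}=(\varphi_0)_{u^0_\gamma}$ is exactly the relation $\varphi_2(1,\alpha)_X^{-1}=(\varphi_0)_{\varphi_\alpha(X)}$ extracted from \eqref{crossing6}. Third, the cocycle \eqref{condweak} becomes an instance of the coherence \eqref{crossing5} applied to $X=u_e$, with $\alpha,\beta,\gamma$ there replaced by $g(\delta^{-1}),\,g(\beta^{-1}),\,g(\beta^{-1})$ (or the appropriate suitable $g$-images), and using that $g$ is a group homomorphism to match the two sides.

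I then obtain the desired $u$ from $u^0$ by the replacement procedure at the end of Section~\ref{sect-iso-quasi-iso}: the canonical isomorphism $(\varphi_0)_{u_e}^{-1}\co u^0_{\gamma_e}=\varphi_1(u_e)\iso u_e$ extends uniquely to an isomorphism $u^0\approx u$ where $u_{\gamma_e}=u_e$, yielding a pre-coloring with the prescribed colors on the distinguished tracks. For (ii), given isomorphisms $f_e\co u_{\gamma_e}\to u'_{\gamma_e}$, the commutativity of \eqref{weakiso} forces
\[
f_{\beta\gamma_e} \;=\; (u'_{\beta,\gamma_e})^{-1}\,\varphi_{g(\beta^{-1})}(f_e)\,u_{\beta,\gamma_e},
\]
which proves uniqueness. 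For existence, I take this as a definition and verify \eqref{weakiso} for arbitrary $\delta\in\pi_1(C_\Omega)$ and arbitrary track $\beta\gamma_e$: substituting the formula on both sides and applying \eqref{condweak} for $u$ and $u'$ to compare $u_{\delta\beta,\gamma_e}$ with the composite through $u_{\delta,\beta\gamma_e}$ and $\varphi_{g(\delta^{-1})}(u_{\beta,\gamma_e})$, the remaining discrepancy is cancelled by the naturality of $\varphi_2(g(\delta^{-1}),g(\beta^{-1}))$ applied to $f_e$.

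The main obstacle is the coherence check for $u^0$: one must carefully marry the group cocycle for $g\co\pi_1(C_\Omega)\to G$ with the monoidal-functor cocycle \eqref{crossing5} for the crossing $\varphi$, recognizing that they combine to give exactly~\eqref{condweak}. Once that bookkeeping is in place, both (i) and (ii) follow essentially by formal manipulation, and the lemma reduces to the elementary fact that a $\pi_1(C_\Omega)$-equivariant family on a free transitive set is determined by its value at one point.
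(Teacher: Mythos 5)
Your proof is correct and follows essentially the same route as the paper: in (i) you build the same auxiliary pre-coloring (your $u^0_{\beta\gamma_e}=\varphi_{g(\beta^{-1})}(u_e)$ and $u^0_{\delta,\beta\gamma_e}=\varphi_2(g(\delta^{-1}),g(\beta^{-1}))_{u_e}^{-1}$ is literally the paper's $u_\gamma=\varphi_{g(\gamma_e\gamma^{-1})}(u_e)$ construction, only written with $\gamma=\beta\gamma_e$), verify coherence via \eqref{crossing5}--\eqref{crossing6}, and then retune to $u_{\gamma_e}=u_e$ by the replacement procedure; in (ii) you extract the same forced formula for $f_{\beta\gamma_e}$ from \eqref{weakiso} and verify it by the same combination of \eqref{condweak} for $u$ and $u'$ together with naturality of $\varphi_2$, which is exactly the paper's ``cubic diagram'' argument unpacked.
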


\begin{proof}
Let us prove (i).  For any     track $\gamma$ of ${e}\in {E}$, there is a
unique element of $\pi_1(C_\Omega)$  denoted
$\gamma_{e}\gamma^{-1}$ such that $(\gamma_{e}\gamma^{-1})
\gamma=\gamma_e$.  Set $u_\gamma=
\varphi_{g(\gamma_{e}\gamma^{-1})} (u_{e})$. For any $\beta\in
\pi_1(C_\Omega)$, consider the isomorphism
$$  \varphi_2( g(\beta^{-1}), g(\gamma_{e} \gamma^{-1}))_{u_{e}}\co    \varphi_{g(\beta^{-1})} \varphi_{g(\gamma_{e} \gamma^{-1})} (u_{e}) \to
\varphi_{g( \gamma_{e} \gamma^{-1} \beta^{-1})} (u_{e})  .$$ The
source and the target of this isomorphism  are the objects
$\varphi_{g(\beta^{-1})} (u_\gamma)$ and $u_{\beta \gamma}$,
respectively. Let $u_{\beta, \gamma}$ be the inverse isomorphism
$u_{\beta \gamma} \to  \varphi_{g(\beta^{-1})} (u_\gamma)$. The
commutativity of the diagrams \eqref{crossing5} and
\eqref{crossing6} implies that the functions $\gamma\mapsto
u_\gamma$ and $(\gamma, \beta) \mapsto u_{\beta, \gamma}$ form
a pre-coloring  of $ \Omega $.

Clearly, $u_{\gamma_{e}}= \varphi_{1} (u_{e})$ is isomorphic to
$u_{e}$ for all ${e}\in {E}$. Replacing the colors inductively
as described before the lemma, we can ensure that
$u_{\gamma_{e}}= u_{e}$ for all ${e} $.

Let us prove (ii). Fix a system of  isomorphisms $\{f_e\colon u_{\gamma_e}\to
u'_{\gamma_e} \}_{e\in E}$. Consider an isomorphism $f\co u \to u'$
such that $f_{\gamma_{e}}=f_e$ for all $e\in E$.   Replacing
$\gamma$ and $\beta$  in \eqref{weakiso} by $\gamma_{e}$ and
$\gamma \gamma_{e}^{-1}$, respectively, we obtain that for any
track $\gamma$ of an edge ${e}\in {E}$,
 \begin{equation}\begin{split}\label{defiso}f_{\gamma}= (u'_{\gamma \gamma_{e}^{-1}, \gamma_e})^{-1}
\varphi_{g(\gamma_e \gamma^{-1})} (f_e ) \,  u_{\gamma \gamma_{e}^{-1}, \gamma_e}\co u_\gamma \to u'_\gamma.\end{split}\end{equation}  This proves the uniqueness of $f=\{f_\gamma\}_\gamma$. To prove the existence of $f$, we define each $f_\gamma$ by \eqref{defiso}. The naturality of $\varphi_0$ implies that $f_{\gamma_e}=f_e$ for all $e\in E$.  It remains to verify the commutativity of the diagram \eqref{weakiso} for all $\beta$,  $ \gamma$. For $\gamma=\gamma_{e}$, the commutativity of \eqref{weakiso} follows from the  definition of $f_\gamma$. Any    track $\gamma$ of ${e}\in {E}$ expands as $\delta \gamma_{e}$ with $\delta\in \pi_1(C_\Omega )$ and the commutativity of \eqref{weakiso}
follows from the commutativity of the cubic diagram in which two
horizontal squares are the diagram  \eqref{condweak} with
$\gamma$ replaced by $\gamma_{e}$ and a similar diagram with $u$
replaced by  $u'$, while the vertical isomorphisms relating
these two squares  are   induced by   $f$.
\end{proof}

Consider two  colorings $(u,   v)$ and $(u',  v')$ of $\Omega $
  with  the same source and target so that $u_{\gamma
  }=u'_{\gamma }$ for all input/output   tracks $\gamma$ of
  $\Omega$.  By an   {\it   isomorphism}
   $(u,   v) \approx (u',  v')$, we mean an isomorphism of
 pre-colorings $f \colon u \to u'$   such that  for any
 input/output   track  $\gamma$ of $\Omega$, we have
 $f_\gamma=\id\co  u_\gamma\to u'_\gamma $ and for any coupon $Q$
 of $\Omega$ and any track $\gamma$ of $Q$, the following
 diagram commutes:
\begin{equation}\begin{split}\label{weakiso+}\xymatrix@R=1cm @C=3cm {
{{\otimes}}_{i=1}^m
u_{  \gamma_i}^{\varepsilon_i}  \ar[r]^-{v_{   \gamma}}\ar[d]_{\otimes_{i=1}^m
f_{  \gamma_i}^{\varepsilon_i}} &  {{\otimes}}_{j=1}^n u_{  \gamma^j}^{\varepsilon^j}
 \ar[d]^{\otimes_{j=1}^n f_{  \gamma^j}^{\varepsilon^j}}\\
 {{\otimes}}_{i=1}^m (u'_{  \gamma_i})^{\varepsilon_i}  \ar[r]^-{v'_{   \gamma}} &  {{\otimes}}_{j=1}^n (u'_{  \gamma^j})^{\varepsilon^j}  .
 }\end{split}\end{equation} Here  we use the notation of Section
\ref{coloredGgraphs} and set  $$f_{  \gamma}^+=  f_\gamma \colon
u_\gamma \to u'_\gamma \quad \quad {\text {and}}\quad \quad f_{
\gamma}^- =(f_{ \gamma}^*)^{-1} \colon    u_{ \gamma}^* \to
(u'_\gamma)^* .$$ Note that if the diagram \eqref{weakiso+}
commutes for one track $\gamma$ of $Q$, then it commutes for all
tracks of $Q$. This follows from the commutativity of
\eqref{weakiso---}.

\subsection {Color-equivalence}\label{color-equivalence1}   By a {\it self-homeomorphism} of   $\RR^2\times [0,1]$ we   mean a  homeomorphism $\RR^2\times
[0,1]\to \RR^2\times [0,1]$  which is   the identity outside
a compact subset of     $\RR \times (-\infty, 1]\times
(0,1) $.   Self-homeomorphisms of $\RR^2\times [0,1]$ fix
the base point $z$ (for any choice of $z$ as   in Section~\ref{ribbredGgraphsNEWNEW}) and act on
colored $G$-graphs in the obvious way. Two colored
$G$-graphs related by a self-homeomorphism of $\RR^2\times [0,1]$ are {\it isotopic}.  It is clear that colored
$G$-graphs are isotopic if and only if there is a
color-preserving deformation of one into the other    in the class
of colored $G$-graphs.

Two colored $G$-graphs are {\it color-equivalent} if they
can  be obtained from each other through  isotopy and
isomorphism of colorings. Color-equivalent  colored
$G$-graphs necessarily have the same source and the same
target.

We  define a \lq\lq stable" version of the color-equivalence using the following transformation of colored $G$-graphs.
Pick an edge  of a colored $G$-graph $(\Omega,g, u,v)$ and
  insert in this edge a new coupon $Q$ with one entry and one
  exit, see Figure \ref{stabi} where the framing is orthogonal
  to the
 plane of the picture and is directed behind the picture. This
 gives a ribbon graph $\Omega'$ containing $\Omega$ as a subset.
 The inclusion of the graph complements   $i \colon C_{\Omega'} \hookrightarrow C_\Omega$
 is a homotopy equivalence. We
  now derive from the coloring $(u,v)$ of $(\Omega,g)$ a
 coloring $(u',v')$ of the $G$-graph $\Omega'=(\Omega', g i_*\co
\pi_1(C_{\Omega'})\to G)$. Composition with $i$   transforms any
edge-track $\gamma$ of $\Omega'$    into an edge-track $
i\gamma$ of $\Omega$, and we set $u'_\gamma=u_{i\gamma}$.
Similarly, set $u'_{\beta, \gamma}=u_{i_*(\beta), i \gamma}$ for
$\beta\in \pi_1(C_{\Omega'})$. For any track $\gamma$ of a
coupon of $\Omega'$ distinct from $Q$, set $v'_\gamma=
v_{i\gamma}$. For any track $\gamma$ of $Q$, we have
$i\gamma^1=i\gamma_1$ so that $u'_{\gamma^1} =u'_{\gamma_1}$ and
we   set $v'_\gamma=\id\co  u'_{\gamma_1}\to u'_{\gamma^1}$. We
obtain in this way a colored $G$-graph $\Omega'$ with the same
source and target as $\Omega$. We call this construction {\it
stabilization}. Two colored $G$-graphs   are   {\it stably
color-equivalent} if stabilizing   them several times we can
obtain color-equivalent colored $G$-graphs.

  \begin{figure}[t]
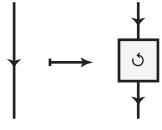

\begin{center}
\rsdraw{.45}{1}{stabil}
\end{center}
\caption{Stabilization}
\label{stabi}
\end{figure}

\subsection {Base points re-examined}
 The structure  of   a  pre-colored  $G$-graph  on  a
ribbon graph $\Omega $ depends
on the choice of a  base point $z$ in    $Z= \RR \times
[2,\infty)\times [0,1] $.  We can transfer this structure along
any path $\rho  $ in $Z $ from $z $ to $z'\in Z $. Given a homomorphism $g \colon \pi_1(C_\Omega,z)\to
G$ and a pre-coloring $u$ of $(\Omega, g)$, we define a
homomorphism $g'\colon \pi_1(C_\Omega,z')\to G$ and a
pre-coloring $u'$ of $(\Omega, g')$ by  $g' (\beta)= g( \rho
\beta \rho^{-1})$, $u'_\gamma=u_{\rho \gamma} $, and $u'_{\beta,
\gamma}=u_{\rho \beta \rho^{-1}, \rho \gamma} $   for any
$\beta\in \pi_1(C_\Omega,z')$ and any   edge-track $\gamma$ of
$\Omega$ with respect to $z'$.
This gives a pre-colored $G$-graph $(\Omega, g', u')$, the
  {\it transfer} of $(\Omega, g,u)$ along $\rho$. Clearly,
transfers  along homotopic paths   are equal. Since
  $ Z$ is
contractible, we can  thus move between   base points in a canonical
way. Alternatively, we can consider  $ Z $ as a \lq\lq
big base point". As a consequence, we shall suppress the base point
from the notation for pre-colored  $G$-graphs. Similar remarks
apply to colored $G$-graphs.

\section{The category ${\mathcal G}_{\cc} $}
In this section, we organize   $\cc$-colored $G$-graphs into a monoidal  category ${\mathcal G}={\mathcal
G}_{\cc} $.

\subsection{The category ${\mathcal G}$}\label{sect-category-of-graphs}
The objects of ${\mathcal  G}$ are finite sequences $ ((U_r,
\varepsilon_r))_{r=1}^k$ where $k\geq 0, \varepsilon_r=\pm$, and
$U_r$ is a non-zero homogeneous object of $ {\mathcal C} $ for
$r=1,\ldots ,k$.     A morphism $((U_r, \varepsilon_r))_{r=1}^k
\to ((U^s, \varepsilon^s))_{s=1}^l$ in $\mathcal  G$ is  the
stable color-equivalence class of  a colored $G$-graph having no
circle components and having the target $((U_r,
 \varepsilon_r))_{r=1}^k$ and the source $((U^s,
\varepsilon^s))_{s=1}^l$.   We now define composition of
morphisms in ${\mathcal G}$. Consider two colored $G$-graphs
$(\Omega^t=(\Omega^t, g^t, u^t, v^t))_{t=1,2}$ such that   ${\rm
{source}} (\Omega^1)= {\rm {target}} (  \Omega^2)=((U_r,
\varepsilon_r))_{r=1}^k$. Stabilizing
if necessary these graphs, we can assume that  they have no
edges with both endpoints lying in the set of inputs and
outputs. Let ${\iota}^t$ be the embedding of the strip
 $\RR^2\times [0,1] $ into itself carrying any point $(x_1, x_2,
x_3)$ to $(x_1, x_2, (x_3+4-2t)/3)$. Then $ {\iota}^1 (\Omega^1)$
(resp. ${\iota}^2 (\Omega^2)$) lies in the upper (resp.\ lower) third
of the strip. In the middle third $\RR^2\times [1/3,2/3] $ we
insert a row of $k$ copies of the graph shown on the right
picture in Figure \ref{stabi}   (with the orientation of the two edges in the $r$-th copy reversed to the upward direction whenever $\varepsilon_r=-$).   The
union, $\Omega$, of these $k$ copies with $ {\iota}^1 (\Omega^1)\cup
{\iota}^2 (\Omega^2)$  is a ribbon graph without circle components.   The van Kampen theorem
  implies that there is a unique
homomorphism $g\co \pi_1(C_\Omega)\to G$ such that $g {\iota}^t_*= g^t\co
\pi_1 (C_{\Omega^t})\to G$ for $t=1,2$. Observe   that any
coloring $(u,v)$ of  the $G$-graph $\Omega=(\Omega, g)$  induces
a coloring $(u {\iota}^t, v {\iota}^t)$  of the $G$-graph
$\Omega^t=(\Omega^t, g^t)$ for $t=1,2$. We show now that the
given colorings of $ \Omega^1 $  and $\Omega^2$ determine a
  coloring of $\Omega $.

\begin{lem}\label{lem-composcolorings}  Let $\gamma (r)$ be the
linear path in $C_{\Omega}$ leading from the base point to the
$r$-th coupon in $\RR^2\times [1/3,2/3] $, where $r=1,...,k$. There
is a coloring $(u,v)$ of the $G$-graph $\Omega $ such that
\begin{equation}\label{gluglu} u_{\gamma(r)_1}=u_{\gamma(r)^1}=U_r, \quad   v_{\gamma(r)}=
\id\co  {U_r^{\varepsilon_r}}\to U_r^{\varepsilon_r} \quad {  {for}}
\quad r=1,...,k,\end{equation} and $(u   {\iota}^t, v {\iota}^t) \approx (u^t,
v^t)$ for $t=1,2$. Such a coloring   of $\Omega$ is unique up to
isomorphism and for this coloring, ${\rm {source}} (\Omega)={\rm
{source}} (\Omega^2)$, ${\rm {target}} (\Omega)={\rm {target}}
(\Omega^1)$.
\end{lem}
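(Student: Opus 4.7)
The plan is to build the coloring $(u,v)$ of $\Omega$ edge-by-edge using Lemma~\ref{lem-weakcolorings}, exploiting the fact that every edge of $\Omega$ is of exactly one of three types: (a) $e = \iota^1(e')$ for a unique edge $e'$ of $\Omega^1$; (b) $e = \iota^2(e')$ for a unique edge $e'$ of $\Omega^2$; or (c) $e$ is one of the two short edges incident to the $r$-th middle coupon. On edges of type (a) or (b) I would fix, as the chosen representative track in the sense of Lemma~\ref{lem-weakcolorings}, the $\iota^t$-image of a chosen track of the corresponding edge of $\Omega^t$, and take as color the color assigned to that track by $u^t$. On edges of type (c) I would use the tracks $\gamma(r)_1$ and $\gamma(r)^1$ provided by the middle coupon itself and color them by $U_r$; this is globally consistent because the stabilization hypothesis forbids edges of $\Omega^t$ having both endpoints on the boundary, so types (a)/(b) and (c) do not overlap, and the hypothesis ${\rm source}(\Omega^1)={\rm target}(\Omega^2)=((U_r,\varepsilon_r))_r$ already pins down $U_r$ as the only reasonable label on each side of the $r$-th middle coupon. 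Lemma~\ref{lem-weakcolorings}(i) then assembles this data into a pre-coloring $u$ of $\Omega$.

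To define $v$, I set $v_{\gamma(r)} = \id_{U_r^{\varepsilon_r}}$ on the middle coupons, and for any coupon of $\Omega$ coming from a coupon $Q^t$ of $\Omega^t$ via $\iota^t$ I pick a representative track of $Q^t$ and copy the morphism $v^t$ on the $\iota^t$-image of that track. The extension procedure described at the end of Section~\ref{coloredGgraphs} promotes this to a coloring by using the diagram~\eqref{weakiso---} on all other coupon-tracks. The van Kampen identity $g\iota^t_* = g^t$ ensures that the meridians, tracks, and diagrams \eqref{condweak} and \eqref{weakiso---} for the $\iota^t$-part of $\Omega$ pull back precisely to the corresponding diagrams in $\Omega^t$; in particular, $(u\iota^t, v\iota^t)$ agrees with $(u^t, v^t)$ on every chosen representative track, and Lemma~\ref{lem-weakcolorings}(ii) extends this to an isomorphism of pre-colorings which also satisfies \eqref{weakiso+} on every coupon by the choice of representatives. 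This establishes existence together with $(u\iota^t, v\iota^t) \approx (u^t, v^t)$ for $t=1,2$.

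Uniqueness up to isomorphism is then almost immediate: any other coloring $(u',v')$ satisfying \eqref{gluglu} and $(u'\iota^t, v'\iota^t) \approx (u^t, v^t)$ furnishes, on each chosen representative edge-track, an isomorphism $u_{\gamma_e} \to u'_{\gamma_e}$ (the identity on middle short edges, and the given isomorphism on type-(a)/(b) tracks); Lemma~\ref{lem-weakcolorings}(ii) extends it uniquely to an isomorphism $u \approx u'$, and commutativity of \eqref{weakiso+} for every coupon holds by construction on the representative track and hence everywhere. The source/target claim is a direct inspection: the bottom boundary of $\Omega$ lies in $\iota^2(\Omega^2)$ and the top boundary lies in $\iota^1(\Omega^1)$, so the input tracks of $\Omega$ coincide with those of $\Omega^2$ and the output tracks with those of $\Omega^1$, yielding ${\rm source}(\Omega) = {\rm source}(\Omega^2)$ and ${\rm target}(\Omega) = {\rm target}(\Omega^1)$.

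The main obstacle I anticipate is not conceptual but the bookkeeping around base points and meridians at the seam $\RR^2 \times [1/3, 2/3]$: one must verify that the homomorphism $g$ produced by van Kampen really does satisfy $g\iota^t_* = g^t$ and that the natural tracks used to define $u$ on both sides of the seam give compatible elements of $G$ on shared short edges, so that the pre-coloring conditions \eqref{condweak} assemble without ambiguity. Once this is checked, everything else is a formal application of Lemma~\ref{lem-weakcolorings} and the extension procedure for coupons.
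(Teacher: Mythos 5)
Your proposal follows essentially the same route as the paper: pick representative tracks edge-by-edge, invoke Lemma~\ref{lem-weakcolorings}(i) to build the pre-coloring $u$, prescribe $v$ on one track per coupon, and extend via the remark at the end of Section~\ref{coloredGgraphs}, then run Lemma~\ref{lem-weakcolorings}(ii) for both the restriction-isomorphisms and the uniqueness. That said, there are three places where the proposal is loose enough to count as gaps.

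First, the edge classification is off. There are no separate ``short edges'' in $\Omega$: the two segments adjacent to the $r$-th middle coupon are the continuations of the edge of $\Omega^1$ incident to the $r$-th input (above) and the edge of $\Omega^2$ incident to the $r$-th output (below), so the edge set of $\Omega$ is canonically identified with $E^1\amalg E^2$. Your type (c) edges are really a subset of the images of $E^1\amalg E^2$, namely the ones whose $\Omega^t$-preimage meets an input of $\Omega^1$ or an output of $\Omega^2$. The stabilization hypothesis guarantees that no edge of $\Omega^t$ has \emph{both} endpoints among inputs and outputs, which is what makes the identification $E(\Omega)\cong E^1\amalg E^2$ unambiguous --- not what makes types ``(a)/(b)'' and ``(c)'' disjoint as you assert.

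Second, and more substantively, you do not pin down the representative track to be the input (resp.\ output) track whenever an edge of $\Omega^t$ is incident to an input (resp.\ output). The paper makes this choice explicit, and it is what forces $u_{\iota^t\gamma}=u^t_\gamma$ at every input/output track of $\Omega^t$, so that $(u\iota^t,v\iota^t)$ has the same source and target as $(u^t,v^t)$ and the isomorphism furnished by Lemma~\ref{lem-weakcolorings}(ii) can be taken to be the identity on those tracks, as required by the definition of $\approx$ in Section~\ref{sect-iso-quasi-iso}. Without this choice, Lemma~\ref{lem-weakcolorings}(ii) only hands you an isomorphism of pre-colorings, with no reason for it to be the identity at the boundary. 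The same care is needed for the source/target assertion at the end: the input/output tracks of $\Omega$ are (via $\iota^t$) exactly the input/output tracks of $\Omega^2$ and $\Omega^1$, so the source/target claim reduces to knowing $u$ agrees with $U_s$ there, which again hinges on the track choice.

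Third, in the uniqueness step you claim that commutativity of \eqref{weakiso+} ``holds by construction on the representative track and hence everywhere.'' The ``hence everywhere'' part is correct (by the last remark of Section~\ref{sect-iso-quasi-iso}), but the ``by construction on the representative track'' part has to be argued separately for (i) the input/output tracks, where you must use that the isomorphisms $h^t$ and $\overline h^t$ are identities there; (ii) the middle coupons, using $v_{\gamma(r)}=\overline v_{\gamma(r)}=\id$; and (iii) the coupons coming from $\Omega^1$ and from $\Omega^2$, using that $h^1$ and $\overline h^1$ both carry $v\iota^1$, resp.\ $\overline v\iota^1$, to $v^1$, and similarly for $t=2$. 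These are exactly the three verifications the paper makes; your version compresses them in a way that does not make clear that the required commutativity is being \emph{proved} and not merely assumed.
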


\begin{proof} For $t=1, 2$, denote by   $E^t$   the set of
edges of $\Omega^t$. For every edge $e\in E^t$ fix a track
$\gamma_e$ of $e$. We assume that if $e$ is incident to an input
(resp.\ output) of $\Omega^t$, then $\gamma_e$ is the
corresponding input (resp.\ output) track of~$\Omega^t$. (Here
  we use the assumption that no edge of $\Omega^t$ has both
endpoints among inputs and outputs.)  Note that the set of edges
of $\Omega$ can be identified with  $ E^1\amalg E^2$.

We first prove the existence of $(u,v)$.   For   $e\in E^t$, the
composition  ${\iota}^t \gamma_e$ is a track of the edge of $\Omega$
containing ${\iota}^t(e) $. Clearly,  $g(\mu_{{\iota}^t
\gamma_e})=g^t(\mu_{\gamma_e})$. Lemma
  \ref{lem-weakcolorings}(i) implies that there is a
  pre-coloring $u$ of $(\Omega, g)$ such that
$u_{{\iota}^t\gamma_{e}}=u^t_{\gamma_e}$ for all ${e}\in {E^t}$ and
$t=1,2$. The choice of $\{\gamma_e\}_e$ ensures that if $e\in E^1$
is incident to the $r$-th input of $\Omega^1$, then
$u_{{\iota}^1\gamma_{e}}=u^1_{\gamma_e}=U_r$, and   if $e\in E^2$ is
incident to the $r$-th output of $\Omega^2$, then
$u_{{\iota}^2\gamma_{e}}=u^2_{\gamma_e}=U_r$.  For $t=1,2$,
consider the pre-coloring $  u {\iota}^t $ of $\Omega^t$. By
definition, $(u {\iota}^t)_{\gamma_e}=
u_{{\iota}^t\gamma_{e}}=u^t_{\gamma_e}$ for all $e\in E^t$. By Lemma
\ref{lem-weakcolorings}(ii), there is an isomorphism of
pre-colorings $f^t\co u {\iota}^t \to u^t$ extending the identity
morphisms $\{\id\co (u {\iota}^t)_{\gamma_e}\to u^t_{\gamma_e}\}_{e\in
E^t}$. Next, we extend   $u$   to a coloring $(u,v)$ of
$\Omega$ as follows. Fix a track $\gamma_Q$ for every coupon $Q$ of
$\Omega^t$   with $t=1,2$.   The  morphism $v_{{\iota}^t \gamma_Q}$
is uniquely determined by the condition
 that  the isomorphism of pre-colorings $f^t $
carries $v_{{\iota}^t \gamma_Q}$ to $v^t_{\gamma_Q}$, i.e., we have
the commutative
 diagram \eqref{weakiso+} where $u, u', v'_\gamma, f $ are
replaced by $ u{\iota}^t, u^t, v^t_{\gamma_Q}, f^t$, respectively.
This and \eqref{gluglu} yields a value of $v$ on one track
for each coupon of $\Omega$. The last remark of Section
\ref{coloredGgraphs} shows that these values extend to a coloring
$(u,v)$ of $\Omega$. Since  $f^t\co  u {\iota}^t \to u^t$ carries
$v_{{\iota}^t \gamma_Q}$ to $v^t_{\gamma_Q}$, the last remark of
Section \ref{sect-iso-quasi-iso} implies that $f^t$
carries $v{\iota}^t$ to $v^t$. Thus, the coloring $(u,v)$
satisfies
 all the
requirements of the lemma. The equalities ${\rm {source}}
(\Omega)={\rm {source}} (\Omega^2)$ and ${\rm {target}}
(\Omega)={\rm {target}} (\Omega^1)$ follow from the definition
of $(u,v)$.

Let us prove the uniqueness of $(u,v)$. Suppose that $(u,v)$ and
$(\overline u, \overline  v)$ are two colorings of $\Omega$
satisfying the conditions of the lemma. Pick isomorphisms $h^t\co
(u   {\iota}^t, v {\iota}^t) \to (u^t, v^t)$ and $\overline h^t\co  (\overline
u {\iota}^t, \overline v {\iota}^t) \to (u^t, v^t)$ for $t=1,2$.  For $e\in
E^t$ consider the   induced isomorphisms
$$ h^t_{{\iota}^t\gamma_e}\co    u_{{\iota}^t \gamma_e}=  (u
  {\iota}^t)_{\gamma_e} \to u^t_{\gamma_e}\quad {\rm {and}}\quad
\overline h^t_{{\iota}^t\gamma_e}\co  \overline u_{{\iota}^t
\gamma_e}=(\overline u {\iota}^t)_{\gamma_e} \to u^t_{\gamma_e}.$$
Consider the composed isomorphisms $$ \{H_{ e}= (
h^t_{{\iota}^t\gamma_e})^{-1} \overline h^t_{{\iota}^t\gamma_e}\co
\overline u_{{\iota}^t \gamma_e} \to u_{{\iota}^t \gamma_e}\}_{e\in
E^t, \, t=1,2}.$$ By Lemma \ref{lem-weakcolorings}(ii), this system
of isomorphisms extends to an isomorphism of pre-colorings
$H\co \overline u \to u $. We claim that $H$ is an isomorphism
$(\overline u, \overline v)\approx (u,v)$. Note that all the
input/output tracks of $\Omega$ belong to the system
$\{\gamma_e\}_{e}$, and the values of $h^t$ and $\overline h^t$ on
these tracks are the identity morphisms of the corresponding objects
of~$\cc$. Therefore the same is true for~$H$. A similar argument
involving the inputs of $\Omega^1$, the outputs of $\Omega^2$, and
the assumption $\overline v_{\gamma(r)}= v_{\gamma(r)}= \id$
implies that the values of $H$ on the coupon-tracks
$\{\gamma(r)\}_{r=1}^k$ carry $\overline v$  to
$v$. By the last remark of Section \ref{sect-iso-quasi-iso}, the same is true for all coupon-tracks
of the $k$ coupons of $\Omega$ lying in $\RR^2\times [1/3,
2/3]$. Finally, the assumption that  $h^1$ carries $v{\iota}^1$ to
$v^1$ and $\overline h^1$ carries $\overline v{\iota}^1$ to $v^1$
implies that the values of $H$ on all coupon-tracks of $\Omega$
entirely lying in $\RR^2 \times [2/3,1]$  carry $\overline v$
to $v$. Using again the last remark of Section \ref{sect-iso-quasi-iso}, we deduce the same for all coupon-tracks of the
coupons of $\Omega$ lying in $\RR^2\times [2/3, 1]$. The coupons
lying in $\RR^2\times [0, 1/3]$ are treated similarly. This proves
our claim.
\end{proof}

We define   composition of the morphisms in $\mathcal G$
represented by $\Omega^1$, $\Omega^2$ to be the stable
color-equivalence class of $(\Omega,g,u,v)$. This composition is
well-defined and associative. The identity morphisms are
represented by   colored $G$-graphs   formed by oriented
vertical segments with constant framing (and no coupons).

\subsection{Monoidal product in ${\mathcal G}$}\label{sect-category-of-graphs+}     We define   a   monoidal    product
 in ${\mathcal G}$. The    monoidal   product of the objects of
 ${\mathcal G}$ is the juxtaposition of sequences.
The unit object is the empty sequence. To define the    monoidal
product of morphisms represented by colored $G$-graphs
  $(\Omega^t=(\Omega^t, g^t, u^t, v^t))_{t=1,2}$ we proceed as
follows. Positioning a copy of $\Omega^1$ to the left of a vertical
band $\{\ast\} \times \RR\times [0,1]$  (with $\ast\in
\RR$) and a copy of $\Omega^2$ to the right of this band,
and taking the union, we obtain a ribbon graph,~$\Omega$. The
complement $C_\Omega$ of $\Omega$ deformation retracts onto the
wedge $C_{\Omega^1} \bigvee C_{\Omega^2}$. The van Kampen theorem
yields a homomorphism $g\co \pi_1(C_\Omega)\to G$ whose restriction to
$\pi_1(C_{\Omega^t})$ is equal to $g^t$ for $t=1,2$. An analogue of
Lemma~\ref{lem-composcolorings} says that there is a unique (up to
isomorphism) coloring $(u,v)$ of $\Omega$ whose restriction to
$C_{\Omega^t}$ gives a coloring of $\Omega^t$ isomorphic to $(u^t,
v^t)$ for $t=1,2$. The stable color-equivalence class of
$(\Omega,g,u,v)$ is the   monoidal    product of the morphisms represented
by $\Omega^1$ and $\Omega^2$. This    monoidal    product is well defined
and   turns ${\mathcal G}$ into a strict
 monoidal category.

 \subsection{Remarks}\label{remarkoncomposition}  1. The coloring   of $\Omega$ provided by Lemma
\ref{lem-composcolorings} is defined only up to isomorphism. Using
the replacement technique of Section \ref{sect-iso-quasi-iso}, we can find a representative $(u,v)$ in this
isomorphism class such that $(u {\iota}^t, v {\iota}^t) = (u^t,
v^t)$ for $t=1,2$. The values of   $(u,v)$ on the tracks and loops
lying in the upper (resp.\ lower) third of $\RR^2\times [0,1]$ are
given directly by $(u^t, v^t)_{t=1,2}$.  A similar remark
applies to the construction of    monoidal    product in Section
\ref{sect-category-of-graphs+}.

2. A useful  class of ribbon graphs without circle components is formed by string links (which generalize braids). By a \emph{$k$-string link}
  with $k\geq 1$ we mean a system of $k$ framed oriented segments embedded in $\RR^2\times [0,1]$ and meeting the boundary planes
   at the points   $\{(r,0,0), (r,0,1)\}_{r=1,...,k}$. The framing should be given by the vector $(0,\delta, 0)$ at the endpoints where $\delta$ is a small positive real number. Such a string link  is a ribbon graph without coupons. All the definitions given above for ribbon graphs apply to string links.  To turn a string link $L$ (equipped with a principal $G$-bundle on the exterior) into a $\cc$-colored ribbon graph it is enough
to color the  input  tracks  of $L$ with objects of $\cc$. This
determines a $\cc$-coloring of $L$ uniquely up to color-preserving
isomorphism.


 \section{Colored   diagrams}\label{Colored   diagrams}

 We introduce colored   diagrams which will be used in the next sections  to
   represent colored ribbon graphs.

\subsection{Graph diagrams}
A {\it graph diagram}    is
a finite family of embedded coupons, immersed
segments,  and immersed circles in $\RR\times [0,1] $.
The segments and circles are called the {\it 1-strata} of the
diagram. We require that
\begin{enumerate}
\labeli
\item  the coupons  are  oriented counterclockwise, disjoint, and
  lie in $\RR \times (0,1)$;
\item the 1-strata are oriented and have only double
  transversal crossings   in $\RR \times (0,1)$ with
  over/under-data at all crossings;
\item the set of the endpoints of the 1-strata consists of the
points $((r,0))_{ r=1}^k$ (the inputs) and  $((s,1))_{ s=1}^l$ (the
outputs) for some $k, l\geq 0$ together with certain points lying on
the distinguished (bottom) sides of the coupons and the opposite
(top) sides. The 1-strata do not meet the coupons other than at the
endpoints and meet $\RR \times \{0,1\}$ orthogonally at the
inputs and outputs.
\end{enumerate}

We do not require the sides of the coupons in the diagrams  to
be parallel to the horizontal and vertical axes in $\RR^2$.
However,  in the pictures below, we will have only such coupons. By
convention,  the distinguished sides of the coupons in our pictures
are their bottom horizontal sides.

  Each crossing  $c$ of a graph diagram $D$ gives rise to two
  points on the 1-strata of $D$: the undercrossing  $c_{\rm
  {un}}$ and  the overcrossing  $
    c_{\rm {ov}}$. The overcrossings lying on  a 1-stratum $d$
    of $D$  split $d$ into  consecutive   segments called  {\it
    underpasses}. If $d$  contains no overcrossings (i.e., $d$
    is embedded and lies below  all the other 1-strata), then by
  definition, $d$ has   one underpass  equal to $ d$.

A crossing   $c$ of $D$ determines three underpasses of
(1-strata  of) $D$: the underpass $\underline c$ containing the
point   $c_{\rm {un}}$ and   two underpasses ${c^{-}}, {c^{+}}$
separated by the  point  $c_{\rm {ov}}$. One of the underpasses
${c^{-}}, {c^{+}}$  is directed towards $c_{\rm {ov}}$ and the
other one is directed away from $c_{\rm {ov}}$. We choose
notation so that  ${c^{+}}$ is directed towards $c_{\rm {ov}}$
if the crossing $c$ is   positive  and away from $c_{\rm {ov}}$
if $c$ is   negative, see Figure~\ref{fig-underpass-crossing}.

\begin{figure}[t]
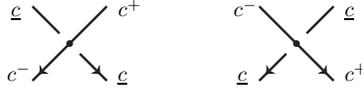

\begin{center}
\psfrag{u}[Bl][Bc]{\scalebox{.9}{$\underline{c}$}}
\psfrag{x}[Bl][Bl]{\scalebox{.9}{$\underline{c}$}}
\psfrag{c}[Bl][Bl]{\scalebox{.9}{$c^-$}}
\psfrag{r}[Bc][Bc]{\scalebox{.9}{$c^+$}}
\rsdraw{.45}{.9}{undercross}
\end{center}
\caption{The underpasses associated with a crossing $c$}
\label{fig-underpass-crossing}
\end{figure}

\subsection{Colorings of diagrams}
  A {\it   ${\mathcal C}$-pre-coloring} or shorter a  {\it
 pre-coloring} $U$  of  a
graph diagram  $D$  comprises two functions. The first function
assigns to every     underpass $p$ of   (a 1-stratum of) $D$ a non-zero
homogeneous object $U_p$ of $ {{\mathcal C}} $ called the {\it
color} of $p$.    The second function   assigns
 to every   crossing   $c$ of   $D$  an isomorphism
\begin{equation}\label{weakiso-}
U_{c}\colon  U_{   {{c^{+}}}} \to \varphi_{ \vert U_{  {\underline c}} \vert  } (U_{  {{c^{- }}}})
\end{equation}
called   the {\it color} of $c$. The existence of such an
isomorphism   together with the fact that the colors are non-zero objects   implies that $ \vert U_{   {{c^{+}}}} \vert =
  \vert U_{  {\underline c}} \vert^{-1} \vert U_{  {{c^{-
  }}}}\vert \vert U_{  {\underline c}} \vert $ for all $c$.  A
  pre-colored diagram $D$ has a source/target defined similarly
  to the source/target of a pre-colored ribbon graph  but
  using  the orientations and the colors of the underpasses  of
  $D$ adjacent to the inputs/outputs.

A {\it   ${\mathcal C}$-coloring} or shorter a {\it coloring} of
$D$ consists of   a  pre-coloring $U$ and a   function $V$
assigning to every coupon $Q $ of $D$    a     morphism $ V_Q$
in $ {{\mathcal C}} $ satisfying  the following conditions. Let
${p}_1,\ldots , {p}_m$   be the   underpasses of $D$ incident to
the bottom  side of $Q$ enumerated from the left to the right
(i.e., in the order determined by the direction on $\partial Q$
induced by the orientation of $Q$). Set   $\varepsilon_i =+$   if
${p}_i$ is directed
  out of $Q$ and   $\varepsilon_i =-$   otherwise. Let
${p}^1,\ldots ,{p}^n$ be the    underpasses of $D$ incident to
the top  side of $Q$ enumerated from the left to the right
(i.e.,  in the order determined by the direction on $\partial Q$
opposite to the one induced by the orientation of $Q$). Set
  $\varepsilon^j =-$     if ${p}^j$ is directed out of $Q$ and
   $\varepsilon^j =+$    otherwise.
We require that
$$
\prod_{i=1}^m \vert
U_{p_i}\vert^{\varepsilon_i}=\prod_{j=1}^n \vert
U_{p^j}\vert^{\varepsilon^j}\quad {\text {and}} \quad V_Q \in
\Hom_{{\mathcal C}}({{\otimes}}_{i=1}^m
U_{p_i}^{\varepsilon_i},\, {{\otimes}}_{j=1}^n
 U_{p^j}^{\varepsilon^j})  .
$$
As above, $X^+=X$ and $X^-=X^*$ for any $X\in \cc$.   By a \emph{$\cc$-colored diagram} or shorter a \emph{colored diagram}, we mean a graph diagram endowed with a $\cc$-coloring.

\begin{figure}[t]
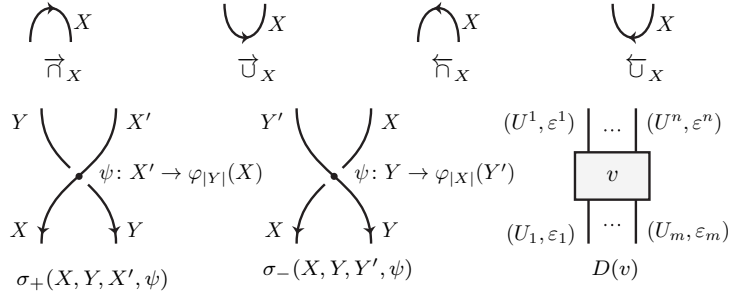

\begin{center}
       \subfigure[{$\protect \overrightarrow{\cap}_X$}]{\quad\psfrag{X}[Bc][Bc]{\scalebox{.9}{$X$}} \rsdraw{.45}{.9}{leval}\quad} \qquad\;
       \subfigure[{$\protect \overrightarrow{\cup}_X$}]{\quad\psfrag{X}[Bc][Bc]{\scalebox{.9}{$X$}} \rsdraw{.45}{.9}{lcoeval}\quad} \qquad\;
       \subfigure[{$\protect \overleftarrow{\cap}_X$}]{\quad\psfrag{X}[Bc][Bc]{\scalebox{.9}{$X$}} \rsdraw{.45}{.9}{reval}\quad} \qquad\;
       \subfigure[{$\protect \overleftarrow{\cup}_X$}]{\quad\psfrag{X}[Bc][Bc]{\scalebox{.9}{$X$}} \rsdraw{.45}{.9}{rcoeval}\quad} \\[.4em]
       \subfigure[{$\sigma_+(X,Y,X',\psi)$}]{\psfrag{U}[Br][Br]{\scalebox{.9}{$Y$}}
       \psfrag{V}[Bl][Bl]{\scalebox{.9}{$X'$}}\psfrag{A}[Br][Br]{\scalebox{.9}{$X$}}
       \psfrag{B}[Bl][Bl]{\scalebox{.9}{$Y$}} \psfrag{p}[Bl][Bl]{\scalebox{.9}{$\psi\co X' \to \varphi_{|Y|}(X)$}}\rsdraw{.45}{.9}{sig-plus-elem}\quad} \qquad\;
       \subfigure[{$\sigma_-(X,Y,Y',\psi)$}]{\quad\psfrag{U}[Br][Br]{\scalebox{.9}{$Y'$}}
       \psfrag{V}[Bl][Bl]{\scalebox{.9}{$X$}}\psfrag{A}[Br][Br]{\scalebox{.9}{$X$}}
       \psfrag{B}[Bl][Bl]{\scalebox{.9}{$Y$}} \psfrag{p}[Bl][Bl]{\scalebox{.9}{$\psi\co Y \to \varphi_{|X|}(Y')$}}\rsdraw{.45}{.9}{sig-moins-elem}\quad} \qquad\qquad
       \subfigure[{$D(v)$}]{\quad\psfrag{U}[Br][Br]{\scalebox{.9}{$(U^1,\varepsilon^1)$}}
       \psfrag{V}[Bl][Bl]{\scalebox{.9}{$(U^n,\varepsilon^n)$}}\psfrag{A}[Br][Br]{\scalebox{.9}{$(U_1,\varepsilon_1)$}}
       \psfrag{B}[Bl][Bl]{\scalebox{.9}{$(U_m,\varepsilon_m)$}} \psfrag{v}[Bc][Bc]{\scalebox{1}{$v$}}\rsdraw{.45}{.9}{dv}\quad}\qquad\qquad
\end{center}
\caption{Elementary colored diagrams}
\label{fig-elem-diags}
\end{figure}

Examples of colored diagrams (and notation for them) are given
in   Figure~\ref{fig-elem-diags}.   Here we mark  the
overcrossings by dots and indicate the colors of the underpasses
and of the crossings. In the first six  diagrams,  $X, Y, X',
Y'$ are any homogeneous objects of $\mathcal C$ and $\psi$ is
any isomorphism. The seventh diagram   is formed by a coupon   colored by a morphism $v \in \Hom_{\cc}\bigl(\otimes_{i=1}^m
U_i^{\varepsilon_i}, \otimes_{j=1}^n  (U^j)^{\varepsilon^j}\bigr)$   and
$m+n$ vertical segments. The  diagrams in
Figure~\ref{fig-elem-diags}   are called {\it elementary
diagrams}. The elementary diagrams $\sigma_+$ and $\sigma_-$ should not be confused with the pictures used    in   Section~\ref{sect-braided-ribbon} to represent the $G$-braiding and its inverse. In the latter, the crossings are not decorated with isomorphisms.

An   {\it   isomorphism}      $U\approx U'$ of pre-colorings
 of $D$   is a system of isomorphisms $f=\{f_p \colon U_p \to
 U'_p\}_p$, where $p$ runs over all   underpasses of   $D$, such
 that
  for any   crossing   $c$, the following diagram   commutes
\begin{equation}
\begin{split}\label{weakis+}\xymatrix@R=0.7cm @C=2cm {
U_{   {{c^{+}}}} \ar[r]^-{U_{  c}}\ar[d]_{f_{{c^{+}}}} & \varphi_{\vert U_{  {\underline c}} \vert } (U_{  {{c^{-}}}})\ar[d]^{\varphi_{\vert U_{  {\underline c}} \vert } (f_{  {{c^{-}}}})} \\
U'_{   {{c^{+}}}} \ar[r]^-{U'_{  c} } & \varphi_{\vert U'_{
{\underline c}} \vert } (U'_{ {{c^{-}}}}).}\end{split}
\end{equation}
Here $\vert U_{ {\underline c}} \vert=\vert U'_{ {\underline
c}} \vert$ because isomorphic non-zero homogeneous objects of $\cc$ have the
same grading.

Let $(U,V)$ and $(U', V')$  be colorings of $D$ with the same source
and target. Thus, $U_p = U'_p$ for any underpass $p$ of $D$ adjacent
to an input
 or an output of $D$.  An {\it isomorphism}     $(U,V)\to (U',
 V')$  is an isomorphism of pre-colorings $f \colon U \to U'$
 such that for any underpass $p$   adjacent to an input or an
 output of $D$, we have $f_p=\id\colon U_p \to U'_p$  and for
 any coupon $Q$ of $D$, the following diagram (in the notation
 above) commutes:
\begin{equation}\begin{split}\label{weakisomor}\xymatrix@R=1cm @C=2cm {
{{\otimes}}_{i=1}^m
U_{  p_i}^{\varepsilon_i}  \ar[r]^-{V_{   Q}}\ar[d]_{\otimes_{i=1}^m
f_{  p_i}^{\varepsilon_i}} &  {{\otimes}}_{j=1}^n U_{  p^j}^{\varepsilon^j}
 \ar[d]^{\otimes_{j=1}^n f_{  p^j}^{\varepsilon^j}}\\
 {{\otimes}}_{i=1}^m (U'_{  p_i})^{\varepsilon_i}  \ar[r]^-{V'_{Q}} &  {{\otimes}}_{j=1}^n (U'_{  p^j})^{\varepsilon^j}.
 }\end{split}\end{equation}

By {\it   isotopy} of a colored diagram, we mean an ambient
 isotopy   of the diagram  in  $\RR \times [0,1]$
 keeping the inputs, the outputs,  the orientations of 1-strata,
 the over/under-data in the crossings,  and all the colors.   We
 call two colored diagrams {\it color-equivalent} if they may be
 obtained from each other through  isotopy and isomorphism of
 colorings. Color-equivalent colored diagrams necessarily have
 the same source and the same target.

\subsection{The category ${\mathcal D}_{\mathcal C} $}
We define a
strict   monoidal category ${\mathcal  D}={\mathcal D}_{\mathcal C}
$ of $\mathcal C$-colored diagrams. This category has the same
objects as  the category $ {\mathcal G}_{\mathcal C} $
 from Section~\ref{sect-category-of-graphs}. The   monoidal product
of objects and the unit object  in ${\mathcal  D}$ are   the same as
in~${\mathcal G}_{\mathcal C}$. A morphism $((U_r,
\varepsilon_r))_{r=1}^k  \to ((U^s, \varepsilon^s))_{s=1}^l$ in
$\mathcal  D$ is a color-equivalence class of    $\mathcal
C$-colored diagrams with   source   $((U_r,
\varepsilon_r))_{r=1}^k$ and  target    $((U^s,
\varepsilon^s))_{s=1}^l$.   The identity morphism of an object
$((U_r, \varepsilon_r))_{r=1}^k$ is represented by the colored
diagram formed by $k$ disjoint vertical segments with source and
target $((U_r, \varepsilon_r))_{r=1}^k$.   The composition of
morphisms represented by colored diagrams $D, D'$ is obtained by
gluing $D$ on  top of $D'$.  The    monoidal    product of the morphisms
represented by  $D,D'$ is obtained by placing $D'$ to the right of
$D$. All axioms of a strict   monoidal category are
straightforward. By abuse of   language, we shall make no
      difference between a colored diagram and the corresponding
      morphism in $\mathcal D$.

\begin{lem}\label{lem-functorF}
   If $(\mathcal C, \varphi, \tau)$ is a  pivotal $G$-braided    category, then
   there is a unique strong monoidal functor ${\mathcal F}=({\mathcal F}, {\mathcal F}_2,
   {\mathcal F}_0)\colon {\mathcal  D}_{\mathcal C} \to  {\mathcal C}$ such
   that:
\begin{itemize}
\item ${\mathcal F}$ carries any object  $ ((U_r, \varepsilon_r))_{r=1}^k$
   of ${\mathcal D}_{\mathcal C}$ to $\otimes_{r=1}^k
   U_r^{\varepsilon_r}$;

\item  $ {\mathcal F}_0\co \un \to {\mathcal F}(\emptyset) =\un$ is the identity
    morphism;

\item for any objects $X=((U_r, \varepsilon_r))_{r=1}^k $ and
    $Y= ((V_s, \mu_s))_{s= 1}^l$ of $\mathcal D$, the morphism
    ${\mathcal F}_2 (X, Y) \co  {\mathcal F}(X)\otimes {\mathcal F}(Y) \to {\mathcal F}(X\otimes Y)$  is the
    canonical
   isomorphism
    $$  (U_1^{\varepsilon_1} \otimes \cdots \otimes
   U_k^{\varepsilon_k})  \otimes (V_1^{\mu_1} \otimes
   \cdots \otimes V_l^{\mu_l})  \cong
   U_1^{\varepsilon_1} \otimes \cdots \otimes
   U_k^{\varepsilon_k} \otimes V_1^{\mu_1} \otimes
   \cdots \otimes V_l^{\mu_l} $$ determined by the associativity constraints in
    $\cc$;

\item  ${\mathcal F}$ carries elementary diagrams to the following
     morphisms:
  $${\mathcal F}(\overrightarrow {\cap}_X)=\lev_X, \;{\mathcal F}(\overleftarrow {\cap}_X)=\rev_X,  \;{\mathcal F}(\overrightarrow {\cup}_X)=\lcoev_X, \; {\mathcal F}(\overleftarrow
   {\cup}_X)=\rcoev_X ,\;  {\mathcal F}(D_v)=v,$$
 \begin{equation}\label{Fother3} {\mathcal F}(\sigma_+(X,Y, X', \psi))=
 (\id_Y\otimes \psi^{-1}) \tau_{X,Y}\co  X\otimes Y \to Y\otimes
 X',\end{equation}
 \begin{equation}\label{Fother4}{\mathcal F}(\sigma_-(X,Y, Y', \psi))=  \tau_{ Y',X}^{-1} (\id_X\otimes \psi) \co  X\otimes Y \to Y'\otimes X.\end{equation}
 \end{itemize}
\end{lem}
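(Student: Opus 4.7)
The plan is to establish uniqueness by a generator argument and then construct $\mathcal{F}$ slice-by-slice, with the bulk of the work being invariance under the equivalence relations on colored diagrams.

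For \emph{uniqueness}, observe that any $\cc$-colored graph diagram $D$ admits a ``Morse-type'' slicing: finitely many horizontal lines cut $\RR\times[0,1]$ so that each strip contains, up to identity strands on its left and right, exactly one elementary diagram from Figure~\ref{fig-elem-diags}. Since $\mathcal{F}$ is required to be strong monoidal with $\mathcal{F}_0=\id$ and $\mathcal{F}_2$ equal to the canonical associativity isomorphism (an identity under the strictness convention of Section~\ref{Conventions}), its value on each slice is forced by the prescribed value on the elementary piece tensored with identities, and $\mathcal{F}(D)$ is the resulting composition. Hence $\mathcal{F}$ is uniquely determined.

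For \emph{existence}, define $\mathcal{F}(D)$ by picking a slicing and using the above prescription. One must then check that the resulting morphism in $\cc$ depends only on the color-equivalence class of $D$. Any two slicings of the same diagram differ by planar isotopies that slide elementary pieces past one another within disjoint vertical bands; these are absorbed by the interchange law in the strict monoidal category $\cc$. Invariance under isotopy of $D$ itself reduces to a standard list of local moves on colored diagrams: cusp creation/cancellation for cups and caps (handled by the duality axioms (a) of Section~\ref{pivotall}); moves sliding cups, caps, and coupons past crossings; and the colored Reidemeister moves~R2 and~R3 on crossings. Invariance under isomorphism of colorings is a naturality check: for each elementary diagram, changing colors along an iso $f$ yields a commutative square by naturality of $\tau$ (for $\sigma_\pm$) together with \eqref{weakis+}, and for coupons it is exactly \eqref{weakisomor}.

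The main obstacle is verifying the two colored crossing moves. In our setting each crossing $c$ carries an isomorphism $U_c\co U_{c^+}\iso \varphi_{|U_{\underline c}|}(U_{c^-})$, so the moves are not a mere repetition of the braided case: one must track how these isomorphisms compose. For R2 one rewrites $\mathcal{F}(\sigma_-(X,Y,Y',\psi))\circ\mathcal{F}(\sigma_+(X,Y,X',\psi'))$ using \eqref{Fother3}--\eqref{Fother4}, and cancels $\tau$ against $\tau^{-1}$; the residual condition on $\psi,\psi'$ is precisely the commutativity~\eqref{weakis+} dictated by the coloring. For R3 one unfolds both sides by \eqref{Fother3}--\eqref{Fother4} and applies the quantum Yang--Baxter equation of Lemma~\ref{lem-braiding}(c); the coherence morphisms $\varphi_2(\beta,\gamma)_X$ appearing there are matched to the crossing isomorphisms $U_c$ prescribed by the underpass colors, using \eqref{crossing5}. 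Moves exchanging cups and caps with crossings are handled by Lemma~\ref{lem-braiding}(a),(b),(d), and moves sliding a strand across a coupon use the naturality of $\tau$ combined with \eqref{eq-braiding1}--\eqref{eq-braiding2}.

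Once well-definedness is established, functoriality follows since composition in $\mathcal{D}_\cc$ is vertical stacking, which corresponds to composing the associated slicings, and strong monoidality follows from the strictness convention: $\mathcal{F}_2$ is an identity and the required hexagons and triangles~\eqref{stmonoidal1}--\eqref{stmonoidal2} become tautologies. This completes the construction of $\mathcal{F}$.
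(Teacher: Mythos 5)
Your proof has a genuine conceptual error: you have conflated the content of this lemma with the content of the later Theorem~\ref{thm-functorF+}. Recall that morphisms in $\mathcal{D}_\cc$ are colored diagrams taken modulo \emph{planar isotopy} (which, as defined in Section~\ref{Colored diagrams}, preserves the over/under data at every crossing) and isomorphism of colorings --- Reidemeister moves are \emph{not} imposed as relations in $\mathcal{D}_\cc$. They are imposed only later, when forming the quotient $\overline{\mathcal{D}}$ in Section~\ref{The functorR}, and the invariance of $\mathcal{F}$ under colored Reidemeister moves (and stabilization) is precisely Theorem~\ref{thm-functorF+}. Consequently, the ``main obstacle'' you identify --- checking colored R2 and R3, and ``sliding cups, caps, and coupons past crossings'' (i.e., the type~4 moves) --- is not an obstacle for this lemma at all. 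Your claim that planar isotopy of a colored diagram ``reduces to a standard list of local moves'' \emph{including} R2 and R3 is simply false: a planar isotopy can neither create nor destroy crossings nor pass a strand through another, so it changes only the Morse picture (positions of local extrema and the slicing), never the crossing combinatorics.

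This matters, not just as a matter of bookkeeping. The point of separating Lemma~\ref{lem-functorF} from Theorem~\ref{thm-functorF+} is that the lemma holds in the weaker pivotal $G$-braided setting (no twist, no self-duality), and its well-definedness check uses only the pivotal axioms: duality axioms (a) for the snake moves, axioms (b)--(d) and the interchange law in a strict monoidal category for independence of the slicing, and naturality of $\tau$ together with \eqref{weakis+} and \eqref{weakisomor} for invariance under isomorphism of colorings. In contrast, invariance under R1 genuinely needs the $G$-ribbon structure (Lemmas~\ref{lem-twist} and~\ref{lem-twist-ribbon}), R3 needs the Yang--Baxter identity from Lemma~\ref{lem-braiding}(c), and the type~4 moves need Lemma~\ref{lem-braiding}(d) and the pivotality of the crossing --- none of which belong in a proof whose hypotheses omit ``ribbon.'' Your uniqueness argument via Morse slicing is fine and matches the paper's one-sentence remark; for existence, strip the Reidemeister and coupon-sliding moves out of your list, and what remains (interchange, snake moves, rotation of crossings/coupons via cups and caps, naturality) is exactly ``a direct consequence of the axioms of a pivotal category'' as the paper states.
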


\begin{proof}   The uniqueness of ${\mathcal F}$ is obvious because all morphisms in ${\mathcal  D}_{\mathcal C}$ can be obtained from the   elementary   diagrams using composition and    monoidal    product.
   The existence of ${\mathcal F}$ is a direct consequence of the axioms of
   a   pivotal  category, cf.\ Sections~\ref{pivotall} and~\ref{sect-pivot-cross-Gcat}.
\end{proof}

\section{Colored Reidemeister moves}

\subsection{The moves}
 We define  local transformations of colored   diagrams
 called {\it colored Reidemeister moves}. These moves preserve a
 diagram outside a 2-disk and modify the diagram in the disk as
  shown in Figures~\ref{fig-T1moves}--\ref{fig-T4moves2}.
There are four moves of type 1,  four moves of type 2, one move
 of type 3, and    four moves   of type 4.  We now specify the
 behavior of colorings under the moves.

Each type 1 or type 2 move creates two new crossings and
a new underpass with endpoints in these crossings.   The color of
 this   underpass  may be an arbitrary object of   $\mathcal  C$
 such that there is an isomorphism $\psi$ as in Figures~\ref{fig-T1moves} and~\ref{fig-T2moves}.
Both new crossings are colored with the same  $\psi$. Note
that under the type 1 moves,   $\vert X\vert=\vert X'\vert$.

 The morphisms $A,B,C,A',B' $ in the type 3 move are any
 isomorphisms in $\mathcal C$ as indicated such that  the
 following diagram   commutes:
 \begin{equation}\begin{split}\label{move3condition}\xymatrix@R=1cm @C=1.65cm {
\varphi_{\vert Y'\vert } (\widetilde X) \ar[d]^{\varphi_{\vert Y'\vert } (A')} & \ar[l]_{B'} X'' \ar[r]^-{B} &  \varphi_{\vert Z\vert} (X')
\ar[d]_{\varphi_{\vert Z \vert} (A)} \\
\varphi_{\vert Y'\vert } \varphi_{\vert Z\vert } (X)    \ar[r]^-{ \varphi_2(\vert Y'\vert, \vert Z\vert)_X} & \varphi_{\vert Z\vert\vert Y'\vert} (X)=\varphi_{\vert Y\vert\vert Z\vert} (X)  &  \ar[l]_{\quad \quad \quad \varphi_2(\vert Z\vert, \vert Y\vert)_X} \varphi_{\vert Z\vert} \varphi_{\vert Y\vert} (X) .}\end{split}\end{equation}
  The equality $\vert Z\vert\vert Y'\vert= \vert Y\vert\vert
Z\vert$  follows from the existence of the isomorphism $C$   and the fact that the objects $Y,Y'$ are non-zero.

 \begin{figure}[t]
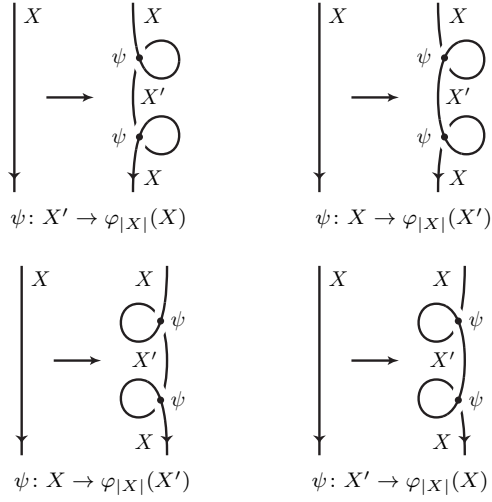

\begin{center}
       \subfigure[{$\psi\co X'\to \varphi_{|X|}(X)$}]{\quad\psfrag{X}[Bl][Bl]{\scalebox{.9}{$X$}} \psfrag{B}[Bl][Bl]{\scalebox{.9}{$X'$}} \psfrag{p}[Br][Br]{\scalebox{.9}{$\psi$}} \rsdraw{.45}{.9}{T1a-move}\quad} \qquad\;
   \subfigure[{$\psi\co X\to \varphi_{|X|}(X')$}]{\quad\psfrag{X}[Bl][Bl]{\scalebox{.9}{$X$}} \psfrag{B}[Bl][Bl]{\scalebox{.9}{$X'$}} \psfrag{p}[Br][Br]{\scalebox{.9}{$\psi$}} \rsdraw{.45}{.9}{T1b-move}\quad} \\[.6em]
\subfigure[{$\psi\co X\to \varphi_{|X|}(X')$}]{\quad\psfrag{Y}[Bl][Bl]{\scalebox{.9}{$X$}}\psfrag{X}[Br][Br]{\scalebox{.9}{$X$}} \psfrag{B}[Br][Br]{\scalebox{.9}{$X'$}} \psfrag{p}[Bl][Bl]{\scalebox{.9}{$\psi$}} \rsdraw{.45}{.9}{T1c-move}\quad} \qquad\;
   \subfigure[{$\psi\co X'\to \varphi_{|X|}(X)$}]{\quad\psfrag{Y}[Bl][Bl]{\scalebox{.9}{$X$}}\psfrag{X}[Br][Br]{\scalebox{.9}{$X$}} \psfrag{B}[Br][Br]{\scalebox{.9}{$X'$}} \psfrag{p}[Bl][Bl]{\scalebox{.9}{$\psi$}} \rsdraw{.45}{.9}{T1d-move}\quad}
\end{center}
\caption{Type 1 moves}
\label{fig-T1moves}
\end{figure}
\begin{figure}[t]
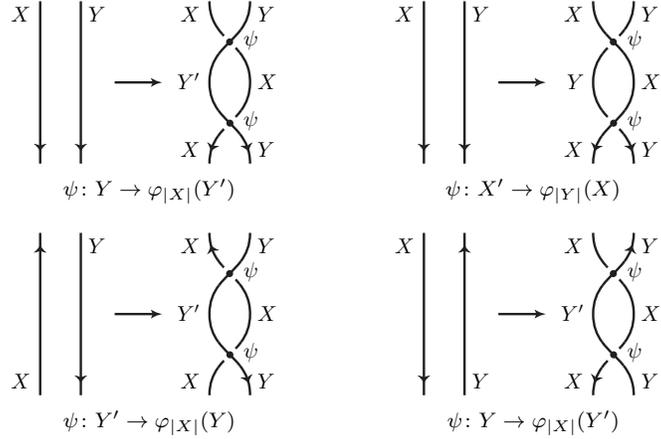

\begin{center}
      \subfigure[{$\psi\co Y\to \varphi_{|X|}(Y')$}]{\quad\psfrag{X}[Br][Br]{\scalebox{.9}{$X$}} \psfrag{A}[Br][Br]{\scalebox{.9}{$Y'$}} \psfrag{Y}[Bl][Bl]{\scalebox{.9}{$Y$}}\psfrag{B}[Bl][Bl]{\scalebox{.9}{$X$}} \psfrag{p}[Bl][Bl]{\scalebox{.9}{$\psi$}} \rsdraw{.45}{.9}{T2a-move}\quad} \qquad\;
                \subfigure[{$\psi\co X'\to \varphi_{|Y|}(X)$}]{\quad\psfrag{X}[Br][Br]{\scalebox{.9}{$X$}} \psfrag{A}[Br][Br]{\scalebox{.9}{$Y$}} \psfrag{Y}[Bl][Bl]{\scalebox{.9}{$Y$}}\psfrag{B}[Bl][Bl]{\scalebox{.9}{$X'$}} \psfrag{p}[Bl][Bl]{\scalebox{.9}{$\psi$}} \rsdraw{.45}{.9}{T2b-move}\quad}\\[.4em]
                \subfigure[{$\psi\co Y'\to \varphi_{|X|}(Y)$}]{\quad\psfrag{X}[Br][Br]{\scalebox{.9}{$X$}} \psfrag{A}[Br][Br]{\scalebox{.9}{$Y'$}} \psfrag{Y}[Bl][Bl]{\scalebox{.9}{$Y$}}\psfrag{B}[Bl][Bl]{\scalebox{.9}{$X$}} \psfrag{p}[Bl][Bl]{\scalebox{.9}{$\psi$}} \rsdraw{.45}{.9}{T2c-move}\quad} \qquad\;
                    \subfigure[{$\psi\co Y\to \varphi_{|X|}(Y')$}]{\quad\psfrag{X}[Br][Br]{\scalebox{.9}{$X$}} \psfrag{A}[Br][Br]{\scalebox{.9}{$Y'$}} \psfrag{Y}[Bl][Bl]{\scalebox{.9}{$Y$}}\psfrag{B}[Bl][Bl]{\scalebox{.9}{$X$}} \psfrag{p}[Bl][Bl]{\scalebox{.9}{$\psi$}} \rsdraw{.45}{.9}{T2e-move}\quad}
\end{center}
\caption{Type 2 moves}
\label{fig-T2moves}
\end{figure}
\begin{figure}[t]
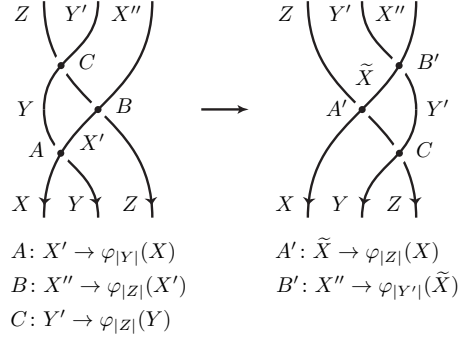

\begin{center}
\psfrag{X}[Br][Br]{\scalebox{.9}{$X$}}
\psfrag{Y}[Br][Br]{\scalebox{.9}{$Y$}}
\psfrag{Z}[Br][Br]{\scalebox{.9}{$Z$}}
\psfrag{R}[Bl][Bl]{\scalebox{.9}{$X'$}}
\psfrag{P}[Br][Br]{\scalebox{.9}{$X''$}}
\psfrag{T}[Br][Br]{\scalebox{.9}{$Y'$}}
\psfrag{S}[Bl][Bl]{\scalebox{.9}{$Y'$}}
\psfrag{A}[Br][Br]{\scalebox{.9}{$\widetilde{X}$}}
\psfrag{a}[Br][Br]{\scalebox{.9}{$A$}}
\psfrag{b}[Bl][Bl]{\scalebox{.9}{$B$}}
\psfrag{g}[Bl][Bl]{\scalebox{.9}{$C$}}
\psfrag{r}[Br][Br]{\scalebox{.9}{$A'$}}
\psfrag{l}[Bl][Bl]{\scalebox{.9}{$B'$}}
\psfrag{q}[Bl][Bl]{\scalebox{.9}{$C$}}
\psfrag{u}[Bl][Bl]{\scalebox{.9}{$A\co X'\to\varphi_{|Y|}(X)$}}
\psfrag{v}[Bl][Bl]{\scalebox{.9}{$B\co X''\to\varphi_{|Z|}(X')$}}
\psfrag{w}[Bl][Bl]{\scalebox{.9}{$C\co Y'\to\varphi_{|Z|}(Y)$}}
\psfrag{e}[Bl][Bl]{\scalebox{.9}{$A'\co \widetilde{X}\to\varphi_{|Z|}(X)$}}
\psfrag{s}[Bl][Bl]{\scalebox{.9}{$B'\co X''\to\varphi_{|Y'|}(\widetilde{X})$}}
\psfrag{o}[Bl][Bl]{\scalebox{.9}{}}
\rsdraw{.45}{.9}{T3-move}
\end{center}
\caption{Type 3 move}
\label{fig-T3move}
\end{figure}

\begin{figure}[t]
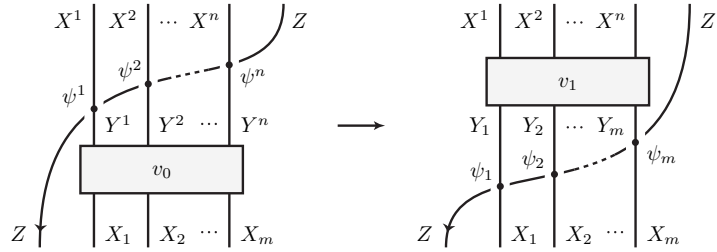

\begin{center}
{\label{fig-sub-T4a}%
\psfrag{v}[Bc][Bc]{\scalebox{.9}{$v_0$}}
\psfrag{n}[Bc][Bc]{\scalebox{.9}{$v_1$}}
\psfrag{Z}[Bc][Bc]{\scalebox{.9}{$Z$}}
\psfrag{X}[Bl][Bl]{\scalebox{.9}{$X_1$}}
\psfrag{Y}[Bl][Bl]{\scalebox{.9}{$X_2$}}
\psfrag{L}[Bl][Bl]{\scalebox{.9}{$X_m$}}
\psfrag{U}[Bl][Bl]{\scalebox{.9}{$Y^1$}}
\psfrag{V}[Bl][Bl]{\scalebox{.9}{$Y^2$}}
\psfrag{T}[Bl][Bl]{\scalebox{.9}{$Y^n$}}
\psfrag{P}[Br][Br]{\scalebox{.9}{$X^1$}}
\psfrag{S}[Br][Br]{\scalebox{.9}{$X^2$}}
\psfrag{R}[Br][Br]{\scalebox{.9}{$X^n$}}
\psfrag{A}[Br][Br]{\scalebox{.9}{$Y_1$}}
\psfrag{B}[Br][Br]{\scalebox{.9}{$Y_2$}}
\psfrag{C}[Br][Br]{\scalebox{.9}{$Y_m$}}
\psfrag{t}[Br][Br]{\scalebox{.9}{$\psi_1$}}
\psfrag{h}[Br][Br]{\scalebox{.9}{$\psi_2$}}
\psfrag{k}[Bl][Bl]{\scalebox{.9}{$\psi_m$}}
\psfrag{p}[Br][Br]{\scalebox{.9}{$\psi^1$}}
\psfrag{q}[Br][Br]{\scalebox{.9}{$\psi^2$}}
\psfrag{g}[Bl][Bl]{\scalebox{.9}{$\psi^n$}}
\rsdraw{.45}{.9}{T4a-move}}\\[.6em]
\end{center}
\caption{The first   move  of type 4}
\label{fig-T4moves1}
\end{figure}

\begin{figure}[t]
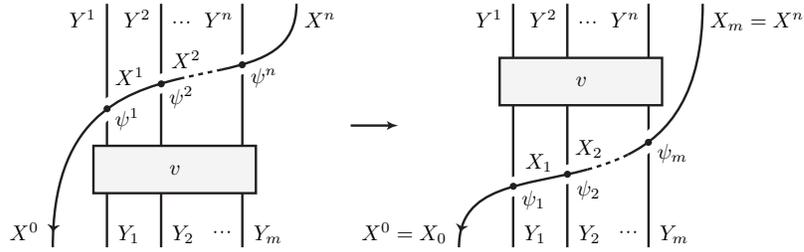

\begin{center}
{\label{fig-sub-T4c}%
\psfrag{v}[Bc][Bc]{\scalebox{.9}{$v$}}
\psfrag{X}[Bl][Bl]{\scalebox{.9}{$Y_1$}}
\psfrag{Y}[Bl][Bl]{\scalebox{.9}{$Y_2$}}
\psfrag{L}[Bl][Bl]{\scalebox{.9}{$Y_m$}}
\psfrag{D}[Bl][Bl]{\scalebox{.9}{$X_m=X^n$}}
\psfrag{P}[Br][Br]{\scalebox{.9}{$Y^1$}}
\psfrag{S}[Br][Br]{\scalebox{.9}{$Y^2$}}
\psfrag{R}[Br][Br]{\scalebox{.9}{$Y^n$}}
\psfrag{T}[Bl][Bl]{\scalebox{.9}{$X^n$}}
\psfrag{Z}[Br][Br]{\scalebox{.9}{$X^0$}}
\psfrag{A}[Br][Br]{\scalebox{.9}{$X^0=X_0$}}
\psfrag{B}[Bc][Bc]{\scalebox{.9}{$X_1$}}
\psfrag{C}[Bc][Bc]{\scalebox{.9}{$X_2$}}
\psfrag{U}[Bc][Bc]{\scalebox{.9}{$X^1$}}
\psfrag{V}[Bc][Bc]{\scalebox{.9}{$X^2$}}
\psfrag{p}[Bl][Bl]{\scalebox{.9}{$\psi^1$}}
\psfrag{q}[Bl][Bl]{\scalebox{.9}{$\psi^2$}}
\psfrag{g}[Bl][Bl]{\scalebox{.9}{$\psi^n$}}
\psfrag{t}[Bl][Bl]{\scalebox{.9}{$\psi_1$}}
\psfrag{h}[Bl][Bl]{\scalebox{.9}{$\psi_2$}}
\psfrag{k}[Bl][Bl]{\scalebox{.9}{$\psi_m$}}
\rsdraw{.45}{.9}{T4c-move}}
\end{center}
\caption{The   third  move  of type 4}
\label{fig-T4moves2}
\end{figure}

To describe the  type 4 moves, we   use   notation   $\overline{\psi}$ and $\psi^-$   introduced in Section~\ref{sect-pivot-cross-Gcat}.
 Let $Q$ be a coupon of a colored   diagram with $m$ entries
  and $n$ exits. The first    type 4  move  pushes   a
  $Z$-colored underpass behind $Q$, see
  Figure~\ref{fig-T4moves1}.  The  color $v_0$ of $Q$ is
  transformed into $v_1$.  There is only one requirement on
   $v_0$, $v_1$ and the isomorphisms $\psi_i, \psi^j$. Namely,
  the following diagram should commute:
\begin{equation}\begin{split}\label{fourthRmove}
 \xymatrix@R=1cm @C=2cm {
\otimes_{i=1}^m X_i^{\varepsilon_i} \ar[r]^-{\otimes_{i=1}^m  \psi_i^{\varepsilon_i}}\ar[d]_{v_0} & \otimes_{i=1}^m \varphi_{\mu}  (Y_i^{\varepsilon_i}) \ar[r]^-{(\varphi_{\mu})_m}  &
 \varphi_{\mu} ( \otimes_{i=1}^m    Y_i^{\varepsilon_i})  \ar[d]^-{\varphi_{\mu}(v_1)}\\
\otimes_{j=1}^n (Y^j)^{\varepsilon^j}   \ar[r]^-{ \otimes_{j=1}^n  (\psi^j)^{\varepsilon^j}} & \otimes_{j=1}^n \varphi_{\mu} ((X^j)^{\varepsilon^j})
\ar[r]^{(\varphi_{\mu})_n } &  \varphi_{\mu} (\otimes_{j=1}^n (X^j)^{\varepsilon^j}).}
\end{split}\end{equation}
 Here $\mu=\vert Z\vert\in G $ and      $\varepsilon_i ,
   \varepsilon^j=\pm$    are the signs determined by the $i$-th
   entry and $j$-th exit of $Q$ as in Section
   \ref{coloredGgraphs}. The second  type 4 move is obtained
   from the previous one  by inverting orientation on the
   $Z$-colored underpass (before and after the move) and
   replacing $\mu, \psi_i, \psi^j$ in   \eqref{fourthRmove}  by
   $\mu^{-1}=\vert Z\vert^{-1}$, $\overline   { \psi_i},
   \overline {\psi^j}$, respectively.

 The third   type 4 move pushes a branch of the diagram in front
of $Q$ keeping the color $v$ of $Q$,  see
Figure~\ref{fig-T4moves2}. There is only one
requirement on the  isomorphisms $\psi_i, \psi^j$. Set
$y_i=\vert Y_i\vert^{\varepsilon_i} \in G$ and $y^j= \vert
Y^j\vert^{\varepsilon^j} \in G$ for all $i,j$. Suppose first
that   $\varepsilon_i = \varepsilon^j=+$   for all $i, j$, i.e.,
that all the segments adjacent to $Q$ are directed downwards. We
require that
the composition
\begin{equation}\label{comp1}\begin{split}
\xymatrix@R=1cm @C=2cm {
X^n  \ar[r]^-{ \psi^n }  \ar[r]^-{\psi^n}  & \varphi_{y^n } (X^{n-1})   \ar[r]^-{
\varphi_{y^n } (\psi^{n-1})}  &    \varphi_{y^n }  \varphi_{y^{n-1} } (X^{n-2}) \ar[r]^-{
\varphi_{y^n } \varphi_{y^{n-1} } (\psi^{n-2})}   &  }\\
 \xymatrix@R=1cm @C=2cm {   \ldots    \ar[r]^-{
\varphi_{y^n }\cdots  \varphi_{y^{2} } (\psi^{1})}  &  \varphi_{y^n } \cdots  \varphi_{y^{ 1}  } (X^{0})  \ar[r]^-{
\varphi_{n}} &  \varphi_{y^1    \cdots  y^{ n}  } (X^{0}) }
\end{split}\end{equation}
is equal to   the composition
\begin{equation}\label{comp2}\begin{split}
\xymatrix@R=1cm @C=1.7cm {
X_m  \ar[r]^-{ \psi_m }  \ar[r]^-{\psi_m}  & \varphi_{y_{m} } (X_{m-1})   \ar[r]^-{
\varphi_{y_{m}} (\psi_{m-1})}  &    \varphi_{y_{m}}  \varphi_{y_{m-1} } (X_{m-2}) \,\,\,\, \ar[r]^-{
\varphi_{y_{m}} \varphi_{y_{m-1} } (\psi_{m-2})}  &  }\\
\xymatrix@R=1cm @C=2cm { \ldots    \ar[r]^-{
\varphi_{y_{m}}\cdots  \varphi_{y_{2}   } (\psi_{1})}  &
\varphi_{y_{m}} \cdots  \varphi_{y_{ 1}  } (X_{0})  \ar[r]^-{
\varphi_{m}} & \varphi_{y_1   \cdots   y_{ m} } (X_{0}) .}
\end{split}\end{equation}
In the general  case,  whenever
  $\varepsilon^j=-$ (resp.\@ $\varepsilon_i=-$),   one should
replace   here $\psi^j$ by $\overline {\psi^j}$ (resp.\@ replace
$\psi_i$ by ${\overline \psi_i}$).   Note that $X^n=X_m$,
$X^0=X_0$, and $y^1\cdots y^n=y_1\cdots y_m$ so that the source
and   target objects of both compositions are the same.

The fourth type 4 move is obtained from the previous
one by inverting orientation on the long branch  (before and
after the move). The rest of the notation and the condition on
the $\psi$'s are the same.

The  moves inverse  to the colored Reidemeister moves above
are also called colored Reidemeister moves.

We shall need one more move on colored  diagrams shown in Figure
 \ref{stabi} and called  {\it stabilization}. This move inserts
 a coupon inside   a downward-oriented branch of an  underpass.
 If this underpass is colored with $X$, then both underpasses
 adjoint to the new coupon are colored with $X$, and the coupon
 is colored with
$\id_X$. The rest of the diagram is preserved including the
coloring.

\begin{thm}\label{thm-functorF+}
For any $G$-ribbon  category  ${\mathcal C}$, the  functor
${\mathcal F}\colon {\mathcal  D}_{\mathcal C} \to  {\mathcal C}$ of Lemma~\ref{lem-functorF} is
invariant under the colored Reidemeister moves and under
stabilization.
\end{thm}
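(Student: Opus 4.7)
The strategy is to verify the invariance of $\mathcal{F}$ under each family of moves separately, using the coherence machinery accumulated in Sections~\ref{Preliminaries on categories and functors}--\ref{sect-braided-ribbon}. Stabilization is essentially immediate: the inserted coupon is colored by $\id_X$ and $\mathcal{F}(D_{\id_X})=\id_X$, so by functoriality the resulting morphism is unchanged. So the substantive content is the eleven Reidemeister moves.

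For moves of type 2, we compute $\mathcal{F}$ on both sides using \eqref{Fother3}--\eqref{Fother4}. Each side reduces to a composition of the form $\tau^{-1}\tau$ (or $\tau\tau^{-1}$) conjugated by the coloring isomorphism~$\psi$; the twists by~$\psi$ cancel by design. For moves of type 1, a similar computation produces $\theta_X$ on one side (or $\theta_X^{-1}$) up to conjugation by coherence isomorphisms of $\varphi$. The matching with the \emph{other} side, where the strand is vertical, requires precisely the two alternative graphical formulas for the twist provided by Lemma~\ref{lem-twist-ribbon} (which in turn relies on self-duality of $\theta$, i.e. the defining axiom \eqref{eq-ribbon} of a $G$-ribbon category); the $\psi$ decoration again absorbs all crossing coherences. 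For the type 3 move, one writes out $\mathcal{F}$ on both diagrams: each is a triple braiding composed with the isomorphisms $A,B,C$ (respectively $A',B',C$). After using naturality of $\tau$ to move these isomorphisms to the bottom/top, the two expressions differ only by the Yang-Baxter relation of Lemma~\ref{lem-braiding}(c), and the compatibility diagram \eqref{move3condition} is exactly the statement that the $\varphi_2$'s appearing in the YBE match the decorations of the crossings on the two sides.

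The type 4 moves are the main obstacle, as they mix the monoidal coherence of the functors $\varphi_\alpha$, the $G$-braiding, and, in the front-passing versions, the pivotal structure. For the first type 4 move (strand passes behind the coupon), the result of applying~$\mathcal{F}$ to each side is obtained by iterating \eqref{eq-braiding1} and \eqref{eq-braiding2} to reduce the $m+n$ crossings to an equality involving $\tau_{v_0,Z}$ (or $\tau_{\varphi_\mu(v_1),Z}$) together with the coherence natural transformation $\varphi_2$; this equality is then precisely the image under ${-}\otimes\id$ of diagram~\eqref{fourthRmove}, i.e.\ the imposed condition on $(v_0,v_1,\psi_i,\psi^j)$. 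The second move is handled analogously, using the second formula for $\tau^{-1}$ in Lemma~\ref{lem-braiding}(d) to convert upward-oriented crossings into a computation parallel to the first move, with $\mu$ replaced by $\mu^{-1}$ and the $\psi$'s replaced by their $\overline{(-)}$-versions. For the third and fourth type 4 moves (strand passing in front), one uses pivotality to rewrite each front-crossing via duality morphisms $\lev, \rev, \lcoev, \rcoev$ and the first formula of Lemma~\ref{lem-braiding}(d); the resulting compositions telescope along the strand $X_0=X^0,\ldots, X_m=X^n$ exactly as dictated by \eqref{comp1} and \eqref{comp2}, the pivotal isomorphisms $\varphi_\alpha^1$ being consumed by Lemma~\ref{lem-psibar} to account for the operations $\psi\mapsto \overline\psi$ and $\psi\mapsto\psi^-$ that appear when orientations are reversed.

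The hardest part will be the bookkeeping of coherence morphisms in the type 4 moves, especially in the front-passing versions where all four structural isomorphisms $\varphi_2(\alpha,\beta)$, $(\varphi_0)_X$, $(\varphi_\alpha)_2(X,Y)$, $(\varphi_\alpha)_0$ enter together with the pivotal natural transformation $\varphi_\alpha^1$. The most efficient route is probably to first reduce all type 4 moves to the downward-oriented case via the naturality of~$\tau$ together with Lemmas~\ref{lem-psibar} and~\ref{lem-braiding}(d), then verify the downward case by an induction on $n$ (strand in front: slide one crossing at a time using \eqref{eq-braiding1}, and at each step pivotality converts a front-crossing into a back-crossing against a dual strand). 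Once this reduction is in place, each move is a formal consequence of one of the axioms \eqref{eq-braiding1}, \eqref{eq-braiding2}, \eqref{eq-braiding3}, Lemma~\ref{lem-braiding}, or Lemma~\ref{lem-twist-ribbon}.
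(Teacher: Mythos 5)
Your proposal matches the paper's proof essentially step for step: stabilization is trivial, types 1 and 2 follow from the formulas of Lemma~\ref{lem-F-othergen} (which in turn rest on Lemma~\ref{lem-twist-ribbon}), type 3 is the quantum Yang--Baxter equation of Lemma~\ref{lem-braiding}(c) combined with \eqref{move3condition}, and the type 4 moves reduce by telescoping the crossings via \eqref{eq-braiding1}/\eqref{eq-braiding2} to the defining conditions \eqref{fourthRmove} and \eqref{comp1}$=$\eqref{comp2}. The only deviation is the closing suggestion to handle the front-passing moves by a crossing-by-crossing induction converting them to back-crossings against duals --- the paper instead packages that work once and for all into the $\sigma'$, $\sigma''$, $\sigma'''$ formulas \eqref{Fother5}--\eqref{Fother10} of Lemma~\ref{lem-F-othergen} and then telescopes directly, which is a cleaner version of the same idea.
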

The proof of this theorem  is based on the following lemma:
\begin{figure}[t]
\begin{center}
\begin{align*}
T_+(X,X',\psi)& \psfrag{X}[Bl][Bl]{\scalebox{.9}{$X$}} \psfrag{B}[Bl][Bl]{\scalebox{.9}{$X'$}} \psfrag{p}[Br][Br]{\scalebox{.9}{$\psi$}}
\psfrag{r}[Bc][Bc]{\scalebox{.9}{$\psi\co X'\to \varphi_{|X|}(X)$}}=\;\;\rsdraw{.45}{.9}{proof-T1a}\,,
&
T_-(X,X',\psi)& \psfrag{X}[Bl][Bl]{\scalebox{.9}{$X$}} \psfrag{B}[Bl][Bl]{\scalebox{.9}{$X'$}} \psfrag{p}[Br][Br]{\scalebox{.9}{$\psi$}}
\psfrag{r}[Bc][Bc]{\scalebox{.9}{$\psi\co X\to \varphi_{|X|}(X')$}} =\;\;\rsdraw{.45}{.9}{proof-T1b}
\,,\\[.5em]
T'_+(X,X',\psi)& \psfrag{X}[Br][Br]{\scalebox{.9}{$X$}} \psfrag{B}[Br][Br]{\scalebox{.9}{$X'$}} \psfrag{p}[Bl][Bl]{\scalebox{.9}{$\psi$}}
\psfrag{r}[Bc][Bc]{\scalebox{.9}{$\psi\co X'\to \varphi_{|X|}(X)$}} =\;\;\rsdraw{.45}{.9}{proof-T1d}\;\,,
&
T'_-(X,X',\psi)& \psfrag{X}[Br][Br]{\scalebox{.9}{$X$}} \psfrag{B}[Br][Br]{\scalebox{.9}{$X'$}} \psfrag{p}[Bl][Bl]{\scalebox{.9}{$\psi$}}
\psfrag{r}[Bc][Bc]{\scalebox{.9}{$\psi\co X\to \varphi_{|X|}(X')$}}=\;\;\rsdraw{.45}{.9}{proof-T1c}
\,\;,\\[.9em]
\sigma'_+(X,Y,Y',\psi)& \psfrag{X}[Br][Br]{\scalebox{.9}{$X$}} \psfrag{Y}[Bl][Bl]{\scalebox{.9}{$Y$}}
\psfrag{R}[Br][Br]{\scalebox{.9}{$Y'$}} \psfrag{Z}[Bl][Bl]{\scalebox{.9}{$X$}}
\psfrag{p}[Br][Br]{\scalebox{.9}{$\psi$}}
\psfrag{r}[Bc][Bc]{\scalebox{.9}{$\psi\co Y'\to\varphi_{|X|}(Y)$}} =\;\;\rsdraw{.6}{.9}{proof-T2f}
\,\;,
&
\sigma'_-(X,Y,X',\psi)& \psfrag{X}[Br][Br]{\scalebox{.9}{$X$}} \psfrag{Y}[Bl][Bl]{\scalebox{.9}{$Y$}}
\psfrag{R}[Br][Br]{\scalebox{.9}{$Y$}} \psfrag{Z}[Bl][Bl]{\scalebox{.9}{$X'$}}
\psfrag{p}[Br][Br]{\scalebox{.9}{$\psi$}}
\psfrag{r}[Bc][Bc]{\scalebox{.9}{$\psi\co X\to\varphi_{|Y|}(X')$}} =\;\;\rsdraw{.6}{.9}{proof-T2e}
\,\;,\\[.9em]
\sigma''_+(X,Y,Y',\psi)& \psfrag{X}[Br][Br]{\scalebox{.9}{$X$}} \psfrag{Y}[Bl][Bl]{\scalebox{.9}{$Y$}}
\psfrag{R}[Br][Br]{\scalebox{.9}{$Y'$}} \psfrag{Z}[Bl][Bl]{\scalebox{.9}{$X$}}
\psfrag{p}[Br][Br]{\scalebox{.9}{$\psi$}}
\psfrag{r}[Bc][Bc]{\scalebox{.9}{$\psi\co Y\to\varphi_{|X|}(Y')$}} =\;\;\rsdraw{.6}{.9}{proof-T2d}
\,\;,
&
\sigma''_-(X,Y,X',\psi)& \psfrag{X}[Br][Br]{\scalebox{.9}{$X$}} \psfrag{Y}[Bl][Bl]{\scalebox{.9}{$Y$}}
\psfrag{R}[Br][Br]{\scalebox{.9}{$Y$}} \psfrag{Z}[Bl][Bl]{\scalebox{.9}{$X'$}}
\psfrag{p}[Br][Br]{\scalebox{.9}{$\psi$}}
\psfrag{r}[Bc][Bc]{\scalebox{.9}{$\psi\co X'\to\varphi_{|Y|}(X)$}} =\;\;\rsdraw{.6}{.9}{proof-T2c}
\,\;,\\[.9em]
\sigma'''_+(X,Y,X',\psi)& \psfrag{X}[Br][Br]{\scalebox{.9}{$X$}} \psfrag{Y}[Bl][Bl]{\scalebox{.9}{$Y$}}
\psfrag{R}[Br][Br]{\scalebox{.9}{$Y$}} \psfrag{Z}[Bl][Bl]{\scalebox{.9}{$X'$}}
\psfrag{p}[Br][Br]{\scalebox{.9}{$\psi$}}
\psfrag{r}[Bc][Bc]{\scalebox{.9}{$\psi\co X\to\varphi_{|Y|}(X')$}} =\;\;\rsdraw{.6}{.9}{proof-T2h}
\,\;,
&
\sigma'''_-(X,Y,Y',\psi)& \psfrag{X}[Br][Br]{\scalebox{.9}{$X$}} \psfrag{Y}[Bl][Bl]{\scalebox{.9}{$Y$}}
\psfrag{R}[Br][Br]{\scalebox{.9}{$Y'$}} \psfrag{Z}[Bl][Bl]{\scalebox{.9}{$X$}}
\psfrag{p}[Br][Br]{\scalebox{.9}{$\psi$}}
\psfrag{r}[Bc][Bc]{\scalebox{.9}{$\psi\co Y'\to\varphi_{|X|}(Y)$}} =\;\;\rsdraw{.6}{.9}{proof-T2g}
\,\;.
\end{align*}
\end{center}
\caption{$\cc$-colored diagrams}
\label{fig-othergen}
\end{figure}
\begin{lem}\label{lem-F-othergen}
For any $G$-ribbon  category  ${\mathcal C}$, the images
under ${\mathcal F}\colon {\mathcal  D}_{\mathcal C} \to  {\mathcal
C}$ of the $\cc$-colored diagrams in Figure~\ref{fig-othergen} are:
\begin{align}
& {\mathcal F}\bigl(T_+(X,X',\psi)\bigr)={\mathcal F}\bigl(T'_+(X,X',\psi)\bigr)=\psi^{-1}\theta_X \quad ({\rm {here}} \quad \vert X\vert =\vert X'\vert), \label{Fother1}\\
& {\mathcal F}\bigl(T_-(X,X',\psi)\bigr)={\mathcal F}\bigl(T'_-(X,X',\psi)\bigr)=\theta_{X'}^{-1}\psi \quad ({\rm {here}} \quad \vert X\vert =\vert X'\vert),\label{Fother2}\\
& {\mathcal F}\bigl(\sigma'_+(X,Y,Y',\psi)\bigr)=\tau_{Y',X^*}^{-1}\bigl(\id_{X^*} \otimes \overline{\psi}\bigr), \label{Fother5}\\
& {\mathcal F}\bigl(\sigma'_-(X,Y,X',\psi)\bigr)=\bigl(\id_{Y^*} \otimes \overline{\psi}^{-1}\bigr)\tau_{X,Y^*}, \label{Fother6}\\
& {\mathcal F}\bigl(\sigma''_+(X,Y,Y',\psi)\bigr)= \tau_{Y'^*,X}^{-1}\bigl(\id_{X} \otimes \psi^-\bigr),\label{Fother7}\\
& {\mathcal F}\bigl(\sigma''_-(X,Y,X',\psi)\bigr)=\bigl(\id_{Y} \otimes (\psi^-)^{-1}\bigr)\tau_{X^*,Y},\label{Fother8}\\
& {\mathcal F}\bigl(\sigma'''_+(X,Y,X',\psi)\bigr)=\bigl(\id_{Y^*} \otimes (\overline{\psi^-})^{-1}\bigr)\tau_{X^*,Y^*} ,\label{Fother9}\\
& {\mathcal F}\bigl(\sigma'''_-(X,Y,Y',\psi)\bigr)=\tau_{Y'^*,X^*}^{-1}\bigl(\id_{X^*} \otimes \overline{\psi^-}\bigr).\label{Fother10}
\end{align}
\end{lem}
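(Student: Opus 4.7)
The plan is to decompose each $\cc$-colored diagram of Figure~\ref{fig-othergen} as a composition of monoidal products of the elementary diagrams of Figure~\ref{fig-elem-diags}, apply $\mathcal{F}$ termwise via Lemma~\ref{lem-functorF}, and then simplify the resulting morphisms of $\cc$ using the axioms of a $G$-ribbon category.

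For the curl diagrams $T_\pm(X,X',\psi)$, I would read each picture as a composition of the shape $(\overrightarrow{\cap}_X \otimes \id)(\id \otimes \sigma_\pm(\dots))(\overleftarrow{\cup}_X \otimes \id)$. Applying Lemma~\ref{lem-functorF} and comparing with definition~\eqref{eq-def-twist} of the twist directly yields $\mathcal{F}(T_+(X,X',\psi))=\psi^{-1}\theta_X$, and, via the formula for $\theta_X^{-1}$ in Lemma~\ref{lem-twist}, the expression $\theta_{X'}^{-1}\psi$ for $T_-$. The diagrams $T'_\pm$ draw the same curl with cups and caps on the opposite side; the same procedure produces a morphism which is identified with $\psi^{-1}\theta_X$ (resp.\ $\theta_{X'}^{-1}\psi$) precisely by means of the alternative pictorial formulas for $\theta_X$ and $\theta_X^{-1}$ established in Lemma~\ref{lem-twist-ribbon}. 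It is in the comparisons $T_\pm=T'_\pm$, and only there, that the self-duality of the twist (the $G$-ribbon hypothesis proper) is used.

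The remaining six diagrams $\sigma'_\pm, \sigma''_\pm, \sigma'''_\pm$ are crossings in which one or both of the two strands have been turned upward via a cap--cup pair; I would decompose each as an elementary crossing $\sigma_\pm$ acting on suitably dualized objects, pre- and post-composed with duality morphisms. Applying $\mathcal{F}$ produces morphisms of the form ``duality $\otimes$ isomorphism $\circ\ \tau^{\pm 1}\ \circ$ isomorphism $\otimes$ duality''. To bring these into the target expressions \eqref{Fother5}--\eqref{Fother10}, I would rewrite the $\tau^{-1}$ factor using either of the two formulas of Lemma~\ref{lem-braiding}(d), exploit the naturality of $\tau$, and then recognize the remaining garland of $(\varphi_0)^{-1}_X\varphi_2(\alpha^{-1},\alpha)_X$ and $\varphi_\alpha^l(X)=\varphi_\alpha^1(X)$ factors as the very definition of $\overline{\psi}$ or $\psi^-$ from Section~\ref{sect-pivot-cross-Gcat}. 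For the $\sigma'''_\pm$ diagrams, which flip both strands, both $\overline{\cdot}$ and $\cdot^-$ appear, and Lemma~\ref{lem-psibar} is invoked to identify $\overline{\psi^-}$ with $(\overline{\psi})^-$.

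The main obstacle is bookkeeping. The structural isomorphisms $\varphi_2(\alpha,\beta)_X$, $(\varphi_0)_X$, $(\varphi_\alpha)_2(X,Y)$, $(\varphi_\alpha)_0$ and $\varphi_\alpha^1(X)$ proliferate in intermediate expressions, and reducing them to the canonical normal forms $\overline\psi$, $\psi^-$, $\overline{\psi^-}$ requires repeated use of the coherence identities \eqref{crossing5}--\eqref{crossing6} and \eqref{eq-psibar1}--\eqref{eq-psibar3}, together with the auxiliary equality \eqref{eq-dem-twist}. I expect each of the eight formulas to unwind into a short diagrammatic calculation modelled closely on the proofs of Lemmas~\ref{lem-twist} and~\ref{lem-twist-mult} above, once the initial decomposition into elementary diagrams is written out explicitly.
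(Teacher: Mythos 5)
Your plan is essentially the one the paper follows: decompose into elementary diagrams, apply $\mathcal F$, and simplify using Lemma~\ref{lem-braiding}(d), the twist lemmas, and the coherence identities; your observation that the $G$-ribbon hypothesis (self-duality of the twist, via Lemma~\ref{lem-twist-ribbon}) is needed only to match $T'_\pm$ with $T_\pm$ is correct, and the treatment of $\sigma'_\pm,\sigma''_\pm$ via color-equivalence to a cap/cup decomposition plus Lemma~\ref{lem-braiding}(d) is exactly how the paper proceeds.

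The one place you genuinely diverge is the $\sigma'''_\pm$ diagrams. The paper exploits the fact that $\sigma'''_+(X,Y,X',\psi)$, as a morphism of $\dd_\cc$, is the dual of $\sigma_+(X',Y,X,\psi)$, so that pivotality of $\mathcal F$ gives immediately
$\mathcal F(\sigma'''_+(X,Y,X',\psi))=(\tau_{X',Y})^*\bigl((\psi^{-1})^*\otimes\id_{Y^*}\bigr)$,
and then rewrites $(\tau_{X',Y})^*$ as a braiding by one application of \eqref{FrFlcomp} together with the two formulas of Lemma~\ref{lem-braiding}(d). After that, the expression $\overline{\psi^-}$ emerges directly by unpacking the definitions of $\psi^-$ and $\overline{\phantom{x}}$, so Lemma~\ref{lem-psibar} is not used in this proof at all (it is used later in the paper, in Theorem~\ref{thm-functorF+}). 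Your proposed route --- decomposing $\sigma'''_\pm$ by hand into caps, cups, and an elementary crossing and then invoking Lemma~\ref{lem-psibar} to reconcile $(\overline\psi)^-$ with $\overline{\psi^-}$ --- should also go through, but at the cost of carrying more structural isomorphisms and one extra lemma; the duality observation is the economical shortcut and is worth noticing.
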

\begin{proof}
Equalities \eqref{Fother1} and \eqref{Fother2} follow from the
expressions for the twist and its inverse given in
   \eqref{eq-def-twist}, Lemma~\ref{lem-twist}, and
   Lemma~\ref{lem-twist-ribbon}. Since   $\sigma'_+(X,Y,Y',\psi)$ is color-equivalent to
$$
\psfrag{X}[Br][Br]{\scalebox{.9}{$X$}} \psfrag{Y}[Br][Br]{\scalebox{.9}{$Y$}}
\psfrag{R}[Bl][Bl]{\scalebox{.9}{$Y'$}} \psfrag{Z}[Bl][Bl]{\scalebox{.9}{$X$}}
\psfrag{p}[Br][Br]{\scalebox{.9}{$\psi$}} \rsdraw{.45}{.9}{proof-T2fbis}\;,
$$
we obtain from the definition of ${\mathcal F}$ that
$$
{\mathcal F}\bigl(\sigma'_+(X,Y,Y',\psi)\bigr)=(\lev_X \otimes \psi^{-1} \otimes \id_{X^*})(\id_{X^*} \otimes \tau_{Y,X} \otimes \id_X) (\id_{X^*\otimes Y} \otimes \lcoev_X).
$$
Note that
$(\varphi_0)_{Y'}^{-1}\varphi_2(|X|,|X|^{-1})_{Y'}\varphi_{|X|}(\overline{\psi})=\psi^{-1}$
by \eqref{crossing5} and \eqref{crossing6}. Now, by the first
equality of Lemma~\ref{lem-braiding}(d),
\begin{align*}
\tau_{Y',X^*}^{-1}=&\bigl(\lev_X \otimes (\varphi_0)_{Y'}^{-1}\varphi_2(|X|,|X|^{-1})_{Y'} \otimes \id_{X^*}\bigr)\circ \\
&\quad \circ (\id_{X^*}\otimes \tau_{\varphi_{|X|^{-1}}(Y'),X} \otimes \id_{X^*})(\id_{X^* \otimes \varphi_{|X|^{-1}}(Y')} \otimes \lcoev_X).
\end{align*}
Therefore we obtain   \eqref{Fother5}. Equality \eqref{Fother6}
is proved similarly. Since   $\sigma''_+(X,Y,Y',\psi)$ is color-equivalent to
$$
\psfrag{X}[Bl][Bl]{\scalebox{.9}{$X$}} \psfrag{Y}[Bl][Bl]{\scalebox{.9}{$Y$}}
\psfrag{R}[Br][Br]{\scalebox{.9}{$Y'$}} \psfrag{Z}[Br][Br]{\scalebox{.9}{$X$}}
\psfrag{p}[Br][Br]{\scalebox{.9}{$\psi$}} \rsdraw{.45}{.9}{proof-T2dbis}\;,
$$
we obtain from the definition of ${\mathcal F}$ that
\begin{align*}
{\mathcal F}\bigl(&\sigma''_+(X,Y,Y',\psi)\bigr)\\
&=(\id_{Y'^*\otimes X} \otimes \rcoev_Y)(\id_{X^*} \otimes (\id_X \otimes \psi^{-1})\tau_{Y,X} \otimes \id_X) (\lev_X \otimes \psi^{-1} \otimes \id_{X^*}).
\end{align*}
Note that $\phi_Y^{-1} (\psi^-)^*\varphi^1_{|X|}(Y'^*)
\varphi_{|X|}(\phi_{Y'})=\psi^{-1}$ by \eqref{FrFlcomp}, where
$\phi$ is the pivotal structure \eqref{pivotal-struct} of $\cc$.
Now, by the second equality of Lemma~\ref{lem-braiding}(d),
\begin{align*}
\tau_{Y'^*,X}^{-1}&=\bigl(\id_{Y'^* \otimes X} \otimes \lev_{\varphi_{|X|}(Y'^*)}(\varphi_{|X|}^1(Y'^*) \otimes \id_{\varphi_{|X|}(Y'^*)} )\bigr) \circ \\
& \qquad\circ (\id_{Y'^*} \otimes \tau_{Y'^{**},X} \otimes \id_{\varphi_{|X|}(Y'^*)})(\lcoev_{Y'^*} \otimes \id_{X \otimes\varphi_{|X|}(Y'^*)})\\
&=\bigl(\id_{Y'^* \otimes X} \otimes \lev_{\varphi_{|X|}(Y'^*)}(\varphi_{|X|}^1(Y'^*)\varphi_{|X|}(\phi_{Y'}) \otimes \id_{\varphi_{|X|}(Y'^*)} )\bigr) \circ \\
& \qquad\circ (\id_{Y'^*} \otimes \tau_{Y',X} \otimes \id_{\varphi_{|X|}(Y'^*)})(\rcoev_{Y'} \otimes \id_{X \otimes\varphi_{|X|}(Y'^*)}).
\end{align*}
Therefore we obtain  \eqref{Fother7}. Equality \eqref{Fother8}
is proved similarly.

Since $\sigma'''_+(X,Y,X',\psi)$, viewed as a morphism in
$\dd_\cc$,  is   dual to  $\sigma_+(X',Y,X,\psi)$,
$$
{\mathcal F}\bigl(\sigma'''_+(X,Y,X',\psi)\bigr)=(\tau_{X',Y})^*\bigl( (\psi^{-1})^* \otimes \id_{Y^*} \bigr).
$$
Using \eqref{FrFlcomp} and   two expressions of
$(\tau_{X'^*,Y})^{-1}$ given by Lemma~\ref{lem-braiding}(d), we
obtain
$$
(\tau_{X',Y})^*=\bigl( \id_{Y^*} \otimes  (\varphi_0)_{X'^*}^{-1}\varphi_2(|Y|^{-1},|Y|)_{X'^*} \bigr)\tau_{\varphi_{|Y|}(X'^*),Y^*} \bigl( \varphi^1_{|Y|}(X')^{-1} \otimes \id_{Y^*} \bigr).
$$
Therefore
\begin{align*}
{\mathcal F}\bigl(&\sigma'''_+(X,Y,X',\psi)\bigr)\\
&=\bigl( \id_{Y^*} \otimes  (\varphi_0)_{X'^*}^{-1}\varphi_2(|Y|^{-1},|Y|)_{X'^*} \bigr)\tau_{\varphi_{|Y|}(X'^*),Y^*} \bigl( \varphi^1_{|Y|}(X')^{-1}(\psi^{-1})^* \otimes \id_{Y^*} \bigr)\\
&=\bigl( \id_{Y^*} \otimes  (\varphi_0)_{X'^*}^{-1}\varphi_2(|Y|^{-1},|Y|)_{X'^*} \bigr)\tau_{\varphi_{|Y|}(X'^*),Y^*} ( \psi^- \otimes \id_{Y^*} )\\
&=\bigl( \id_{Y^*} \otimes  (\varphi_0)_{X'^*}^{-1}\varphi_2(|Y|^{-1},|Y|)_{X'^*} \varphi_{|Y|^{-1}}(\psi^-)\bigr)\tau_{X^*,Y^*}\\
&= \bigl( \id_{Y^*} \otimes  (\overline{\psi^-})^{-1} \bigr)\tau_{X^*,Y^*}.
\end{align*}
This gives~\eqref{Fother9}. Equality \eqref{Fother10} is proved
similarly.
\end{proof}

\begin{proof}[Proof of Theorem~\ref{thm-functorF+}]
We must prove that if two colored diagrams $D_1, D_2$ are
related by a (colored)   Reidemeister move or a
stabilization move, then ${\mathcal F}(D_1)={\mathcal F}(D_2)$. Invariance under
stabilization  is obvious. Invariance under the Reidemeister
  moves of types 1 and 2 is a direct consequence of
Lemma~\ref{lem-F-othergen}.  For example, the colored diagram $
T_-(X',X,\psi)\, T_+(X,X',\psi)$ on the right-hand side of
Figure~\ref{fig-T1moves}(a)  is carried by ${\mathcal F}$ to
$\theta_{X}^{-1}\psi\psi^{-1}\theta_X=\id_X$; hence the
invariance.

Let us prove the invariance of ${\mathcal F}$ under  the Reidemeister moves
of type 3. The left-hand  side of Figure~\ref{fig-T3move} is
carried by ${\mathcal F}$ to
$$
\bigl((\id_Z \otimes C^{-1})\tau_{Y,Z} \otimes
\id_Z\bigr)\bigl(\id_Y  \otimes (\id_Z \otimes
B^{-1})\tau_{X',Z}\bigr) \bigl((\id_Y \otimes A^{-1})\tau_{X,Y}
\otimes \id_Z\bigr)=$$ $$ \bigl(\id_Z \otimes C^{-1} \otimes
  B^{-1}\varphi_{|Z|} (A^{-1})\bigr)\bigl(\tau_{Y,Z} \otimes
  \varphi_{|Z|}\varphi_{|Y|}(X)\bigr)  \bigl(\id_Y \otimes
  \tau_{\varphi_{|Y|}(X),Z}\bigr)\bigl(\tau_{X,Y} \otimes
  \id_Z\bigr).$$
The right-hand  side of Figure~\ref{fig-T3move} is carried
by ${\mathcal F}$ to
$$
\bigl(\id_Z \otimes (\id_{Y'} \otimes
B'^{-1})\tau_{\widetilde{X},Y'}\bigr)\bigl((\id_Z \otimes A'^{-1})
\tau_{X,Z} \otimes \id_{Y'}\bigr)\bigl(\id_Y \otimes (\id_X \otimes
C^{-1})\tau_{Y,Z}\bigr)=$$ $$\bigl(\id_Z \otimes C^{-1} \otimes
B'^{-1}\varphi_{|Y'|}(A'^{-1} )\bigr) \bigl(\id_Z \otimes
\tau_{\varphi_{|Z|}(X),\varphi_{|Z|}(Y)}\bigr) \bigl(\tau_{X,Z}
\otimes \id_{\varphi_{|Z|}(Y)} \bigr) \bigl(\id_X \otimes
\tau_{Y,Z}\bigr).$$ We conclude using \eqref{move3condition} and the
quantum Yang-Baxter equality of Lemma~\ref{lem-braiding}(c).

Consider    the  first Reidemeister   move  of type 4 shown
in   Figure~\ref{fig-T4moves1}. Using \eqref{Fother4} and
\eqref{Fother7}, we obtain that ${\mathcal F}$ carries the  colored diagram
on the left to
$$
f=\bigl(\id_{\otimes_{j=1}^{n-1} (X^j)^{\varepsilon^j}} \otimes \tau^{-1}_{(X^n)^{\varepsilon^n},Z}  \bigr)   \cdots
\bigl(\tau^{-1}_{(X^1)^{\varepsilon^1},Z} \otimes \id_{\otimes_{j=2}^{n} (Y^j)^{\varepsilon^j}}  \bigr)\bigl(\id_Z \otimes (\otimes_{i=1}^{n} (\psi^i)^{\varepsilon^i})v_0\bigr).
$$
Now, setting $\mu=|Z|$ and   using \eqref{eq-braiding2}, we
obtain that
\begin{align*}
\bigl(\id_{\otimes_{j=1}^{n-1} (X^j)^{\varepsilon^j}} \otimes & \tau^{-1}_{(X^n)^{\varepsilon^n},Z}  \bigr) \circ \cdots \circ
\bigl(\tau^{-1}_{(X^1)^{\varepsilon^1},Z} \otimes \id_{\otimes_{j=2}^{n} (Y^j)^{\varepsilon^j}}  \bigr)\\
&=\tau^{-1}_{\otimes_{j=1}^{n} (X^j)^{\varepsilon^j},Z}\bigl(\id_Z \otimes (\varphi_{\mu})_n((X^1)^{\varepsilon^1}, \cdots, (X^n)^{\varepsilon^n})\bigr).
\end{align*}
Therefore, using \eqref{fourthRmove}, we obtain that
\begin{align*}
f&=\tau^{-1}_{\otimes_{j=1}^{n} (X^j)^{\varepsilon^j},Z}\bigl(\id_Z \otimes (\varphi_{\mu})_n((X^1)^{\varepsilon^1}, \cdots, (X^n)^{\varepsilon^n})(\otimes_{j=1}^{n} (\psi^j)^{\varepsilon^j})v_0\bigr)\\
&=\tau^{-1}_{\otimes_{j=1}^{n} (X^j)^{\varepsilon^j},Z}\bigl(\id_Z \otimes \varphi_{\mu}(v_1) (\varphi_{\mu})_m(X_1^{\varepsilon_1}, \cdots, X_m^{\varepsilon_m})(\otimes_{i=1}^{m} \psi_i^{\varepsilon_i})\bigr)\\
&=\bigl(v_1 \otimes \id_Z\bigr)\tau^{-1}_{\otimes_{i=1}^{m} Y_i^{\varepsilon_i},Z}\bigl(\id_Z \otimes (\varphi_{\mu})_m(X_1^{\varepsilon_1}, \cdots, X_m^{\varepsilon_m})(\otimes_{i=1}^{m} \psi_i^{\varepsilon_i})\bigr)\\
&=\bigl(v_1 \otimes \id_Z\bigr)
\bigl(\id_{\otimes_{i=1}^{m-1} Y_i^{\varepsilon_i}} \otimes \tau^{-1}_{X_m^{\varepsilon_m},Z}  \bigr)   \cdots
\bigl(\tau^{-1}_{Y_1^{\varepsilon_1},Z} \otimes \id_{\otimes_{i=2}^{m} X_i^{\varepsilon_i}}  \bigr)
\bigl(\id_Z \otimes (\otimes_{i=1}^{m} \psi_i^{\varepsilon_i})\bigr).
\end{align*}
The latter morphism is the image under ${\mathcal F}$ of the colored
diagram obtained by the move. This proves the invariance of ${\mathcal F}$
under the first  move of type 4. The   second    move of type~4
is treated similarly  using \eqref{Fother5} and
\eqref{Fother10}.

Consider now the third  move of type~4 shown in
Figure~\ref{fig-T4moves2}. Using  \eqref{Fother3} and
\eqref{Fother6},   we obtain that ${\mathcal F}$ carries the diagram on the
left to
\begin{align*}
g&= \bigl(\id_{\otimes_{j=1}^{n-1} (Y^j)^{\varepsilon^j}}  \otimes (\id_{(Y^n)^{\varepsilon^n}} \otimes (\xi^n)^{-1})\tau_{X^{n-1},(Y^n)^{\varepsilon^n}}\bigr)\circ \cdots \\
& \qquad \cdots \circ \bigl((\id_{(Y^1)^{\varepsilon^1}} \otimes (\xi^1)^{-1})\tau_{X^0,(Y^1)^{\varepsilon^1}} \otimes \id_{\otimes_{j=2}^{n} (Y^j)^{\varepsilon^j}}
  \bigr)\bigl(\id_{X^0} \otimes v\bigr).
\end{align*}
where $\xi^i=\psi^i$ if   $\varepsilon^i=+$   and
 $\xi^i=\overline{\psi^i}$ otherwise.  Set $y_i=\vert
 Y_i\vert^{\varepsilon_i} \in G$ and $y^j= \vert
 Y^j\vert^{\varepsilon^j} \in G$ for all $i,j$
and
 $$\rho=\bigl(\varphi_{y^n} \cdots
 \varphi_{y^2}(\xi^{1})\bigr)\circ\cdots\circ\bigl(\varphi_{y^n}(\xi^{n-1})
 \bigr) \circ\xi^n.$$ Then using \eqref{eq-braiding1}, we
 obtain
\begin{align*}
g&= \bigl(\id_{\otimes_{j=1}^{n} (Y^j)^{\varepsilon^j}} \otimes \rho^{-1}) \bigl(\id_{\otimes_{j=1}^{n-1} (Y^j)^{\varepsilon^j}}
 \otimes \tau_{\varphi_{y^{n-1}} \cdots \varphi_{y^1}(X^0),(Y^n)^{\varepsilon^n}}\bigr)\circ \cdots \\
& \qquad \qquad\qquad \cdots \circ \bigl(\tau_{X^0,(Y^1)^{\varepsilon^1}} \otimes \id_{\otimes_{j=2}^{n} (Y^j)^{\varepsilon^j}}
  \bigr)\bigl(\id_{X^0} \otimes v\bigr)\\
&= \bigl(\id_{\otimes_{j=1}^{n} (Y^j)^{\varepsilon^j}} \otimes \rho^{-1} \varphi^{-1}_n\bigr) \tau_{X^0,\otimes_{j=1}^n(Y^j)^{\varepsilon^j}}\bigl(\id_{X^0} \otimes v\bigr)\\
&=\bigl(v \otimes \rho^{-1} \varphi^{-1}_n\bigr) \tau_{X^0,\otimes_{i=1}^m Y_i^{\varepsilon_i}}.
\end{align*}
Set
$$\varrho=\bigl(\varphi_{y_m} \cdots \varphi_{y_2}(\xi_{1})\bigr)\circ\cdots\circ\bigl(\varphi_{y_m}(\xi_{m-1}) \bigr) \circ\xi_m$$
where $\xi_i=\psi_i$ if   $\varepsilon_i=+$    and
 $\xi_i=\overline{\psi_i}$ otherwise. Using the hypothesis
 $\varphi_n\rho=\varphi_m\varrho$ and \eqref{eq-braiding1}, we
obtain
\begin{align*}
g&=\bigl(v \otimes \varrho^{-1} \varphi^{-1}_m\bigr) \tau_{X^0,\otimes_{i=1}^m Y_i^{\varepsilon_i}}\\
&= \bigl(v \otimes \varrho^{-1}\bigr) \bigl(\id_{\otimes_{i=1}^{m-1} Y_i^{\varepsilon_i}}  \otimes
 \tau_{\varphi_{y_{m-1} } \cdots \varphi_{y_1 }(X_0) ,(Y_m)^{\varepsilon_m}}\bigr)  \cdots \bigl(\tau_{X_0,Y_1^{\varepsilon^1}} \otimes \id_{\otimes_{i=2}^{m} Y_i^{\varepsilon_i}}
  \bigr) \\
&= \bigl(v \otimes \id_{X_m}\bigr)\bigl(\id_{\otimes_{i=1}^{m-1} Y_i^{\varepsilon_i}}  \otimes (\id_{Y_m^{\varepsilon_m}} \otimes \xi_m^{-1})\tau_{X_{m-1},Y_m^{\varepsilon_m}}\bigr)\circ \cdots \\
& \qquad \qquad \cdots \circ \bigl((\id_{Y_1^{\varepsilon_1}} \otimes \xi_1^{-1})\tau_{X_0,Y_1^{\varepsilon_1}} \otimes \id_{\otimes_{i=2}^{m} Y_i^{\varepsilon_i}}
  \bigr).
\end{align*}
The latter morphism is the image under ${\mathcal F}$ of the diagram
obtained  by the move. This proves the invariance of ${\mathcal F}$ under
the third  move of type 4. The  fourth    move of type~4 is
treated similarly  using \eqref{Fother8} and \eqref{Fother9}.
 \end{proof}

 \section{The functor $F_\cc $}\label{From colored  diagrams to colored graphs}

 In this section, we  construct a canonical monoidal functor $ {\mathcal G}_\cc\to
 \cc$. We begin by discussing relations between colored ribbon
 graphs and colored   diagrams.

\subsection{Presentation of graphs by diagrams}\label{Colorings of diagrams vs. colorings of graphs}
Any graph diagram $D$ represents a  ribbon graph $
\Omega_D $ in the obvious way. Namely, we identify $\RR \times [0,1] $ with $ \RR\times \{0\}\times [0,1] \subset
\RR^2\times [0,1]$ and slightly push the interiors
of the underpasses of $D$ along the second axis into $\RR \times [0, 1) \times [0,1]$ keeping the rest of $D$.
This transforms the segments,
  circles, and coupons of $D$ into the edges,   circle
components,  and coupons of $ \Omega_D$, respectively.   The
framing of $\Omega_D$ is given by the constant vector field $(0,
\delta, 0)$ with small $\delta>0$.

Each underpass $p$ of a segment $d$ of $D$ determines a {\it
diagrammatic track} $\gamma_p$ of the edge of $\Omega=\Omega_D$
represented by $d$. The track $\gamma_p$ is represented by the
linear   path from the base point   $z\in C_\Omega$ to the point
of  $\widetilde d \subset \widetilde \Omega$ obtained from an
interior point of $p$ by shifting along the framing vector.
 By Section \ref{ribbredGgraphs}, the track $\gamma_p$
determines a  (negative) meridian $\mu_{\gamma_p} \in
\pi_1(C_\Omega)$ which we  call the {\it diagrammatic meridian}
of $p$  and denote by $\mu_p$. If the underpass $p$ is adjacent
to an input/output of $D$, then $\gamma_p$ is the corresponding
input/output  track of $\Omega$. In a similar way,  a coupon $Q$
of $D$  determines the  {\it diagrammatic track} $\gamma_Q$  of
the corresponding coupon of $\Omega $
and the associated {\it diagrammatic meridian}
$\mu_Q=\mu_{\gamma_Q}\in \pi_1(C_\Omega)$.

If $D$ has no circle 1-strata, then there is a direct
relationship between the colorings of $D$ and
$\Omega=\Omega_D$. Pick a homomorphism $g\colon \pi_1(C_\Omega)
\to G$. Each pre-coloring $u$ of    $ (\Omega, g) $ induces a
pre-coloring $U=U(u)$  of $D$ as follows: for every underpass
$p$ of $D$, set $U_p=u_{\gamma_p}\in \mathcal C_{g(\mu_p)}$  and
for every crossing $c$ of $D$, set
$$U_c=u_{\mu_{ {\underline c}}^{-1}, \gamma_{{c^{-}}}}\co  U_{  {{c^{+}}}}
=u_{  \gamma_{{c^{+}}}}=u_{\mu_{ {\underline c}}^{-1}  \gamma_{{c^{-}}}}\to
\varphi_{g(\mu_{ {\underline c}})} (u_{\gamma_{{c^{-}}}})=
\varphi_{ \vert U_{ {\underline c}} \vert  } (U_{  {{c^{-}}}})  .$$
Here we use the obvious equality $     \gamma_{{c^{+}}} =\mu_{
{\underline c}}^{-1}  \gamma_{{c^{-}}} $.
Similarly,  a coloring $(u,v)$ of   $(\Omega, g)$ induces a
coloring $( U=U(u),V)$ of $ D $ by $V_Q=v_{\gamma_Q}$ for
any coupon~$Q$ of~$D$.  We say that the colored diagram $(D, U ,V)$ {\it represents} the colored $G$-graph
$(\Omega, g, u, v)$.

To sum up, the structure $(\Omega, g)$ of a $G$-graph on $\Omega=\Omega_D$ together with a
coloring $(u,v)$ of this $G$-graph induce  a
coloring $( U ,V)$ of $ D $.       The homomorphism $g\colon \pi_1(C_\Omega)
\to G$ can  be  recovered
from   $U$  by
$g(\mu_p)= \vert U_p\vert$ for any underpass $p$ of $D$. Though we shall not need it, note
that the coloring
$(u,v)$ can be recovered from $(U,V)$   uniquely up to
isomorphism. Thus,  the colored $G$-graph $(\Omega, g, u,v)$ can be reconstructed
from the colored diagram $(D,U,V)$
uniquely up to isomorphism. Generally speaking, there are colorings of $D$ that
do not arise in this way from colorings of $\Omega$.  We emphasize
that these constructions apply only to diagrams and ribbon graphs without  circle components.

\begin{lem}\label{lem-functorF++}   Let $\Omega_r$ be  a colored  $G$-graph having no circle components and represented by a colored    diagram $D_r$ for $r=1, 2$.
\begin{enumerate}
  \labeli
\item  If $\Omega_1$, $\Omega_2$ are color-equivalent, then there
    is a finite sequence of colored Reidemeister moves,
    isotopies, and isomorphisms of colorings which transforms
    $D_1$ into  $D_2$.
\item If $\Omega_1$, $\Omega_2$ are stably
    color-equivalent, then there is a finite sequence of colored
    Reidemeister moves, isotopies,  isomorphisms of colorings,
      stabilizations, and   moves inverse to stabilizations
      which transforms $D_1$ into $D_2$.
\end{enumerate}
\end{lem}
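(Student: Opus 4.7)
The plan is to reduce the lemma to the classical Reidemeister-type theorem for ribbon graphs (without colors), and then show that each uncolored move can be upgraded to one of the colored Reidemeister moves (or a composition involving isomorphism of colorings) listed in the paper. We treat (i) first, and deduce (ii) afterwards by incorporating stabilizations.

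\textbf{Step 1 (reduction to uncolored moves).} By a standard Reidemeister-type theorem for ribbon graphs with coupons in $\RR^2 \times [0,1]$ (e.g. \cite{Tu0}), two diagrams without circle components present isotopic ribbon graphs if and only if they are related by a finite sequence of planar isotopies and the uncolored versions of the moves of types 1--4. Applying this to diagrams of $\Omega_1$ and $\Omega_2$ produces a sequence $D_1 = D^{(0)}, D^{(1)}, \ldots, D^{(N)} = D_2$ of graph diagrams, each presenting some ribbon graph $\Omega^{(i)}$ isotopic to $\Omega_1 \cong \Omega_2$. I transport the $G$-graph structure and the coloring along the self-homeomorphism realizing each isotopy, so that each $D^{(i)}$ inherits a colored $G$-graph structure $(u^{(i)}, v^{(i)})$, with $(u^{(0)},v^{(0)})$ representing the original coloring on $\Omega_1$.

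\textbf{Step 2 (coloring a diagram of a colored graph).} For any ribbon graph $\Omega$ presented by a diagram $D$ and any coloring $(u,v)$ on $\Omega$, the construction at the start of Section \ref{From colored  diagrams to colored graphs} (Section on ``Colorings of diagrams vs. colorings of graphs'') produces an induced coloring $(U(u),V)$ on $D$. If $(u,v) \approx (u',v')$ on $\Omega$, then the system $\{f_{\gamma_p}\}_p$ over the diagrammatic tracks extends to an isomorphism $(U(u),V) \approx (U(u'),V')$ of colorings of $D$: the required commutativity of \eqref{weakis+} and \eqref{weakisomor} for diagrammatic tracks and coupons is exactly the specialization of \eqref{weakiso} and \eqref{weakiso+} along $\gamma_p, \gamma_Q$ and $\beta = \mu_{\underline c}^{-1}$. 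So color-equivalence of graphs descends to isomorphism of colorings of any chosen diagram, which takes care of all deformations that preserve the diagram combinatorially.

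\textbf{Step 3 (local analysis of each move).} I now analyze each uncolored move $D^{(i)} \rightsquigarrow D^{(i+1)}$. For the type 1 and 2 moves, the move creates a new underpass whose diagrammatic track $\gamma$ lies over a new loop in $C_\Omega$; the compatibility data $u_{\beta,\gamma}$ of the pre-coloring on $\Omega^{(i+1)}$ furnishes an isomorphism $\psi$ of precisely the form required to color the two new crossings, and the condition in Figures \ref{fig-T1moves}--\ref{fig-T2moves} is satisfied by construction. For the type 3 move, the three isomorphisms $A,B,C,A',B'$ that appear are three instances of the compatibility data $u_{\beta,\gamma}$ associated to the three strands entering the disk, and commutativity of \eqref{move3condition} is a consequence of  \eqref{condweak} applied to the cocycle-type identity $\mu_Y \mu_Z = \mu_Z \mu_{Y'}$ in the meridian subgroup near the triple point. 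For the type 4 moves (pushing a strand past a coupon), the needed compatibility with the coupon-coloring $v$ is exactly \eqref{weakiso---}: specializing \eqref{weakiso---} to $\beta = \mu_Z^{\pm 1}$ yields \eqref{fourthRmove}, while specializing to the meridian of the long branch yields the equality of the compositions \eqref{comp1}=\eqref{comp2} in the third and fourth type 4 moves (after using \eqref{crossing5} to chain meridians). In each case the move is therefore an instance of the colored Reidemeister move, possibly preceded or followed by an isomorphism of colorings coming from the replacement procedure of Section \ref{sect-iso-quasi-iso} in case the induced colors on the new underpasses differ from the chosen ones.

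\textbf{Step 4 (assembling (i) and proving (ii)).} Concatenating Steps 2 and 3 over $i = 0, \ldots, N-1$ yields the finite sequence required by (i). For (ii), a stable color-equivalence $\Omega_1 \sim \Omega_2$ factors as $\Omega_1 \to \Omega_1' \to \Omega_2' \to \Omega_2$, where the outer arrows are graph stabilizations (inserting trivial coupons on edges) and the middle arrow is a color-equivalence. Each graph stabilization is realized on a diagram by a diagram stabilization applied at a downward-oriented underpass, after possibly using a planar isotopy and a type 1 colored Reidemeister move to reorient the relevant segment; the reverse uses the inverse stabilization. Applying (i) to the middle arrow and combining with these stabilizations (and their inverses) gives the required sequence.

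\textbf{Main obstacle.} The delicate point is the coloring bookkeeping in Step 3: for each local move I must verify that the isomorphism data attached to the new crossings/coupons by the coloring rule on $\Omega^{(i+1)}$ really equals, up to isomorphism of colorings of the diagram, the data required by the corresponding colored Reidemeister move. This amounts to a careful unpacking of the cocycle conditions \eqref{condweak} and \eqref{weakiso---} along the meridians introduced by the local picture, and is where all the coherence diagrams \eqref{crossing3}--\eqref{crossing6} and the pivotality of $\varphi$ enter. In particular, the type 4 moves with an upward-oriented pushed strand are where the operators $\overline{(\cdot)}$ and $(\cdot)^-$ from Section \ref{sect-pivot-cross-Gcat} (and Lemma \ref{lem-psibar}) are needed to match the conventions of \eqref{weakiso---} with the conventions of \eqref{fourthRmove} and \eqref{comp1}--\eqref{comp2}.
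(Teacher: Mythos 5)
Your proposal follows essentially the same route as the paper: reduce the isotopy of colored $G$-graphs to a finite sequence of uncolored Reidemeister moves on diagrams, transport the coloring along the ambient isotopy, and then check move-by-move that the transferred colorings of the before/after diagrams are related by the corresponding colored Reidemeister move (using \eqref{condweak}, \eqref{weakiso---}, and the coherence diagrams for the crossing). Step 2 (isomorphisms of graph colorings descend to isomorphisms of diagram colorings via the diagrammatic tracks) and Step 3 (the claim that the transferred coloring satisfies exactly the conditions of the colored moves) are precisely the two ingredients of the paper's argument, and your identification of where the cocycle/pivotality conditions and the operations $\overline{(\cdot)}$, $(\cdot)^-$ enter is accurate. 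The one small inaccuracy is in Step 4: you suggest using ``a type 1 colored Reidemeister move'' to make an underpass downward-oriented before applying the diagram stabilization. A type 1 move adds a curl rather than a downward run; what one actually uses is a planar isotopy of the diagram to bend the relevant underpass downward (no crossings are created, so no Reidemeister move is needed). This does not affect the overall correctness.
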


\begin{proof} Claim  (ii) directly follows from Claim (i) and we focus on the latter.
  Recall that the color-equivalence of colored  $G$-graphs is
generated by isomorphisms of colorings and isotopies. It is clear
that isomorphisms of colorings of  graphs induce isomorphisms of the
induced colorings of  diagrams.  It remains   to handle isotopies of
graphs.

The existence of a color-preserving isotopy of $\Omega_1$ into
$\Omega_2$ implies the existence of a color-preserving isotopy of
$\Omega_1$ into $\Omega_2$ which keeps all the coupons parallel to
the strip $ \RR\times \{0\}\times [0,1]$ (this follows from
the surjectivity of the inclusion homomorphism $\pi_1(SO(2))\to
\pi_1(SO(3))$). Projecting such an isotopy into $\RR\times
[0,1]$, we obtain a finite sequence of colored Reidemeister moves
and  isotopies   transforming $D_1$ into $D_2$. Note that
the type 3 moves determined by various orientations of the branches
may be expanded as compositions of the type 3 move  of
Figure~\ref{fig-T3move} and the type 2 moves, see for instance
\cite{Po}. Therefore  it is enough to consider only the type 3 move
shown  in Figure~\ref{fig-T3move}. We need to prove that the
colorings of the diagrams are transformed as in the definition of
the colored Reidemeister moves. This is a consequence of
the following statement.

Claim.   {\it  A   Reidemeister move  $D\mapsto {\widetilde D}$ on
(uncolored) graph diagrams  without circle 1-strata   determines a
self-homeomorphism $f$ of $\RR^2\times [0,1]$ carrying
$\Omega=\Omega_D$ to $ {\widetilde \Omega} =\Omega_{{\widetilde
D}}$. Given a homomorphism $g\co \pi_1(C_\Omega)\to G$ and a coloring
$(u,v)$ of   $(\Omega,g) $, we transfer this data along $f$ to
obtain a homomorphism $ {\widetilde g}\colon \pi_1(C_{{\widetilde
\Omega}})\to G$ and a coloring $({\widetilde u},{\widetilde v})$ of
$({\widetilde \Omega}, {\widetilde g})$. Then the diagrams $D$ and $
{\widetilde D}$ with the   colorings $(U,V)$ and  $({\widetilde U},
{\widetilde V})$ induced from $(u,v)$ and  $({\widetilde
u},{\widetilde v})$ respectively, are related by the  corresponding
colored  Reidemeister move. }

  In the proof   we will   use the action of $f^{-1}$ on the
  tracks: for a  track $  \gamma$ of an edge/coupon of  $
  {\widetilde \Omega}$,  its pre-image $f^{-1} (  \gamma)$ is a
  track of  the corresponding edge/coupon of  $\Omega$. The
  isomorphism $ \pi_1(C_{{\widetilde \Omega}}) \to  \pi_1(C_{{
  \Omega}})$ induced by $f^{-1}$ will be also denoted  by
  $f^{-1}$. In this notation $\widetilde g = g f^{-1}$ and  $
  \widetilde u_{ \gamma}=u_{f^{-1} (  \gamma)}$  for any
  edge-track $  \gamma$ of  $ {\widetilde \Omega}$.

Consider the first type 1 move $D\mapsto {\widetilde D}$  in
Figure~\ref{fig-T1moves}. Let $p$ be the  underpass of $D$
modified by the move. The corresponding piece of ${\widetilde
D}$ contains two new  crossings  $c$, $ e$ and splits into 3
underpasses $c^{-}$, $e^{-}$, and $\underline c =\underline
e=c^+=e^+$.  Since $f^{-1} (\gamma_{{c^{-}}})= \gamma_p$,
$${\widetilde U}_{c^{-}}={\widetilde
u}_{\gamma_{c^{-}}}=u_{f^{-1}
(\gamma_{c^{-}})}=u_{\gamma_p}=U_p.$$ Similarly, $ f^{-1}
(\gamma_{{e^{-}}})=\gamma_p$ and ${\widetilde U}_{e^{-}}  =U_p$.
Also,
$${\widetilde U}_c={\widetilde u}_{\mu_{{\underline c}}^{-1}, \gamma_{{c^{-}}}}=u_{f^{-1} (\mu_{  {\underline c}}^{-1}), f^{-1} (\gamma_{{c^{-}}})} =u_{f^{-1} (\mu_{  {\underline e}}^{-1}), f^{-1} (\gamma_{{e^{-}}})}
={\widetilde u}_{\mu_{   {\underline e}}^{-1}, \gamma_{{e^{-}}}}=
 {\widetilde U}_e.$$
  Thus, the move $(D,U,V)\mapsto ({\widetilde D}, {\widetilde
U}, {\widetilde V})$ is as shown in
Figure~\ref{fig-T1moves}, where $X= U_p$, $X'={\widetilde
U}_{\underline c}={\widetilde U}_{\underline e}$, and $\psi=
{\widetilde U}_c={\widetilde U}_e$. The other   type  1 moves
are treated similarly.

A type 2 move $D\mapsto {\widetilde D}$    creates two new crossings
$c$, $ e$ such that  $\underline c=\underline e$. The ${\widetilde
U}$-colors in the   top and bottom   of ${\widetilde D}$ coincide
because ${\widetilde U}=U \circ f^{-1}$, $f=\id$ near the
top/bottom, and the $U$-colors  in the top and bottom  of   $D$
coincide. The equality ${\widetilde U}_c=  {\widetilde U}_e$ follows
from  the   formulas    $\mu_{ {\underline c}}=\mu_{  {\underline
e}}$  and $f^{-1} (\gamma_{{c^{-}}}) =f^{-1} (\gamma_{{e^{-}}})$.
The latter holds because  either $c^{-}=e^{-}$ or  both tracks
$f^{-1} (\gamma_{{c^{-}}}) $ and $f^{-1} (\gamma_{{e^{-}}})$ are
equal to the  diagrammatic track  of  one and   the same  underpass
of $D$.

Consider a type 3 move $D\mapsto {\widetilde D}$. Denote by $ c$
the crossing of $D$ colored with     $ C$  and  by $ c'$ the
crossing  of ${\widetilde D}$ colored with     $  C'$. It is
clear that $f$ carries $\gamma_{c^-}$  and
$\mu_{\gamma_{\underline c}}$ to $\gamma_{(c')^-}$ and
$\mu_{\gamma_{\underline c'}}$, respectively. Therefore
$$C'={\widetilde U}_{c'}={\widetilde u}_{\mu_{\gamma_{\underline
c'}}^{-1}, \gamma_{(c')^-}} =u_{f^{-1} (\mu_{\gamma_{\underline
c'}}^{-1}), f^{-1} (\gamma_{(c')^-})}=
u_{\mu_{\gamma_{\underline c}}^{-1}, \gamma_{c^-}}=U_c=C.$$ Let
$p,   q, r$  be the underpasses of $D$ colored with $X,  Y, Z$,
respectively. Thus, $X=u_{\gamma_p}$, $Y=u_{\gamma_q}$, and
$Z=u_{\gamma_r}$.  Set $\gamma=\gamma_p$,
$\delta=\mu_{\gamma_q}^{-1} $, and $\beta= \mu_{\gamma_r}^{-1}
$. Then
 $$\varphi_2(\vert Z\vert, \vert Y\vert)_X \varphi_{\vert Z\vert} (A) B=
\varphi_2(g(\beta^{-1}) , g(\delta^{-1}))_{u_\gamma} \varphi_{g(\beta^{-1})} (u_{\delta, \gamma}) u_{\beta, \delta \gamma}=u_{\beta  \delta, \gamma}
$$
where the last equality follows from the commutativity of the
diagram   \eqref{condweak} and we use  the formulas $\vert
Z\vert=g(\beta^{-1})$, $\vert Y\vert=g(\delta^{-1})$. Let $p',
q', r'$  be the underpasses of ${\widetilde D}$ colored with $X,
Y', Z$, respectively. A similar computation gives
$$\varphi_2(\vert Y'\vert, \vert Z\vert)_X \varphi_{\vert Y'\vert} (A') B'=
 {\widetilde u}_{\beta'  \delta', \gamma'} ,
$$
where $\beta' = \mu_{\gamma_{q'}}^{-1}$, $\delta' =
\mu_{\gamma_{r'}}^{-1}$, and $\gamma'=\gamma_{p'}$.  Clearly,
$f^{-1}( \beta'  \delta')= \beta  \delta$ and
$f^{-1}(\gamma')=\gamma$. Hence ${\widetilde u}_{\beta'
\delta', \gamma'} = u_{\beta  \delta, \gamma} $ which proves the
commutativity of the diagram \eqref{move3condition}.

Consider now a type 4 move   $D\mapsto {\widetilde D}$. Denote
by $Q$ the coupon of $D$ subject to the  move  and by
$\widetilde Q$ the corresponding coupon of $\widetilde D$. We
begin with the first type 4 move. We must prove that the
morphisms $v_0, v_1, \psi_i, \psi^j$ determined by the colorings
$(u,v)$ and $(\widetilde u, \widetilde v)$ turn
\eqref{fourthRmove} into a commutative diagram. Let $\beta\in
\pi_1(C_{\widetilde \Omega})$ be the inverse of the diagrammatic
(negative) meridian of the $Z$-colored underpass of $\widetilde
D$. Clearly, $ {\widetilde g} (\beta^{-1})=\vert  Z\vert =\mu
\in G$. Let $\gamma$ be the diagrammatic track of   $\widetilde
Q$.    We shall identify the diagram \eqref{fourthRmove} with
the  diagram \eqref{weakiso---} associated with these   $\beta$,
$\gamma$ and the coloring $(\widetilde u, \widetilde v)$ of
$\widetilde D$.    For   $i=1, \ldots, m$, the track $\gamma_i$
derived from $\gamma$ as in Section \ref{coloredGgraphs}   is
the diagrammatic track of the $  {Y_i}$-colored underpass  of
${\widetilde D}$ and  $\beta \gamma_i$ is the diagrammatic track
of the $ {X_i}$-colored underpass  of ${\widetilde D}$.
Therefore $\widetilde u_{\gamma_i}= {Y_i}$ and  $\widetilde
u_{\beta \gamma_i}=   {X_i}$ for all $i$. For   $j=1, \ldots,
n$, the track $\gamma^j$  derived from $\gamma$ as in Section
\ref{coloredGgraphs}   is the diagrammatic track of the $
{X^j}$-colored underpass of ${\widetilde D}$ and so  $\widetilde
u_{\gamma^j}=   {X^j}$ for  all $j $. To compute $\widetilde
u_{\beta\gamma^j} =u_{f^{-1} (\beta \gamma^j)}$, note that
$f^{-1} (\beta \gamma^j)$ is the diagrammatic track of the $
{Y^j}$-colored underpass of $D$. Hence  $\widetilde
u_{\beta\gamma^j}=  {Y^j}$ for all $j$. Thus,  the   diagrams
\eqref{weakiso---}  and \eqref{fourthRmove}  have the same
objects. It follows directly from the definitions  that the
morphisms are also the same. Now,  the commutativity of
\eqref{weakiso---}  implies the commutativity of
\eqref{fourthRmove}. The second   type 4 move is treated
similarly using in the role of   $\beta\in \pi_1(C_{\widetilde
\Omega})$   the diagrammatic  meridian of the $Z$-colored
underpass (rather than its inverse as above).    One should also
use the identity $\overline{u_{\delta^{-1}, \delta \gamma}}=
u_{\delta, \gamma}$ which  holds for any $\delta\in
\pi_1(C_\Omega)$ and any edge-track $\gamma$  of $\Omega$. This
identity is a  consequence of the definition of  a coloring of
$\Omega$.

Consider the third type 4 move $D\mapsto {\widetilde D}$.  We
need to prove the equality of the   compositions \eqref{comp1}
and \eqref{comp2}.  Let    $\gamma^j$ be  the diagrammatic track
of the $X^j$-colored underpass of $D$ for $j= 0,1, \ldots, n$.
By the definition of the induced coloring, $X^j=u_{\gamma^j}$
for all $j$. Note that  $\gamma^n= \beta^{-1}   \gamma^0$ where
$\beta\in \pi_1(C_\Omega)$ is the diagrammatic meridian of $Q$.
We claim that the  composition  \eqref{comp1}    is    equal to
$u_{\beta^{-1}, \gamma^0}\colon X^n \to \varphi_{g(\beta)}
(X^0)$.  Suppose first that $\varepsilon^j=+$ for all $j=1,
\ldots, n$.  For $n=1$    our claim follows from the definition
of $\psi^1$. For $n \geq 2$ the claim is deduced by induction
from the commutativity of the diagram \eqref{condweak}. The case
$\varepsilon^j=-$   can be reduced to the case $\varepsilon^j=+$
by inverting the orientation of the $Y^j$-colored underpath and
changing its color  to $(Y^j)^*$. Indeed, under this
transformation  the color of the $j$-th crossing $\psi^j=
u_{y^{-1}, \gamma^j} \co X^{j-1}\to \varphi_{y } (X^j)$ (where
$y=\vert Y^j\vert\in G$)  changes to $u_{y ,
\gamma^{j-1}}\colon X^j \to \varphi_{y^{-1} } (X^{j-1})$ and we
need only to observe that  $$u_{y ,  \gamma^{j-1}}= u_{y, y^{-1}
\gamma^j} = \overline {u_{y^{-1}, \gamma^j}}= \overline {\psi^j}
.$$ A similar computation shows that the  composition
\eqref{comp2}  is    equal to ${\widetilde  u}_{\mu^{-1},
\gamma_0}\colon X_m \to \varphi_{g(\mu)} (X_0)$ where $ \mu \in
\pi_1(C_{\widetilde  \Omega})$ is the diagrammatic meridian of $
\widetilde Q$  and  $\gamma_0$ is  the diagrammatic track of the
$X_0$-colored underpass of $\widetilde  D$. It remains to
observe that $X_m=X^n$, $X_0=X^0$, and
$${\widetilde  u}_{\mu^{-1}, \gamma_0}= { u}_{f^{-1}(\mu^{-1}), f^{-1}(\gamma_0)}
= u_{\beta^{-1}, \gamma^0}.$$
The fourth type 4 move is treated similarly.
\end{proof}

\subsection{Functor  $F_\cc$}\label{The functorR} For  any    $G$-ribbon category
$\cc$, we define a functor $F_{\mathcal C}\co {\mathcal
G}_\cc\to \cc$ as follows. Consider  the category of colored
diagrams ${\mathcal D}={\mathcal D}_{\mathcal C}$ and let $\overline
{\mathcal D} $ be its quotient by the equivalence relation on the
set of morphisms generated by the colored Reidemeister moves and
stabilization. The category $ \overline {\mathcal D} $ has the same
objects as ${\mathcal  D} $, and the structure of a strict monoidal
category in $ {\mathcal D}$ induces a structure of a strict monoidal
category in $ \overline {\mathcal D} $. Theorem~\ref{thm-functorF+}
implies that the strong monoidal functor $\mathcal F  \colon
{\mathcal  D}  \to  {\mathcal C}$ of Lemma~\ref{lem-functorF} is
invariant under the colored Reidemeister moves and stabilization.
Therefore $\mathcal F $ induces  a   strong monoidal functor
$\overline {\mathcal F} \colon \overline {\mathcal D}   \to
{\mathcal C}$.

Consider the  strict
monoidal functor   $P\co  \mathcal G_{ \mathcal
C}\to \overline {\mathcal D}$ that    carries each object to itself
  and carries a morphism
represented by a colored $G$-graph
  into the morphism
  represented by a  diagram of this graph with induced
coloring. Lemma~\ref{lem-functorF++} implies that $P$ is well
defined. Set   $$F_{\mathcal C}=\overline {\mathcal F} P\co \mathcal
G_{ \mathcal C} \to {
 \mathcal C}.$$
  Since $P$ is   strict
monoidal  and $\overline {\mathcal F}$ is strong monoidal, their
composition   $F_{\mathcal C}$ is   strong monoidal.   We summarize the relationships between these functors in the following commutative diagram:
$$
\xymatrix@R=1cm @C=1.5cm {
 & \dd_\cc \ar@{->>}[d] \ar[rd]^{{\mathcal F}_\cc}& \\
{\mathcal G}_\cc \ar@/_1.5pc/[rr]_{F_\cc} \ar[r]^{P}& \overline{\dd}_\cc \ar[r]^{\overline{\mathcal F}_\cc} & \cc
}
$$

\subsection{Remarks} 1.  The category ${\mathcal  D}_\cc$  becomes a pivotal $G$-graded category  by
setting $$ \vert U\vert =\prod_{r=1}^k \vert U_r\vert^{\varepsilon_r} \quad {\text {and}} \quad U^*=((U_k,-\varepsilon_k), \dots
,(U_1,-\varepsilon_1)) $$ for any object $U=((U_1,\varepsilon_1),
\dots,(U_k,\varepsilon_k))$ of $\dd$,  and
\begin{center}
\psfrag{A}[Bc][Bc]{\scalebox{.75}{$(U_1,\varepsilon_1)$}}
\psfrag{B}[Bc][Bc]{\scalebox{.75}{$(U_k,\varepsilon_k)$}}
\psfrag{U}[Bc][Bc]{\scalebox{.75}{$(U_k,-\varepsilon_k)$}}
\psfrag{V}[Bc][Bc]{\scalebox{.75}{$(U_1,-\varepsilon_1)$}}
$\lev_{U}=$\rsdraw{.45}{.9}{ev-diag}, \qquad $\lcoev_{U}=$\rsdraw{.45}{.9}{coev-diag},\\[.8em]
\psfrag{U}[Bc][Bc]{\scalebox{.75}{$(U_1,\varepsilon_1)$}}
\psfrag{V}[Bc][Bc]{\scalebox{.75}{$(U_k,\varepsilon_k)$}}
\psfrag{A}[Bc][Bc]{\scalebox{.75}{$(U_k,-\varepsilon_k)$}}
\psfrag{B}[Bc][Bc]{\scalebox{.75}{$(U_1,-\varepsilon_1)$}}
$\rev_{U}=$\rsdraw{.45}{.9}{ev-diag}, \qquad
$\rcoev_{U}=$\rsdraw{.45}{.9}{coev-diag},
\end{center}
where the orientation of the arcs is uniquely determined by the
 signs $\varepsilon_i$.

 It is easy to check  that the   functor $\mathcal F=\mathcal F_{\mathcal C} \colon {\mathcal  D}_{\mathcal C} \to  {\mathcal C}$ is grading preserving and pivotal. We have
$$
{\mathcal F}^1((U_1,\varepsilon_1), \dots,(U_n,\varepsilon_n))={\mathcal F}^1(U_n,\varepsilon_n) \otimes \cdots \otimes {\mathcal F}^1(U_1,\varepsilon_1)
$$
where
$$
{\mathcal F}^1(U_i,\varepsilon_i)= \left\{\begin{array}{ll} \id_{U_i^*}\co  {U_i^*}\to {U_i^*} & \text{if   $\varepsilon_i=+$, }\\
 (\rev_{U_i} \otimes \id_{U_i^{**}})(\id_{U_i} \otimes \lcoev_{U_i^*})\co {U_i}\to U_i^{\ast \ast} & \text{if   $\varepsilon_i=-$. }\end{array}\right.
$$

Similarly, using   colored $G$-graphs represented by the diagrams
above, one can turn $\mathcal G_{ \mathcal C}$ into a pivotal
$G$-graded category, and then the functor $F_\cc\co {\mathcal
G}_\cc\to \cc$  is grading preserving and pivotal.

2. Using appropriate colored braids, one can define a $G$-braiding
in $\mathcal G_{ \mathcal C}$ turning   $\mathcal G_{ \mathcal
C}$  into a $G$-ribbon category  so that the functor
$F_\cc$ preserves both the $G$-braiding and the $G$-twist. We
shall not use this $G$-braiding.

\section{Conjugation of colorings}\label{Conjugation}
In this section, we  define conjugation of colorings and describe the behavior of the functor $F_\cc\co {\mathcal G}_\cc\to \cc$ under conjugation.

\subsection {Conjugation of colorings}\label{Conjugation of the colorings}
We can conjugate $G$-graphs and their   colorings    by any
$\eta\in G$. For a  $G$-graph $(\Omega, g)$, its {\it $\eta$-conjugate} $\Omega^{\eta}=(\Omega,
g^{\eta})$ is the same ribbon graph $\Omega$ endowed with the
homomorphism
 $g^{\eta}=\eta^{-1} g \eta\colon \pi_1(C_\Omega)\to G$. Given
 a pre-coloring $u$ of $(\Omega, g)$, we define the {\it
 $\eta$-conjugate  pre-coloring} $u^{\eta}$ of $(\Omega,
 g^{\eta})$ by
$u^{\eta}_\gamma=\varphi_\eta (u_\gamma)\in \mathcal
 C_{g^{\eta}(\mu_\gamma)}$ for any edge-track $\gamma$   of
 $\Omega$. For $\beta \in \pi_1(C_\Omega)$, we define the
 isomorphism
$u^{\eta}_{\beta, \gamma}\colon u^{\eta}_{\beta  \gamma} \to
\varphi_{g^{\eta}(\beta^{-1})} (u^{\eta}_\gamma)$ as the
composition of the isomorphisms
\begin{gather*}
\xymatrix@R=1cm @C=2cm {
u^{\eta}_{\beta  \gamma}=\varphi_{\eta} (u_{\beta \gamma})  \ar[r]^-{\varphi_{\eta} (u_{\beta, \gamma})}  &
 \varphi_{\eta} \varphi_{g(\beta^{-1})} (u_\gamma)   \ar[r]^-{\varphi_2 ( \eta, g(\beta^{-1}) )_{u_\gamma}}  &
   \varphi_{g(\beta^{-1})\eta} (u_\gamma) }\\
\xymatrix@R=1cm @C=3cm { { =\varphi_{\eta g^{\eta}(\beta^{-1})} (u_\gamma) }\ar[r]^-{(\varphi_2 ( g^{\eta}(\beta^{-1}), \eta)_{u_\gamma})^{-1}}
  & \varphi_{  g^{\eta}(\beta^{-1}) } \varphi_\eta (u_\gamma)=\varphi_{  g^{\eta}(\beta^{-1})  }  (u^{\eta}_\gamma).}
\end{gather*}

\begin{lem}\label{lem-conjugation}
$u^{\eta}$ is a pre-coloring of $\Omega^{\eta}$.
\end{lem}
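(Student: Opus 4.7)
The plan is to verify the three axioms of a pre-coloring listed in Section~\ref{coloredGgraphs}: (i) each $u^\eta_\gamma$ is a nonzero object of $\cc_{g^\eta(\mu_\gamma)}$; (ii) $u^\eta_{1,\gamma}=(\varphi_0)_{u^\eta_\gamma}$ for every edge-track $\gamma$; and (iii) the diagram \eqref{condweak} commutes with $u$, $g$ replaced by $u^\eta$, $g^\eta$.

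Item (i) is immediate: since $\varphi_\eta$ is a strong monoidal equivalence with $\varphi_\eta(\cc_\alpha)\subset \cc_{\eta^{-1}\alpha\eta}$, the nonzero object $u_\gamma\in\cc_{g(\mu_\gamma)}$ is carried by $\varphi_\eta$ to the nonzero object $u^\eta_\gamma\in\cc_{\eta^{-1}g(\mu_\gamma)\eta}=\cc_{g^\eta(\mu_\gamma)}$. For (ii), unwind the definition of $u^\eta_{1,\gamma}$ and substitute $u_{1,\gamma}=(\varphi_0)_{u_\gamma}$. Using axiom \eqref{crossing6} for $\alpha=\eta$ in the form $\varphi_2(\eta,1)_{u_\gamma}\varphi_\eta((\varphi_0)_{u_\gamma})=\id_{\varphi_\eta(u_\gamma)}$ and $(\varphi_2(1,\eta)_{u_\gamma})^{-1}=(\varphi_0)_{\varphi_\eta(u_\gamma)}$, the three-factor composition defining $u^\eta_{1,\gamma}$ collapses to $(\varphi_0)_{\varphi_\eta(u_\gamma)}=(\varphi_0)_{u^\eta_\gamma}$.

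The substantive step is (iii), a diagram chase. Setting $\alpha=g^\eta(\beta^{-1})=\eta^{-1}g(\beta^{-1})\eta$ and $\alpha'=g^\eta(\delta^{-1})=\eta^{-1}g(\delta^{-1})\eta$, both $u^\eta_{\beta\delta,\gamma}$ and the composition $\varphi_2(\alpha,\alpha')_{u^\eta_\gamma}\cdot\varphi_\alpha(u^\eta_{\delta,\gamma})\cdot u^\eta_{\beta,\delta\gamma}$ unfold via the definition into strings of arrows of two kinds: applications of $\varphi_\eta$ to arrows of the form $u_{\bullet,\bullet}$, and component morphisms of the natural transformation $\varphi_2$ (at various pairs of group elements, applied to $u_\gamma$). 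The plan is to first use $\varphi_\eta$ applied to the commutative diagram \eqref{condweak} for $u$ at the pair $(\beta,\delta)$ in order to rewrite $\varphi_\eta(u_{\beta\delta,\gamma})$ as $\varphi_\eta(\varphi_2(g(\beta^{-1}),g(\delta^{-1}))_{u_\gamma})\cdot\varphi_\eta(\varphi_{g(\beta^{-1})}(u_{\delta,\gamma}))\cdot\varphi_\eta(u_{\beta,\delta\gamma})$. After this substitution, the $u$-arrows on both sides match, and what remains is a coherence identity involving only $\varphi_2$-components evaluated at $u_\gamma$, governing how the compositions $\varphi_\eta\varphi_{g(\beta^{-1})}\varphi_{g(\delta^{-1})}\to\varphi_{g^\eta(\delta^{-1}\beta^{-1})}\varphi_\eta$ can be broken up. This identity follows from repeated application of the pentagon \eqref{crossing5} together with the naturality of $\varphi_2$ in both arguments.

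The main obstacle is purely bookkeeping: the many $\varphi_2$ factors appear in different orders on the two sides, and one must carefully align them using \eqref{crossing5} at the correct triples $(\eta,g(\beta^{-1}),g(\delta^{-1}))$, $(g^\eta(\beta^{-1}),\eta,g(\delta^{-1}))$, and $(g^\eta(\beta^{-1}),g^\eta(\delta^{-1}),\eta)$. Once these pentagons are inserted in the right order, the identity reduces to an equality of two parenthesizations of the same coherent composition, which is the content of Mac Lane-type coherence for the monoidal functor $\varphi\co\overline G\to\Aut(\cc)$.
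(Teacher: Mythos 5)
Your proposal is correct and follows essentially the same route as the paper. The $\beta=1$ case is handled exactly as you describe, via \eqref{crossing6} at $\alpha=\eta$, $X=u_\gamma$; and for \eqref{condweak} the paper's diagram chase uses precisely the ingredients you name — $\varphi_\eta$ applied to \eqref{condweak} for $u$, the naturality of $\varphi_2(\eta,g(\beta^{-1}))$ and $\varphi_2(g^\eta(\beta^{-1}),\eta)$, and then \eqref{crossing5} three times at exactly the triples $(\eta,g(\beta^{-1}),g(\delta^{-1}))$, $(g^\eta(\beta^{-1}),\eta,g(\delta^{-1}))$, and $(g^\eta(\beta^{-1}),g^\eta(\delta^{-1}),\eta)$ that you list. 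The only stylistic difference is the order: the paper first uses the two naturality squares to pull $\varphi_{g^\eta(\beta^{-1})}\varphi_\eta(u_{\delta,\gamma})$ into the form $\varphi_\eta\varphi_{g(\beta^{-1})}(u_{\delta,\gamma})$ and only then invokes \eqref{condweak}, whereas you propose to substitute \eqref{condweak} first and clean up afterwards, but this is cosmetic.

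One small caveat you should keep in mind when writing up part (iii) in full: after substituting $\varphi_\eta$ of \eqref{condweak}, the $u$-arrows do \emph{not} immediately match — the left side has the factor $\varphi_{g^\eta(\beta^{-1})}\varphi_\eta(u_{\delta,\gamma})$, not $\varphi_\eta\varphi_{g(\beta^{-1})}(u_{\delta,\gamma})$, and these live under different outer functors. You already mention using naturality of $\varphi_2$, so the idea is there, but be explicit that naturality is needed precisely to commute $u_{\delta,\gamma}$ past the $\varphi_2$-components before the pentagons can be applied; it is not merely coherence among $\varphi_2$-arrows with all $u$-arrows already aligned.
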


\begin{proof}
Consider the commutative diagram obtained     by setting $\alpha=\eta$ and $X=u_\gamma$ in  \eqref{crossing6}.
 Since   $(\varphi_0)_{u_\gamma}= u_{1, \gamma}$, we deduce that
$u^{\eta}_{1, \gamma} =(\varphi_0)_{\varphi_\eta (u_{  \gamma})
}=(\varphi_0)_{u^{\eta}_\gamma}$.

We now check the commutativity of the diagram \eqref{condweak}
where $g$ and $u$ are  replaced by $g^{\eta}$ and $u^{\eta}$
respectively. It follows from the definitions that
\begin{gather*}
\varphi_{g^{\eta}(\beta^{-1})}  (u^{\eta}_{\delta, \gamma}) \,  u^{\eta}_{ \beta, \delta \gamma}= \varphi_{g^{\eta}(\beta^{-1})} ((\varphi_2 ( g^{\eta}(\delta^{-1}), \eta)_{u_{ \gamma}})^{-1}) \circ
 \varphi_{g^{\eta}(\beta^{-1})} ( \varphi_2 ( \eta, g(\delta^{-1}) )_{u_{ \gamma}} ) \\
  \circ\,
  \varphi_{g^{\eta}(\beta^{-1})} \varphi_{\eta} (u_{\delta,   \gamma} )\circ
(\varphi_2 ( g^{\eta}(\beta^{-1}), \eta)_{u_{\delta\gamma}})^{-1} \circ  \varphi_2 ( \eta, g(\beta^{-1}) )_{u_{\delta\gamma}} \circ     \varphi_{\eta} (u_{\beta, \delta \gamma}).
\end{gather*}
 We  rewrite the composition of the  three leftmost morphisms in
 the last row using twice the  naturality of $\varphi$. This
 gives
\begin{gather*}
\varphi_{g^{\eta}(\beta^{-1})}  (u^{\eta}_{\delta, \gamma}) \circ  u^{\eta}_{ \beta, \delta \gamma}= \varphi_{g^{\eta}(\beta^{-1})} ((\varphi_2 ( g^{\eta}(\delta^{-1}), \eta)_{u_{ \gamma}})^{-1}) \circ
 \varphi_{g^{\eta}(\beta^{-1})} ( \varphi_2 ( \eta, g(\delta^{-1}) )_{u_{ \gamma}} ) \\
  \circ
  (\varphi_{2} (g^{\eta}(\beta^{-1}), \eta)_{\varphi_{g(\delta^{-1})} (u_\gamma)} )^{-1}   \circ
\varphi_{g(\beta^{-1})  \eta} (u_{\delta,\gamma})  \circ  \varphi_2 ( \eta, g(\beta^{-1}) )_{u_{\delta\gamma}} \circ     \varphi_{\eta} (u_{\beta, \delta \gamma})\\
= \varphi_{g^{\eta}(\beta^{-1})} ((\varphi_2 ( g^{\eta}(\delta^{-1}), \eta)_{u_{ \gamma}})^{-1}) \circ
 \varphi_{g^{\eta}(\beta^{-1})} ( \varphi_2 ( \eta, g(\delta^{-1}) )_{u_{ \gamma}} ) \circ\\
  (\varphi_{2} (g^{\eta}(\beta^{-1}), \eta)_{\varphi_{g(\delta^{-1})} (u_\gamma)} )^{-1}   \circ
\varphi_{2} ( \eta, g(\beta^{-1}))_{\varphi_{g(\delta^{-1})} (u_\gamma)}  \circ  \varphi_\eta \varphi_{g(\beta^{-1})}   (u_{\delta,\gamma}) \circ     \varphi_{\eta} (u_{\beta, \delta \gamma}).
\end{gather*}
Using the commutativity of   \eqref{condweak}  for $g,u$, we
 replace the composition of the  two rightmost morphisms    with
 $\varphi_\eta (\varphi_2 (g(\beta^{-1}),
 g(\delta^{-1}))_{u_\gamma})^{-1}\circ   \varphi_\eta (u_{\beta
 \delta, \gamma})$. At each of the next three steps we use
 \eqref{crossing5}. First, we replace  $$\varphi_{2} ( \eta,
 g(\beta^{-1}))_{\varphi_{g(\delta^{-1})} (u_\gamma)}  \circ
 \varphi_\eta (\varphi_2 (g(\beta^{-1}),
 g(\delta^{-1}))_{u_\gamma})^{-1}$$
with $$(\varphi_{2} (g(\beta^{-1}) \eta, g( \delta^{-1}))_{
u_\gamma} )^{-1} \circ \varphi_{2} ( \eta,
g(\delta^{-1}\beta^{-1}))_{  u_\gamma} .$$ Next, we replace
$$  \varphi_{g^{\eta}(\beta^{-1})} (\varphi_{2} (\eta, g(\beta^{-1}))_{ u_\gamma } ) \circ (\varphi_{2} (g^{\eta}(\beta^{-1}), \eta)_{\varphi_{g(\delta^{-1})} (u_\gamma)} )^{-1}  \circ (\varphi_{2} (g(\beta^{-1}) \eta, g( \delta^{-1}))_{  u_\gamma} )^{-1}$$
with $(\varphi_{2} (g^{\eta}(\beta^{-1}), g(\delta^{-1})\eta)_{
 u_\gamma } )^{-1} $.
Finally, we replace
$$\varphi_{g^{\eta}(\beta^{-1})} ((\varphi_2 ( g^{\eta}(\delta^{-1}), \eta)_{u_{ \gamma}})^{-1}) \circ (\varphi_{2} (g^{\eta}(\beta^{-1}), g(\delta^{-1})\eta)_{ u_\gamma } )^{-1} $$
with
$$(\varphi_{2} (g^{\eta}(  \beta^{-1}), g^{\eta}(  \delta^{-1}))_{ u^{\eta}_\gamma } )^{-1} \circ (\varphi_{2} (g^{\eta}(\delta^{-1} \beta^{-1}),  \eta)_{ u_\gamma } )^{-1}.$$
The resulting expression is nothing but $(\varphi_{2} (g^{\eta}(
 \beta^{-1}), g^{\eta}(  \delta^{-1}))_{ u^{\eta}_\gamma }
 )^{-1} \, u^{\eta}_{\beta \delta, \gamma}$.
\end{proof}

It is clear that the source and the target of $\Omega^\eta$ can
be computed from the source and the target of $\Omega$ by
applying $\varphi_\eta$ to the objects of $\cc$ while keeping the signs.

   For a coloring $(u,v)$ of $(\Omega, g)$, we define the {\it
   $\eta$-conjugate  coloring} $(u^{\eta},v^{\eta})$ of $(\Omega,
   g^{\eta})$. Here
    $u^{\eta}$ is the pre-coloring defined  above, and for any
    coupon-track $\gamma$ of $\Omega$, the morphism
    $v^{\eta}_\gamma$  is defined as the composition   of
    morphisms
\begin{gather*}
\xymatrix@R=1cm @C=1.8cm {
\otimes_{i=1}^m (\varphi_\eta (u_{\gamma_i}))^{\varepsilon_i}
\ar[r]^-{ \otimes_i \rho_i^{-1}}  &  \otimes_{i=1}^m \,
\varphi_\eta (u_{\gamma_i}^{\varepsilon_i} )
\ar[r]^-{(\varphi_\eta)_m  }  &    \varphi_\eta (\otimes_{i=1}^m
\,u_{\gamma_i}^{\varepsilon_i} ) \ar[r]^-{
\varphi_\eta(v_\gamma)  } & {}}\\
\xymatrix@R=1cm @C=2cm
{\varphi_\eta (\otimes_{j=1}^n \,  u_{\gamma^j}^{\varepsilon^j} )  \ar[r]^-{(\varphi_\eta)_n^{-1}  }
  &   \otimes_{j=1}^n   \,\varphi_\eta (  u_{\gamma^j}^{\varepsilon^j} )  \ar[r]^-{ \otimes_j \rho^j}
   & \otimes_{j=1}^n  ( \varphi_\eta   (u_{\gamma^j}))^{\varepsilon^j} .}
\end{gather*}
Here we use  notation of Section \ref{coloredGgraphs}   and   set
$$
\rho_i= \left\{\begin{array}{ll} \id_{\varphi_\eta (u_{\gamma_i})}\co \varphi_\eta (u_{\gamma_i})\to \varphi_\eta (u_{\gamma_i}) & \text{if $\varepsilon_i=+$,}\\[.4em]
 \varphi_\eta^{1}(u_{\gamma_i})\colon \varphi_\eta
   (u_{\gamma_i}^{\ast} ) \to (\varphi_\eta
(u_{\gamma_i}))^{\ast} & \text{if $\varepsilon_i=-$. }\end{array}\right.
$$
and similarly
$$
\rho^j= \left\{\begin{array}{ll} \id_{\varphi_\eta (u_{\gamma^j})}\co \varphi_\eta (u_{\gamma^j})\to \varphi_\eta (u_{\gamma^j}) & \text{if $\varepsilon^j=+$,}\\[.4em]
 \varphi_\eta^{1}(u_{\gamma^j})\colon \varphi_\eta
   (u_{\gamma^j}^{\ast} ) \to (\varphi_\eta
(u_{\gamma^j}))^{\ast} & \text{if $\varepsilon^j=-$. }\end{array}\right.
$$
In the case where $m=n=1$ and $
\varepsilon_1= \varepsilon^1=+$, the definition of
$v^{\eta}_\gamma$ simplifies to $v^{\eta}_\gamma=\varphi_\eta
(v_\gamma)$.

\begin{lem}\label{lem-conjugation+}
$(u^{\eta}, v^{\eta})$ is a coloring of $\Omega^{\eta}$.
\end{lem}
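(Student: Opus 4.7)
The plan is to verify the two defining conditions of a coloring from Section~\ref{coloredGgraphs} for the data $(u^\eta, v^\eta)$ on the $G$-graph $\Omega^\eta = (\Omega, g^\eta)$. Pre-coloringhood of $u^\eta$ is already Lemma~\ref{lem-conjugation}; what remains is condition (i) on the Hom-space membership of each $v^\eta_\gamma$, and condition (ii) asserting commutativity of the compatibility diagram \eqref{weakiso---} with $(u,v,g)$ replaced throughout by $(u^\eta, v^\eta, g^\eta)$.

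Condition (i) is immediate by inspection of the definition of $v^\eta_\gamma$ as a composition: the source is $\otimes_{i=1}^m (\varphi_\eta(u_{\gamma_i}))^{\varepsilon_i} = \otimes_{i=1}^m (u^\eta_{\gamma_i})^{\varepsilon_i}$ and the target is $\otimes_{j=1}^n (u^\eta_{\gamma^j})^{\varepsilon^j}$ as required. The $G$-grading is correct because $\varphi_\eta$ carries $\cc_{g(\mu_\gamma)}$ into $\cc_{\eta^{-1} g(\mu_\gamma) \eta} = \cc_{g^\eta(\mu_\gamma)}$, so $v^\eta_\gamma$ lives in the component of degree $g^\eta(\mu_\gamma)$.

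For condition (ii), fix $\beta \in \pi_1(C_\Omega)$ and set $\alpha = g(\beta^{-1})$, $\alpha' = g^\eta(\beta^{-1}) = \eta^{-1}\alpha\eta$. The strategy is to obtain the desired square as the outer boundary of an enlarged diagram whose inner rectangle is $\varphi_\eta$ applied to the compatibility diagram \eqref{weakiso---} for $(u,v,g)$ and $\beta$, which commutes because $(u,v)$ is a coloring of $\Omega$. The peripheral cells are filled as follows: (a) the definition of $u^\eta_{\beta, \gamma_i}$ and $u^\eta_{\beta, \gamma^j}$ expresses each as $\varphi_\eta(u_{\beta, -})$ pre- and post-composed with $\varphi_2(\eta, \alpha)$ and $\varphi_2(\alpha', \eta)^{-1}$; (b) the \emph{interchange cell} relating the two ways to build a monoidality morphism $\varphi_\eta \varphi_\alpha(\otimes_i -) \cong \varphi_{\alpha'} \varphi_\eta(\otimes_i -)$ out of $(\varphi_\eta)_m, (\varphi_\alpha)_m, (\varphi_{\alpha'})_m$ and $\varphi_2(\eta, \alpha), \varphi_2(\alpha', \eta)^{-1}$, which reduces to iterated naturality and \eqref{crossing3}; (c) for each sign $\varepsilon = -$, the insertion of the pivotal isomorphism $\rho = \varphi_\eta^1$ from the definition of $v^\eta_\gamma$.

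The main obstacle is in the case $\varepsilon = -$: one must check that the decoration $u^\eta_{\beta, \gamma_i}^-$ (defined via the convention $\psi^-$ of Section~\ref{sect-pivot-cross-Gcat} applied inside $\Omega^\eta$) agrees, up to insertion of the $\rho$-cells, with $\varphi_\eta(u_{\beta, \gamma_i}^-)$. Concretely, the required identity is
\[
\varphi_\eta^1(u_{\gamma_i}) \circ \varphi_\eta(u_{\beta, \gamma_i}^-) \circ \varphi_\eta^1(u_{\beta\gamma_i})^{-1} = (u^\eta_{\beta, \gamma_i})^-,
\]
which is an analogue of Lemma~\ref{lem-psibar} with $\eta$-conjugation in place of the overline operation. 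It is proved by exactly the same machinery used there: apply \eqref{eq-psibar1} to the pivotal functor $F = \varphi_\eta$, then \eqref{eq-psibar2} to $F = \varphi_\alpha$ and $G = \varphi_\eta$, and finally \eqref{eq-psibar3} successively to the monoidal natural isomorphisms $\lambda = \varphi_2(\eta, \alpha)$ and $\lambda = \varphi_2(\alpha', \eta)^{-1}$. Granted this identity, the outer diagram assembles into the commutativity of \eqref{weakiso---} for $(u^\eta, v^\eta, g^\eta)$ by a direct diagram chase combining the inner commutative $\varphi_\eta$-image of \eqref{weakiso---} with the peripheral naturality and monoidality cells.
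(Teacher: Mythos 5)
Your overall strategy matches the paper's. The paper's proof treats only the simplest case ($m=n=1$, $\varepsilon_1=\varepsilon^1=+$) and exhibits a three-square commutative diagram whose leftmost square is $\varphi_\eta$ applied to \eqref{weakiso---next} and whose remaining two squares are naturality squares for $\varphi_2(\eta, g(\beta^{-1}))$ and $\varphi_2(g^\eta(\beta^{-1}), \eta)^{-1}$. Your decomposition into an inner $\varphi_\eta$-image of \eqref{weakiso---} plus peripheral $\varphi_2$-naturality, monoidality \eqref{crossing3}, and pivotal cells is exactly the natural generalization of this to arbitrary $m$, $n$, $\varepsilon$, and you correctly isolate the one genuinely new ingredient: relating $(u^\eta_{\beta,\gamma_i})^-$ to $\varphi_\eta(u_{\beta,\gamma_i}^-)$.

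However, the explicit identity you state for that step is wrong as written. The composite $\varphi_\eta^1(u_{\gamma_i}) \circ \varphi_\eta(u_{\beta, \gamma_i}^-)$ does not typecheck: $\varphi_\eta(u_{\beta,\gamma_i}^-)$ lands in $\varphi_\eta\varphi_{g(\beta^{-1})}(u_{\gamma_i}^*)$, whereas $\varphi_\eta^1(u_{\gamma_i})$ expects input from $\varphi_\eta(u_{\gamma_i}^*)$. The correct identity must also insert $\varphi_2(\eta, g(\beta^{-1}))_{u_{\gamma_i}^*}$ and $\varphi_2(g^\eta(\beta^{-1}), \eta)^{-1}_{u_{\gamma_i}^*}$, and the leftover $\varphi_\eta^1$-factor on the target side must appear conjugated inside $\varphi_{g^\eta(\beta^{-1})}(-)$. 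That said, the machinery you invoke --- \eqref{eq-psibar1} with $F=\varphi_\eta$, \eqref{eq-psibar2}, and \eqref{eq-psibar3} with $\lambda=\varphi_2(\eta,\alpha)$ and $\lambda=\varphi_2(\alpha',\eta)^{-1}$ --- is precisely what delivers the corrected identity. So this is a formulation slip, not a conceptual gap; once the identity is stated with the missing $\varphi_2$ and $\varphi_{g^\eta(\beta^{-1})}$ factors, the argument closes as you describe.
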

\begin{proof}
We need to prove the commutativity of the diagram \eqref{weakiso---} for all $\gamma$,  $\beta$. For simplicity, we restrict ourselves to   coupon-tracks $\gamma$ with
   $m=n=1$ and $ \varepsilon_1=  \varepsilon^1=+$. Then the
   commutativity of  \eqref{weakiso---} follows from the
   commutativity of the diagram
$$
\xymatrix@R=1cm @C=1.15cm
{
\varphi_\eta (  u_{\beta \gamma_1 } )  \ar[r]^-{\varphi_\eta (  u_{\beta, \gamma_1}  ) }   \ar[d]_{v^{\eta}_{\beta \gamma}}
  &   \varphi_\eta \varphi_{g(\beta^{-1})} (  u_{ \gamma_1}  )  \ar[r]^-{\varphi_2 }  \ar[d]_{
   \varphi_\eta \varphi_{g(\beta^{-1})} (v_\gamma)}
  &   \varphi_{g(\beta^{-1})\eta} (  u_{  \gamma_1}  )  \ar[r]^-{(\varphi_2 )^{-1}} \ar[d]_{ \varphi_{g(\beta^{-1})\eta} (v_\gamma)}
   &   \varphi_{g^{\eta}(\beta^{-1}) } \varphi_{ \eta}  (  u_{  \gamma_1}  ) \ar[d]_{\varphi_{g^{\eta}(\beta^{-1}) } (v^{\eta}_\gamma)}\\
  \varphi_\eta (  u_{\beta \gamma^1}  )  \ar[r]^-{\varphi_\eta (  u_{\beta, \gamma^1}  ) }
  &   \varphi_\eta \varphi_{g(\beta^{-1})} (  u_{ \gamma^1}  )  \ar[r]^-{\varphi_2 }
  &   \varphi_{g(\beta^{-1})\eta} (  u_{  \gamma^1}  )  \ar[r]^-{(\varphi_2 )^{-1}}
   &   \varphi_{g^{\eta}(\beta^{-1}) } \varphi_{ \eta}  (  u_{  \gamma^1}  )
    .}
$$
\end{proof}

Isomorphisms   of   colorings can also  be
conjugated in the obvious way and yield isomorphisms   of the
conjugate  colorings.


\subsection{Behavior of   $F=F_{\cc} $
  under conjugation of colorings}

\begin{thm}\label{thm-conjugation}  Let $\cc$ be a $G$-ribbon  category, and let $\Omega$ be a   colored  $G$-graph  with source
$( u_{1}, \varepsilon_1),\ldots , ( u_{k}, \varepsilon_k)$ and
   target $( u^{1}, \varepsilon^1),\ldots ,( u^{l},
   \varepsilon^l)$. Let $\Omega^\eta$ be the
colored $G$-graph
   obtained from $\Omega$ through conjugation by $\eta\in G$.
Then the morphism   $F(\Omega^\eta)$  is equal to the following
 composition
\begin{gather*}
\xymatrix@R=0cm @C=1.7cm {
\otimes_{r=1}^k (\varphi_\eta (u_{ r}))^{\varepsilon_r}   \ar[r]^-{ \otimes_r \rho_r^{-1}}  &  \otimes_{r=1}^k \, \varphi_\eta (u_{ r}^{\varepsilon_r} )  \ar[r]^-{(\varphi_\eta)_k  }  &    \varphi_\eta (\otimes_{r=1}^k   \,u_{r}^{\varepsilon_r} ) \ar[r]^-{ \varphi_\eta(F(\Omega) )  } & {}}\\
\xymatrix@R=1cm @C=1.7cm
{\varphi_\eta (\otimes_{j=1}^l \,  (u^{ s})^{\varepsilon^s} )  \ar[r]^-{(\varphi_\eta)_l^{-1}  }
  &   \otimes_{s=1}^l   \,\varphi_\eta (  (u^{ s})^{\varepsilon^s} )  \ar[r]^-{ \otimes_s \rho^s}
   & \otimes_{s=1}^l  ( \varphi_\eta   (u^{ s}))^{\varepsilon^s} ,}
\end{gather*}
where    $\rho_r=\id_{\varphi_\eta (u_r)}$ if $\varepsilon_r=+$,
   $\rho_r=\varphi_\eta^{1}(u_r )$ if $\varepsilon_r=-$
   and similarly $\rho^s=\id_{\varphi_\eta   (u^{ s})}$ if $\varepsilon^s=+$,
   $\rho^s=\varphi_\eta^{1}(u^s)$  if $\varepsilon^s=-$.
\end{thm}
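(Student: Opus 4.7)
The plan is to reduce to elementary colored diagrams via the factorization $F_\cc = \overline{\mathcal F}\circ P$. First, I represent $\Omega$ by a $\cc$-colored graph diagram $D$ without circle components as in Section~\ref{Colorings of diagrams vs. colorings of graphs}; a direct inspection of how the induced coloring is constructed shows that $\Omega^\eta$ is represented by the same underlying diagram $D$ equipped with the $\eta$-conjugate coloring $(U^\eta,V^\eta)$ in the evident sense. Hence $F_\cc(\Omega^\eta) = \mathcal F(D,U^\eta,V^\eta)$, and the problem becomes one about the strong monoidal functor $\mathcal F\co\dd_\cc\to\cc$.

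Next, for each source/target object $\XX=((X_r,\varepsilon_r))_{r=1}^k$ of $\dd_\cc$, I introduce the coherence isomorphism
$$
\Phi_\eta(\XX) = (\varphi_\eta)_k\circ(\otimes_r \rho_r^{-1}) \co \otimes_r (\varphi_\eta(X_r))^{\varepsilon_r} \iso \varphi_\eta(\otimes_r X_r^{\varepsilon_r})
$$
built from the monoidal coherence $(\varphi_\eta)_k$ and from $\varphi_\eta^1$ on the dual strands. The claim to be proved is then
\begin{equation*}
\mathcal F(D,U^\eta,V^\eta) \;=\; \Phi_\eta(\text{target})^{-1}\circ\varphi_\eta\bigl(\mathcal F(D,U,V)\bigr)\circ\Phi_\eta(\text{source}).
\end{equation*}
Both sides are compatible with horizontal composition and monoidal product, thanks to the coherence diagrams \eqref{crossing1}--\eqref{crossing2} for the monoidal functor $\varphi_\eta$ and to the fact that $\Phi_\eta$ is a \emph{monoidal} natural isomorphism (in particular, $\Phi_\eta(\XX\otimes\YY)$ factors through $\Phi_\eta(\XX)\otimes\Phi_\eta(\YY)$ up to these coherences). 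Since every morphism of $\dd_\cc$ is a composition of monoidal products of elementary diagrams from Figures~\ref{fig-elem-diags} and~\ref{fig-othergen}, it suffices to establish the formula for each elementary piece.

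The case-by-case verification then runs as follows. For a coupon diagram $D_v$, the formula is a rewriting of the definition of $v^\eta$ in Section~\ref{Conjugation of the colorings}, the factors $\rho_i,\rho^j$ in that definition matching precisely the components of $\Phi_\eta$. For the duality elementary diagrams $\overrightarrow\cap_X$, $\overrightarrow\cup_X$, $\overleftarrow\cap_X$, $\overleftarrow\cup_X$, the formula reduces to equations \eqref{pivotal-lev}--\eqref{pivotal-rcoev} applied to $F=\varphi_\eta$, together with the definition of $\varphi_\eta^1=\varphi_\eta^l=\varphi_\eta^r$ encoded in the $\rho$'s; this is where pivotality of the crossing is crucial. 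For the crossing elementary diagrams $\sigma_\pm(X,Y,X',\psi)$, the computation of the conjugated color of the crossing (governed by $u^\eta_{\beta,\gamma}$ as in Section~\ref{Conjugation of the colorings}) combines with equation~\eqref{eq-braiding3}, which expresses $\varphi_\eta(\tau_{X,Y})$ as a composition of $\tau_{\varphi_\eta(X),\varphi_\eta(Y)}$ with $(\varphi_\eta)_2$ and $\varphi_2(\cdot,\cdot)$, to yield the required identity after using \eqref{Fother3}--\eqref{Fother4}.

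The main obstacle is the bookkeeping of the coherences $\varphi_2(\alpha,\beta)$ and $\varphi_\eta^1$: each time two elementary diagrams are composed, the intermediate $\Phi_\eta$ factor must cancel correctly, which requires repeated use of the pentagon-type axiom \eqref{crossing5} and the unitality axioms \eqref{crossing2},~\eqref{crossing6}. Once the monoidal naturality of $\Phi_\eta$ is set up cleanly, the verifications on the elementary diagrams become routine applications of the axioms recalled above, and the theorem follows.
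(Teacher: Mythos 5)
Your proof is correct and follows essentially the same route as the paper: you pass to diagrams and observe that $\Omega^\eta$ is represented by the $\eta$-conjugate colored diagram, you package the coherence morphisms $(\varphi_\eta)_k\circ(\otimes_r\rho_r^{-1})$ into a natural isomorphism relating $\mathcal F\circ\Phi_\eta$ and $\varphi_\eta\circ\mathcal F$, and you verify the identity on elementary diagrams. This is precisely the content of the commutative square the paper exhibits in its proof (with $\Phi_\eta$ there denoting the conjugation endofunctor of $\dd_\cc$ rather than the coherence isomorphism), so the two arguments coincide in substance.
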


For $m=n=0$, we obtain   $ F(\Omega^\eta)=(\varphi_\eta)_0^{-1}
\varphi_\eta(F(\Omega) )   (\varphi_\eta)_0 $. In particular, if
the morphism $F(\Omega)\in \End_\cc(\un)$
is a scalar multiple of $\id_\un$, then
$F(\Omega^\eta)=F(\Omega)$.

\begin{proof} We give   the main lines of the proof leaving the details to the reader. First of all, a coloring $(U,V)$ of an arbitrary graph diagram $D$  determines an $\eta$-conjugate coloring   $(U^\eta, V^\eta)$ of $D$ as follows. For an underpass $p$ of $D$, set $U^\eta_p=\varphi_\eta (U_p)$.
For a crossing $c$ of $D$, let $U^\eta_c$ be the composition
\begin{gather*}
\xymatrix@R=1cm @C=2cm {
U^{\eta}_{c^+}=\varphi_{\eta} (U_{c^+})  \ar[r]^-{\varphi_{\eta} (U_c)}  &
\varphi_{\eta} \varphi_{\vert U_{\underline c} \vert } (U_{c^-})   \ar[r]^-{\varphi_2 ( \eta, \vert U_{\underline c} \vert )_{U_{c^-}}}  &    \varphi_{\vert U_{\underline c} \vert \eta} (U_{c^-})  }\\
\xymatrix@R=1cm @C=3cm {
 =\varphi_{\eta  \vert U^{\eta}_{\underline c} \vert} (U_{c^-})    \ar[r]^-{(\varphi_2 (  \vert U^{\eta} _{\underline c}\vert , \eta)_{U_{c^-}})^{-1}}
  & \varphi_{  \vert U^{\eta} _{\underline c} \vert } \varphi_\eta (U_{c^-})= \varphi_{  \vert U^{\eta} _{\underline c} \vert }  (U^{\eta}_{c^-}).
  }
\end{gather*}
The coloring $V^\eta$ of the coupons of $D$ is defined similarly to $v^\eta$ in Section
\ref{Conjugation of the colorings}.  The colored  diagram
$(D, U^\eta, V^\eta)$ is  denoted by $D^\eta$.

We next define the conjugation endofunctor $ \Phi_\eta$ of the
category $ \mathcal D_{\mathcal C}$. It transforms an object $
((U_r, \varepsilon_r))_{r=1}^k$  of $ \mathcal D_{\mathcal C}$
into the object $ (( \varphi_\eta (U_r), \varepsilon_r
))_{r=1}^k$. A morphism of $ \mathcal D_{\mathcal C}$
represented by a colored  diagram $D$ is transformed into
the   morphism represented by $D^\eta$. It is easy to see that $
\Phi_\eta$  is a strict monoidal functor. Comparing the values
on the elementary  diagrams,
one easily observes that  for any
morphism $f\colon ((U_r, \varepsilon_r))_{r=1}^k \to ((U^s ,
\varepsilon^s))_{s=1}^l$ in $ \mathcal D_{\mathcal C}$, the
following diagram commutes:
$$
\xymatrix@R=1cm @C=2cm {
\otimes_{r=1}^k (\varphi_\eta (U_r))^{\varepsilon_r}  \ar[r]^-{\otimes_r \rho_r^{-1}}   \ar[d]^-{ F \Phi_\eta (f)} &
 \otimes_{r=1}^k (\varphi_\eta (U_r^{\varepsilon_r}))  \ar[r]^-{(\varphi_\eta)_k}     &
 \varphi_\eta (\otimes_{r=1}^k    U_r^{\varepsilon_r} )      \ar[d]^-{ \varphi_\eta ( F(f))}   \\
\otimes_{s=1}^l (\varphi_\eta (U^s))^{\varepsilon^s}     \ar[r]^-{\otimes_s (\rho^s )^{-1}}   &
\otimes_{s=1}^l  \varphi_\eta ((U^s)^{\varepsilon^s} )    \ar[r]^-{(\varphi_\eta)_l }  &
\varphi_\eta ( \otimes_{s=1}^l  (U^s)^{\varepsilon^s} )   .}
$$
Here   $\rho_r=\id_{\varphi_\eta (U_r)}$ if $\varepsilon_r=+$,
   $\rho_r=\varphi_\eta^{1}(U_r)$ if $\varepsilon_r=-$
   and similarly $\rho^s=\id_{\varphi_\eta   (U^{ s})}$ if $\varepsilon^s=+$,
   $\rho^s=\varphi_\eta^{1}(U^s)$  if $\varepsilon^s=-$.

Observe finally that if a colored $G$-graph $\Omega$ is
represented by a colored     diagram $D$, then
$\Omega^\eta$ is represented by $D^\eta$. Now, the claim of
the theorem directly follows from the commutativity of the
previous diagram.
\end{proof}

\section{Invariants of special   $G$-graphs}\label{Invariants of special colored $G$-graphs}

In this section, we derive from a $G$-ribbon  category  $\cc$ an invariant of so-called special colored $G$-graphs (such graphs possibly have circle components). This construction  will be crucial in the definition of the surgery HQFT.

\subsection {Insertion of coupons}\label{sect-insert-coupon}  The  functor $\mathcal F$ of  Lemma~\ref{lem-functorF}  does not apply to colored $G$-graphs having circle components. In particular, this functor does not apply to knots and links. We show   how to transform   circle components into graphs with coupons and   to derive  from  $\mathcal F$  invariants of some $G$-links.

Let $(\Gamma, g\colon \pi_1(C_\Gamma)\to G)$
be a $\cc$-colored $G$-graph with circle components $(\ell_r)_r$. We transform $\Gamma$ into a colored $G$-graph $\Omega_\Gamma$ without circle   components as follows. Insert into each circle
component $\ell_r$ of $\Gamma$ a coupon $Q_r$ with one input and
one output, see   Figure~\ref{insertion-coupon}. In this figure
$Q_r$ is oriented counterclockwise, its bottom base  is the
bottom horizontal side, and the framing  is given (on the
boldface portions) by the   vector field orthogonal to the page
of the picture and directed behind the page.  In this way,
$\ell_r$ is transformed into a union of $Q_r$ and an oriented
segment $e_r $ for all $r$. These coupons and segments together
with $\Gamma \setminus \cup_r \ell_r$ form a ribbon graph
$\Omega=\Omega_\Gamma$. Clearly, $\pi_1(C_{\Omega })=\pi_1
(C_\Gamma)$ so that the homomorphism $g\colon \pi_1
(C_\Gamma)\to G $  induces a homomorphism $  \pi_1(C_{\Omega })
\to G$ also denoted~$g$.

\begin{figure}[t]
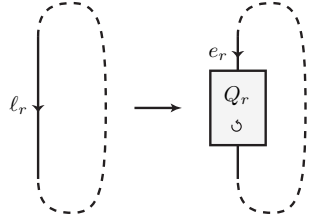

\begin{center}
\psfrag{e}[Br][Br]{\scalebox{.9}{$\ell_r$}}
\psfrag{a}[Br][Br]{\scalebox{.9}{$e_r$}}
\psfrag{Q}[Bc][Bc]{\scalebox{.9}{$Q_r$}}
\rsdraw{.45}{.9}{insert-coupon}
\end{center}
\caption{Insertion of a coupon}
\label{insertion-coupon}
\end{figure}

The given coloring of $\Gamma$ determines a coloring of all
tracks of $\Omega$ except the tracks of the edges $(e_r)_r$ and
the coupons $(Q_r)_r$. By      colorings  of $\Omega$ we shall
mean only the colorings   extending this \lq\lq partial
coloring". To color the  tracks of   $(e_r)_r$ and $(Q_r)_r$,   fix a
$z$-path $\gamma (r)$ for each
$Q_r$ and set $\mu_r=\mu_{\gamma(r)}\in \pi_1(C_{\Omega })$.
Pushing the endpoint of $\gamma (r)$ on $\widetilde Q_r$ towards
the top (resp.\ bottom) base, we obtain a $z$-path  for $e_r$,
cf. Section \ref{coloredGgraphs}.
 Let   $\gamma^r$ and $\gamma_r$  be the corresponding tracks of
$e_r$. Clearly,    $\mu_{\gamma^r}=\mu_{\gamma_r}=\mu_r$ and
$\gamma_r=\lambda_r \gamma^r $ where $\lambda_r \in
\pi_1(C_\Omega)$ is the longitude  of $\ell_r$ determined by
$\gamma (r)$ and  the  orientation and the framing of $\ell_r$.
Given     objects $( X_{r} \in {{\mathcal C}}_{g(\mu_r)})_{r }
$, Lemma \ref{lem-weakcolorings} yields an edge-coloring $u$ of
$\Omega $ such that $u_{\gamma_r}= X_{r}$ for all $r$.     Note
that   $u$ gives an isomorphism $u_{\lambda_r^{-1}, \gamma_r}
\colon u_{\gamma^r}  \to \varphi_{ g(\lambda_r) } (X_{r}) $ for
all~$r$. We color  each coupon-track  $\gamma (r)$ with the
composition of a morphism $     X_{r}  \to \varphi_{
g(\lambda_r) } (X_{r}) $ with $ (u_{\lambda_r^{-1}, \gamma_r})^{-1}  $.
By  Section~\ref{coloredGgraphs}, this extends to a coloring  of $\Omega$.

\subsection{Special colored $G$-graphs}\label{sect-special-G-graphs}
We call a colored $G$-graph $(\Gamma, g)$  {\it special} if
if the longitudes of all circle components of $\Gamma$ lie in $\Ker \, g$. An example of a
special $G$-graph is provided by the
{\it trivial $G$-knot} defined as a framed oriented unknot
in $S^3$ with trivial homomorphism of the fundamental group
of the complement to $G$. A more general example is provided
by any framed oriented link   $\ell\subset S^3$ endowed with
homomorphism $\pi_1(S^3\setminus \ell)\to G$ carrying the
longitudes of all components of $\ell$ to $1$. We call such
links {\it special $G$-links}.

Let $(\Gamma, g\colon \pi_1(C_\Gamma)\to G)$ be a special $\cc$-colored $G$-graph with circle components $(\ell_r)_r$.
As in Section~\ref{sect-insert-coupon}, consider an edge-coloring $u$ of  $\Omega=(\Omega_\Gamma, g)$ such
that $u_{\gamma_r}= X_{r}\in {{\mathcal C}}_{g(\mu_r)}$ for all
 $r$. Consider the   isomorphism $u_{\lambda_r^{-1}, \gamma_r}
 \colon u_{\gamma^r}  \to \varphi_{ g(\lambda_r) }
 (X_{r})=\varphi_{1 } (X_{r}) $. Given morphisms  $ (f_r\in
 \End_{\mathcal  C}(X_r))_{r=1}^n$, we color  each    $\gamma
 (r)$ with
the morphism
\begin{equation}\label{vvv}
v_{\gamma(r)}= (u_{\lambda_r^{-1}, \gamma_r})^{-1}
(\varphi_0)_{X_r}  f_r \colon u_{\gamma_r} \to
u_{\gamma^r}.
\end{equation}
This extends uniquely to
a  coloring $(u,v)$ of $\Omega$.  The resulting colored
$G$-graph is denoted  $\Omega  (X_r, f_r, \gamma(r))
 $. Different choices of $u$ lead to isomorphic
colored $G$-graphs    so that $F( \Omega  (X_r, f_r,\gamma(r)) )
 \in \End_{\mathcal  C}(\un)$ does not depend on the choice of
 $u$. The map $(X_r, f_r)_r\mapsto F( \Omega  (X_r,
 f_r,\gamma(r)) ) $ extends by $\kk$-linearity to an $n$-linear
 form
\begin{equation}\label{eqq+--}
\otimes_{r=1}^n \widetilde L_{g(\mu_r)} \to \End_{\mathcal  C}(\un),
\end{equation}
where $\widetilde L$ is defined in Section \ref{sect-verlinde-algebra}. Generally speaking, the form \eqref{eqq+--}
depends on the choice of the tracks $(\gamma(r))_r$. Before
exploring this form note that every coloring $(u,v)$ of
$\Omega=(\Omega_\Gamma, g)$ is obtained as above from a unique
family of {\it associated morphisms} $(f_r \in \End_{\mathcal
C}(X_r))_r$ computed   by $f_r=(\varphi_0)_{X_r}^{-1} \,
u_{\lambda_r^{-1}, \gamma_r} \, v_{\gamma(r)}$.

We now study the form   \eqref{eqq+--}.  We begin   with the
following lemma.

\begin{lem}\label{lem-smallomega--}
For any $r=1, \ldots, n$ and any $f_r, h_r \in \End_{\mathcal  C}(X_r)$ the morphism $f_r h_r -h_r f_r \in \End_{\mathcal  C}(X_r)$  lies in the annihilator of   \eqref{eqq+--}.
\end{lem}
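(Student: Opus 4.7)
The strategy is to double the coupon on the loop $\ell_r$ via the stabilization move, interchange the two coupons by a rotation isotopy of the loop, and invoke the functoriality of $\mathcal F$, which identifies two adjacent coupons on a strand with the composition of their morphisms.

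First, pass to a diagram $D_0$ representing $\Omega(X_r, f_r h_r, \gamma(r))$, so that $F(\Omega(X_r, f_r h_r, \gamma(r))) = \mathcal F(D_0)$. Construct a diagram $D_0^+$ from $D_0$ by inserting a second coupon $Q'_r$ on the strand representing $\ell_r$, close to the original coupon $Q_r$, colored so that the composition of the two coupon morphisms along this strand equals the single coupon morphism of $D_0$. Concretely, the lower coupon receives a coloring that encodes $h_r$ via \eqref{vvv}, while the upper coupon receives the morphism on $u_{\gamma^r}$ obtained by conjugating $f_r$ with the isomorphism $(u_{\lambda_r^{-1}, \gamma_r})^{-1} (\varphi_0)_{X_r} \colon X_r \to u_{\gamma^r}$. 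Since $\mathcal F$ applied to two adjacent coupons is the composition of their morphisms, $\mathcal F(D_0^+) = \mathcal F(D_0)$.

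Next, apply a self-homeomorphism of $\RR^2 \times [0,1]$ that is compactly supported on a small neighborhood of the loop $\ell_r$ and rotates this neighborhood by $180$ degrees about an axis lying in the plane of the loop, interchanging the positions of the two coupons while preserving the framing. This is an isotopy in the sense of Section~\ref{color-equivalence1}, and hence $\mathcal F$ is preserved: $\mathcal F(D_1^+) = \mathcal F(D_0^+)$, where $D_1^+$ is the rotated diagram. After the rotation, the ordering of the coupons along the oriented strand is reversed, so the composition of their morphisms now encodes $h_r f_r$ rather than $f_r h_r$. Merging the two adjacent coupons back into one, by the same functoriality argument together with the invariance of $\mathcal F$ under stabilization (Theorem~\ref{thm-functorF+}), yields a diagram of $\Omega(X_r, h_r f_r, \gamma(r))$; consequently $\mathcal F(D_1^+) = F(\Omega(X_r, h_r f_r, \gamma(r)))$, and chaining the equalities gives the desired identity.

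The main difficulty is the careful bookkeeping in the doubling step, where one must verify that the redistribution of morphisms across two coupons respects the specific normalization~\eqref{vvv} and the encoding of the $G$-structure on the $r$-th loop. The specialness hypothesis $g(\lambda_r) = 1$ is essential here: it forces $\varphi_{g(\lambda_r)}$ to coincide, via the natural isomorphism $\varphi_0$, with the identity endofunctor of $\cc$, so that the conjugation factor $(u_{\lambda_r^{-1}, \gamma_r})^{-1} (\varphi_0)_{X_r}$ is a well-defined isomorphism $X_r \to u_{\gamma^r}$ whose conjugation action on $\End_\cc(X_r)$ respects composition. This is what allows the cyclic property of the trace in the pivotal category $\cc$ to propagate through the ribbon-graph framework and manifest as the desired commutator identity.
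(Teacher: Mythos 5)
Your overall strategy -- split the coupon into two, interchange them by an isotopy, then merge -- is the same as the paper's. The gap is in the isotopy you describe. You propose a self-homeomorphism ``compactly supported on a small neighborhood of the loop $\ell_r$'' that ``rotates this neighborhood by $180$ degrees about an axis lying in the plane of the loop.'' No such homeomorphism exists: a tubular neighborhood $N$ of $\ell_r$ is a solid torus, and a $\pi$-rotation about an axis in the plane of the core reverses the orientation of the core and acts as $-\mathrm{id}$ on $H_1(\partial N)$; since $-\mathrm{id}\neq \mathrm{id}$ in the mapping class group of the torus, this restriction is not isotopic to the identity, so the rotation cannot be tapered off to the identity near $\partial N$. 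Even leaving existence aside, a flip of this type reverses the orientation of $\ell_r$ (so would compare $f_rh_r$ on $X_r$ with something on $X_r^*$, the domain of Lemma~\ref{lem-smallomega+}, not of the present lemma), and it does not preserve the framing or coupon co-orientation as you assert.

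The move the paper actually performs is different and orientation-preserving: after splitting $Q$ into two coupons with connecting edges $d'$ (short) and $d$ (the rest of $\ell_r$), it slides the upper coupon forward along $d'$ and the lower coupon forward along $d$ all the way around the loop, so the two coupons exchange positions while every edge keeps its orientation. This ``slide around the circle'' is the standard cyclicity move, and it \emph{is} realizable by a self-homeomorphism $j$ as in Section~\ref{color-equivalence1}. You should replace your rotation by this slide.

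A secondary point: even with the correct isotopy, most of the content of the proof is the computation showing that, after applying $j$ and contracting, the associated endomorphism of the coupon-track $\gamma$ is precisely $h_rf_r$. The slide changes the coupon-track by the longitude $\lambda_r$; the specialness hypothesis $g(\lambda_r)=1$ is used not so much to make the normalization in~\eqref{vvv} well-defined (that holds for any coloring) as to make the $\lambda_r$-shifted tracks compatible via~\eqref{condweak}, which is what the final chain of identities in the paper's proof ($u''_{\lambda^{-1},\zeta^1} v''_\eta (u''_{\lambda^{-1},\zeta_1})^{-1}=\varphi_{g(\lambda)}(v''_\zeta)=\varphi_1(h)$, etc.) verifies. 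Your sketch waves past this and misidentifies where the delicacy lies.
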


\begin{proof}  Assume for simplicity that $\Gamma=\ell_1 $ is a knot so that $r=n=1$; the general case is  similar.  Set $\pi=\pi_1(C_\Gamma)=\pi_1(C_\Omega)$. Let $\gamma=\gamma (1)$ be the fixed track of the coupon $Q=Q_1$ of $\Omega$
 and let $\mu=\mu_\gamma \in \pi$ be the associated meridian.
  We must prove that $F( \Omega  (X, fh,\gamma) )=F( \Omega  (X,
hf,\gamma) )$ for  any object $  X  \in {{\mathcal C}}_{g(\mu )}
$ and any endomorphisms $f,h$ of $X$.  This equality is well
known in the non-crossed case; the   proof  goes by replacing
the $fh$-colored coupon   with two coupons colored with $f, h$,
then pushing the $h$-colored coupon  along $\Gamma$ so that it
comes on top of the $f$-colored coupon and finally replacing the
two resulting coupons with an $hf$-colored coupon. These
operations preserve the invariant $F$ and yield the required
equality. The crossed case is similar but needs a more careful
treatment as follows.

A schematic picture of the colored $G$-graph $\Omega=\Omega  (X,
fh,\gamma)$ is shown in Figure~\ref{fig-eee}. Here  $\gamma^1$
(resp.\ $\gamma_1$) is the   track  of the   edge $e=e_1$ of
$\Omega$ obtained by slightly pushing $\gamma$  to the top
(resp.\ bottom) of $Q$. We have $\gamma^1=\lambda^{-1}
\gamma_1$, where $\lambda  \in \pi $ is the longitude  of
$\Gamma $ determined by $\gamma$. By definition, the coloring
$(u,v)$ of  $\Omega $ satisfies $u_{\gamma_1}=X$ and $v_\gamma=
(u_{\lambda^{-1}, \gamma_1})^{-1} \, (\varphi_0)_{X} \, fh
\colon X \to Y$ where $Y=u_{\gamma^1} \in {{\mathcal C}}_{g(\mu
)} $.

\begin{figure}[t]
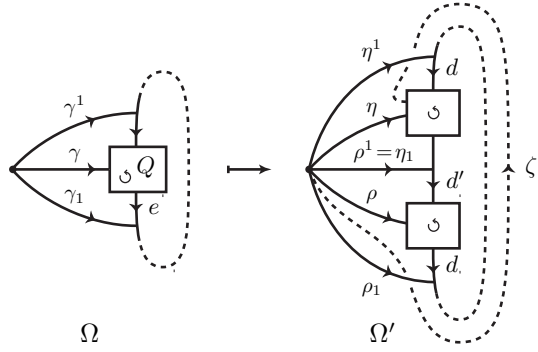

\begin{center}
      \psfrag{x}[Bc][Bc]{\scalebox{.8}{$\gamma^1$}}
      \psfrag{c}[Bc][Bc]{\scalebox{.8}{$\gamma$}}
      \psfrag{n}[Bc][Bc]{\scalebox{.8}{$\gamma_1$}}
      \psfrag{e}[Bl][Bl]{\scalebox{.9}{$e$}}
      \psfrag{Q}[Bc][Bc]{\scalebox{.9}{$Q$}}
      \psfrag{a}[Bc][Bc]{\scalebox{.8}{$\eta$}}
      \psfrag{v}[Bl][Bl]{\scalebox{.8}{$\rho^1\!=\!\eta_1$}}
      \psfrag{u}[Bc][Bc]{\scalebox{.8}{$\rho$}}
      \psfrag{o}[Bc][Bc]{\scalebox{.8}{$\eta^1$}}
      \psfrag{s}[Bc][Bc]{\scalebox{.8}{$\rho_1$}}
      \psfrag{Q}[Bc][Bc]{\scalebox{.9}{$Q$}}
      \psfrag{z}[Br][Br]{\scalebox{.9}{$z$}}
      \psfrag{r}[Bl][Bl]{\scalebox{.9}{$\zeta$}}
      \psfrag{l}[Bl][Bl]{\scalebox{.9}{$d'$}}
      \psfrag{d}[Bl][Bl]{\scalebox{.9}{$d$}}
      \psfrag{A}[Bc][Bc]{\scalebox{1}{$\Omega$}}
      \psfrag{B}[Bc][Bc]{\scalebox{1}{$\Omega'$}}
\rsdraw{.45}{1}{omega-omegap}
\end{center}
\caption{The ribbon graphs $\Omega$ and $\Omega'$}
\label{fig-eee}
\end{figure}

We replace the coupon $Q$ with two coupons as  in Figure
\ref{fig-eee}. The resulting  ribbon graph $\Omega'$   has   two
edges $d, d'$
   where $d\subset e$. We identify $ \pi_1(C_{\Omega'})=\pi$ in
   the obvious way so that $\Omega'$ becomes a $G$-graph. The
   track $\gamma$ determines in the obvious way the tracks
    $\rho$, $\eta$ of  the two coupons  of $\Omega'$  so that
    $\rho^1=\eta_1$  and $\rho_1=\gamma_1$, $\eta^1=\gamma^1$
    (as tracks of $e$). The edge-coloring $u$ of $\Omega$
    induces an edge-coloring of $\Omega'$ as follows. Each track
    of $d$ in $C_{\Omega'}$ determines a track of $e$ in
    $C_{\Omega}$ and keeps its $u$-color;
     the same holds for  the isomorphisms determined by pairs
(an element of $\pi $, a track of $d$). We   color the track
$\rho^1=\eta_1$ of $d'$ with $X$. This data extends to an
edge-coloring $u'$ of~$\Omega'$. Note that
$u'_{\rho_1}=u_{\gamma_1}=X$ and $u'_{\eta^1}=u_{\gamma^1}=Y$.
The edge-coloring $u'$ extends to a coloring $(u',v')$ of
$\Omega'$ such that $v'_\rho=h$ and  $v'_\eta= (u_{\lambda^{-1},
\gamma_1})^{-1} \, (\varphi_0)_{X} \, f \colon X \to Y$. Note
for the record   that  $\eta^1=\lambda^{-1} \rho_1$  and
$u'_{\lambda^{-1}, \rho_1}=u_{\lambda^{-1}, \gamma_1}\colon Y
\to \varphi_1(X)$.

Pushing the upper coupon   along $d'$ and the lower coupon along
$d$,  we obtain a self-homeomorphism $j$ of  ${\mathbb
R}^2\times [0,1]$ transforming $\Omega'$ into itself and
permuting its edges and   coupons. The homeomorphism $j$ induces
the identity automorphism of  $\pi=\pi_1(C_{\Omega'})$.
Transporting the coloring $(u',v')$ of $\Omega'$ along $j$ we
obtain a    coloring $(u'', v'')$ of   $\Omega'$.  Note that $j$
transforms the tracks $\eta, \eta_1, \eta^1$  into the   tracks
$\rho, \rho_1, \rho^1$, respectively. Therefore
 $u''_{\rho_1} =u'_{\eta_1}=X$, $u''_{\rho^1} =u'_{\eta^1}=Y$,
and $  v''_{\rho} =v'_{\eta}  \colon X \to Y$.  Consider the
coupon-track $\zeta= j(\rho)$. Clearly,  $\zeta^1=j(\rho^1)$ and
$\zeta_1= j(\rho_1)$. Thus,  $u''_{\zeta^1}=u'_{\rho^1}=X$,
$u''_{\zeta_1}=u'_{\rho_1}=X$ and $v''_{\zeta}=v'_{\rho}=h$. The
definition of $j$ implies that the tracks $\zeta^1$ and $\rho_1$
of   $d$ are equal: $\zeta^1=\rho_1$.

Next, we contract the edge $d'$ of  $\Omega'$ to obtain the same
$G$-graph $\Omega$ as before. The coloring $(u'', v'')$ of
$\Omega'$ induces a coloring $(\overline u, \overline v)$ of
$\Omega$ such that $\overline u_{\gamma_1}= u''_{\rho_1}=X$,
$\overline u_{\gamma^1}= u''_{\eta^1}$, and $\overline
v_\gamma=v''_\eta v''_\rho \colon X\to \overline u_{\gamma^1}=
u''_{\eta^1}$.  It follows from the definitions that   all these
transformations of $\Omega$   preserve the invariant $F$:
$$
F(\Omega,  u,v)= F(\Omega',  u',v')  = F(\Omega',  u'',v'') = F(\Omega,  \overline u, \overline v).
$$
The first equality is obtained by applying the definition of $F$
to a diagram representing $\Omega$ and chosen so  that    the
diagrammatic track of $Q$ is equal to~$\gamma$ (such a diagram
exists for any $\gamma$). The third equality is proven
similarly. The second equality holds because  the colored
$G$-graphs $(\Omega',  u',v')  $ and $(\Omega',  u'',v'')$ are
isotopic.

To finish the proof, we identify the colored $G$-graph
$(\Omega,  \overline u, \overline v)$ with $\Omega(X,
hf,\gamma)$. It is enough to show that the endomorphism, $x$,
of $X$ associated with the coupon-track $\gamma$ of $(\Omega,
\overline u, \overline v)$  is equal to $hf$. By definition,
\begin{gather*}
x=(\varphi_0)_{X}^{-1} \, \overline u_{\lambda^{-1}, \gamma_1} \, \overline v_{\gamma} =(\varphi_0)_{X}^{-1} \,   u''_{\lambda^{-1}, \rho_1} v''_\eta v''_\rho=(\varphi_0)_{X}^{-1} \,   u''_{\lambda^{-1}, \rho_1} v''_\eta v'_\eta  \\
= (\varphi_0)_{X}^{-1} \,   u''_{\lambda^{-1}, \rho_1} v''_\eta (u_{\lambda^{-1}, \gamma_1})^{-1} \, (\varphi_0)_{X} \, f = (\varphi_0)_{X}^{-1} \,   u''_{\lambda^{-1},  \zeta^1} v''_\eta (u''_{\lambda^{-1}, \zeta_1})^{-1} \, (\varphi_0)_{X} \, f
\end{gather*}
where the last equality follows from the formulas
$\zeta^1=\rho_1$ and $u''_{\lambda^{-1},
\zeta_1}=u'_{\lambda^{-1}, \rho_1}=u_{\lambda^{-1}, \gamma_1}$.
Using  the equality of coupon-tracks $\eta= \lambda^{-1} \zeta$
and the definition of a coloring, we obtain
$$
u''_{\lambda^{-1},  \zeta^1} v''_\eta (u''_{\lambda^{-1}, \zeta_1})^{-1}=u''_{\lambda^{-1},  \zeta^1} v''_{\lambda^{-1} \zeta} (u''_{\lambda^{-1}, \zeta_1})^{-1}= \varphi_{g (\lambda )} (v''_\zeta)=\varphi_1(h).
$$
Hence $x=(\varphi_0)_{X}^{-1} \,   \varphi_1(h) \,
(\varphi_0)_{X} \, f =hf$.
\end{proof}

Lemma \ref{lem-smallomega--} implies that  the $n$-linear form
\eqref{eqq+--} induces   an $n$-linear form
\begin{equation}
\label{eqq+} \otimes_{r=1}^n   L_{g(\mu_r)} \to \End_{\mathcal  C}(\un),
\end{equation}
where $L=L(\cc)$ is the fusion algebra of $\cc$ (see Section~\ref{sect-verlinde-algebra}). Given a  family of vectors $\omega=(\omega_\alpha \in
L_\alpha)_{\alpha\in G}$, we can  evaluate   \eqref{eqq+} on  $
\otimes_{r=1}^n \,   \omega_{g(\mu_r)}$. This yields an element
of $ \End_{\mathcal  C}(\un)$ denoted $F(\Gamma,  g,\omega,
(\gamma(r))_r)$.

We say that $\omega$ is {\it conjugation invariant} if
$\varphi_\beta(\omega_\alpha) =\omega_{\beta\alpha\beta^{-1}}$
for all $\alpha, \beta \in G$, where $\varphi\co G \to \mathrm{Aut}(L)$ is defined in Section~\ref{sect-verlinde-algebra}.

\begin{lem}\label{lem-smallomega}
If $\omega$ is conjugation invariant, then $ F(\Gamma,  g,\omega,
(\gamma(r))_r)$     does not depend on the      tracks
$(\gamma(r))_r$ and  $F(\Gamma,  g,\omega )=F(\Gamma,  g,\omega,
(\gamma(r))_r)$ is an isotopy invariant of the special colored $G$-graph
$(\Gamma, g)$.
\end{lem}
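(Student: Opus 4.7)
The plan is to establish the two claims in sequence: independence from the choice of tracks $(\gamma(r))_r$, followed by isotopy invariance. Given that $F_\cc$ is already an invariant of colored $G$-graphs without circle components (by Theorem~\ref{thm-functorF+} and the construction in Section~\ref{The functorR}), the second claim will follow rather directly from the first.

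For track-independence, an obvious induction reduces matters to changing a single track $\gamma(r_0)\mapsto\beta\gamma(r_0)$ for some $\beta\in\pi_1(C_\Omega)$, keeping the remaining tracks fixed. Under such a change the meridian transforms to $\beta\mu_{r_0}\beta^{-1}$, so $g(\mu_{r_0})$ is replaced by $g(\beta)g(\mu_{r_0})g(\beta)^{-1}$. Given color data $(X_{r_0},f_{r_0})$ for the old track, I would take $X_{r_0}':=\varphi_{g(\beta^{-1})}(X_{r_0})$ and $f_{r_0}':=\varphi_{g(\beta^{-1})}(f_{r_0})$ as the new data, which lie in the correct grading. Using Lemma~\ref{lem-weakcolorings}(ii) together with the coherence diagram~\eqref{condweak} and the formula~\eqref{vvv} for the coupon color, I would exhibit an explicit isomorphism of colorings between the $\cc$-colored $G$-graph built from $(X_{r_0}',f_{r_0}',\beta\gamma(r_0))$ and the one built from $(X_{r_0},f_{r_0},\gamma(r_0))$; invariance of $F_\cc$ under isomorphism of colorings then yields equality of their $F$-values. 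Unravelling this at the level of the multilinear form~\eqref{eqq+}, the new form $B'$ and the old form $B$ are related by $B'(\ldots,\varphi_{g(\beta^{-1})}(x),\ldots)=B(\ldots,x,\ldots)$ for $x\in L_{g(\mu_{r_0})}$. The conjugation invariance of $\omega$ gives $\omega_{g(\beta)g(\mu_{r_0})g(\beta)^{-1}}=\varphi_{g(\beta^{-1})}(\omega_{g(\mu_{r_0})})$, and combining the two equalities shows that evaluation on $\omega$ is unchanged.

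For isotopy invariance, an isotopy of the special colored $G$-graph $(\Gamma,g)$ lifts to an isotopy of $\Omega_\Gamma$ preserving the inserted coupons, transporting the tracks $(\gamma(r))_r$ to new tracks in the target graph. These transported tracks are generally not any prescribed choice, but by the track-independence established above this is harmless. The equality of invariants for the two resulting isotopic colored $G$-graphs (without circle components) then follows from the isotopy invariance of $F_\cc$.

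The principal obstacle will be the coupon-level verification in the track-independence step: one must check that the replacement $f_{r_0}\mapsto\varphi_{g(\beta^{-1})}(f_{r_0})$ combined with the change of track is indeed compensated by the shifts $u_{\lambda_{r_0}^{-1},\gamma_{r_0}}$ and $(\varphi_0)_{X_{r_0}}$ appearing in~\eqref{vvv}. This is essentially a bookkeeping exercise in the coherence data of the crossing, resting on the compatibility relations~\eqref{condweak}, \eqref{crossing5} and~\eqref{crossing6}, but it is where the conjugation-invariance hypothesis on $\omega$ enters in an essential way.
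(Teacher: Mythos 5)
Your proposal mirrors the paper's argument: replace $(X_r, f_r, \gamma(r))$ by $(\varphi_{g(\beta_r^{-1})}(X_r), \varphi_{g(\beta_r^{-1})}(f_r), \beta_r\gamma(r))$, use Lemma~\ref{lem-weakcolorings}(ii) together with~\eqref{condweak} and~\eqref{vvv} to build an isomorphism of colorings, and conclude via the conjugation invariance of $\omega$; the paper changes all tracks simultaneously rather than one at a time, but this is purely organizational. You correctly identify the coupon-level verification as the nontrivial step, which the paper carries out in detail via the commutative diagrams~\eqref{morphismf-} and~\eqref{morphismf}.
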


\begin{proof}
We need to prove that for any  $(\beta_r \in \pi_1(C_\Omega))_{r=1, \ldots, n}$,
\begin{equation}\label{FFF}
F(\Gamma,  g,\omega, (\gamma(r))_r)=F(\Gamma,  g,\omega, (\beta_r \gamma(r))_r).
\end{equation}
Pick   any objects $( X_{r} \in {{\mathcal C}}_{g(\mu_r)})_{r } $ and their endomorphisms $(f_r)_r$. For  all $r  $, set
$$Y_r=\varphi_{g(\beta_r^{-1})} (X_r)\in \mathcal C_{g(\beta_r \mu_r \beta_r^{-1})} \quad {\text {and}} \quad j_r=\varphi_{g(\beta_r^{-1})} (f_r) \in  \End_{\mathcal  C}(Y_r).$$ Consider the colored $G$-graphs
  $\Psi =\Omega  (X_r, f_r,\gamma(r)) $ and $\Psi'=\Omega  (Y_r,
  j_r, \beta_r \gamma(r)) $  with the same underlying $G$-graph
  $\Omega$. We construct below an isomorphism of colored graphs
  $\Psi \approx \Psi '$. The existence of such an isomorphism
  implies that $F(\Psi )=F(\Psi ')$.  Since this holds for all
  choices of $X_r, f_r$,  the forms   \eqref{eqq+} determined by
  the coupon-tracks  $(\gamma(r))_r$  and $(\beta_r
  \gamma(r))_r$ are obtained from each other via the isomorphism
  $$ \otimes_{r=1}^n   \varphi_{g(\beta_r^{-1})} \colon \otimes_{r=1}^n   L_{g(\mu_r)} \to \otimes_{r=1}^n   L_{g(\beta_r \mu_r \beta_r^{-1})}.$$
  By assumption on $\omega$, this  isomorphism  carries $
 \otimes_{r=1}^n   \omega_{g(\mu_r)}$ into $ \otimes_{r=1}^n
 \omega_{g(\beta_r \mu_r \beta_r^{-1})}$. This implies
 \eqref{FFF}.

Denote the colorings   of $\Psi $ and $\Psi'$  by  $(u,v)$
  and $(u',v')$ respectively. By definition ${u}_{\gamma_r}=X_r$
  and $u'_{\beta_r\gamma_r}=Y_r=\varphi_{g(\beta_r^{-1})} (X_r)$
  for all $r$.  The isomorphisms $$h_{\beta_r
  \gamma_r}={u}_{\beta_r, \gamma_r}\colon {u}_{\beta_r
  \gamma_r}\to \varphi_{g(\beta_r^{-1})} ({u}_{\gamma_r})
  =u'_{\beta_r\gamma_r}$$ with $r=1,\ldots, n$ extend  uniquely
  to an isomorphism of   edge-colorings $h\colon {u}\to u'$.
  That $h$ transforms $v$  into $v'$ follows from the fact that
  $h$  conjugates $v_{\beta_r\gamma (r)}$ and
  $v'_{\beta_r\gamma (r)}$ for all $r$. To verify this fact, fix
  $r$ and note that  it is enough to show that    $h_{\beta_r
  \gamma_r}={u}_{\beta_r, \gamma_r}$ conjugates the   morphisms
  $f\in  \End_{\mathcal
C}({u}_{\beta_r\gamma_r})$ and $f'\in \End_{\mathcal
C}(u'_{\beta_r\gamma_r})=\End_{\mathcal C}(Y_r)$ associated with
$v_{\beta_r \gamma (r)}$ and    $v'_{\beta_r \gamma (r)}$,
respectively. By the definition of $v'$, we have $f' =
j_r=\varphi_{g(\beta_r^{-1})} (f_r)$. The homomorphism $f $ is
computed by
\begin{equation}\label{forf}
f= (\varphi_0)^{-1}_{u_{\beta_r\gamma_r}} \,   u_{\beta_r \lambda_r^{-1}\beta_r^{-1}, \beta_r \gamma_r} \,  v_{\beta_r \gamma (r)} .
\end{equation}
The naturality of $\varphi_0$ yields the  commutative diagram
\begin{equation}\label{morphismf-}
\begin{split}
\xymatrix@R=1cm @C=2cm {
{u}_{\beta_r \gamma_r} \ar[d]_{(\varphi_0)_{u_{\beta_r\gamma_r}}} \ar[r]^{u_{\beta_r, \gamma_r}} &
   \varphi_{g(\beta_r^{-1}) }  (  u_{ \gamma_r} )   \ar[d]^{(\varphi_{0} )_{\varphi_{g(\beta_r^{-1}) }  (  u_{ \gamma_r} )} }  \\
\varphi_1({u_{\beta_r\gamma_r}})  \ar[r]^{\varphi_1({u}_{\beta_r, \gamma_r})}  &
\varphi_1 \varphi_{g(\beta_r^{-1})} ({u}_{\gamma_r})  .}
\end{split}
\end{equation}
 Expanding $(\varphi_0)^{-1}_{u_{\beta_r\gamma_r}}$ from this
diagram and substituting in \eqref{forf} we obtain that  $$
u_{\beta_r,\gamma_r} f=      \varphi_2(1, g(\beta_r^{-1})
)_{u_{\gamma_r}} \,   \varphi_1({u_{\beta_r,\gamma_r}})   \,
u_{\beta_r \lambda_r^{-1}\beta_r^{-1}, \beta_r \gamma_r}
v_{\beta_r \gamma (r)} =     u_{\beta_r \lambda_r^{-1},
\gamma_r}    v_{\beta_r \gamma (r)} .$$ Here the first equality
holds because    the inverse of the right vertical arrow in
\eqref{morphismf-} is   $\varphi_2(1, g(\beta_r^{-1})
)_{u_{\gamma_r}}$ and the second equality follows from the
commutativity of \eqref{condweak}.

Consider now the   following  commutative diagram of
isomorphisms:
\begin{equation}\label{morphismf}
\begin{split}
\xymatrix@R=1cm @C=2.3cm {
{u}_{\beta_r \gamma_r} \ar[d]^{v_{\beta_r \gamma (r)}} \ar[r]^{{u}_{\beta_r, \gamma_r}}&
 \varphi_{g(\beta_r^{-1})} ({u}_{\gamma_r})  \ar[r]^-{f'=\varphi_{g(\beta_r^{-1})} (f_r ) } \ar[d]^{\varphi_{g(\beta_r^{-1})} (v_{\gamma(r)})} &    \varphi_{g(\beta_r^{-1})} ({u}_{\gamma_r}) \ar[d]^{\varphi_{g(\beta_r^{-1})} ((\varphi_0)_{{u}_{\gamma_r}})}\\
{u}_{\beta_r \gamma^r}   \ar[r]^{{u}_{\beta_r, \gamma^r}}  &
 \varphi_{g(\beta_r^{-1})} ({u}_{\gamma^r})  \ar[r]^-{\varphi_{g(\beta_r^{-1})} ({u}_{\lambda_r^{-1}, \gamma_r}) } &
 \varphi_{g(\beta_r^{-1})} \varphi_1 ({u}_{\gamma_r}) .}
\end{split}
\end{equation}
Here the left square is the diagram \eqref{weakiso---} for the
  $r$-th coupon   of $\Omega$  (this coupon has   one entry and
  one exit so that $\otimes$  and     $\varphi_2$ do not come
  up.)  The  right square    is commutative by    \eqref{vvv}.
  Note that the inverse of the rightmost vertical arrow is equal
  to $\varphi_2(g(\beta_r^{-1}), 1)_{u_{\gamma_r}}$. The
  commutativity of the diagram \eqref{condweak} implies that
  the composition of the isomorphisms in  \eqref{morphismf},
  going from the bottom left to the bottom right and then to the
  top right is equal to $u_{\beta_r \lambda_r^{-1}, \gamma_r}$.
Therefore
\begin{equation*}
f' \,   {u}_{\beta_r, \gamma_r} =u_{\beta_r \lambda_r^{-1}, \gamma_r}  \,  v_{\beta_r \gamma (r)}=u_{\beta_r,\gamma_r} \,  f. \qedhere
\end{equation*}
\end{proof}

\begin{lem}\label{lem-smallomega--ee}
Assume $\End_\cc(\un)=\kk\,\id_\un$ and $\omega$ is conjugation invariant. Then
$F(\Gamma,  \eta^{-1} g \eta ,\omega )=F(\Gamma,  g,\omega )$ for all $\eta\in G$.
\end{lem}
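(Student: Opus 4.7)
The plan is to reduce the statement to Theorem~\ref{thm-conjugation} via the explicit formula \eqref{vvv} for colorings associated with $(X_r,f_r)$, using the conjugation-invariance of $\omega$ to absorb the change of base.

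First, I would unfold the definitions. By Lemma~\ref{lem-smallomega}, the invariant $F(\Gamma,g,\omega)$ equals $F(\Gamma,g,\omega,(\gamma(r))_r)$ for any choice of coupon-tracks; since transfer along paths in the \lq\lq big base point'' $Z$ is canonical, we can use the same system of coupon-tracks $(\gamma(r))_r$ and the same meridians $(\mu_r)_r$ to compute both $F(\Gamma,g,\omega)$ and $F(\Gamma,g^\eta,\omega)$, where $g^\eta=\eta^{-1}g\eta$. By the definition of the $n$-linear form \eqref{eqq+}, it therefore suffices to show that for any system of objects $(X_r\in\cc_{g(\mu_r)})_r$ and endomorphisms $(f_r\in\End_\cc(X_r))_r$,
\begin{equation}\label{plan-eq1}
F\bigl(\Omega(\varphi_\eta(X_r),\varphi_\eta(f_r),\gamma(r))\bigr)=F\bigl(\Omega(X_r,f_r,\gamma(r))\bigr),
\end{equation}
where on the left we view $\Omega$ as a $G$-graph with structure map $g^\eta$ (so that $\varphi_\eta(X_r)\in\cc_{g^\eta(\mu_r)}$). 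Indeed, then by conjugation-invariance of $\omega$ one has $\varphi_\eta(\omega_{g(\mu_r)})=\omega_{g^\eta(\mu_r)}$, and summing \eqref{plan-eq1} over the decomposition of $\otimes_r\omega_{g(\mu_r)}$ produces exactly the two sides of the lemma.

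The core step is to identify the colored $G$-graph $\Omega(\varphi_\eta(X_r),\varphi_\eta(f_r),\gamma(r))$ on the left of \eqref{plan-eq1} with the $\eta$-conjugate $\Omega(X_r,f_r,\gamma(r))^\eta$ in the sense of Section~\ref{Conjugation of the colorings}. Writing $(u,v)$ for the coloring produced by the recipe of Section~\ref{sect-special-G-graphs} applied to $(X_r,f_r)$, the $\eta$-conjugate pre-coloring $u^\eta$ satisfies $u^\eta_{\gamma_r}=\varphi_\eta(X_r)$, as required. To match the coupon-morphisms, I would check that the morphism $v^\eta_{\gamma(r)}$ predicted by Section~\ref{Conjugation of the colorings} equals $(u^\eta_{\lambda_r^{-1},\gamma_r})^{-1}(\varphi_0)_{\varphi_\eta(X_r)}\varphi_\eta(f_r)$, i.e., the value produced by formula \eqref{vvv} applied to the data $(\varphi_\eta(X_r),\varphi_\eta(f_r))$. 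This amounts to a diagram chase using the definition of $u^\eta_{\lambda_r^{-1},\gamma_r}$ (which involves $\varphi_2(\eta,g(\lambda_r))$ and $\varphi_2(g^\eta(\lambda_r^{-1}),\eta)^{-1}$), the naturality of $\varphi_0$, and the commutativity of \eqref{crossing5}--\eqref{crossing6}; this is essentially the same kind of computation carried out in the proof of Lemma~\ref{lem-conjugation+}.

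Once the identification $\Omega(\varphi_\eta(X_r),\varphi_\eta(f_r),\gamma(r))\approx\Omega(X_r,f_r,\gamma(r))^\eta$ is established, equation \eqref{plan-eq1} follows from Theorem~\ref{thm-conjugation}: since the source and target of $\Omega(X_r,f_r,\gamma(r))$ are both empty, the theorem gives $F(\Omega^\eta)=(\varphi_\eta)_0^{-1}\varphi_\eta(F(\Omega))(\varphi_\eta)_0$, and the hypothesis $\End_\cc(\un)=\kk\,\id_\un$ forces $F(\Omega)\in\kk\,\id_\un$, whence $\varphi_\eta(F(\Omega))=F(\Omega)(\varphi_\eta)_0(\varphi_\eta)_0^{-1}$ after using that $\varphi_\eta$ is $\kk$-linear and monoidal, and finally $F(\Omega^\eta)=F(\Omega)$. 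The main obstacle will be the bookkeeping in identifying $v^\eta_{\gamma(r)}$ with the coupon-morphism of the conjugated special graph; the signs, orientations, and the precise form of $u^\eta_{\lambda_r^{-1},\gamma_r}$ need to be tracked carefully, but no new ideas beyond those already used in Section~\ref{Conjugation} should be required.
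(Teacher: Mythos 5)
Your proposal is correct and follows essentially the same route as the paper's own proof: identify $\Omega(\varphi_\eta(X_r),\varphi_\eta(f_r),\gamma(r))$ (with structure $g^\eta$) with the $\eta$-conjugate of $\Omega(X_r,f_r,\gamma(r))$, apply the scalar case of Theorem~\ref{thm-conjugation} using $\End_\cc(\un)=\kk\,\id_\un$, and invoke conjugation-invariance of $\omega$ to match the decompositions. The only cosmetic difference is that the paper first proves the general identity \eqref{conji} for an arbitrary $\omega$ (introducing $\omega^\eta$) before specializing, whereas you invoke conjugation-invariance up front; the underlying computation is the same.
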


\begin{proof}
We   prove a stronger claim which concerns an arbitrary, not necessarily conjugation invariant  family of vectors $\omega=(\omega_\alpha \in L_\alpha)_{\alpha\in G}$.
Consider the family   $\omega^\eta =(\omega^\eta_\alpha\in
L_\alpha)_\alpha$ defined by $\omega^\eta_\alpha=\varphi_\eta
(\omega_{\eta \alpha\eta^{-1}})$ for all $\alpha$.  We claim
that
\begin{equation}\label{conji}
F(\Gamma,  \eta^{-1} g \eta,\omega^\eta, (\gamma(r))_r)=F(\Gamma,  g,\omega, (\gamma(r))_r)
\end{equation}
where $(\gamma(r))_r$ are the coupon-tracks as above. If
$\omega$ is conjugation invariant, then $\omega^\eta=\omega$ and
the   lemma follows.

To prove \eqref{conji}, consider the $G$-graphs
$\Omega=(\Omega_\Gamma, g)$ and $\Omega^\eta=(\Omega_\Gamma,
\eta^{-1} g \eta)$. Let $ \gamma_r,  X_{r}  , f_r $  be as
above.         The $\eta$-conjugation transforms the colored
$G$-graph $  (\Omega, u,v) $ $=\Omega  (X_r, f_r,\gamma(r))$
into a colored $G$-graph $ (\Omega^\eta, u^\eta, v^\eta)$. The
endomorphism of  $u^\eta_{\gamma_r}= \varphi_\eta
(u_{\gamma_r})=  \varphi_\eta (X_r)$ associated with $(u^\eta,
v^\eta)$ is computed from the definitions to be
\begin{gather*}
(\varphi_0)_{ \varphi_\eta (X_r)}^{-1} \, u^\eta_{\lambda_r^{-1}, \gamma_r} \,  \varphi_\eta(v_{\gamma(r)})\\
=(\varphi_0)_{ \varphi_\eta (X_r)}^{-1} \, (\varphi_2(1, \eta)_{X_r})^{-1} \, \varphi_2( \eta, 1 )_{X_r} \, \varphi_\eta (u_{\lambda_r^{-1}, \gamma_r})\, \varphi_\eta( u^{-1}_{\lambda_r^{-1}, \gamma_r}  (\varphi_0)_{X_r} f_r)\\
=(\varphi_0)_{ \varphi_\eta (X_r)}^{-1} \, (\varphi_2(1, \eta)_{X_r})^{-1} \, \varphi_2( \eta, 1 )_{X_r} \,   \varphi_\eta( (\varphi_0)_{X_r})\, \varphi_\eta(f_r)= \varphi_\eta(f_r).
\end{gather*}
Therefore    $ (\Omega^\eta, u^\eta, v^\eta)=\Omega^\eta
(\varphi_\eta (X_r), \varphi_\eta ( f_r),\gamma(r))$.  Since
$\End_\cc(\un)=\kk\,\id_\un$, the  remark after the statement
of Theorem \ref{thm-conjugation} implies  that
$$
F(\Omega^\eta  (\varphi_\eta (X_r), \varphi_\eta (
f_r),\gamma(r)))= F(\Omega^\eta, u^\eta, v^\eta)= F(\Omega,
u,v)=F( \Omega  (X_r, f_r,\gamma(r))) .
$$
Extending by linearity to the fusion algebra, we obtain  \eqref{conji}.
\end{proof}

\begin{lem}\label{lem-smallomega+}
If $\omega$ is conjugation invariant and  $\omega_\alpha^* =\omega_{ \alpha^{-1}}$ for all
$\alpha  \in G$, then $F(\Gamma,  g,\omega )$  does not
depend on the orientation of the circle components of $\Gamma$.
\end{lem}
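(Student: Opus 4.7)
The plan is to reduce the statement to reversing the orientation of a single circle component and then to show that this operation corresponds, on the $n$-linear form \eqref{eqq+}, to applying the involution $\ast$ of the fusion algebra to the relevant slot. Fix one circle component $\ell_r$ of $\Gamma$ and let $\Gamma'$ be obtained from $\Gamma$ by reversing the orientation of $\ell_r$, keeping the homomorphism $g\co \pi_1(C_\Gamma)\to G$ unchanged. Under this reversal the negative meridian of $\ell_r$ becomes $\mu_r^{-1}$, so the $r$-th slot of the form \eqref{eqq+} changes from $L_{g(\mu_r)}$ to $L_{g(\mu_r^{-1})}=L_{g(\mu_r)^{-1}}$, while the longitude, being inverted, still lies in $\Ker\,g$, so that $\Gamma'$ is again special.

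The central step is the identity
\begin{equation*}
F(\Omega_\Gamma(X_s, f_s, \gamma(s))_s) = F(\Omega_{\Gamma'}(X_s', f_s', \gamma(s))_s)
\end{equation*}
with $(X_s', f_s')=(X_s, f_s)$ for $s\neq r$ and $(X_r', f_r')=(X_r^\ast, f_r^\ast)$. I would prove this by a local argument near the inserted coupon $Q_r$: represent the graph by a diagram, insert a cancelling pair of duality morphisms on either side of $Q_r$ so as to turn the two $X_r$-colored arcs adjacent to $Q_r$ into $X_r^\ast$-colored arcs with the reversed orientation, and then slide the reversal all the way around $\ell_r$. Theorem~\ref{thm-functorF+}, together with pivotality of $\cc$ and Lemmas~\ref{lem-twist} and~\ref{lem-twist-ribbon}, guarantees that each local move preserves $F$, while the resulting transformation sends the coupon morphism $f_r$ to $f_r^\ast$. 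Conjugation invariance of $\omega$ together with Lemma~\ref{lem-smallomega} lets us choose the coupon-tracks freely.

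Extending by $\kk$-linearity, the identity above says that evaluating the form \eqref{eqq+} for $\Gamma$ on $\langle X_r, f_r\rangle$ equals evaluating the form for $\Gamma'$ on $\langle X_r, f_r\rangle^\ast \in L_{g(\mu_r)^{-1}}$. Summing against $\omega_{g(\mu_r)}$ for $\Gamma$ then corresponds, on the $\Gamma'$ side, to pairing against $\omega_{g(\mu_r)}^\ast=\omega_{g(\mu_r)^{-1}}=\omega_{g(\mu_r^{-1})}$ by the self-duality hypothesis on $\omega$, which is exactly the value of $\omega$ in the $r$-th slot for $\Gamma'$. Hence $F(\Gamma, g, \omega)=F(\Gamma', g, \omega)$, and iterating over all circle components yields the lemma.

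The main obstacle will be carrying out the local orientation-reversal argument cleanly in the $G$-crossed setting, where each strand carries not only a color but also crossing-isomorphism data at every over-crossing it participates in. When a reversed arc is slid along $\ell_r$ across other strands, the self-duality of the twist \eqref{eq-ribbon} is precisely the condition needed to ensure that the accumulated twist-and-crossing corrections at these over-crossings reassemble into the dualized morphism $f_r^\ast$ with the identity crossing coloring. This is the $G$-crossed analogue of the classical fact that reversing orientation of an $X$-colored component is equivalent to coloring it by $X^\ast$, and it uses all of the $G$-ribbon axioms.
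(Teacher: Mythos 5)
Your proposal follows essentially the same route as the paper: both reduce the lemma to the stronger claim that $F(\Omega(X_r,f_r,\gamma(r)))$ is unchanged when the orientation of a single circle component $\ell_i$ is reversed and the pair $(X_i,f_i)$ is simultaneously replaced by $(X_i^\ast,f_i^\ast)$, and then deduce the lemma by linearity and the hypothesis $\omega_\alpha^\ast=\omega_{\alpha^{-1}}$. The paper notes this claim is standard in the non-crossed case and leaves the $G$-crossed verification to the reader, whereas you go one step further and sketch how the local diagrammatic argument (insert cancelling duality morphisms, slide the reversal around $\ell_i$, use Theorem~\ref{thm-functorF+} together with pivotality and the self-dual twist) would carry over — a useful supplement, though still a sketch rather than a full proof of the claim.
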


\begin{proof}  This  lemma follows from a stronger claim which says that
$F( \Omega  (X_r, f_r,\gamma(r)) )$ is preserved when the
orientation of   $\ell_i$   is reversed and simultaneously   the
pair $(X_i, f_i\in \End (X_i)) $ is replaced with  $(X_i^*,
f_i^*\in \End (X_i^*)) $ (keeping the rest of the data). This
claim is well known in the non-crossed setting.   The proof in
the crossed setting is left to the reader as an exercise.
\end{proof}

An example of a  family of vectors $\omega$ satisfying
all requirements of the last three lemmas will be given in the
next section.

\subsection{Remark} \label{RemarkAssMorphism}     The morphism $ (u_{\lambda_r^{-1}, \gamma_r})^{-1} (\varphi_0)_{X_r}\colon X_r=u_{\gamma_r}\to u_{\gamma^r}$ appearing in~\eqref{vvv} may be rewritten as
$(\varphi_0)^{-1}_{u_{\gamma^r}}  u_{\lambda_r, \gamma^r}
$. Indeed,
\begin{gather*}
u_{\lambda_r^{-1}, \gamma_r} (\varphi_0)^{-1}_{u_{\gamma^r}}   u_{\lambda_r, \gamma^r} = (\varphi_0)^{-1}_{\varphi_1(X_r)} \varphi_1( u_{\lambda_r^{-1}, \gamma_r} )  u_{\lambda_r, \gamma^r}\\
=(\varphi_0)^{-1}_{\varphi_1(X_r)}(\varphi_2(1,1)_{\varphi_1(X_r)})^{-1}  (\varphi_0)_{X_r}=(\varphi_0)_{X_r}.
\end{gather*}
These equalities follow respectively  from the naturality of
$\varphi_0$,   the commutativity of the diagram
\eqref{condweak}, and the  commutativity of the left triangle in
\eqref{crossing6}.

\section{$G$-modular categories}\label{$G$-modular categories}

\subsection{Pre-fusion and fusion categories}\label{sect-pre-fusion-cat}
We call an object $U$ of a $\kk$-additive category $\cc$
\emph{simple} if $\End_\cc(U)$ is a free \kt module of rank 1 (and
so has the basis $\{\id_U\}$).     It is clear that   an object
isomorphic to a simple object is itself  simple. If~$\cc$ is pivotal, then the dual of a   simple object of $\cc$ is  simple.

A \emph{split semisimple category (over $\kk$)} is a  $\kk$-additive
category $\cc$ such that each object of $\cc$ is a finite direct sum of simple objects and $\Hom_\cc(i,j)=0$ for any non-isomorphic
simple objects $i,j  $ of $\cc$.

Clearly, the Hom spaces in
such a $\cc$ are   free $\kk$-modules of finite rank. For  $X\in\cc$
and a  simple object $i \in \cc$, the  modules $\Hom_\cc(X,i)$
and $\Hom_\cc(i,X)$ have same   rank denoted $N^i_X $ and
called the \emph{multiplicity number}. A set $I$ of simple objects of $\cc$ is \emph{representative} if every
simple object of $\cc$ is isomorphic to a unique element of~$I$.

A \emph{pre-fusion category (over $\kk)$} is a split semisimple
$\kk$-additive pivotal category $\cc$ such that the unit object
$\un$ is  simple. In such a   category, the map $\kk \to
\End_\cc(\un), k \mapsto k \, \id_\un$  is a \kt algebra isomorphism
which we use  to identify $\End_\cc(\un)=\kk$. The left and right
dimensions of any simple object of a pre-fusion category are
invertible (see, for example, Lemma 4.1 of \cite{TVi1}).

If $I$ is representative set of simple objects of pre-fusion category $\cc$, then for any object $X$ of $\cc$, $N_X^i=0$ for all but a finite number of $i\in I$, and
\begin{equation}\label{eq-dim-mult-numbers}
\dim_l(X)=\sum_{i\in I} \dim_l(i) N_X^i, \qquad \dim_r(X)=\sum_{i\in I} \dim_r(i) N_X^i.
\end{equation}

A \emph{fusion category} is a pre-fusion category such that the
set of isomorphism classes of simple objects is finite. The
\emph{dimension} $\dim(\cc)$ of a   fusion   category $\cc$ is
$$
\dim(\cc)=\sum_{i\in I} \dim_l(i)\dim_r(i) \in \kk,
$$
where $I$ is a (finite)  set   of   simple objects of $\cc$ such
that every simple object of~$\cc$ is isomorphic to a unique element
of~$I$. The sum on the right-hand side   does not depend on the
choice of~$I$.

\subsection{$G$-fusion categories}\label{sect-G-fusion}
In a pre-fusion $G$-graded category
$\cc=\oplus_{g \in G} \, \cc_g$, every simple object is isomorphic
to a simple object of $\cc_g$ for a unique $g\in G$. Moreover, for
all $g\in G$, each object of $\cc_{g}$ is a finite direct sum of
simple objects of $\cc_{g}$.

A \emph{$G$-fusion category} is a
pre-fusion $G$-graded category~$\cc$ such that   the set of isomorphism
classes of simple objects of~$\cc_g$   is finite and non-empty for
every $g\in G$. For $G=1$, we obtain the    notion of a fusion
category (see Section~\ref{sect-pre-fusion-cat}).   The neutral
component $\cc_1$ of a $G$-fusion category $\cc$ is a fusion
category. A $G$-fusion category is a fusion category if and only if
$G$ is finite.

  The argument   in \cite[Section VII.1]{Tu1}  shows  that if $\cc=\oplus_{g \in G}\, \cc_g$ is a $G$-fusion category, then for all $g\in G$,
\begin{equation}\label{eq-dim-Cg}
\sum_{i \in I_g} \dim_l(i)\dim_r(i)=\dim(\cc_1),
\end{equation}
where $I_g$ is any representative set of simple objects of $\cc_g$.

The fusion algebra $L$ of a $G$-fusion category $\cc$ (see Section~\ref{sect-verlinde-algebra}) is
a  free \kt module with basis $ (\langle i \rangle )_{i\in  I }$,
where $I$ is an arbitrary representative set of simple objects of
$\cc$.  For $X\in {\cc} $, the vector  $\langle X \rangle\in L $
expands as  $\langle X \rangle= \sum_{i\in I } N^i_X
\,\langle i \rangle  $.

\subsection{$G$-modular categories}\label{$G$-modular categories++}
A $G$-ribbon category $\cc$ is \emph{spherical} in the sense that the left and right traces of any endomorphism $g$ in $\cc$  coincide. This fact is a consequence of Theorem~\ref{thm-functorF+}, since the graph diagrams representing $\tr_l(g)$ and $\tr_r(g)$ (see Section~\ref{sect-penrose}) are related by Reidemeister moves of type 1 and 2. Then $\tr_{l}(g)=\tr_r(g)$ is denoted $\tr(g)$ and called the \emph{trace} of $g$. Consequently, the left and right dimensions of an object $X$ of $\cc$ coincide. Then $\dim_{l}(X)=\dim_{r}(X)$ is denoted $\dim(X)$ and called the \emph{dimension} of $X$.

The neutral component $\cc_1$ of a  $G$-ribbon   $G$-fusion category $\cc$ is a ribbon fusion
category. Let $I_1$ be a (finite) representative set of simple
objects of $\cc_1$. For $i,j \in I_1$,   set
$$
S_{i,j}= \tr(c_{j,i}\circ c_{i,j}\colon  i\otimes  j\to
i\otimes j)\in \End_{\cc}(\un)=\kk
$$
where $c_{i,j}\co i\otimes j\to j\otimes i$ is the braiding in $\cc_1$  (see Section~\ref{sect-cas-C1}).
The matrix  $S=[S_{i,j}]_{i,j\in I_1} $ does not depend on the
choice of $I_1$ and is called the {\it $S$-matrix} of $\cc$.

A \emph{$G$-modular category} is a   $G$-ribbon   $G$-fusion category whose $S$-matrix is invertible (over $\kk$).
In other words, a $G$-modular category is a   $G$-ribbon   $G$-fusion category whose neutral component is
modular  in the sense of \cite{Tu0}.

Let $\cc$ be a $G$-modular category. Consider the fusion algebra $L=\oplus_{\alpha\in G}\, L_\alpha$ of~$\cc$ (see Section~\ref{sect-verlinde-algebra}). For $\alpha\in G$, set
$$
\omega_\cc^\alpha=\sum_{i\in I_\alpha}
 \dim  ( i) \,\langle i \rangle\in L_\alpha ,
$$
where
$I_\alpha$ is  a representative set  of simple objects
of~$\cc_\alpha$.  The vector $\omega_\cc^\alpha$
does not depend on the choice of $I_\alpha$. Since the action of
any $\beta\in G$ on ${\cc}$ transforms   simple objects in
${\cc}_\alpha$ into simple objects in ${\cc}_{\beta \alpha
\beta^{-1}}$ and preserves their dimension, the family
$\omega_\cc=(\omega_\cc^\alpha)_\alpha$ is conjugation
invariant.  By Lemma  \ref{lem-smallomega},   $\omega_\cc$
determines an isotopy invariant  $F(\ell,  \omega_\cc)\in
 \kk$ of special $G$-links  $\ell$.

We shall need several elements   of $\kk$ associated with  a
$G$-modular category ${\cc}$. Since each $ i\in I_1$ is
a simple object, the   twist  $ v_i\co i \to   i$ in $\cc_1$ (see Section~\ref{sect-cas-C1}) is
equal to  $v_i=\nu_i \,\id_{ i}$ for some $\nu_i\in \kk$.  Since $v_i$ is an isomorphism, $\nu_i\in \kk^*$.
Set
$$
\Delta_{\pm} =\sum \limits_{i\in I_1}{\nu_i^{\pm 1}(\dim  (i))^2}\in
\kk.
$$
Formulas \eqref{Fother1} and \eqref{Fother2} imply that
$\Delta_{\pm}=F(\ell^\pm, \omega_\cc)$ where $\ell^\pm\subset
S^3$ is a trivial $G$-knot   with framing $\pm 1$, see
Section~\ref{sect-special-G-graphs}. The properties of modular categories
imply that  $\Delta_{\pm} \in \kk^*$     and   $\Delta_+
 \Delta_-=\dim(\cc_1)$, see \cite[Formula II.2.4.a.]{Tu0}. In particular, $\dim(\cc_1)$ is invertible in $\kk$.
A {\it rank}   of ${\cc} $ is a square root $\mathcal  D\in \kk^*$ of $\dim(\cc_1)$.

\section{The surgery HQFT}\label{sect-modular-Gcat}

\subsection{An invariant of $G$-manifolds}
 By a   {\it closed connected $G$-manifold}  we    mean  a closed connected oriented
  $3$-dimensional manifold   whose fundamental
 group is endowed with   a conjugacy class of homomorphisms  to $G$. A   {\it closed   $G$-manifold} is a disjoint union of a finite number of
  closed connected $G$-manifolds. A {\it homeomorphism} of closed  $G$-manifolds   is an orientation preserving homeomorphism whose action in $\pi_1$ commutes with the maps to $G$ (up to conjugation).

  We   derive from a $G$-modular category $\cc$  with rank  $\mathcal  D $
a multiplicative $\kk$-valued homeomorphism invariant    $\tau_{\cc} $ of     closed
$G$-manifolds. The multiplicativity of $\tau_{\cc} $  means that $\tau_{\cc}(M\amalg N)= \tau_{\cc}(M) \,  \tau_{\cc}(N)$ for all closed $G$-manifolds $M, N$.

It suffices to define $\tau_{\cc}$ for closed connected $G$-manifolds. Present such a manifold $M$ as the
result of surgery on $S^3=\RR^3\cup\{\infty\}$ along a framed
 link $\ell\subset \RR^2\times (0,1)$ with  $\#\ell$ components.
Thus,  $M$ is obtained by gluing $\#\ell$
 solid tori to the exterior $E_\ell$ of $\ell$ in $S^3$. Pick
a base point $z\in E_\ell\setminus \{\infty\}  $ with big second
coordinate. Composing the inclusion map $\pi_1 (E_\ell, z)\to \pi_1(M,z)$ with a homomorphism  $\pi_1(M,z)\to G$ in the given conjugacy class, we obtain   a homomorphism  $ g \colon
\pi_1(C_\ell,z)=\pi_1(E_\ell,z) \to G$.  We orient $\ell$ in an
arbitrary way. It is clear that the $G$-link $(\ell, g)$ is
special  in the sense of Section \ref{sect-special-G-graphs}. Recall the elements $\Delta_+, \Delta_- \in \kk$ from Section~\ref{$G$-modular categories++}.
Set
$$
\tau_{\cc}(M)= \Delta_{-}^{\sigma(\ell)}
\, {\mathcal D}^{ -\sigma(\ell)-\#\ell-1} F(\ell, g , \omega_\cc)\in \kk
$$
where  $\sigma(\ell)$ is the signature of the compact oriented
4-manifold $B_\ell$ bounded by $M$ and obtained  from the 4-ball
$B^4$ by attaching  2-handles    along  tubular neighborhoods of
the components of $\ell$ in $S^3=\partial B^4$. Here   $B_\ell$
is oriented so that $\partial B_\ell=M$ in the category of
oriented manifolds.

\begin{thm}\label{MAINTH}
$\tau_{\cc}(M)$ is a homeomorphism invariant of  the $G$-manifold $M$.
\end{thm}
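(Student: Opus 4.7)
The plan is to follow the classical Reshetikhin--Turaev strategy, reducing everything to Kirby calculus and the absorption properties of $\omega_\cc$. First I would verify that $\omega_\cc=(\omega_\cc^\alpha)_\alpha$ satisfies the hypotheses of Lemmas~\ref{lem-smallomega}, \ref{lem-smallomega--ee}, and \ref{lem-smallomega+}: conjugation invariance holds because the crossing $\varphi_\beta$ carries $I_\alpha$ bijectively onto a representative set of simple objects of $\cc_{\beta\alpha\beta^{-1}}$ and preserves dimensions, while $(\omega_\cc^\alpha)^*=\omega_\cc^{\alpha^{-1}}$ holds because $*$ sends $I_\alpha$ into a representative set of simple objects of $\cc_{\alpha^{-1}}$ and $\cc$ is spherical so $\dim(i^*)=\dim(i)$. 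Next I would check that the data entering the definition is well-posed: the longitudes of the components of $\ell$ bound disks in the solid tori glued to $E_\ell$, hence lie in the kernel of $\pi_1(E_\ell)\to\pi_1(M)$, so they lie in $\ker g$ and $(\ell,g)$ is special in the sense of Section~\ref{sect-special-G-graphs}. Lemma~\ref{lem-smallomega+} then makes $F(\ell,g,\omega_\cc)$ independent of the chosen orientation of $\ell$, and Lemma~\ref{lem-smallomega--ee} (applicable because $\End_\cc(\un)=\kk\,\id_\un$) makes it independent of the choice of representative $g$ within its conjugacy class.

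Second, by the Kirby--Fenn--Rourke theorem, two framed links in $S^3$ yield homeomorphic 3-manifolds exactly when they can be joined by a finite sequence of isotopies, $(\pm 1)$-stabilizations (disjoint addition of an unknotted $\pm 1$-framed component), and handle slides; the homeomorphism of $G$-manifolds amounts to further requiring that these moves be performed compatibly with $g$. Isotopy invariance is given by Lemma~\ref{lem-smallomega}. For the stabilization move, the new unknot $U_\pm$ is unlinked from $\ell$ and thus splits off as a factor in $F$; its $g$-meridian is trivial in $G$ (since it bounds a disk in $E_\ell$), so it is a trivial $G$-knot with framing $\pm 1$. By formulas \eqref{Fother1}--\eqref{Fother2} of Lemma~\ref{lem-F-othergen}, $F(U_\pm,\omega_\cc)=\Delta_\pm$, while $\sigma$ changes by $\pm 1$ and $\#\ell$ increases by $1$. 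The prefactor $\Delta_-^{\sigma(\ell)}\mathcal D^{-\sigma(\ell)-\#\ell-1}$ is then multiplied by $\Delta_-^{\pm 1}\mathcal D^{\mp 1-1}\Delta_\pm$, which equals $\Delta_+\Delta_-\mathcal D^{-2}=\dim(\cc_1)/\mathcal D^2=1$ in the $+$ case and $1$ in the $-$ case. Thus $\tau_\cc(M)$ is unchanged.

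The remaining and main obstacle is invariance under the handle slide: replacing a component $\ell_1$ of $\ell$ by the band sum $\ell_1\sharp\ell_2'$, where $\ell_2'$ is a parallel copy (using the framing) of another component $\ell_2$. Here the presentation of $M$ is unchanged, but the homomorphism $g$ changes on the meridian of the new $\ell_1$ (it becomes $\mu_{\ell_1}\mu_{\ell_2}$); compatibility is automatic because longitudes are killed by $g$. The invariance of $F(\ell,g,\omega_\cc)$ will follow from the key \textbf{sliding property} of $\omega_\cc$: for any $\alpha\in G$ and any homogeneous object $X$, an $\omega_\cc^\alpha$-colored meridional loop encircling a strand colored by $X$ acts as $\mathcal D^2$ times the projector onto the $\cc_1$-unit-isotypical component of $X$ (equivalently, evaluates to $\mathcal D^2\id_X$ when the encircled strand carries no nontrivial holonomy and $0$ on non-trivial isotypical components). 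This is where modularity of $\cc_1$ enters in an essential way, via the non-degeneracy of its $S$-matrix; the $G$-graded version is handled by combining this classical fact with the $G$-braiding of Section~\ref{sect-braided-ribbon} and the fact that, for special $G$-colored graphs, the relevant encircling loops lie in the neutral component~$\cc_1$ up to conjugation. Once this sliding identity is established, one applies it on a parallel copy of $\ell_2$ colored by $\omega_\cc^{|\mu_{\ell_2}|}$ to split $F(\ell,g,\omega_\cc)=F(\ell',g',\omega_\cc)$ in the standard diagrammatic manner, just as in \cite{RT2,Tu0}. Since $\sigma(\ell)$ and $\#\ell$ are unchanged by a handle slide, $\tau_\cc(M)$ is invariant. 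Combining the three steps finishes the proof.
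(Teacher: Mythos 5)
Your plan follows the same broad Kirby-calculus strategy as the paper's proof, and the preliminary steps are correct and match the paper: conjugation invariance via Lemma~\ref{lem-smallomega--ee}, orientation independence via Lemma~\ref{lem-smallomega+} and the identity $(\omega_\cc^\alpha)^*=\omega_\cc^{\alpha^{-1}}$, speciality of $(\ell,g)$, and absorption of $\pm1$-framed distant unknots using $\Delta_+\Delta_-=\mathcal{D}^2$ with the bookkeeping $\#\ell\mapsto\#\ell+1$, $\sigma\mapsto\sigma\pm1$. The divergence is in the treatment of handle slides. You invoke a $G$-graded version of the modular ``$\omega$-killing'' (sliding) lemma, whereas the paper avoids it entirely: following \cite{RT2} it reduces to \emph{negative} Kirby--Fenn--Rourke moves only and proves the local identity $F(\Omega,g',u')=\Delta_-\,F(\Gamma,g,u)$ by a direct trace computation. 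After reducing to a single strand colored by a simple object $V$, that computation uses only the definition of the twist, Lemma~\ref{lem-twist-mult}, Lemma~\ref{lem-F-othergen}, and~\eqref{eq-dim-mult-numbers}; it never uses invertibility of the $S$-matrix (cf.\ the remark at the end of Section~\ref{sect-modular-Gcat}, which notes that $\Delta_\pm\in\kk^*$ suffices for the 3-manifold invariant). So the paper's route is genuinely lighter than yours.

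The gap in your argument is that the sliding property is asserted, not proved, and in the $G$-graded setting it is not a routine transfer of the classical fact. First, the statement needs repair: for the $\omega_\cc^\alpha$-colored meridional loop to define a morphism via $F$ it must be a special $G$-knot, so its longitude --- which is the total meridian of the encircled strands --- must lie in $\ker g$; this forces the encircled object into $\cc_1$. Your parenthetical ``equivalence'' conflates this well-posedness constraint with the conclusion of the lemma (for $X\in\cc_1$ the loop does \emph{not} evaluate to $\mathcal{D}^2\id_X$ in general, but to $\mathcal{D}^2$ times the projector onto the $\un$-isotypical part). Second, even with a correct statement, a proof in the $G$-crossed setting must track the crossing isomorphisms and the gradings of the encircling and encircled components; this is the substantive step that would replace the paper's twist computation, and it is missing. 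If you want to keep the sliding-lemma route, you would need to formulate and prove the $G$-graded killing lemma precisely; otherwise, the shortest completion of your plan is to adopt the paper's Fenn--Rourke reduction and carry out the local $\Delta_-$ computation.
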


\begin{proof}
 We
should prove that $\tau_{\cc}(M)$ does not depend on the choices
made in its definition. First of all, the homomorphism $g$ is   well
defined only up to conjugation.
  Lemma \ref{lem-smallomega--} implies that    $ F(\ell,
 g,\omega_\cc )$ is preserved under conjugation of~$g$ so that
 $\tau_{\cc}(M)$
does not depend on the choice of $g$  in its conjugacy class. Since the duality
  $V\mapsto V^*$ preserves the dimension and  transforms simple
  objects in
${\cc}_\alpha$ into simple objects in ${\cc}_{\alpha^{-1}}$, we
have $(\omega_\cc^\alpha)^*=\omega_\cc^{{\alpha}^{-1}}$ for
all $\alpha \in G $. Lemma  \ref{lem-smallomega+} implies that
$F(\ell, g, \omega_\cc)$ is independent of  the choice of
orientation of $\ell$ and so is $\tau_{\cc}(M)$.

To prove the independence of the choice of $\ell$ we   use
Kirby's theory of moves on   links. By \cite{Ki}, any two framed
links   in $S^3$ yielding via surgery homeomorphic 3-manifolds
can be related by certain transformations called
  Kirby moves. There are moves of two kinds. The first    move
adds to a framed link $\ell\subset S^3$ a distant unknot
$\ell^{\pm}$  with framing $ \pm 1$; under this move the
4-manifold  $B_\ell$ is transformed into    $B_\ell \# \CCC P^2$.   The second   move preserves   $B_\ell$ and is induced by
a sliding  of a 2-handle  of $B_\ell$ across another 2-handle.
We   need a more precise version of this theory. Denote the
result of surgery on a framed link $\ell\subset S^3$ by
$M_\ell$. A {\it surgery presentation}  of a closed connected
oriented 3-manifold $M$  is a pair (a framed link $\ell\subset
S^3$, an isotopy class of   orientation preserving
homeomorphisms $ f\colon M\to M_\ell$). Note that any framing
preserving isotopy of $\ell$ onto itself induces a homeomorphism
$j_0\colon M_\ell\to M_\ell$. For any $ f\colon M\to M_\ell$ as
above, the pair  $(\ell, j_0 f)$ is  a surgery presentation of
$M$; we say that it is obtained from $(\ell, f)$ by isotopy. The
first Kirby move $\ell\mapsto \ell'= \ell \amalg \ell^{\pm}$
induces a homeomorphism $j_1\colon M_\ell\to M_{\ell'}  $ which
is the identity outside a small 3-ball containing $\ell^{\pm}$.
The second  Kirby move ${\ell}\mapsto {\ell}'$ induces  a
homeomorphism $B_{\ell}\to B_{{\ell}'}$ which restricts to a
homeomorphism of boundaries
   $j_2\colon M_{\ell}\to M_{{\ell}'}$. In both cases we say
that the surgery presentation $(\ell', j_k f\colon M\to
M_{\ell'})$ (where $k=1,2$)
  is obtained from $(\ell, f\colon M\to M_\ell)$ by the $k$-th
Kirby move. The arguments in   \cite{Ki}, Section 2  show that
for any surgery presentations
 $({\ell}_1, f_1\colon M_1\to M_{\ell_1})$  and $({\ell}_2,
f_2\colon M_2\to M_{\ell_2})$ of  closed  connected oriented
3-manifolds $M_1,M_2$ and for any isotopy class of orientation
preserving  homeomorphisms   $f\colon M_{1}\to M_{2}$ there is a
sequence of Kirby moves and isotopies transforming $({\ell}_1,
f_1)$ into $ ({\ell}_2,f_2 f)$.

The 3-manifold $M_\ell$ obtained by surgery  on a  special
$G$-link $(\ell\subset S^3, g\colon \pi_1(C_{\ell})\to
G)$ is a $G$-manifold  in the obvious way.  (Warning:
by definition,  $G$-links are oriented but their
orientations play  no role in the surgery construction.)  A
Kirby move on a special   $G$-link   $(\ell,g)$ yields a
special $G$-link   $(\ell'\subset S^3, g'\colon
\pi_1(C_{\ell'})\to G)$ where $g'$ is   the composition of
the inclusion homomorphism $\pi_1(C_{\ell'})\to
\pi_1(M_{\ell'})$, the isomorphism $\pi_1(M_{\ell'})
=\pi_1(M_{\ell})$ induced by the homeomorphism $j\colon
M_{\ell}\to M_{\ell'}$ as above and the homomorphism
$\pi_1(M_{\ell})\to G$ induced by   $g$.  The results of
the previous paragraph imply that  if surgeries on two special
$G$-links   in $S^3$ yield   homeomorphic
$G$-manifolds,  then these $G$-links  can be related
by  a finite sequence of
  Kirby moves,  isotopies, and orientation reversions on link
  components.

It is clear that  $\tau_{\cc}(M_\ell)$ is invariant under
isotopies on $\ell$. To prove the theorem it is   enough to show
that $\tau_{\cc}(M_\ell)$ is invariant under the  Kirby moves on
$\ell$.  Under the first Kirby  move $\ell\mapsto \ell'=\ell
\amalg \ell^{\pm}$ the meridian of $\ell^{\pm}$ is contractible
in    $M_{\ell'}$ and  the $G$-link   $ \ell'  $ is a
disjoint union of $\ell $ and the $G$-unknot  $\ell^{\pm} $ with
framing $\pm 1$.  Therefore  $$F(\ell', \omega_\cc)=
F(\ell^{\pm}, \omega_\cc)\, F(\ell, \omega_\cc)=\Delta_{\pm
}\, F(\ell, \omega_\cc) .$$
   This formula and the equalities $\#\ell'=\#\ell +1, \sigma
(\ell')=\sigma(\ell) \pm 1$,   $\Delta_+ \Delta_-=\mathcal D^2$
imply that
 $\tau_{\cc}(M_\ell)=\tau_{\cc}(M_{\ell'})$.

  We consider the second Kirby moves in the restricted form
studied by Fenn and Rourke  \cite{FR}. The  Kirby-Fenn-Rourke
moves split into    positive   and   negative ones. It is
explained in \cite{RT2} that  (modulo the first Kirby moves) it
is enough to consider only   the negative Kirby-Fenn-Rourke
moves. Such a move   $\ell \mapsto \ell'$ replaces a piece
$\Gamma$
  of $\ell$ lying in a closed 3-ball $B^3 $ by another piece
$\Gamma'$ lying in $B^3$ and having the same endpoints.  Here
$\Gamma=B^3\cap \ell$ is
  a system of $k\geq 1$ parallel strings with parallel framings
and  $\Gamma'=\Gamma^-\cup t$, where $\Gamma^-$ is obtained from
$\Gamma$ by applying a full left-hand twist and $t$ is an unknot
encircling $\Gamma$ and having the framing $-1$. (This move
$\ell \mapsto \ell'$ can  be expanded as a composition of $k$
Kirby moves of type 2 and a single Kirby move of type 1.) Note
that $\#\ell'=\#\ell +1$ and $\sigma (\ell')=\sigma (\ell)-1$.
We must prove that $F(\ell', \omega_\cc)=\Delta_{-}\, F(\ell,
\omega_\cc) $. This will follow  from a \lq\lq local" equality
which we now formulate.

 Let   $\Gamma$ be a trivial braid on $k$ strings in ${\mathbb
R}^2\times [0,1]$ with constant framing. Let
$\Gamma'=\Gamma^-\cup t\subset {\mathbb R}^2\times [0,1]$ be the
framed tangle obtained from $\Gamma$ as above.  Fix an arbitrary
orientation of $\Gamma'$ and  the induced orientation of
$\Gamma$. Let us transform $\Gamma'$ into a ribbon graph
$\Omega=\Gamma^-\cup \Omega_t$ by inserting a coupon in $t$ as
 in Figure \ref{insertion-coupon}. Clearly, $C_\Gamma= ({\mathbb
 R}^2\times [0,1])- \Gamma$ is obtained from $C_{\Gamma^-}=
 ({\mathbb R}^2\times [0,1])- {\Gamma^-}$ by surgery on
 $t\subset C_{\Gamma^-}$. This yields inclusions
 $C_\Omega\subset C_{\Gamma'}=C_{\Gamma^-} \setminus t \subset
 C_\Gamma$. (The reader uncomfortable with open manifolds may
replace the compliments by exteriors throughout the argument.)
These inclusions induce  a bijection $g\leftrightarrow g'$
between homomorphisms $  g \colon \pi_1(C_\Gamma)\to G$ and
homomorphisms $g'\colon \pi_1(C_{\Omega})=\pi_1(C_{\Gamma'})\to
G$ carrying the homotopy class of the $(-1)$-longitude of
$t$  to $1\in G$.   Any coloring $u$ of the $G$-graph
$(\Gamma,g)$ induces a coloring $u'$ of the $G$-graph
$(\Omega,g')$ such that
\begin{enumerate}
\labela
\item the values of $u'$ on the edge-tracks of   $\Gamma^-\subset
\Omega$ are equal to the values of $u$ on the corresponding
edge-tracks of $\Gamma$ (and the same for the isomorphisms
associated with pairs (a track of     $\Gamma^-$,  an   element
of $\pi_1(C_{\Omega}$));
\item $\Omega_t$ is colored as in Section \ref{sect-special-G-graphs}  using
the canonical color $\omega=\omega_\cc$.
\end{enumerate}
Clearly, the colored $G$-graphs $\Gamma$ and $\Omega$ have the
same source  and the same target. The properties of the functor
$F:\mathcal G_{\cc}\to \cc$  imply that to prove the equality
 $F(\ell', \omega_\cc)=\Delta_{-}\, F(\ell, \omega_\cc) $,
it is enough to show   that for any orientation  of $\Gamma'$
and any
  $ g $,  $u$ as above,
\begin{equation}\label{mainformulla}
F(\Omega,g',u')= \Delta_{-}\, F(\Gamma,g,u)\, .
\end{equation}
Let us prove this formula. Let
$(U_1,\varepsilon_1),\ldots ,(U_k,\varepsilon_k)$ be the
source of $\Gamma$. Using the standard technique of coupons
colored with identity morphisms, we can reduce the proof of
\eqref{mainformulla} to the case where $\Gamma$ is a single
string oriented from top to bottom and colored with
$\otimes_{r=1}^k U_r^{\varepsilon_r} \in {\mathcal C}$.
Similarly, using   a decomposition of this object as a direct
sum of simple objects, we can further reduce ourselves to the
case where the input and the output of  $\Gamma$ is a 1-term
sequence $(V,+)$ where $V$ is a simple object of ${\mathcal
C}$. Suppose that $V\in {\mathcal C}_\alpha$ where $\alpha \in
G$. By the argument above in this proof, $F(\Gamma',g',u')$ does
not change if we invert the orientation of $t$. Therefore we can
assume that $t$ is oriented so that its linking number with the
string $\Gamma$ is equal to $+1$. Clearly,
$F(\Gamma,g,u)=\id_V$. Since $V$ is simple,
$$
F(\Omega,g',u')=(\dim(V))^{-1}\tr \bigl ( F(\Omega,g',u') \bigr )\, \id_V.
$$
To establish
\eqref{mainformulla}, we need  only  to prove that $\tr \bigl ( F(\Omega,g',u') \bigr )=\dim(V) \Delta_{-}
$. By construction,
\begin{equation}\label{eq-pf-kirby1}
F(\Omega,g',u')=\sum_{i \in I_{\alpha^{-1}}} \dim(i)\, {\mathcal F}(D_i),
\end{equation}
where $D_i\in \mathcal{D}_\cc$ is the $\cc$-colored diagram of Figure~\ref{fig-kirby-pf1} and $I_{\alpha^{-1}}$ is a representative set of simple objects of $\cc_{\alpha^{-1}}$.
   \begin{figure}[t]
\begin{center}
\psfrag{v}[Br][Br]{\scalebox{.9}{$\id_{\varphi_\alpha(V)}$}}
\psfrag{u}[Br][Br]{\scalebox{.9}{$\id_{\varphi_{\alpha^{-1}}(i)}$}}
\psfrag{B}[Br][Br]{\scalebox{.9}{$\varphi_{\alpha^{-1}}(i)$}}
\psfrag{X}[Br][Br]{\scalebox{.9}{$\varphi_\alpha(V)$}}
\psfrag{V}[Bl][Bl]{\scalebox{.9}{$V$}}
\psfrag{Y}[Bl][Bl]{\scalebox{.9}{$\varphi_\alpha(V)$}}
\psfrag{q}[Bl][Bl]{\scalebox{.9}{$\psi=\varphi_2(\alpha,\alpha^{-1})_i^{-1}(\varphi_0)_i$}}
\psfrag{i}[Bl][Bl]{\scalebox{.9}{$i$}}
\psfrag{p}[Bl][Bl]{\scalebox{.9}{$\phi=\varphi_2(\alpha^{-1},\alpha)_V^{-1}(\varphi_0)_V$}}
$D_i=$\; \rsdraw{.45}{.9}{proof-kirby1}
\end{center}
\caption{}
\label{fig-kirby-pf1}
\end{figure}
Let $i \in I_{\alpha^{-1}}$. From Lemmas~\ref{lem-functorF} and \ref{lem-F-othergen}, we obtain $\tr\bigl({\mathcal F}(D_i) \bigr)=\tr(T_{i,V})$ with
$$
T_{i,V}=(\theta_i^{-1} \otimes \theta_V^{-1})\tau^{-1}_{\varphi_{\alpha^{-1}}(i),\varphi_\alpha(V)} (\id_{\varphi_\alpha(V)} \otimes \psi) \tau^{-1}_{\varphi_\alpha(V),i} (\id_{i} \otimes \phi).
$$
Using the naturality of $\tau$,   Lemma~\ref{lem-twist-mult},   and \eqref{crossing7}, we obtain
\begin{align*}
T_{i,V}& =(\theta_i^{-1} \otimes \theta_V^{-1})\tau^{-1}_{\varphi_{\alpha^{-1}}(i),\varphi_\alpha(V)} \tau^{-1}_{\varphi_\alpha(V),\varphi_{\alpha}\varphi_{\alpha^{-1}}(i)} (\psi \otimes \phi)\\
&= \theta_{i \otimes V}^{-1} (\varphi_1)_2(i,V)((\varphi_0)_i \otimes (\varphi_0)_V)= \theta_{i \otimes V}^{-1} (\varphi_0)_{i \otimes V}=v_{i \otimes V}^{-1},
\end{align*}
where $\{v_X=(\varphi_0)_X^{-1}\theta_X\co X \to X\}_{X \in \cc_1}$ is the (standard) twist of $\cc_1$ (see Section~\ref{sect-cas-C1}). Recall that for any simple object $X$ of $\cc_1$, $v_X=\nu_X\id_X$ for some $\nu_X \in \kk^*$. Let $I_1$ be a representative set of simple objects of $\cc_1$. Since $i \otimes V\in\cc_1$
splits as a (finite) direct sum of objects of $I_1$, there
exists a finite family of morphisms $( p_a\co i \otimes V \to i_a  , q_a \co i_a \to i \otimes V )_{a \in A}$
such that
\begin{equation*}
\id_{i \otimes V}=\sum_{a \in A} q_a p_a  \quad
\text{and} \quad p_a q_b=\delta_{a,b} \, \id_{i_a} \quad
\text{for all} \quad a,b\in A.
\end{equation*}
Then
\begin{gather*}
\tr\bigl({\mathcal F}(D_i) \bigr)=\tr(v^{-1}_{i \otimes V})=\tr(v^{-1}_{i \otimes V}\id_{i \otimes V})
=\sum_{a \in A} \tr(v^{-1}_{i \otimes V}p_aq_a)\\
 =\sum_{a \in A} \tr(p_av^{-1}_{i_a}q_a)
=\sum_{a \in A} \nu_{i_a}^{-1} \tr(p_aq_a) =\sum_{a \in A} \nu_{i_a}^{-1} \dim(i_a)\\
 =\sum_{k \in I_1} \sum_{\substack{a \in A\\ i_a=k}} \nu_{k}^{-1} \dim(k)=\sum_{k \in I_1} N_{i \otimes V}^k \nu_{k}^{-1} \dim(k).
\end{gather*}
Finally, using \eqref{eq-pf-kirby1} and \eqref{eq-dim-mult-numbers}, we obtain
\begin{gather*}
\tr \bigl ( F(\Omega,g',u') \bigr )=\sum_{i \in I_{\alpha^{-1}}} \sum_{k \in I_1} \dim(i) N_{i \otimes V}^k \nu_{k}^{-1} \dim(k)\\
=\sum_{k \in I_1} \sum_{i \in I_{\alpha^{-1}}}  \dim(i^*) N_{V \otimes k^*}^{i^*} \nu_{k}^{-1} \dim(k)
=\sum_{k \in I_1} \left ( \sum_{j \in I_{\alpha}}  \dim(j) N_{V \otimes k^*}^{j} \right ) \nu_{k}^{-1} \dim(k)\\
=\sum_{k \in I_1} \dim(V \otimes k^*) \nu_{k}^{-1} \dim(k) = \sum_{k \in I_1} \dim(V) \nu_{k}^{-1} (\dim(k))^2=\dim(V) \Delta_{-}.
\end{gather*}
This concludes the proof of Theorem~\ref{MAINTH}.
\end{proof}

\subsection{Remarks}
1.  The 3-sphere $S^3$ has a unique structure of a   closed
$G$-manifold. It can be obtained by the surgery  on $S^3$
along an empty link. Hence, $\tau_{\cc}(S^3)={\mathcal
D}^{-1}$.

2. We
have $\tau_\cc  (S^1\times S^2,f )=1$ for any homomorphism $f \colon \pi_1(S^1\times S^2) \approx \ZZ\to G$. Indeed,  the closed $G$-manifold
$(S^1\times S^2, f )$ can be obtained by the surgery on $S^3$ along
an unknot $\ell$ with framing $0$ and with homomorphism $g\colon
\pi_1(C_\ell)\to G$ carrying a meridian of $\ell$  to a certain
$\alpha\in G$. Then $\sigma (\ell)=0$ and by   \eqref{eq-dim-Cg},
$$\tau_\cc  (S^1\times S^2,f )= \mathcal D^{-2} F(\ell, g , \omega_\cc)=\mathcal D^{-2}
\sum_{i \in I_\alpha} (\dim( i))^2= \mathcal D^{-2} \dim(\cc_1) =1.$$

3.  The definition of $\tau_\cc  (M)$ can be rewritten in a more
 symmetric form:   $$
\tau_\cc  (M)= \mathcal D^{-b_1(M)-1}\Delta_-^{-\sigma_-}
\Delta_+^{-\sigma_+} F(\ell, g, \omega_\cc)$$ where
$b_1(M)=\#\ell -\sigma_+-\sigma_-$ is the first Betti number of
$M$ and $\sigma_+$ (resp.\ $ \sigma_-$) is the number of
positive (resp.\ negative) squares in the diagonal decomposition
of the intersection form $H_2(B_\ell) \times H_2(B_\ell) \to
\mathbb  Z$. The invariant
$$ \mathcal  D^{b_1(M)+1} \tau_{\cc}(M)=\Delta_-^{-\sigma_-} \Delta_+^{-\sigma_+}
F(\ell, g, \omega_\cc)$$
does not depend on the choice of $\mathcal D$.

4. The invariant  $\tau_\cc (M)$   can be defined without the
invertibility assumption  on the $S$-matrix   of $\cc$. It
suffices to require the  weaker  condition $\Delta_+,
\Delta_-\in \kk^*$.    The invertibility of  the $S$-matrix  is
needed in order to extend    $\tau_\cc$   to an HQFT in the next subsection.

\subsection{The   HQFT}\label{VI.2.2+} Consider again a $G$-modular category $\cc$  with rank  $\mathcal  D $. The invariant $\tau_{\cc}$
of closed   $G$-manifolds extends to a 3-dimensional HQFT   with target an Eilenberg-MacLane space $K(G,1)$,   i.e., to a    symmetric
  monoidal projective functor  from the category of $G$-surfaces and 3-dimensional $G$-cobordisms $ \mathrm{Cob}^G$ to $
  \mathrm{vect}_\kk$. For   precise definitions of $G$-surfaces, $G$-cobordisms, and HQFTs, we refer to  \cite{TVi2}.
 The resulting projective functor $ \mathrm{Cob}^G \to
  \mathrm{vect}_\kk$   is still denoted by $\tau_{\cc}$.    The   construction of    $\tau_{\cc}$ is given in
   \cite[Chapter VII]{Tu1}   when $\cc$ belongs to the class of
   strict $G$-modular categories considered there; the same method
  applies to   $G$-modular categories in the sense
  of this paper. The projectivity of $\tau_{\cc}$ may be described more precisely: the homomorphism associated with any
  $G$-cobordism obtained by gluing two $G$-cobordisms is equal
  to the composition of the corresponding homomorphisms times an integer power of $\Delta_+ \Delta_-^{-1}\in \kk^*$. If $\Delta_+ = \Delta_-$, then   $\tau_{\cc}$  is   a functor.
  If $\Delta_+
  \neq \Delta_-$, then  the multiplicative ambiguity of $\tau_{\cc}$  may be resolved by     enriching $G$-surfaces (and in
  particular the bases of $G$-cobordisms) with Lagrangian
  subspaces in real 1-dimensional homology. The  projective
  functor $\tau_{\cc}$ lifts to a symmetric monoidal functor
  from the category of such enriched $G$-cobordisms  to
   $\mathrm{vect}_\kk$, cf.\ \cite[Chapter VII]{Tu1}.

For completeness, we give an explicit expression for (the isomorphism type
 of) the $\kk$-module $\tau_{\cc}(\Sigma)\in \mathrm{vect}_\kk$
associated with a (closed connected) $G$-surface $\Sigma$ of genus
$n\geq 0$. Such a surface carries a base point, $\bullet$, and a homomorphism $ \pi_1(\Sigma,\bullet)\to G$. If $n=0$, then $\tau_{\cc}(\Sigma)\simeq \kk$. If
$n\geq 1$, then   a skeleton of $\Sigma$ is  formed by   $2n$ loops beginning
and ending at $\bullet$   as   in Figure~\ref{fig-Surface}.
\begin{figure}[t]
\begin{center}
 \psfrag{i}[Bc][Bc]{\scalebox{.9}{$\alpha_1$}}
 \psfrag{j}[Bc][Bc]{\scalebox{.9}{$\beta_1$}}
  \psfrag{c}[Bc][Bc]{\scalebox{.9}{$\alpha_2$}}
 \psfrag{g}[Bc][Bc]{\scalebox{.9}{$\beta_2$}}
  \psfrag{e}[Bc][Bc]{\scalebox{.9}{$\alpha_n$}}
 \psfrag{q}[Bc][Bc]{\scalebox{.9}{$\beta_n$}}
\rsdraw{.45}{.9}{Gij-gg}\,.
\end{center}
\caption{}
\label{fig-Surface}
\end{figure}
Let $\alpha_1,\beta_1, \ldots, \alpha_n, \beta_n\in G$ be the evaluations of the given homomorphism $ \pi_1(\Sigma,a)\to G$ on these  loops,    as indicated in
the figure. Note that
$   \prod_{i=1}^n   \alpha_i^{-1}
\beta_i^{-1} \alpha_i \beta_i  =1$.
 Given a representative set $ \amalg_{\alpha \in G}\,
 {\mathcal I}_\alpha$ of simple objects of $\cc$, we have
\begin{equation}\label{eq-formulaforthemodule}
\tau_{ \cc }(\Sigma)\simeq\!\!\!\!\!\!\bigoplus_{J_1 \in  {\mathcal I}_{\beta_1}, \dots, J_n \in {\mathcal I}_{\beta_n}} \!\!\!\!\!\! \Hom_{ \cc }(\un_{ \cc},
 \varphi_{\alpha_1}(J_1^*)  \otimes J_1 \otimes \cdots \otimes \varphi_{\alpha_n}(J_n^*) \otimes J_n).
\end{equation}
This formula directly follows from the definition of $\tau_{ \cc
}(\Sigma)$ and allows one to compute $\rank_{\kk} \, \tau_{ \cc
}(\Sigma)$ via a  version of the standard  Verlinde formula,
cf.\@ \cite{Tu1}.

\end{document}